\newtheorem{theorem}{Theorem}
\newtheorem{lemma}[theorem]{Lemma}
\newtheorem{corollary}[theorem]{Corollary}
\newtheorem{proposition}[theorem]{Proposition}
\theoremstyle{definition}
\newtheorem{remark}[theorem]{Remark}
\newtheorem{property}[theorem]{Property}
\newtheorem{definition}[theorem]{Definition}
\numberwithin{theorem}{section}
\renewcommand{\Subset}{\subset\subset}
\newcommand{\eref}[1]{(\ref{e.#1})}
\newcommand{\tref}[1]{Theorem \ref{t.#1}}
\newcommand{\lref}[1]{Lemma \ref{l.#1}}
\newcommand{\pref}[1]{Proposition \ref{p.#1}}
\newcommand{\cref}[1]{Corollary \ref{c.#1}}
\newcommand{\fref}[1]{Figure \ref{f.#1}}
\newcommand{\sref}[1]{Section \ref{s.#1}}
\newcommand{\aref}[1]{Appendix \ref{s.#1}}
\newcommand{\rref}[1]{Remark \ref{r.#1}}
\newcommand{\Z}{\mathbb{Z}}
\newcommand{\R}{\mathbb{R}}
\newcommand{\E}{\mathbb{E}}
\newcommand{\T}{\mathbb{T}}
\newcommand{\ind}{\raisebox{.45ex}{$\chi$}}
\def\XXint#1#2#3{{\setbox0=\hbox{$#1{#2#3}{\int}$ }
\vcenter{\hbox{$#2#3$ }}\kern-.6\wd0}}
\newcommand{\ep}{\varepsilon}
\renewcommand{\div}{\operatorname{div}}
\begin{document}

\title[Gradient discontinuities and zero mobility]{The occurrence of surface tension gradient discontinuities and zero mobility for Allen-Cahn and curvature Flows in periodic media}

\author{William M Feldman}
\address{Department of Mathematics, University of Utah, Salt Lake City, USA}
\email{feldman@math.utah.edu}
\author{Peter S Morfe}
\address{Department of Mathematics, The University of Chicago, Chicago, USA}
\email{pmorfe@math.uchicago.edu}
\keywords{Homogenization, Interface Motion, Pinning, Anisotropic Surface Energies, Phase Field Models}

\maketitle

\begin{abstract}
We construct several examples related to the scaling limits of energy minimizers and gradient flows of surface energy functionals in heterogeneous media.  These include both sharp and diffuse interface models.  The focus is on two separate but related issues, the regularity of effective surface tensions and the occurrence of zero mobility in the associated gradient flows.  On regularity we build on the theory of Chambolle, Goldman and Novaga \cite{GCN} to show that gradient discontinuities in the surface tension are generic for sharp interface models.   In the diffuse interface case we only show that the laminations by plane-like solutions satisfying the strong Birkhoff property generically are not foliations and do have gaps.  On mobility we construct examples in both the sharp and diffuse interface case where the homogenization scaling limit of the $L^2$ gradient flow is trivial, i.e. there is pinning at every direction.  In the sharp interface case, these are related to examples previously constructed by Novaga and Valdinoci \cite{NovagaValdinoci2011} for forced mean curvature flow.  
\end{abstract}

\section{Introduction}

\subsection{Sharp Interface Models}  The primary focus of the paper is on the analysis of sharp and diffuse heterogeneous surface energy functionals.  We start the exposition introducing the sharp interface energy on subsets $S \subset \R^d$
\begin{equation}\label{e.surfaceenergy}
 E_{a}(S;U) = \int_{\partial S \cap U} a(x)\mathcal{H}^{d-1}(dx),
 \end{equation}
where $U \subset \R^d$ is an open bounded domain and $a$ is $\Z^d$ periodic and $1 \leq a(x) \leq \Lambda$.  The energy is well defined on locally finite perimeter subsets of $\R^d$, and can also be made sense of on a closed set with finite perimeter.  The quantity of interest in this case is the effective surface tension which can be defined for each normal $n$ by
\[ \bar{\sigma}(n,a) = \lim_{T \to \infty} \frac{1}{\omega_{d-1}T^{d-1}} \inf \{ E_{a}(S;B_T(0)): \ S \cap \partial B_T(0) = \{x \cdot n \leq 0\} \cap \partial B_T(0)\}.\]
There is an equivalent definition using cubes aligned with the direction $n$. 

A basic question of interest is the regularity of the $\bar{\sigma}$.   Chambolle, Goldman and Novaga \cite{GCN} proved that, consistently with other models in Aubry-Mather theory, the differentiability properties of $\bar{\sigma} : S^{d-1} \to [1,\Lambda]$ are largely determined by the geometry of the so-called plane-like minimizers of \eqref{e.surfaceenergy}.
This leads to the observation that even restricting to smooth coefficients $a$, there are examples for which $\bar{\sigma} : S^{d-1} \to [1,\Lambda]$ has gradient discontinuities at all lattice (rational) directions.

In this note we show that the presence of gradient discontinuities at all rational directions appears not only in specially constructed examples, but is a generic feature in the topological sense.

\begin{theorem}\label{t.generic}
For each $n \in S^{d-1}$, the sets of coefficients $\mathcal{A}_n \subset C^\infty(\T^d ; [1,\Lambda])$ for which the associated surface tension $\bar{\sigma}$ has a gradient discontinuity at $n$ is open and dense in the topologies induced by $C(\T^d ; [1,\Lambda])$ and $W^{1,p}(\mathbb{T}^{d};[1,\Lambda])$ for $d < p < +\infty$.  Furthermore, $\cap_{n \in S^{d-1}} \mathcal{A}_{n}$ is also dense in those topologies.
\end{theorem}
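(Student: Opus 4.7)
The plan rests on the Goldman-Chambolle-Novaga correspondence identifying gradient discontinuities of $\bar{\sigma}$ at a direction $n$ with gaps in the $\Z^d$-periodic lamination of $\R^d$ by plane-like $E_a$-minimizers of normal $n$. Since $W^{1,p}(\T^d) \hookrightarrow C(\T^d)$ for $p > d$, the nontrivial claims are $C$-openness and $W^{1,p}$-density of each $\mathcal{A}_n$. Because $\bigcap_{n \in S^{d-1}} \mathcal{A}_n \subset \mathcal{A}_n$ for every $n$, it is most economical to aim directly for $W^{1,p}$-density of the intersection; density of each $\mathcal{A}_n$ follows.

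\emph{Openness.} Let $a_0 \in \mathcal{A}_n$ and fix plane-like $E_{a_0}$-minimizers $S_\pm$ of normal $n$ bounding a maximal gap in the lamination. For any sequence $a_k \to a_0$ in $C$, a plane-like $E_{a_k}$-minimizer lying in the gap would, by the subsequential $L^1_{\mathrm{loc}}$-compactness of plane-like minimizers combined with the lower semicontinuity of $E_a$ under uniform convergence of $a$, accumulate to a plane-like $E_{a_0}$-minimizer inside the gap, contradicting maximality. Hence $C$-openness, and a fortiori $W^{1,p}$-openness.

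\emph{Density via commensurate perturbations.} For rational $n = p/|p|$ with primitive $p \in \Z^d$, the lattice hyperplane family $H_k = \{x \cdot p = k\}$ is $\Z^d$-commensurate with $n$. The main construction is a $\Z^d$-periodic perturbation $\psi$ commensurate with this hyperplane family --- for instance, $\psi(x) = \epsilon\, g(x \cdot p)$ with $g$ a smooth $1$-periodic function with a sharp unique minimum --- which by an Aubry-Mather-type commensurate-locking argument forces plane-like $E_{a_0 + \psi}$-minimizers of normal $n$ to concentrate on the hyperplane family, producing a lamination gap of width $\sim |p|^{-1}$. The perturbation has $\|\psi\|_{W^{1,p}} \lesssim \epsilon |p|$, small for small $\epsilon$. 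Enumerate a countable dense sequence $\{n_j\}_{j \geq 1} \subset S^{d-1}$ of rational directions, and starting from $a^{(0)} = a_0$, inductively set $a^{(j)} = a^{(j-1)} + \psi_j$ with $\|\psi_j\|_{W^{1,p}} < 2^{-j}\varepsilon$, chosen small enough (by the openness step) to preserve the facets already established at $n_1, \ldots, n_{j-1}$. The limit $a = \lim a^{(j)}$ satisfies $\|a - a_0\|_{W^{1,p}} < \varepsilon$ and lies in $\bigcap_j \mathcal{A}_{n_j}$.

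\emph{Main obstacle.} The principal technical difficulty is twofold. First, the Aubry-Mather-type facet creation at $n_j$ requires $\psi_j$ to have a commensurate amplitude exceeding a threshold set by the accumulated variation of $a^{(j-1)}$ along the hyperplane family; making this quantitative while keeping $(\|\psi_j\|_{W^{1,p}})_j$ summable is delicate, and exploits the slack provided by $p > d$ in the scaling of the relevant Sobolev norms of $g$. Second, the passage from facets at the countable dense set $\{n_j\}$ to membership in $\bigcap_{n \in S^{d-1}} \mathcal{A}_n$ requires a uniform lower bound on facet sizes (of order $|p_j|^{-1}$) along every approximating sequence $n_{j_k} \to n$, so that Hausdorff convergence of the Wulff shapes associated with $a^{(j)}$ transfers a facet to every $n \in S^{d-1}$ in the limit. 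Careful arithmetic control of the enumeration $\{n_j\}$ --- arranging that every $n \in S^{d-1}$ admits rational approximants $n_{j_k}$ with denominators $|p_{j_k}|$ growing at a controlled rate --- together with matching decay of the perturbation scales, closes the argument.
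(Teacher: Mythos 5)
Your openness argument is essentially the paper's (Lemma \ref{l.aopen}): compactness of plane-like minimizers in $L^1_{\textup{loc}}$ plus lower semicontinuity of $E_a$ under uniform convergence of coefficients. That part is sound.

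Your density argument, however, has genuine gaps and departs from the paper in a way that does not close. The paper's mechanism is \emph{a single compact obstruction, simultaneously in every direction}: it perturbs $a$ so that the sublevel set $\Omega = \{a > c\}$, for $c$ just below $\max a$, becomes a $C^2$ bounded open set whose indicator is a strict subsolution of the Euler--Lagrange equation \eref{mcfa} (Lemma \ref{l.adense}), and then a sliding argument (Lemma \ref{l.barriers}) shows that no family of strongly Birkhoff plane-like minimizers in \emph{any} direction $n$ can foliate through $\Omega$ --- hence $\mathcal{M}(n)$ has a gap for every $n$ at once. Because this is one compact barrier, not a countable family of commensurate nudges, there is no summability delicacy and no transfer-to-irrational-directions problem.

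Your plan of iterating commensurate perturbations $\psi_j(x) = \epsilon_j g(x\cdot p_j)$ over a dense set of rational $n_j$ breaks down in at least two places. First, the ``commensurate locking'' step is asserted but not supplied: to create a facet at $n_j$ you claim to need an amplitude threshold, while to preserve the already-created facets at $n_1,\dots,n_{j-1}$ you need $\|\psi_j\|$ small; there is no lemma provided showing these two constraints are compatible, and the tension is real (the threshold depends on the medium already built, which has been perturbed $j-1$ times). Second, and more fundamentally, the transfer to all $n \in S^{d-1}$ cannot work as described. By the Chambolle--Goldman--Novaga theorem quoted in the paper (\tref{GCN}), $\grad \bar\sigma(n)$ \emph{always} exists at totally irrational directions, whether or not $\mathcal{M}(n)$ has a gap; the correspondence ``gap $\Leftrightarrow$ corner'' is confined to directions satisfying a rational relation. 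The theorem statement must therefore be read (as the paper's proof makes explicit by redefining $\mathcal{A}_n$) as ``the family $\mathcal{M}(n)$ has a gap,'' not ``$\bar\sigma$ has a corner at $n$.'' Your proposed deduction of facets at arbitrary $n$ from facets at rational approximants $n_{j_k}$ with decaying facet sizes, via Hausdorff convergence of Wulff shapes, is false for totally irrational $n$, and for commensurate $n$ outside your enumeration there is no argument at all. The single-barrier construction sidesteps all of this by attacking the lamination directly rather than the surface tension.
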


This result will be proved below in \sref{stgaps}. Note that the coefficients $a$ in the theorem are necessarily not laminar.  Indeed, \cite{GCN} shows that the surface tension is necessarily $C^{1}$ in some non-empty open set of $S^{d-1}$ when the underlying medium is laminar.  From that point of view, the theorem shows that the general non-laminar case can be much less regular than the laminar setting.

Associated with this energy is the heterogeneous curvature flow of an evolving set $S_t$
\begin{equation}\label{e.mcfa}
 V_n = -a(x)\kappa - D a(x) \cdot n
 \end{equation}
where $n$ is the outward normal to $\partial S_t$, $V_n$ is the outward normal velocity of $\partial S_t$, and $\kappa$ is the mean curvature oriented so that convex $S$ has $\kappa\geq 0$. 

In this case we are interested in the limiting behavior of the rescaled curvature flow of $S^\ep_t$ starting from some compact initial data $S_0$, 
\begin{equation}\label{e.siepeqn} V_n = -a(\tfrac{x}{\ep})\kappa - \tfrac{1}{\ep} Da(\tfrac{x}{\ep}) \cdot n.\end{equation}
By analogy with what has been proved in related models (cf.\ \cite{barles souganidis}, \cite{BBP}, \cite{variational einstein}), one would expect that the limiting equation would be
\[ V_n = -\mu(n)\div_{\partial S}(\bar{\sigma}(n))\]
where the additional anisotropic term $\mu: S^{d-1} \to [0,\infty)$ is the mobility, the infinitesimal velocity of the system induced by additive forcing (see below for more details).  At the very best, based on the examples of $\bar{\sigma}$, we are looking at something like a crystalline curvature flow.  However the situation is more delicate than even this.

To start with, the construction of \tref{generic} also implies a certain kind of pathology at the level of the gradient flow, which we state next using the language of level set PDE for convenience:

\begin{corollary} \label{c.level set bubbling}  There is a family of coefficients $\mathcal{F} \subset C^{\infty}(\mathbb{T}^{d}; [1,\Lambda])$, which is dense in $C(\mathbb{T}^{d}; [1,\Lambda])$, such that if $a \in \mathcal{F}$, then the following statement holds: for each $u_{0} \in UC(\mathbb{R}^{d})$, if $(u^{\ep})_{\ep > 0}$ are the solutions of the level set PDE
	\begin{equation}\label{e.levelsetepeqn}
		\left\{ \begin{array}{r l}
			u^{\ep}_{t} = a(\tfrac{x}{\ep}) \textup{tr} \left( \left( \textup{Id} - \widehat{Du^{\ep}}\otimes \widehat{Du^{\ep}} \right) D^{2} u^{\ep} \right) +  \ep^{-1} Da(\tfrac{x}{\ep})\cdot Du^{\ep}   & \textup{in} \, \, \mathbb{R}^{d} \times (0,\infty), \\
			u^{\ep} = u_{0} & \textup{on} \, \, \mathbb{R}^{d} \times \{0\},
		\end{array} \right.
	\end{equation}
	then 
		\begin{equation*}
			\limsup \nolimits^{*} u^{\ep} \geq u_{0} \geq \liminf \nolimits_{*} u^{\ep}.
		\end{equation*}
	\end{corollary}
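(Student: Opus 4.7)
The plan is to use plane-like critical points as stationary barriers for the rescaled level set PDE~\eqref{e.levelsetepeqn} and to invoke the comparison principle to obtain the desired pinning inequalities. I would take $\mathcal{F} := \bigcap_{n \in S^{d-1} \cap \mathbb{Q}^{d}} \mathcal{A}_{n}$, a countable intersection of open dense subsets of $C^{\infty}(\T^{d};[1,\Lambda])$, which is dense in $C(\T^{d};[1,\Lambda])$ by Baire's theorem. For $a \in \mathcal{F}$ and every rational direction $n$, the gradient discontinuity of $\bar{\sigma}$ at $n$ implies, via the theory of~\cite{GCN}, that the plane-like minimizers of $E_{a}$ in direction $n$ form a closed lamination with nonempty gaps rather than a foliation; each such minimizer $\Gamma$ is a stationary critical point of the geometric flow~\eqref{e.mcfa}, so $\ep\Gamma$ is stationary for~\eqref{e.siepeqn}.

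Next I would establish a density of barriers: for every $x_{0} \in \R^{d}$, every $n_{0} \in S^{d-1}$, and every $\delta > 0$, there exists $\ep_{0} > 0$ such that for all $\ep < \ep_{0}$ some plane-like critical point $\ep\Gamma$, in a rational direction $n$ near $n_{0}$, passes within Hausdorff distance $\delta$, on compact sets, of the hyperplane through $x_{0}$ with normal $n_{0}$. This uses the $\Z^{d}$-periodicity of plane-like minimizers in each rational direction (via the Birkhoff property), which supplies stationary critical points at $\Z^{d}$-lattice spaced heights, together with density of rational directions in $S^{d-1}$; rescaling by $\ep$ densifies the lattice.

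Given this, I would prove $\limsup^{*} u^{\ep} \geq u_{0}$ as follows. Fix $(x_{0}, t_{0})$ and $c < u_{0}(x_{0})$. By uniform continuity of $u_{0}$, choose $\rho > 0$ with $B_{2\rho}(x_{0}) \subset \{u_{0} > c\}$. Using the density claim, for all sufficiently small $\ep > 0$ I would construct a bounded closed region $R^{\ep}$ with $x_{0} \in \operatorname{int}(R^{\ep}) \subset B_{\rho}(x_{0})$ whose boundary is assembled from pieces of plane-like critical points $\ep\Gamma_{i}$ in various rational directions, arranged so that $\partial R^{\ep}$ approximates $\partial B_{\rho}(x_{0})$ from inside. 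The upper semicontinuous function $v^{\ep} := c\,\mathbf{1}_{R^{\ep}} + (-M)\,\mathbf{1}_{\R^{d} \setminus R^{\ep}}$, with $M \gg 1$, is then a stationary subsolution of~\eqref{e.levelsetepeqn} lying below $u_{0}$, and the comparison principle yields $u^{\ep}(x_{0}, t) \geq c$ for all $t > 0$. Sending $c \uparrow u_{0}(x_{0})$ gives $\limsup^{*} u^{\ep}(x_{0}, t_{0}) \geq u_{0}(x_{0})$, and the symmetric construction using plane-like barriers tangent from outside produces $\liminf_{*} u^{\ep} \leq u_{0}$.

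The hard part is verifying that $v^{\ep}$ is actually a viscosity subsolution of~\eqref{e.levelsetepeqn} at the non-smooth corners of $\partial R^{\ep}$ where distinct $\ep\Gamma_{i}$ meet. One fix is to arrange the plane-like pieces so that every corner is convex from the subsolution side, which is subsolution-compatible for curvature-driven flows by general principles. A cleaner alternative is to avoid the global set-theoretic barrier and argue pointwise: at every point where $u^{\ep}$ threatens to cross the target level, the signed distance to a single nearby $\ep\Gamma$ serves as a local stationary solution of~\eqref{e.levelsetepeqn}, and invoking the viscosity condition there yields the desired contradiction. The essential input in either implementation is the density of plane-like critical points furnished by \tref{generic} together with the lamination structure of~\cite{GCN}.
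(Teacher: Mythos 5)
Your overall strategy diverges from the paper's, and it contains a genuine gap at exactly the step you flag as ``the hard part.'' The paper does not derive the bubbling from plane-like minimizers at all: it takes $\mathcal{F}$ to be the (merely dense) set of coefficients admitting a compact $C^{2}$ open set $\Omega$ for which $\ind_{\overline{\Omega}}$ is a \emph{strict} stationary subsolution of \eref{mcfa} (Lemma \ref{l.adense}), and then uses the rescaled translate $\ep(k_{\ep}+\overline{\Omega})\Subset\{u_{0}>c\}$ as a compact barrier, concluding by comparison. The remark following \cref{level set bubbling} explicitly states that whether gaps in the strong Birkhoff laminations at every lattice direction (which is what membership in $\bigcap_{n}\mathcal{A}_{n}$ gives you) directly imply bubbling is left open; your proposal in effect claims to settle that question by a patching argument, and that argument does not go through.

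Concretely: plane-like minimizers are stationary \emph{solutions}, with no strictness to spare, and to enclose a compact region $R^{\ep}$ around $x_{0}$ by pieces of such fronts you must take (locally) an \emph{intersection} of the corresponding phase regions. In level-set terms this is a minimum of subsolutions, which is not a subsolution; geometrically, a compact region whose boundary consists of pieces with zero forced normal velocity must have corners that are convex as seen from inside $R^{\ep}$ (the total turning of a closed boundary curve is $2\pi$), and at such a corner the curvature concentrates with the sign that makes $V_{n}=-a\kappa-\grad a\cdot n$ strictly negative, so the region immediately recedes there: $\ind_{R^{\ep}}$ fails the subsolution test precisely at those corners, and the ``general principle'' you invoke runs in the opposite direction (unions/reentrant corners are the harmless ones, not intersections/convex corners). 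Your ``cleaner alternative'' of comparing locally with a single nearby plane-like front also fails, because that front is unbounded: the comparison principle would require $u^{\ep}$ and the front's indicator to be ordered on all of $\mathbb{R}^{d}$ at $t=0$ (or on the full parabolic boundary of a region), whereas $u_{0}$ exceeds $c$ only near $x_{0}$ and may drop below $c$ where the front exits $B_{2\rho}(x_{0})$; nothing prevents the level set from retreating around the front through that region. This is exactly why the paper manufactures a \emph{compact, strict} subsolution by perturbing the medium, at the price of obtaining only a dense (not residual) family $\mathcal{F}$.
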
  
See \sref{stgaps} for the proof. The point is that wherever the front recedes some areas of the positive phase are left behind, and wherever the front advances some areas of the negative phase remain. We refer to this phenomenon as \emph{bubbling}.  Bubbling is well known to occur in these kinds of interface motions, see Cardaliaguet, Lions, and Souganidis \cite{CLS2009}, the only part which is possibly new about \cref{level set bubbling} is that the set of coefficients for which it holds is dense.
Despite this somewhat pathological behavior, it is still conceivable that the ``bulk" of the front moves by a limiting curvature flow.  Simply instead of being a transition between the $+1$ and $-1$ phases it is actually a transition between some more complicated $+$ and $-$ phases that include the bubbles left behind by the bulk of the moving front.

	\begin{remark}
	Note that we do not prove topological genericity in \cref{level set bubbling}, only density.  The set of coefficients $\mathcal{F}$ which we construct is dense but not open in $C(\T^d;[1,\Lambda])$ and open but not dense in $C^1(\T^d;[1,\Lambda])$.  It would be interesting to know whether the occurrence of gaps in the lamination by strong Birkhoff plane-like solutions at every lattice direction, which occurs generically in the uniform topology by \tref{generic}, directly implies bubbling as in \cref{level set bubbling}.
	\end{remark}

\subsection{Interface pinning}	In fact, it can happen that the effective dynamics are in a sense ``worse" than this, the entire front can be pinned not only some compact bubbles.  Through the construction of a specific class of examples in dimension $d = 2$, we show that it is possible that the mobility $\mu(n) \equiv 0$, meaning the scaling limit is trivial.  This is exactly the phenomenon known as pinning, which occurs ubiquitously in problems involving interface motion in heterogeneous media.

Discussing formally we explain the so-called Einstein relation \cite{Kubo_1966,https://doi.org/10.1002/andp.19053220806, spohn interface dynamics}, which identifies the friction term in the effective diffusivity as the mobility, the infinitesimal response of the system to an external volume forcing.  Consider the solution $S_t(n)$ of the forced mean curvature flow for a constant large scale forcing $F \in \R$ with planar initial data
\begin{equation}\label{e.mcfaplanarforcing}
 V_n = -a(x)\kappa - D a(x) \cdot n + F \ \hbox{ with } \ S_0 = \{ x \cdot n \leq 0\}.
 \end{equation}
Then associated with each propagation direction $n$ there are minimal and maximal asymptotic speeds
 \[ c_*(n,F) = \lim_{ t \to \infty} \frac{1}{t}\inf_{x \in \partial S_t} x \cdot n\ \hbox{ and } \ c^*(n,F) = \lim_{n \to \infty} \frac{1}{t}\sup_{x \in \partial S_t} x \cdot n\] 
 which may not be the same.  It is not difficult to check that both are monotone nondecreasing in $F$ and $c_*(n,0) = c^*(n,0) = 0$.  Ignoring, for now, the possibility that the two asymptotic propagation speeds do not agree in some cases we define $c(n,F) = c_*(n,F) = c^*(n,F)$ and then define the mobility $\mu(n)$ by 
\begin{equation} \label{e.mobility} 
\mu(n) = \left.\frac{d}{dF}\right|_{F=0} c(n,F).
\end{equation}
   Although both quantities are strictly monotone in the set where they are non-zero, there can occur a nontrivial pinning interval $[-F_*(n),F^*(n)]\ni 0$ so that
 \[ c^*(n,F) = 0 \ \hbox{ for } \ F \leq F^*(n) \ \hbox{ and } \ c_*(n,F) = 0 \ \hbox{ for } \ F \geq F_*(n).\]
 In this case $\mu(n) = 0$, possibly at \emph{every} direction $n \in S^{d-1}$.  This is a well-known phenomenon which occurs in many related models of interface propagation in heterogeneous media. See for example \cite{FeldmanSmart,Feldman2021,NovagaValdinoci2011,DirrYip,DirrKaraliYip}.

Physical intuition suggests that the existence of a nontrivial pinning interval at every direction is, in some sense, a generic feature.  Again, special assumptions (e.g.\ laminarity) on the medium may produce positive mobility at some directions.  Here we prove that there exists a medium in dimension $d = 2$ with nontrivial pinning interval at every direction, actually we make an even stronger statement.

\begin{theorem} \label{t.level set pinning}  There is a medium $a \in C^{\infty}(\mathbb{T}^{2}; [1,\Lambda])$ and an $F_{a} > 0$ such that, for each $u_{0} \in UC(\mathbb{R}^{2})$, if $(u^{\ep})_{\ep > 0}$ are the solutions of the level set PDE associated with \eref{siepeqn} and the initial datum $u_{0}$ and forcing $F \in (-F_{a},F_{a})$, that is, if they are the viscosity solutions of the equations
		\begin{equation*}
			\left\{ \begin{array}{l l}
				u^{\ep}_{t} = a(\tfrac{x}{\ep}) \textup{tr} \left( \left(\textup{Id} - \widehat{Du^{\ep}} \otimes \widehat{Du^{\ep}} \right) D^{2} u^{\ep} \right) + \tfrac{1}{\ep}  Da(\tfrac{x}{\ep})\cdot Du^{\ep} + \tfrac{1}{\ep} F \|Du^{\ep}\|  & \textup{in} \, \, \mathbb{R}^{2} \times (0,\infty) \\
				u^{\ep} = u_{0} & \textup{on} \, \, \mathbb{R}^{2} \times \{0\}
			\end{array} \right.
		\end{equation*}
	then 
		\begin{equation*}
			\lim_{\ep \to 0^{+}} u^{\ep} = u_{0} \quad \textup{locally uniformly in} \, \, \mathbb{R}^{2} \times [0,\infty).
		\end{equation*}
	Furthermore, given any $\zeta > 0$, we can choose $a$ so that $\|a - 1\|_{L^{\infty}(\mathbb{T}^{2})} \leq \zeta$.  
\end{theorem}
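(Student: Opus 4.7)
The plan is to construct a single periodic medium $a$ and threshold $F_{a} > 0$ for which the unscaled forced geometric flow $V = -a\kappa - \nabla a \cdot n + F$ admits stationary plane-like barriers oriented in every direction $n \in S^{d-1}$, and then to use those barriers, rescaled by $\ep$, as sub- and supersolutions of the $\ep$-level-set PDE via the comparison principle. The first step is the design of $a$: build it as a small smooth perturbation of a constant obtained by inserting a periodic array of narrow isotropic ``obstacle'' regions inside the unit cell, shaped so that whatever the orientation of a quasi-planar interface crossing an obstacle, the drift $-\nabla a \cdot n$ dominates $|a\kappa| + |F|$ over a positive fraction of the interface, uniformly in $|F| < F_{a}$. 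Spherical obstacles with a carefully chosen radial $a$-profile are a natural candidate, since they pin interfaces equally well in every direction.

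Second, for each $n \in S^{d-1}$ and each $|F| < F_{a}$, I would apply Perron's method over the class of Birkhoff graphs of slope $n$ to obtain stationary super- and subsolutions $v_{n}^{\pm}$ of the forced unscaled level-set PDE whose zero level sets are plane-like over $\{x \cdot n = 0\}$ and lie inside a strip of width bounded uniformly in $n$ (mirroring the Aubry--Mather/\cite{GCN} theory of plane-like minimizers, with $F_{a}$ chosen small enough that the obstacle drift absorbs the forcing). The obstacles from Step 1 prevent the Perron envelope from sliding indefinitely, producing genuine stationary barriers; translation along $n$ then yields such barriers at every level.

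Third, given $u_{0} \in UC(\R^{d})$, $\delta > 0$, and a compact set $K \subset \R^{d} \times [0,\infty)$, approximate each superlevel set $\{u_{0} \geq c\}$ from outside near $K$ by a finite intersection of half-spaces $H_{j}^{c}$ whose boundaries lie within $\delta/2$ of $\partial \{u_{0} \geq c\}$. Taking infima of the rescaled plane-like supersolutions $v_{n_{j}}^{+}(\cdot/\ep)$, translated and level-shifted so that their zero sets lie along the chosen hyperplanes, yields a stationary supersolution $\Phi^{\ep}$ of the $\ep$-PDE with $\Phi^{\ep} \geq u_{0}$ initially and $\Phi^{\ep} \leq u_{0} + \delta$ on $K$ once $\ep$ is small compared to the uniform strip width. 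Comparison gives $u^{\ep} \leq u_{0} + \delta$ on $K$, and a symmetric argument using the subsolutions $v_{n}^{-}$ gives the matching lower bound, hence $u^{\ep} \to u_{0}$ locally uniformly.

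The main obstacle is Step 1: producing a single $a$ whose pinning mechanism has \emph{uniform} strength $F_{a}$ across all $n \in S^{d-1}$. The Novaga--Valdinoci construction \cite{NovagaValdinoci2011} achieves pinning only along specific rational directions, with strength that degenerates as $n$ becomes generic. Overcoming this either requires obstacle shapes isotropic enough that every direction encounters a uniform pinning structure, or a scale-hierarchical distribution so each direction is ``seen'' by obstacles of some scale; in either case the technical core is a quantitative estimate showing that the plane-like stationary manifold has a positive, uniform-in-$n$ reserve against external forcing.
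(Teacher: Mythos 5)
Your Step 3 is where the argument breaks, and it breaks for a structural reason, not a technical one. Superlevel sets of a general $u_{0} \in UC(\mathbb{R}^{d})$ are arbitrary closed sets, so they cannot be approximated from outside, near $\partial\{u_{0} \geq c\}$, by finite intersections of half-spaces: any such intersection contains the convex hull of the level set. More importantly, the barriers you actually need at a point $x_{0}$ are not planar in nature. To prove $u^{\ep}(x_{0},t) \leq u_{0}(x_{0}) + \delta$ you must block a superlevel set that may \emph{surround} a small ball $B_{r}(x_{0})$ from closing in on it, which requires a stationary supersolution comparable to the complement of a ball; to prove the lower bound you need a pinned compact set trapped between $B_{r/2}(x_{0})$ and $B_{r}(x_{0})$. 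Neither object can be manufactured from plane-like barriers by the operations available to you: taking infima (i.e.\ intersections) of supersolutions only produces convex-type regions, while unions of supersolutions are not supersolutions, so "wrapping" planes around a ball or around its complement is not admissible. This is precisely why the paper does not argue from zero mobility at every direction (it explicitly notes no general theorem links planar pinning to homogenization of \eqref{e.siepeqn}); instead it proves the stronger Lemma \ref{L: sub and supersolutions}/\lref{stationary solutions}, constructing stationary sub- and supersolutions within bounded distance of \emph{any} set satisfying an interior and exterior sphere condition (in particular balls and complements of balls), via the edge/node supersolution network and the patching machinery of Sections \ref{S: patching}--\ref{S: edges and nodes}. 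The actual proof of \tref{level set pinning} is then a short comparison argument: for $B_{r}(x_{0}) \Subset \{u_{0} > u_{0}(x_{0}) - \delta\}$, the geometric flow started from the ball stays inside the corresponding superlevel set of $u^{\ep}$, and \cref{pinning corollary} pins it, giving $\liminf_{*} u^{\ep} \geq u_{0}$; the upper bound is symmetric using complements of balls.

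There is also a gap you yourself flag but do not close: the uniform-in-$n$ (indeed uniform-in-geometry) pinning strength $F_{a}$ of the medium is the core technical content, and "spherical obstacles with a radial profile" is not a construction. The paper's answer is the explicit two-dimensional design of $a = 1+\varphi$ in Section \ref{S: edges and nodes}: circular arcs of small curvature joining lattice points (edge supersolutions) and node regions where $\varphi$ has a large gradient across a perturbed circle met orthogonally by the arcs, so that the strict supersolution inequality survives with a forcing margin $F_{a}$ independent of direction and of the approximated set. Without an analogue of that construction, and without compact (and complement-of-compact) barriers replacing your half-space scheme, the proposal does not yield the theorem; note also that the paper's construction, and hence the proof of \tref{level set pinning} through \cref{pinning corollary}, is carried out for $d = 2$, whereas your outline tacitly assumes it in all dimensions.
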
  

Taking $u_{0}$ to be a linear function in the previous theorem, we see that $a$ has a non-trivial pinning interval:

\begin{corollary}\label{c.lspc1}  If $a$ and $F_{a}$ are as in Theorem \ref{t.level set pinning}, then $c^{*}(n,F)$ and $c_{*}(n,F)$ as defined above are well-defined for each $F \in (-F_{a},F_{a})$ and $n \in S^{1}$, and $c^{*}(n,F) = c_{*}(n,F) = 0$.  \end{corollary}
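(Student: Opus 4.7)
The strategy is to apply \tref{level set pinning} with the linear initial datum $u_0(x) = x \cdot n$ and to unwind the resulting convergence statement at the level of the unrescaled flow.  The key observation is that when $u_0$ is linear the unrescaled level set PDE enjoys a crucial symmetry: since $a$ is $\Z^d$-periodic and the equation depends on $u$ only through $Du$ and $D^2u$, both $u(y+m,s)$ and $u(y,s) + m \cdot n$ satisfy the same PDE with identical initial datum $(y+m)\cdot n$ for any $m \in \Z^d$.  Uniqueness of viscosity solutions then forces
\[
    u(y+m, s) = u(y,s) + m \cdot n \qquad \text{for all } m \in \Z^d, \; s \geq 0,
\]
so that $w(y,s) := u(y,s) - y \cdot n$ is $\Z^d$-periodic in $y$ for each $s$.

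Next, I would transfer the locally uniform convergence furnished by \tref{level set pinning} into a growth bound on $M(s) := \sup_{y \in \R^d} |w(y,s)|$.  The parabolic rescaling associated with \eref{siepeqn} is $u^\ep(x,t) = \ep\, u(x/\ep,\, t/\ep^2)$, which yields
\[
    u^\ep(x,t) - x \cdot n = \ep\, w\bigl(x/\ep,\, t/\ep^2\bigr).
\]
Fixing $R,T > 0$ and a tolerance $\delta > 0$, \tref{level set pinning} provides $\ep_0 > 0$ such that $\ep\,|w(x/\ep, t/\ep^2)| \leq \delta$ for all $(x,t) \in \overline{B_R} \times [0,T]$ whenever $\ep \leq \ep_0$.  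Using $\Z^d$-periodicity of $w(\cdot,s)$, once $R/\ep$ exceeds the diameter of a unit cube this is equivalent to $\ep M(s) \leq \delta$ for every $s \in [0, T/\ep^2]$.  Choosing $s = T/\ep^2$ and letting $\ep \to 0$ then gives $M(s)/\sqrt{s} \to 0$; in particular $M(s) = o(s)$ as $s \to \infty$.

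Finally, every $y$ in the front $\partial S_s = \{u(\cdot,s) = 0\}$ satisfies $y \cdot n = -w(y,s)$, so $|y \cdot n| \leq M(s) = o(s)$.  Therefore both $\sup_{y \in \partial S_s} y \cdot n$ and $\inf_{y \in \partial S_s} y \cdot n$ are $o(s)$, and dividing by $s$ shows that the limits defining $c^*(n,F)$ and $c_*(n,F)$ exist and equal zero, for every $F \in (-F_a, F_a)$ and $n \in S^{d-1}$.  The one delicate point is the passage in the second step from the bound $\ep|w(x/\ep,\cdot)| \leq \delta$ on a bounded $x$-set to a bound on $\sup_y|w(y,\cdot)|$; without the $\Z^d$-periodicity established in the first step, the locally uniform convergence of $u^\ep$ on the rescaled side would carry no information about the front at large scales in the unrescaled picture.
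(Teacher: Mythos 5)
Your proposal is correct and takes essentially the same route as the paper, which justifies the corollary simply by applying \tref{level set pinning} to the linear datum $u_{0}(x) = x \cdot n$; your $\mathbb{Z}^{d}$-periodicity of $w(y,s) = u(y,s) - y\cdot n$ together with the parabolic rescaling $u^{\ep}(x,t) = \ep\, u(x/\ep, t/\ep^{2})$ is exactly the bridge, left implicit in the paper, needed to convert locally uniform convergence of the rescaled solutions into the $o(t)$ bound on the unrescaled front. (For what it is worth, invoking \lref{stationary solutions} directly with $K$ a half-space gives the stronger conclusion that $\partial S_{t}$ stays within a bounded distance of the initial hyperplane for all time, but your weaker $o(\sqrt{t})$ displacement bound already suffices.)
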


The proof of Theorem \ref{t.level set pinning} is based on the construction of a medium $a$ such that stationary \emph{strict} supersolutions are plentiful.  The following lemma is the main technical result used in its proof:

\begin{lemma} \label{l.stationary solutions}
There is a medium $a \in C^{\infty}(\mathbb{T}^{2};[1,\Lambda])$, and a  (numerical) constant $C > 0$ such that if $K \subset \mathbb{R}^{2}$ satisfies an interior and exterior ball condition with large enough radius, then there is a strict stationary supersolution $S$ of \eref{mcfa} such that
	\begin{gather}
		\{x \in K \, \mid \, \textup{dist}(x,\partial K) \geq C\} \subset S, \quad S \subset K + B_{C}(0), \quad d_{H}(\partial K, \partial S) \leq C.
	\end{gather}
\end{lemma}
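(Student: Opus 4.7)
My plan is to construct a specific $\mathbb{Z}^{2}$-periodic medium $a$ whose gradient field is oriented in all directions at various well-distributed locations, and then to produce $\partial S$ as a smooth simple closed curve tracking $\partial K$ that passes through favorable points where $\nabla a$ aligns with the target outward normal. Concretely, I would take $a \in C^{\infty}(\mathbb{T}^{2}; [1,\Lambda])$ to be a periodic sum of many small smooth radial bumps placed within the fundamental cell. With enough well-placed bumps, each providing a radial gradient field on an annulus around its center, the resulting medium has the property that for every unit vector $n \in S^{1}$ and every $x_{0} \in \mathbb{R}^{2}$, there is a nearby point $x \in B_{1}(x_{0})$ with $\nabla a(x) \cdot n \geq \delta_{0}$ for a fixed $\delta_{0} > 0$.

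Given $K$ satisfying the interior and exterior sphere condition with radius $R_{0}$ (to be chosen large, depending on $a$), I would parameterize $\partial K$ by arclength and construct $\partial S$ as a smooth curve threading through favorable points matching the local outward normal $n_{\partial K}$. At each point $p^{*} \in \partial S$, the outward normal $n_{\partial S}(p^{*})$ is arranged to be close to the local $n_{\partial K}$, and the construction yields $\nabla a(p^{*}) \cdot n_{\partial S}(p^{*}) \geq \delta_{0}/2$. Letting $C_{1}$ denote an upper bound on $|\kappa_{\partial S}|$ coming from the tracking geometry, provided the bumps are chosen so that $\delta_{0} \geq 4\Lambda C_{1}$, one obtains
\[ a\kappa_{\partial S} + \nabla a \cdot n_{\partial S} \geq -\Lambda C_{1} + \tfrac{1}{2} \delta_{0} \geq \tfrac{1}{4}\delta_{0} > 0 \]
pointwise along $\partial S$, which is exactly the strict stationary supersolution condition for \eref{mcfa}. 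The Hausdorff bound $d_{H}(\partial K, \partial S) \leq C$ and the two-sided inclusions follow from the fact that each $p^{*}$ stays within $O(1)$ of the associated point of $\partial K$, by the periodic structure of $a$.

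The main technical difficulty lies in the tight interaction among three competing requirements: a sufficiently large gradient lower bound $\delta_{0}$, a sufficiently small curvature bound $C_{1}$ on $\partial S$, and an $O(1)$ Hausdorff distance between $\partial S$ and $\partial K$. Increasing $\delta_{0}$ forces $a$ to oscillate over shorter scales, which tends to force $\partial S$ to make tighter turns to visit favorable points and thus increases $C_{1}$; achieving $\delta_{0} > 2\Lambda C_{1}$ while keeping $\partial S$ close to $\partial K$ is the core challenge. A careful parameter analysis together with a systematic choice of bump placement (perhaps using a fine sublattice inside the fundamental cell) should allow this balance to be struck. A secondary technical issue is to globally piece together the local tracking so as to produce a simple closed $\partial S$: here the slow rotation of $n_{\partial K}$ (at rate $\leq 1/R_{0}$) guaranteed by the sphere condition on $K$ should make the transitions between favorable points gradual enough to preserve the strict inequality at the joins.
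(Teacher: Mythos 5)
The central step in your proposal --- that the tracking curve $\partial S$ can be arranged so that $\nabla a(p)\cdot n_{\partial S}(p)\geq \delta_0/2$ at \emph{every} point $p\in\partial S$ --- is both unjustified and, for the medium you describe, false. For a periodic sum of small radial bumps, the set where $\nabla a\cdot n\geq\delta_0$ for a given direction $n$ is a disjoint union of small pieces (portions of annuli around bump centers); between consecutive favorable points the curve necessarily crosses regions where $\nabla a$ vanishes or points the wrong way, and there the supersolution inequality $a\kappa_{\partial S}+\nabla a\cdot n_{\partial S}>0$ forces $\kappa_{\partial S}$ to be strictly positive. That requirement is incompatible with your other constraints: the sphere condition on $K$ only bounds the curvature of $\partial K$ by $1/R_0$, so $\partial K$ can contain arbitrarily long straight or concave stretches, and a smooth closed curve confined to an $O(1)$-neighborhood of such a stretch cannot have curvature bounded below by a positive constant along it (it would turn by more than $\pi$ and leave the strip). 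So the gap is not the quantitative tension between $\delta_0$ and $C_1$ that you flag, but the absence of any mechanism for the supersolution property on the portions of $\partial S$ between favorable spots, together with the fact that concave portions of $\partial K$ force $\kappa_{\partial S}<0$ precisely where you would need the gradient term to dominate along an extended arc, not just at isolated points.

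The paper's construction is designed around exactly these obstructions, and it differs from yours in three essential ways. First, the favorable gradient is not placed at isolated bumps but along closed curves: $a=1+\varphi$ with $\varphi$ a steep function of the signed distance to a nearly circular region $\mathcal{O}$ around each lattice point, so that the whole curve $\partial\mathcal{O}$ is a strict supersolution (the ``node''); this is where all the turning, including corner-like and concave behavior, is absorbed. Second, the connecting pieces (``edges'') are circular arcs of small curvature $1/R$ joining neighboring lattice points through the region where $\varphi\equiv 0$, so positive curvature alone supplies strictness there; the arcs are arranged to meet $\partial\mathcal{O}$ orthogonally to $\nabla a$, so the steep node gradient does not destroy their supersolution property. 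Third, the global object is \emph{not} a smooth curve: edges and nodes are patched by taking minima/intersections of local supersolutions (legitimate for viscosity supersolutions but producing corner-type gradient discontinuities), organized via a lattice-indexed network, a discrete approximation of $K$ by regular $\mathbb{Z}^{2*}$-measurable cube sets, and finally Perron's method to pass from sub/supersolutions to the stationary object. Your insistence on a single smooth curve with a pointwise gradient lower bound everywhere is precisely what the paper's architecture avoids, and without a replacement for these ideas the proposed argument does not go through.
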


In terms of the evolution equation \eref{siepeqn} on regions of $\R^2$, the lemma immediately implies a quantitative pinning result for $C^{2}$ compact sets in the homogenization limit:

\begin{corollary} \label{c.pinning corollary}
If $a$ is the medium of Lemma \ref{l.stationary solutions}, $K$ is any compact subset of $\R^2$ with $C^2$ boundary, and ${S^\ep_t}$ is any solution of 
\begin{equation}\label{e.siepeqnforcing} V_n = -a(\tfrac{x}{\ep})\kappa - \tfrac{1}{\ep}Da(\tfrac{x}{\ep}) \cdot n + \tfrac{1}{\ep} F.\end{equation}
with initial data ${K}$ and forcing $F \in (-F_a,F_a)$, then there is an $\ep_{0}(K) > 0$ such that, for each $\ep \in (0,\ep_{0}(K))$,
\begin{gather*}
	\{x \in K \, \mid \, \textup{dist}(x,\partial K) \geq C \ep \} \subset \bigcap_{t \geq 0} S^{\ep}_{t}, \quad \bigcup_{t \geq 0} S^{\ep}_{t} \subset K + B_{C \ep}(0), \\
	\sup_{t \geq 0} d_{H}(\partial S^{\ep}_{t}, \partial K) \leq C \ep.
\end{gather*}
\end{corollary}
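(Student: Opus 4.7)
The plan is to rescale so that \lref{stationary solutions} applies in the rescaled frame, and then to sandwich the rescaled flow between a stationary strict super- and sub-solution using viscosity comparison.

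Set $\tilde S_{\tilde t} = \ep^{-1} S^{\ep}_{\ep^2 \tilde t}$; under this parabolic rescaling, \eref{siepeqnforcing} becomes the unscaled forced curvature flow
\begin{equation*}
V_n = -a(\tilde x)\tilde\kappa - \nabla a(\tilde x) \cdot n + F,
\end{equation*}
with initial datum $\tilde S_0 = \ep^{-1} K =: \tilde K$. Because $K$ is compact and $C^2$, it satisfies interior and exterior sphere conditions of some fixed radius $r_K > 0$, so $\tilde K$ satisfies them with radius $\ep^{-1} r_K$, which exceeds the threshold of \lref{stationary solutions} once $\ep < \ep_0(K)$. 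The same is true for the inflations $\tilde K + B_{2C}(0)$ and $\tilde K^c + B_{2C}(0)$, since inflating by $O(1)$ perturbs the sphere-condition radius by only $O(1)$.

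Next I build stationary barriers. Applying \lref{stationary solutions} to $\tilde K + B_{2C}(0)$ yields a strict stationary supersolution $W$ with $\tilde K \subseteq W \subseteq \tilde K + B_{3C}(0)$. The equation $V_n = -a\kappa - \nabla a \cdot n$ is antisymmetric under $S \leftrightarrow S^c$ (both $\kappa$ and the outward normal flip sign), so applying \lref{stationary solutions} to $\tilde K^c + B_{2C}(0)$ and taking the complement of the resulting supersolution produces a strict stationary subsolution $U$ with $\{x \in \tilde K : \dist(x, \partial \tilde K) \geq 3C\} \subseteq U \subseteq \tilde K$. The strictness in \lref{stationary solutions} provides a uniform margin $F_a > 0$ (this is the constant in \tref{level set pinning}), so for every $F \in (-F_a, F_a)$ the set $W$ remains a stationary supersolution, and $U$ a stationary subsolution, of the forced equation. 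Since $U \subseteq \tilde S_0 \subseteq W$, viscosity comparison for the level-set PDE yields $U \subseteq \tilde S_{\tilde t} \subseteq W$ for every $\tilde t \geq 0$. Unrescaling gives $\ep U \subseteq S^{\ep}_t \subseteq \ep W$ for every $t \geq 0$, which, after absorbing the factor of $3$ into $C$, is exactly the three claimed inclusions; the Hausdorff bound follows because $\partial S^{\ep}_t$ is sandwiched between $\partial(\ep U)$ and $\partial(\ep W)$, both of which lie within $C\ep$ of $\partial K$.

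The main substantive point to verify is the uniform strictness margin underlying \lref{stationary solutions}: the same positive $F_a$ must serve every $K$ satisfying the sphere condition, independent of the particular $K$. This uniformity, inherited from the construction of the medium $a$, is what permits the pinning constant $F_a$ in the corollary (and in \tref{level set pinning}) to be independent of $K$. The remaining checks --- that $O(1)$ inflations of $\tilde K$ and its complement preserve the sphere conditions, and that standard viscosity comparison applies to the level-set form of \eref{siepeqnforcing} --- are routine.
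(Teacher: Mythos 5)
Your proposal is correct and is essentially the paper's argument: rescale so that $\ep^{-1}K$ satisfies the sphere condition with radius above the threshold, produce stationary sub- and supersolution barriers sandwiching $\ep^{-1}K$ within distance $C$, apply the comparison principle to the (rescaled) forced flow, and read off the inclusions and Hausdorff bounds after blowing back down. The only cosmetic difference is that the paper quotes Lemma \ref{L: sub and supersolutions} directly, which already supplies both barriers with $S_{*}(K)\subset K\subset S^{*}(K)$ and a uniform margin $F_{a}$ independent of $K$, so your inflations by $B_{2C}(0)$ and the complementation trick are unnecessary at this stage (the latter is exactly how the subsolution is obtained inside the proof of that lemma), and the uniformity of $F_{a}$ you flag as the main point to verify is precisely what that lemma records.
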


All of the above sequence of results will be proved in \sref{mobility}.

\begin{remark}
The construction in Lemma \ref{l.stationary solutions} is stable with respect to uniform norm perturbations of $a$ and $Da$ so we can also conclude that $\mu(\cdot) \equiv 0$ on an open subset of $a \in C^1(\T^2 ;[1,\Lambda])$. It is possible that there is also an open subset of $C^1(\T^2 ;[1,\Lambda])$ on which there is no pinning at any direction, although we have no evidence to suggest such a set of data exists.  The only explicitly understood case is that of laminar media of the form $a(x) = \tilde{a}(x \cdot n)$ for some $n \in S^{1}$, where the mobility is always zero at the laminar direction unless the medium is homogeneous, that property is again stable with respect to small perturbations in the $C^1(\T^2 ;[1,\Lambda])$ norm.
\end{remark}

\subsection{Diffuse Interface Models}  In the diffuse interface setting, we obtain results analogous to those in the sharp interface model by exploiting the close connection between the two models as the diffuse interface width vanishes.  To be more precise, we consider diffuse interface functionals $\mathcal{AC}^{\delta}_{\theta}$ defined on configurations $u : \mathbb{R}^{d} \to \mathbb{R}$ of the form
	\begin{equation}\label{e.diffuseenergy}
		\mathcal{AC}^{\delta}_{\theta}(u; U) = \int_{U} \left(\frac{\delta}{2} \|Du(x)\|^{2} + \delta^{-1} \theta(x) W(u(x)) \right) \, dx,
	\end{equation}
where $\theta$ is a $\mathbb{Z}^{d}$-periodic function satisfying $1 \leq \theta \leq \Lambda^{2}$, $W : \mathbb{R} \to [0,\infty)$ is a double-well potential with $\{W = 0\} = \{-1,1\}$ satisfying standard assumptions (see \eqref{A: smooth and zeros}, \eqref{A: shape of W}, \eqref{A: nondegenerate zeros} below), and $\delta > 0$ is a parameter that, roughly speaking, encodes the typical width of a (minimizing) diffuse interface.

In summary, the next results show that when $\delta$ is small enough, relative to the $C^1(\T^d)$ norm of $\theta$, $\mathcal{F}^{\delta}_{\theta}$ and its {$L^2$} gradient flow exhibit the same large scale behavior as $E_{\sqrt{\theta}}$ and its flow.

At equilibrium, the large scale behavior of $\mathcal{F}^{\delta}_{\theta}$ is described by a homogenized surface tension, as in the sharp interface case.  The following formula for the effective surface tension can be derived
\[ \overline{\sigma}_{AC}(n,\delta,\theta) = \lim_{T \to \infty} \frac{1}{\omega_{d-1}T^{d-1}}\inf\{ \mathcal{AC}^{\delta}_{\theta} (u;B_{T}(0)) : u \in \mathcal{T}(n,B_T(0))\}\]
where the admissible class is defined
\[ \mathcal{T}(n,U) = \{ u \in H^1(U): u = -\tanh(n \cdot x)  \ \hbox{ on } \ \partial U\}.\]
Note that the boundary data is just the plane separation data $\ind_{\{n \cdot x \leq 0\}} - \ind_{\{n \cdot x > 0\}}$ but smoothed out at the unit scale to avoid technical difficulties associated with the discontinuous boundary condition.

We expect the sub-differential of the effective surface tension to be characterized by the geometry of the plane-like minimizers of $\mathcal{AC}^{\delta}_{\theta}$, just as in the sharp interface case.  However, this has not yet been proved.  Toward that end, we expect the next result will be of interest.

\begin{theorem}\label{t.genericdiffuse}  For any dimension $d \geq 2$, there is a dense $G_{\delta}$ set $\mathcal{G} \subset C(\mathbb{T}^{d}; [1,\Lambda^{2}])$ such that if $\theta \in \mathcal{G}$, then there is an open set $I(\theta) \subset (0,1)$ with $0 \in \overline{I(\theta)}$ such that if $\delta \in I(\theta)$, then the following statements hold:
		\item[(i)] For each $n \in S^{d-1}$, the family $\mathcal{M}^{\delta}_{\theta}(n)$ of strongly Birkhoff plane-like minimizers of $\mathcal{AC}^{\delta}_{\theta}$ in the $n$ direction has gaps.
		\item[(ii)] Given $u_{0} \in UC(\mathbb{R}^{d}; [-3,3])$, if $(u^{\ep})_{\ep > 0}$ are the solutions of the Cauchy problem
			\begin{equation*}
				\left\{ \begin{array}{r l}
						\delta (u^{\ep}_{t} - \Delta u^{\ep}) + \ep^{-2} \delta^{-1} \theta(\ep^{-1} x)W'(u^{\ep}) = 0 & \textup{in} \, \, \mathbb{R}^{d} \times (0,\infty), \\
						u^{\ep} = u_{0} & \textup{on} \, \, \mathbb{R}^{d} \times \{0\},
				\end{array} \right.
			\end{equation*}
		then
			\begin{equation*}
				\limsup \nolimits^{*} u^{\ep} = 1 \quad \textup{in} \, \, \{u_{0} > 0\}, \quad \liminf \nolimits_{*} u^{\ep} = -1 \quad \textup{in} \, \, \{u_{0} < 0\}.
			\end{equation*}
	Furthermore, the subset of $\mathcal{G}$ consisting of $\theta$ for which $I(\theta) \supset (0,\delta_{\theta})$ for some $\delta_{\theta} > 0$ is dense in $C(\mathbb{T}^{d}; [1,\Lambda^{2}])$.
	
The statements above remain true if $C(\mathbb{T}^{d};[1,\Lambda])$ is replaced by $W^{1,p}(\mathbb{T}^{d};[1,\Lambda])$ for $p \in (d,\infty)$.
\end{theorem}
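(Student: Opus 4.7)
The plan is to deduce \tref{genericdiffuse} from the sharp-interface result \tref{generic} by exploiting the Modica--Mortola $\Gamma$-convergence of $\mathcal{AC}^{\delta}_{\theta}$ to a constant multiple of the weighted perimeter $E_{\sqrt{\theta}}$ as $\delta \to 0^{+}$. The philosophy is that the geometry of the plane-like minimizer lamination of $E_{\sqrt{\theta}}$ governs that of $\mathcal{M}^{\delta}_{\theta}(n)$ for $\delta$ sufficiently small, and once one knows $\mathcal{M}^{\delta}_{\theta}(n)$ has gaps the bubbling in (ii) follows from the parabolic comparison principle applied to stationary droplet sub- and supersolutions of the Allen--Cahn PDE built from those gaps.

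I construct $\mathcal{G}$ as the pullback of the dense $G_{\delta}$ set $\bigcap_{n} \mathcal{A}_{n}$ from \tref{generic} under the homeomorphism $a \mapsto a^{2}$; the $G_{\delta}$ property is arranged by intersecting over a countable dense sequence of rational directions, and a standard Birkhoff/translation argument extends gaps at rational directions to gaps at every $n \in S^{d-1}$. For $\theta \in \mathcal{G}$ and a fixed direction $n$, \cite{GCN} provides a gap in the sharp-interface lamination of $E_{\sqrt{\theta}}$, i.e.\ a strip $\Sigma_{n}$ whose interior meets no minimizer interface. To transfer this gap to the diffuse setting I argue by contradiction: if $\mathcal{M}^{\delta_{k}}_{\theta}(n)$ foliated $\Sigma_{n}$ for some sequence $\delta_{k} \to 0^{+}$, one could extract strongly Birkhoff minimizers $u^{\delta_{k}}$ whose zero-level sets densely sample $\Sigma_{n}$; by the standard compactness theory for Modica--Mortola $\Gamma$-convergence, a subsequence converges in $L^{1}_{\textup{loc}}$ to $2\chi_{E} - 1$ with $\partial E$ a strongly Birkhoff plane-like minimizer of $E_{\sqrt{\theta}}$ whose interface must lie in $\Sigma_{n}$, contradicting its definition. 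This produces an open set $I(\theta,n) \subset (0,1)$ with $0 \in \overline{I(\theta,n)}$, and $I(\theta)$ is taken to be the interior of the intersection over a countable dense family of directions.

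For (ii) I build stationary droplet barriers from the diffuse gaps. Given $x_{0} \in \{u_{0} > 0\}$ and a ball $B_{r}(x_{0}) \subset \{u_{0} > 0\}$, I use the gaps in $\mathcal{M}^{\delta}_{\theta}(n)$ at sufficiently many rational directions to construct a stationary subsolution $w_{\ep}$ of the rescaled Allen--Cahn PDE satisfying $w_{\ep} \equiv -1$ outside $B_{r}(x_{0})$ and $w_{\ep} \geq 1 - o(1)$ on $B_{r/2}(x_{0})$. Concretely, $w_{\ep}$ can be taken as the minimizer of $\mathcal{AC}^{\delta}_{\theta}(\cdot\,; B_{r/\ep}(\ep^{-1}x_{0}))$ with Dirichlet data $-1$ in rescaled coordinates, and its positive phase cannot shrink to nothing because a monotone deformation toward $-1$ would have to cross the diffuse lamination on each side of the gap. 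The parabolic comparison principle then yields $u^{\ep}(\cdot,t) \geq w_{\ep}$ for all $t \geq 0$, and combined with a matching upper bound $u^{\ep} \leq 1 + o(1)$ coming from comparison with the spatially homogeneous ODE driven by $W'$, one concludes $\limsup^{*} u^{\ep} = 1$ on $\{u_{0} > 0\}$; the symmetric construction with a negative droplet gives $\liminf_{*} u^{\ep} = -1$ on $\{u_{0} < 0\}$.

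For the strengthened density of $\{\theta \in \mathcal{G} : I(\theta) \supset (0,\delta_{\theta})\}$ in $C(\T^{d};[1,\Lambda^{2}])$ I would perturb a given $\theta_{0}$ by small smooth bumps so that $\sqrt{\theta}$ admits a \emph{strict} stationary plane-like supersolution at every rational direction in the spirit of \lref{stationary solutions}; strict sharp-interface supersolutions lift to strict Allen--Cahn supersolutions for all sufficiently small $\delta$ via a Modica--Mortola recovery-sequence argument, yielding $(0,\delta_{\theta}) \subset I(\theta)$. The $W^{1,p}$ extension requires no new ingredients, since \tref{generic} is already stated in that topology and the comparison and $\Gamma$-convergence arguments above only use $W^{1,p}$ control of $\theta$ via the Morrey embedding $W^{1,p} \hookrightarrow C^{0,1-d/p}$ for $p > d$. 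The main obstacle I anticipate is making the passage from a sharp-interface gap to a uniform-in-$\delta$ diffuse gap quantitative: the abstract contradiction argument produces some unspecified threshold $\delta_{\ast}(\theta,n) > 0$ but not a controllable one, which is what forces $I(\theta)$ to depend on $\theta$ and makes the recovery-sequence construction (with its explicit strict supersolutions) indispensable for the density refinement; effective uniform-in-$\delta$ regularity estimates on diffuse plane-like minimizers would be the central technical input.
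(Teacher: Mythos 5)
Your overall strategy diverges from the paper's, and as written it has genuine gaps. The paper does not transfer gaps of the sharp-interface lamination to the diffuse one by $\Gamma$-convergence at all. Instead it works with the dense family $\Theta_{\Lambda}$ of weights $\theta$ such that $a = \sqrt{\theta}$ admits a single smooth compact set $\Omega$ with $\ind_{\overline{\Omega}}$ a strict subsolution of \eref{mcfa} (\lref{adense}); the explicit $q(d(x)/\delta)$ construction of \sref{diffusepinning} lifts this to compact strict diffuse sub- and supersolutions whose strictness $F_{\theta}$ is uniform over $\delta \in (0,\delta_{\theta})$; a sliding argument against the continuous parametrization of a hypothetically gap-free lamination (\pref{diffuse gaps}) then yields gaps at \emph{every} direction simultaneously; and the open sets $\mathcal{G}_{n}(\theta)$ are uniform-norm neighborhoods in which this strict subsolution survives for $\delta \in [\delta_{\theta}/n,\delta_{\theta})$, which is what produces both the dense $G_{\delta}$ set and $I(\theta)$. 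Your route --- gaps of $E_{\sqrt{\theta}}$ at countably many rational directions, transferred to $\mathcal{M}^{\delta}_{\theta}(n)$ by a compactness/contradiction argument --- requires uniform-in-$\delta$ control of strongly Birkhoff plane-like minimizers of $\mathcal{AC}^{\delta}_{\theta}$ (uniform slab width, density/clearing-out estimates, passage of Class A minimality and the Birkhoff property to the limit), none of which is established; the paper explicitly notes that the diffuse analogue of \cite{GCN} is open. Even granting the transfer, it gives a threshold $\delta_{*}(\theta,n)$ for each direction separately, with no uniformity in $n$, so an $I(\theta)$ obtained by intersecting over a dense family of directions need not be open nor have $0$ in its closure; and the claim that gaps at rational directions propagate to all $n \in S^{d-1}$ by a ``Birkhoff/translation argument'' is unsupported --- in the paper no such step is needed because one compact barrier handles all directions at once with a single $\delta$-threshold.

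The treatment of (ii) is the most serious gap. The minimizer of $\mathcal{AC}^{\delta}_{\theta}(\cdot\,;B_{r/\ep}(\ep^{-1}x_{0}))$ with Dirichlet datum $-1$ is essentially the constant $-1$ (it has negligible energy), so it produces no droplet, and the heuristic that a positive phase ``cannot shrink because it would have to cross the lamination on each side of the gap'' is neither an energy nor a comparison argument; deducing bubbling from gaps alone is precisely the question the paper leaves open in the remark following \cref{level set bubbling}. The paper's proof of (ii) instead feeds the compact strict sub/supersolutions described above into the comparison principle together with the initialization result (\pref{initialization}, via \pref{diffuse bubbles}). Moreover, your density refinement invokes strict plane-like supersolutions at every rational direction ``in the spirit of \lref{stationary solutions}'', but that network construction is carried out only in $d = 2$, whereas \tref{genericdiffuse} is stated in every dimension; the paper needs only the compact bump barrier of \lref{adense}, which exists in all dimensions. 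Your final paragraph --- lifting strict sharp-interface barriers to strict Allen--Cahn barriers for all small $\delta$ --- is in fact the mechanism the paper uses for the entire theorem, not merely for the density refinement; reorganizing your argument around it (compact barriers instead of plane-like ones, together with the quantitative perturbation neighborhoods $\mathcal{G}_{n}(\theta)$) is what closes the gaps above.
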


See \sref{stgaps-diffuse} for the proof. 

Although the parameter $\delta$ is useful for the statements of our theorems, it is cumbersome for the following informal discussion, so we set $\delta = 1$ for the next paragraph.  The $L^2$ gradient flow of the Allen-Cahn energy functional is well known to be
\begin{equation}\label{e.aceqn1}
 u_{t} - \Delta u  +  \theta(x) W'(u)  = 0 \  \textup{in} \, \,U \times (0,\infty).
 \end{equation}
 Considering the long time behavior of \eref{aceqn1} in the parabolic scaling leads to the equation
\begin{equation}\label{e.aceqnep}
u^{\ep}_{t} - \Delta u^{\ep} + \ep^{-2} \theta(\tfrac{x}{\ep})W'(u^{\ep}) = 0 \  \textup{in} \, \,U \times (0,\infty).
 \end{equation}
By analogy with what has been proved in related models (cf.\ \cite{barles souganidis}, \cite{BBP}, \cite{variationaleinstein}), one would expect that, in the limit $\ep \to 0$, the interface between the positive and negative phases evolves by a curvature flow
\[  V_n = -\mu_{AC}(n)\div_{\partial S}(\overline{\sigma}_{AC}(n)) \]
with the mobility $\mu_{AC}: S^{d-1} \to [0,\infty)$, as before, being the infinitesimal response to additive forcing.  We will show (see \tref{diffuse interface pinning} below) that, on an open set of coefficients in $C(\T^2)$, this homogenization limit holds but results in a trivial flow $\mu_{AC}(n) \equiv 0$.

In fact, as in the sharp interface case, we give examples of coefficients $\theta$ for which the pinning interval associated to the gradient flow dynamics is uniformly bounded below with respect to the direction, i.e. the mobility is zero at every direction, and the homogenization scaling limit \eref{aceqnep} results in a trivial flow even when an external force is added.

\begin{theorem} \label{t.diffuse interface pinning} There is an open set $\mathcal{O} \subset C(\mathbb{T}^{2}; [1,\Lambda^{2}])$ such that if $\theta \in \mathcal{O}$ and $\delta > 0$ is small enough, then there is an $F_0 > 0$ (independent of $\delta$) such that, for each $F\in (-F_0,F_0)$ and each $u_{0} \in UC(\mathbb{R}^{2}; [-3,3])$, if $u^{-}(\alpha) < u^{0}(\alpha) < u^{+}(\alpha)$ are the critical points of the potential $W(u) - \alpha u$ and $(u^{\ep})_{\ep > 0}$ are the solutions of the forced equation
	\begin{equation} \label{e.forced allen cahn}
		\left\{ \begin{array}{r l}
			\delta (u^{\ep}_{t} - \Delta u^{\ep} ) + \ep^{-2} \theta(\tfrac{x}{\ep}) \left( \delta^{-1}W'(u^{\ep}) - F \right) = 0 & \textup{in} \, \, \mathbb{R}^{2} \times (0,\infty) \\
			u^{\ep} = u_{0} & \textup{on} \, \, \mathbb{R}^{2} \times \{t=0\}
		\end{array} \right.
	\end{equation}
then, as $\ep \to 0^{+}$,
	\begin{align*}
		u^{\ep} &\to u^{+}(F\delta) \quad \textup{locally uniformly in} \, \, \{u_{0} > u^{0}(F\delta)\} \times (0,\infty), \\
		u^{\ep} &\to u^{-}(F \delta) \quad \textup{locally uniformly in} \, \, \{u_{0} < u^{0}(F \delta)\} \times (0,\infty).
	\end{align*} 
Moreover, the constant function $\theta \equiv 1$ is an accumulation point of $\mathcal{O}$ in $C(\mathbb{T}^{2};[1,\Lambda^{2}])$.
\end{theorem}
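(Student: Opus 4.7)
My plan is to reduce the diffuse pinning to the sharp-interface pinning of \lref{stationary solutions} by constructing strict diffuse stationary super- and subsolutions of the forced Allen--Cahn equation whose phase regions approximate arbitrary $C^{2}$ compacta at $O(\ep)$ Hausdorff resolution. I would set $\theta_{*} := a^{2}$, with $a$ the medium produced by \lref{stationary solutions} on $\R^{2}$, and take $\mathcal{O}$ to be a $C(\T^{2};[1,\Lambda^{2}])$-neighborhood of $\theta_{*}$. Openness in the $C$-topology (rather than $C^{1}$, as in the remark following \cref{pinning corollary}) is tenable because $\theta$ enters the stationary diffuse equation
\[
  -\delta^{2} \Delta v + \theta(y)\, W'(v) - F\delta\, \theta(y) = 0
\]
only as a pointwise multiplier of $W'(v) - F\delta$, with no derivatives falling on $\theta$; so any uniform strict supersolution margin obtained with coefficient $\theta_{*}$ persists under small $L^{\infty}$ perturbations of $\theta$ as long as $v$ remains uniformly bounded.

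To build the supersolution barrier, fix a large $C^{2}$ compact $K \subset \R^{2}$ satisfying the interior and exterior sphere condition with radius $\geq R_{0}$, apply \lref{stationary solutions} to $a = \sqrt{\theta_{*}}$ with a suitably normalized forcing $\tilde F$ (the Allen--Cahn normalization constant for $F$, determined by the unforced standing wave $\Phi$), and obtain a strict stationary supersolution $\Sigma$ of the sharp forced mean curvature flow with $d_{H}(\partial \Sigma, \partial K) \leq C$. From $\Sigma$ I would build the diffuse stationary supersolution on the fast scale $y = x/\ep$ via the matched asymptotics ansatz
\[
  v_{\Sigma}(y) = \Phi^{F\delta}\!\left( \frac{d_{\Sigma}(y)}{\delta} + \psi(y) \right),
\]
where $\Phi^{\alpha}$ is the standing wave profile for the potential $W(u) - \alpha u$, $d_{\Sigma}$ is signed distance to $\partial \Sigma$ (positive outside), and $\psi$ is a bounded corrector chosen to absorb the $\grad \theta_{*}$ contribution from the heterogeneity. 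A direct computation should show that the residual of \eref{forced allen cahn} equals, at leading order on the $O(\delta)$ transition layer, $\Phi'(d_{\Sigma}/\delta + \psi)$ times the sharp-interface residual of $\Sigma$ (up to the normalization constant), which is strictly positive by construction; the $O(\delta)$ errors from curvature and from the corrector are absorbed into the strict margin for $\delta$ sufficiently small.

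A symmetric construction yields a strict diffuse stationary subsolution from the complementary sharp subsolution. Given $u_{0} \in UC(\R^{2};[-3,3])$ and $F$ in the resulting pinning interval $(-F_{0},F_{0})$, I would exhaust the open sets $\{u_{0} > u^{0}(F\delta)\}$ and $\{u_{0} < u^{0}(F\delta)\}$ by $C^{2}$ compacta $K_{n}$, apply the parabolic comparison principle for \eref{forced allen cahn} against the barriers $v^{\ep}_{\Sigma_{n}}(x) := v_{\Sigma_{n}}(x/\ep)$ and their subsolution counterparts, and combine with the ODE stability of the bulk equilibria $u^{\pm}(F\delta)$ (which act on the fast timescale $\ep^{2}\delta^{2}$ to pull $u^{\ep}$ into the appropriate well away from interfaces) to conclude the claimed locally uniform convergences. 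The main obstacle is the matched-asymptotics residual estimate for $v_{\Sigma}$: the heterogeneous $\theta_{*}$ forces one to choose the corrector $\psi$ carefully, and the sharp supersolution $\Sigma$ from \lref{stationary solutions} is only $C^{1,1}$, so $d_{\Sigma}$ has bounded but discontinuous Hessian; a preliminary smoothing of $\Sigma$, at a small cost to the strict margin, combined with a careful two-variable expansion in $y$ and $d_{\Sigma}/\delta$, is likely required.
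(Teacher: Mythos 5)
Your overall reduction is the same as the paper's: take $\theta_{*}=a^{2}$ with $a$ from \lref{stationary solutions}, build strict stationary diffuse super/subsolutions around the sharp-interface barriers, and conclude by parabolic comparison plus bulk arguments (this is \lref{stationary solutions diffuse interface} and the proof of the theorem in \sref{diffusepinning}). However, the proposal breaks down at its central technical step, the residual estimate for the ansatz $v_{\Sigma}(y)=\Phi^{F\delta}\bigl(d_{\Sigma}(y)/\delta+\psi(y)\bigr)$ with $d_{\Sigma}$ the Euclidean signed distance and $\psi$ bounded. In \eref{aca forced} the heterogeneity enters only as $\delta^{-1}\theta(y)W'(v)$ (and $-F\theta$); there is no $\grad\theta$ term in the PDE. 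Consequently the leading part of the residual is $\delta^{-1}\,\ddot{\Phi}(\cdot)\,\bigl(\theta(y)-\|Dd_{\Sigma}(y)\|^{2}\bigr)$ up to order-one terms, and with the Euclidean distance ($\|Dd_{\Sigma}\|\equiv 1$) this is of size $\delta^{-1}$ throughout the transition layer, while a bounded phase corrector $\psi$ only affects the residual at order one (through $2\ddot{\Phi}\,Dd_{\Sigma}\cdot D\psi+\delta(\cdots)$). So the heterogeneity is not a ``$\grad\theta_{*}$ contribution'' that a bounded corrector can absorb; to kill the $\delta^{-1}$ term the phase itself must solve the eikonal equation $\|Dd\|=\sqrt{\theta}=a$. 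That is precisely the paper's device: $d$ is the signed distance in the Riemannian metric $D^{a}$, the needed viscosity inequalities for it near $\partial E$ are Proposition \ref{P: distance viscosity supersolution}, and the profile used is the \emph{unforced} standing wave $q$ shifted by a constant $2\beta\delta$, with strictness supplied by the sharp-interface margin $F_{0}$ in the layer and by $W''>0$ near the wells, uniformly for all $|F|\le\bar F$. This choice also sidesteps your smoothing worry: by Property \ref{prop.locallyamin}, $\partial E$ is locally a finite intersection of uniformly smooth supersolutions, so $d$ is locally a minimum of smooth functions satisfying the distance inequalities and the viscosity framework applies directly; by contrast, smoothing a nonsmooth supersolution ``at small cost to the margin'' is not a routine operation and you give no mechanism for it.

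Two further points. First, your openness argument for $\mathcal{O}$ in $C(\mathbb{T}^{2})$ is too quick: an $L^{\infty}$ perturbation of $\theta$ of size $\eta$ perturbs the residual by roughly $\delta^{-1}\eta\,\|W'\|_{\infty}$ in the transition layer, while the strictness margin there is of order one, so the admissible perturbation is only $O(\delta)$ and does not give a fixed neighborhood valid for all small $\delta$; the paper faces exactly this quantifier issue and handles it explicitly via the sets $\mathcal{G}_{n}(\theta)$ in \tref{genericdiffuse} (for the present theorem it records stability only as a remark). Second, in the dynamic part, comparison with the pinned stationary subsolution cannot start the argument by itself: one must first show that $u^{\ep}$ rises above $u^{+}(F\delta)-\ep$ on interior balls of $\{u_{0}>u^{0}(F\delta)\}$ within time $O(\ep^{2}|\log\ep|)$ — the initialization step \pref{initialization}, imported from Barles--Souganidis — before the barrier can be slid underneath; a spatially homogeneous ODE bound from below does not do this because $u_{0}$ is not above $u^{0}(F\delta)$ globally (in the paper the ODE is used only for the matching upper bound $\limsup^{*}u^{\ep}\le u^{+}(F\delta)$). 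Your closing comparison scheme is otherwise in the right spirit, but these two steps, and above all the metric-adapted phase in the barrier construction, are the substance of the proof.
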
  
See \sref{diffusepinning} for the proof.
\begin{remark}
It is not hard to check that this result is stable with respect to small perturbations of the coefficients in the uniform norm.  Thus we can say that this pinning phenomenon is not non-generic in the coefficient space $C(\T^2;[1,\Lambda^2])$.
\end{remark}

\begin{remark}
In the theorem, the reaction term $\theta(x)(W'(u) - F)$ appearing in \eref{forced allen cahn} satisfies 
	\begin{equation*}
		\left|\int_{u^{-}(F\delta)}^{u^{+}(F\delta)} \theta(x) (\delta^{-1} W'(u) - F) \,du\right|     \geq c|F|   \quad \textup{for each} \, \, F \in (-F_{0},F_{0}).
	\end{equation*}
Thus, the result is a manifestation of the front-blocking phenomenon in the study of bistable reaction diffusion equations (cf. Lewis and Keener \cite{MR1776397}).  In fact, we show below that, for $\theta \in \mathcal{O}$, \eref{forced allen cahn} has stationary, plane-like solutions for all forcing values $F$ in this interval (see Remark \ref{R: plane like}).

\begin{remark} \label{R: other energies} Our arguments also apply to diffuse interface energies where the heterogeneity appears on the gradient rather than the potential term, as well as those where it multiplies both.  That is, the same results apply to energies of the form
	\begin{gather}
		\int_{U} \left(\frac{\delta}{2} \theta(x) \|Du(x)\|^{2} + \delta^{-1} W(u(x)) \right) \, dx, \label{E: gradient model} \\
		\int_{U} \left( \frac{\delta}{2} \|Du(x)\|^{2} + \delta^{-1} W(u(x)) \right) \sqrt{\theta(x)} \, dx. \label{E: weight model}
	\end{gather}
For more details, see Remarks \ref{R: other energies construction} and \ref{R: other energies generic} below.
\end{remark}  

\end{remark}
\subsection{Literature}  

The study of variational models in periodic media falls under the broad umbrella of Aubry-Mather theory, named after the fundamental contributions of Aubry and LeDaeron \cite{aubry_le-daeron} and Mather \cite{mather}, who investigated the (discrete) Frenkel-Kontorova model and twist maps.  In Aubry-Mather theory, one of the main questions is the existence and structure of ``plane-like" minimizers and its relation to the large-scale (or homogenized) behavior of the energy itself.  The investigation of continuum models via PDE methods was initiated by Moser \cite{Moser} with the fundamental structural theorems contributed by Bangert \cite{Bangert1,Bangert2}.  

The results of Moser and Bangert concern graphical energies modeled on the Dirichlet energy.  In more recent years, variational problems with more of a geometrical flavor have been shown to possess the same basic structure.  Caffarelli and de la Llave \cite{CdlL2001} extended the basic existence results of Aubry-Mather theory to surface energies like those considered here.  There has also been considerable interest in diffuse interface energies, including contributions by many authors.  For references and connections to the work of Moser and Bangert, see the book of Rabinowitz and Stredulinsky \cite{RSbook} and the expository paper by Junginger-Gestrich and Valdinoci \cite{JGV09}.

Chambolle, Goldman, Novaga \cite{GCN} studied the effective energy for the sharp interface model, giving a precise characterization of the differentiability properties in terms of the existence (or not) of gaps in the corresponding laminations by plane-like solutions.  They also gave specific examples where the effective surface tension has gradient discontinuities at every direction satisfying a rational relation.  Ruggiero \cite{Ruggiero04} and Pacheco and Ruggiero \cite{PachecoRuggiero} showed that media with gaps in the lamination at every direction are residual (i.e.\ they form a dense $G_{\delta}$ set) in two dimensions in the $C^1$ and $C^{1,\beta}$ norms, respectively.  Our result \tref{generic} shows that the gap phenomenon is residual in higher dimensions as well, at least for the rational directions, but only in the uniform norm.  
It would be interesting to obtain a similar result for all directions satisfying a rational relation or, even better, all directions, and in stronger topologies.

An analogous connection between gaps in the laminations by plane-like solutions and surface tension regularity has not yet been established in the case of diffuse interfaces, see \cite{variationaleinstein,cflv} for partial results in this direction.

 The front bubbling phenomenon, as discussed in \cref{level set bubbling}, has also been known for some time, examples were constructed for forced mean curvature flow by Dirr, Karali and Yip \cite{DirrKaraliYip} and by Cardaliaguet, Lions and Souganidis \cite{CLS2009}.  Novaga and Valdinoci \cite{NovagaValdinoci2011}, in the setting of the forced mean curvature flow with homogeneous perimeter, have shown that bubbling as in \cref{level set bubbling} occurs generically with respect to the $L^1$ distance on the coefficient field in dimension $2$.  Note that this type of genericity is quite similar to our result because their forcing term corresponds to $Da$ in our setting.
 
Front pinning is another well known and fundamental feature of interface propagation in heterogeneous media, and has been studied for many related models in both periodic and random media \cite{DirrYip,Feldman2021,BCN2011,MR2581044,MR3719956,MR2846018,kardar,courte2021proof}.  In the reaction diffusion literature this is referred to as wave blocking  \cite{MR988617,MR1776397}.  Examples of front pinning have been constructed for various models in both periodic and random media: for the forced quenched Edwards-Wilkinson equation, Dirr and Yip \cite{DirrYip} have shown that pinning is generic and they have also constructed pinning examples for forced Allen-Cahn (homogeneous energy, but heterogeneous volume forcing) in one dimension.  The first author \cite{Feldman2021} gave examples of front pinning at every direction for the Bernoulli free boundary problem in heterogeneous media. Novaga and Valdinoci \cite{NovagaValdinoci2011}, which showed a similar result to our \cref{level set bubbling} in the context of forced mean curvature flow (homogeneous surface energy, but heterogeneous volume forcing), does not explicitly give an example of pinning of the entire interface (as in our \cref{pinning corollary}), we believe that a small modification of their ideas would also yield an example of pinning in $2$-d. We also were recently made aware of a paper by Courte, Dondl and Ortiz \cite{courte2021proof} which considers a curvature driven motion with dry friction in random media with sparse obstacles.  They show the occurrence of additional pinning by the Poissonian obstacles and establish the precise scaling exponent of the additional pinning in the sparse obstacle limit.  Their fundamental barrier construction bears significant similarity to ours, patching together barrier pieces near concentrated obstacles (the example of \cite[Section 5.2]{Feldman2021} is also similar, but patching barriers is easier due to the particular nature of that problem).

In the context of bistable reaction diffusion equations in one-dimensional periodic media, Xin \cite{MR1251222} and Ding, Hamel, and Zhao \cite{MR3412383} have constructed unbalanced reaction terms for which one nonetheless finds plane-like stationary solutions, a phenomenon associated with pinning.  Our results provide (non-laminar) examples of this in two dimensions.

 \subsection{Organization of the paper}
 We begin in \sref{preliminaries} with some background on viscosity solution theory of interface motions and related concepts.  Next, in \sref{mobility}, we construct a special class of ``highly pinning" media and use the construction to prove \tref{level set pinning}, \cref{lspc1}, \lref{stationary solutions}, and \cref{pinning corollary}.  In \sref{diffusepinning}, we prove \tref{diffuse interface pinning}, obtaining analogous interface pinning results for diffuse interface models.  In \sref{stgaps}, we prove \tref{generic} and \cref{level set bubbling} on the genericity of surface tension gradient discontinuities for sharp interface models. In \sref{stgaps-diffuse}, we prove \tref{genericdiffuse}, the analogous result on genericity of surface tension singularities for diffuse interface models.

\subsection{Acknowledgments}  Both authors would like to thank Takis Souganidis for helpful discussions and comments throughout preparation of this paper.  William Feldman would like to acknowledge the support of NSF grant DMS-2009286.  Peter Morfe was partially supported by P.E. Souganidis's NSF grants DMS-1600129 and DMS-1900599.


\section{Preliminaries}\label{s.preliminaries}

\subsection{Viscosity Solutions and Level-Set Formulation} Given a positive periodic function $a \in C^{1}(\mathbb{T}^{d})$ and a force $F \in \mathbb{R}$, we are interested in sets moving with normal velocity given by 
	\begin{equation} \label{E: flow definition}
		V_{n} = - a(x) \kappa - Da(x) \cdot n + F.
	\end{equation}
We will use the level-set formulation, which amounts to studying the following nonlinear PDE:
	\begin{equation} \label{E: level set definition}
		u_{t} - a(x) \text{tr} \left( \left( \text{Id} - \widehat{Du} \otimes \widehat{Du} \right) D^{2} u \right) - Da(x) \cdot Du - F\|Du\| = 0.
	\end{equation}
Above $\widehat{Du}$ is a shorthand for the normal vector $\widehat{Du} = \|Du\|^{-1}Du$.  

The level-set formulation allows one to define (weak) solutions of \eqref{E: flow definition} using viscosity solutions of \eqref{E: level set definition}.  Very roughly speaking, we say that a family of sets $(S_{t})_{t \geq 0}$ is a viscosity solution of \eqref{E: flow definition} if the characteristic function $u(x,t) = \chi_{S_{t}}(x)$ \footnote{Here and in what follows $\chi_{S}(x) = 1$ if $x \in S$ and $\chi_{S}(x) = 0$, otherwise.} determines a viscosity solution of \eqref{E: level set definition}.  We will not make this completely precise here since the main technical constructions of the paper only require time-stationary solutions.  For a full account of the level-set theory, see the original survey articles \cite{barlessouganidis} and \cite{barlessonersouganidis} and the textbook \cite{giga}.

\subsection{Stationary Viscosity Solutions} The basic plan of attack of the article is the construction of (time-stationary) sets that act as barriers of the evolution, that is, we are interested in sets solving the equation
	\begin{equation} \label{E: stationary flow}
		- a(x) \kappa - Da(x) \cdot n + F = 0
	\end{equation}
or, equivalently, functions $u$ solving the PDE
	\begin{equation} \label{E: stationary level set PDE}
		- a(x) \text{tr} \left( \left( \text{Id} - \widehat{Du} \otimes \widehat{Du} \right) D^{2} u \right) - Da(x) \cdot Du - F \|Du\| = 0.
	\end{equation}  
This is made precise next.  To simplify the notation here and in what follows, we will define the mean-curvature operators $\mathcal{MC}_{*}, \mathcal{MC}^{*} : \mathbb{R}^{d} \times \mathcal{S}_{d} \to \mathbb{R}$ by
	\begin{align*}
		\mathcal{MC}_{*}(p,X) &= \left\{ \begin{array}{r l}
					\text{tr} \left( \left( \text{Id} - \|p\|^{-2} p \otimes p \right) X\right), & \text{if} \, \, p \neq 0, \\
					\min \left\{ \text{tr} \left( \left( \text{Id} - e \otimes e \right) X \right) \, \mid \, e \in S^{d-1} \right\}, & \text{otherwise.}
				\end{array} \right. \\
		\mathcal{MC}^{*}(p,X) &= \left\{ \begin{array}{r l}
					\text{tr} \left( \left( \text{Id} - \|p\|^{-2} p \otimes p \right) X \right), & \text{if} \, \, p \neq 0, \\
					\max \left\{ \text{tr} \left( \left( \text{Id} - e \otimes e \right) X \right) \, \mid \, e \in S^{d-1} \right\}, & \text{otherwise.}
				\end{array} \right.
	\end{align*}
We begin by defining precisely what it means for a function to be a time-independent viscosity super- or subsolution of \eqref{E: level set definition}.  

\begin{definition} Given an open set $U \subset \mathbb{R}^{d}$, we say that a lower (resp. upper) semi-continuous function $u : U \to \mathbb{R}$ is a \emph{viscosity supersolution} (resp.\ \emph{subsolution}) of \eqref{E: stationary level set PDE} if the following property holds: given any point $x_{0} \in U$ and any smooth function $\psi$ defined in a neighborhood of $x_{0}$, if $u - \psi$ has a local minimum (resp.\ maximum) at $x_{0}$, then
	\begin{align*}
		- a(x_{0}) \mathcal{MC}_{*}(D\psi(x_{0}),D^{2}\psi(x_{0})) - Da(x_{0}) \cdot D\psi(x_{0}) - F \|D\psi(x_{0})\| \geq 0 \\
		(\text{resp.} \, \, -a(x_{0}) \mathcal{MC}^{*}(D\psi(x_{0}),D^{2}\psi(x_{0})) - Da(x_{0}) \cdot D\psi(x_{0}) - F \|D\psi(x_{0})\| \leq 0).
	\end{align*}
	
As an abbreviation, we write that $-a(x) \mathcal{MC}_{*}(Du,D^{2}u) - Da(x) \cdot Du - F\|Du\| \geq 0$ holds in $U$ if $u$ is a viscosity supersolution of \eqref{E: stationary level set PDE} in $U$.  A similar convention will be used for subsolutions.
\end{definition}

Roughly speaking, a set $S \subset \mathbb{R}^{d}$ is now a subsolution or supersolution of \eqref{E: flow definition} precisely when its characteristic function $\chi_{S}$ is a viscosity subsolution or supersolution as above.  The precise definition is given next.
	
\begin{definition} Given an open set $U \subset \mathbb{R}^{d}$ and a closed set $S \subset \overline{U}$, we say that $S$ is a \emph{supersolution} of \eqref{E: stationary flow} in $U$ if the characteristic function $u = \chi_{\text{Int}(S)}$ is a viscosity super-solution of \eqref{E: stationary level set PDE} in $U$.  

Similarly, given an open set $U \subset \mathbb{R}^{d}$ and an open set $V \subset U$, $V$ is called a \emph{subsolution} of \eqref{E: stationary flow} if the function $v = \chi_{\overline{V}}$ is a viscosity subsolution of \eqref{E: stationary level set PDE} in $U$.

Finally, if $S \subset \mathbb{R}^{d}$ is such that the closure $\overline{S}$ is a supersolution of \eqref{E: stationary flow} and the interior $\text{Int}(S)$ is a subsolution of \eqref{E: stationary flow}, then we say that $S$ is a \emph{solution} of \eqref{E: stationary flow}. \end{definition} 

It is useful to bear in mind that, in the case of sets, the differential inequalities involved in the definition of viscosity sub- and supersolution only need to be checked on the boundary.  

	\begin{proposition} \label{P: boundary thing} Given an open set $U \subset \mathbb{R}^{d}$ and a closed set $S \subset \overline{U}$, $S$ is a supersolution in $U$ if and only if, for every $x_{0} \in \partial S \cap U$ and every smooth function $\psi$ defined in a neighborhood of $x_{0}$, if $\chi_{\text{Int}(S)} - \psi$ has a local minimum at $x_{0}$, then 
		\begin{equation*}
			-a(x_{0}) \mathcal{MC}^{*}(D\psi(x_{0}),D^{2}\psi(x_{0})) - Da(x_{0}) \cdot D\psi(x_{0}) - F\|D\psi(x_{0})\| \geq 0.
		\end{equation*}
	\end{proposition}
	
		\begin{proof} The ``only if" is immediate.  To see that the ``if" direction holds, recall that $S = \partial S \cup \text{Int}(S)$.  Further, if $x_{0} \in \text{Int}(S)$, then $\chi_{\text{Int}(S)} \equiv 1$ in a neighborhood of $x_{0}$ and then the desired differential inequality follows directly from the second derivative test. \end{proof}
		
Finally, we make frequent use of the fact that the minimum of two supersolutions is once again a supersolution.  The next result states this in the context of sets.

	\begin{proposition} \label{P: minimum} Given an open set $U \subset \mathbb{R}^{d}$, if the closed sets $S_{1}, S_{2} \subset \mathbb{R}^{d}$ are supersolutions of \eqref{E: stationary flow} in $U$, then $S_{1} \cap S_{2}$ is also a supersolution in $U$.\end{proposition}
	
		\begin{proof} Note that $\chi_{\text{Int}(S_{1} \cap S_{2})} = \min\{\chi_{\text{Int}(S_{1})}, \chi_{\text{Int}(S_{2})}\}$.  Thus, the result follows from the well-known fact that the minimum of two supersolutions of \eqref{E: stationary level set PDE} is again a supersolution. \end{proof}
		
\subsection{Diffuse Interfaces} In this section, we briefly provide definitions of solutions of our diffuse interface evolution equations.  Recall that the equation of interest, after scaling out the $\epsilon$ and $\delta$ variables and restriction to time-stationary solutions, is
	\begin{equation} \label{E: stationary diffuse interface}
		- \Delta u + \theta(x) \left( W'(u) - F \right) = 0.
	\end{equation}
As in the last section, we will only define time-stationary sub- and supersolutions since those are all that is needed for our main technical results.  (For relevant definitions for the time-dependent equation, see, e.g., \cite{user}.)

	\begin{definition} Given an open set $U \subset \mathbb{R}^{d}$, we say that a lower (resp.\ upper) semi-continuous function $u : U \to \mathbb{R}$ is a \emph{viscosity supersolution} (resp.\ \emph{subsolution}) of \eqref{E: stationary diffuse interface} in $U$ if, for any $x_{0} \in U$ and any smooth function $\psi$ defined in a neighborhood of $x_{0}$, if $u - \psi$ has a local minimum (resp.\ maximum) at $x_{0}$, then
		\begin{equation*}
			- \Delta \psi(x_{0}) + \theta(x_{0}) \left( W'(u(x_{0})) - F \right) \geq 0 \, \, \, \text{(resp.} \leq 0\text{)}.
		\end{equation*}\end{definition}
		
\subsection{Half-relaxed Limits} We next recall the definition of half-relaxed limits, a notion that has been used extensively in the theory of viscosity solutions since it was introduced by Barles and Perthame \cite{barles_perthame}.  

\begin{definition} \label{D: half-relaxed} Given a family of functions $(u_{\epsilon})_{\epsilon > 0}$ in $\mathbb{R}^{d}$, the \emph{upper and lower half-relaxed limits}, $\limsup^{*} u_{\epsilon}$ and $\liminf_{*} u^{\epsilon}$, are the functions in $\mathbb{R}^{d}$ defined by 
	\begin{align*}
		\limsup \nolimits^{*} u_{\epsilon}(x) &= \lim_{\delta \to 0^{+}} \sup \left\{ u_{\epsilon}(y) \, \mid \, \|x - y\| + \epsilon < \delta \right\}, \\
		\liminf \nolimits_{*} u_{\epsilon}(x) &= \lim_{\delta \to 0^{+}} \inf \left\{ u_{\epsilon}(y) \, \mid \, \|x - y\| + \epsilon < \delta \right\}.
	\end{align*}
\end{definition}

See, for instance, the notes by Barles \cite{barles_introduction} for properties of half-relaxed limits and some of their applications.

\section{Sharp interfaces: medium with a non-trivial pinning interval at every direction}\label{s.mobility}
We will construct a medium $a(x) \in C^{\infty}(\T^2 ; [1,\Lambda])$ which has a plane-like stationary solution $S_e$ of
\begin{equation}\label{e.sieqn}
 -a(x) \kappa -Da(x) \cdot n +F = 0 
 \end{equation}
 and
 \[  \partial S \subset \{-C \leq x \cdot e \leq C\}  \] 
at every direction $e$ and for every forcing $|F| \leq F_{a}$.  This implies that the mobility, defined by \eref{mobility}, has
\[ \mu(e,a) = \left.\frac{d}{dF}\right|_{F=0} c(e,F,a) = 0 \ \hbox{ for all } \ e \in S^{1}.\]
However, since we do not have any general theorem establishing the relationship between this mobility and the homogenization of \eref{siepeqn}, we prove a slightly more general result, stated above in \lref{stationary solutions} and \cref{pinning corollary}: within a unit distance of any sufficiently regular set $K \subset \R^2$, there is a stationary solution of \eref{sieqn}.

The bulk of the work consists in proving the existence of certain sub- and supersolutions.  Toward that end, here is a statement of the main result of this section:

\begin{lemma} \label{L: sub and supersolutions} There is an $R > 0$ (a numerical constant) and an $a \in C^{\infty}(\mathbb{T}^{2}; [1,\Lambda])$ for which there is an $F_{a} > 0$ and a $C > 0$ such that the following holds: if $K \subset \mathbb{R}^{2}$ satisfies an exterior and interior ball condition of radius $R$, then there is a supersolution $S^{*}(K)$ of \eqref{e.sieqn} with $F = F_{a}$ and a subsolution $S_{*}(K)$ of \eqref{e.sieqn} with $F = -F_{a}$ such that
	\begin{equation*}
			S_{*}(K) \subset K \subset S^{*}(K), \quad d_{H}(S_{*}(K),S^{*}(K)) \leq C, \quad d_{H}(\partial S_{*}(K), \partial S^{*}(K)) \leq C.
	\end{equation*}
Furthermore, we can assume that $S_{*}(K)$ and $S^{*}(K)$ have piecewise smooth boundaries.
\end{lemma}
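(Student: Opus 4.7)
The plan is to build the medium $a$ so that, for every direction $n \in S^1$ and every forcing $|F| \leq F_a$, short stationary supersolution arcs with outward normal $n$ are densely distributed throughout $\R^2$, and then to assemble $\partial S^*(K)$ and $\partial S_*(K)$ by routing curves parallel to $\partial K$ through such arcs. The entire strategy is closely analogous to the obstacle constructions of Novaga--Valdinoci \cite{NovagaValdinoci2011}, adapted to the $-a\kappa - \grad a \cdot n + F$ form of \eref{sieqn}.

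For the construction of $a$, I would start from a constant background $a_0 \in (1,\Lambda)$ and add smooth compactly supported bumps $\psi_k$ at finitely many sublattice points in the unit cell of $\T^2$, rotated so that the directions $\grad \psi_k/|\grad \psi_k|$ form a $\delta$-net in $S^1$. Near each bump the term $-\grad a \cdot n$ is large and has the correct sign for $n$ close to the bump's preferred direction $n_k$, so a short circular arc with outward normal $n$ can be arranged to satisfy $-a\kappa - \grad a \cdot n + F \leq 0$ locally whenever $|F|\leq F_a$, with $F_a$ depending only on the bump amplitudes and on $\delta$. Reversing the orientation and switching the sign of $F$ gives, through the same bump, a short stationary subsolution arc at forcing $-F_a$, so that super- and subsolution building blocks coexist through each bump.

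Given $K$ with interior and exterior sphere condition of radius $R$, I would build $\partial S^*(K)$ as a piecewise smooth curve at distance $\sim C$ outside $\partial K$: cover $\partial K$ by arcs of length comparable to the bump period, and on each arc, with local outward normal $n$, select a nearby bump whose preferred direction $n_k$ lies within $\delta$ of $n$ and insert a local supersolution arc through that bump; connect adjacent local arcs by nearly straight segments placed in regions where $\grad a$ is negligible, so that the supersolution inequality is preserved along the connectors as well. Choosing $R$ large relative to the bump spacing ensures that consecutive normals along $\partial K$ differ by less than $\delta$, guaranteeing that a compatible bump is always available, and that the routed curve stays within a uniform tubular distance $C$ of $\partial K$ with $S^*(K) \supseteq K$. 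The viscosity supersolution property at the junctions is verified by arranging them as convex corners (for $S^*(K)$) or concave corners (for $S_*(K)$), where the one-sided test function condition is automatic. The construction of $S_*(K)$ is the symmetric dual using the subsolution arcs at forcing $-F_a$, giving $S_*(K) \subseteq K$, $d_H(S_*(K),S^*(K)) \leq C$, and the boundary bound $d_H(\partial S_*(K), \partial S^*(K)) \leq C$.

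The principal obstacle is the joint step: a single periodic $a$ must simultaneously support barrier arcs in every direction $n \in S^1$ and at both forcings $\pm F_a$, which forces many rotated bumps per unit cell and a careful choice of bump geometry so that each individual bump produces both a supersolution and a subsolution arc without mutual interference between neighboring bumps. A related subtlety is that the barrier arcs and the straight connectors must be glued into a curve that is a \emph{global} viscosity supersolution of \eref{sieqn} rather than a merely piecewise classical one; this is handled by designing the gluing points to be geometric corners of the correct sign, at which the one-sided viscosity test automatically excludes the opposite inequality. Once the medium and the routing scheme are in place, the containment, Hausdorff-distance, and boundary-distance estimates follow from standard maximum-principle comparisons between the routed curve and $\partial K$.
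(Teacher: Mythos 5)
Your overall strategy (a periodic medium supporting barrier arcs in all directions, assembled into a curve near $\partial K$, with the subsolution obtained by symmetry) is the right one, but two steps as written would fail. First, the connectors: on a ``nearly straight segment placed in a region where $\grad a$ is negligible'' you have $\kappa \approx 0$ and $\grad a \cdot n \approx 0$, so $-a\kappa - \grad a \cdot n + F_{a} \approx F_{a} > 0$, which violates the supersolution inequality precisely because the lemma requires strict forcing $F = F_{a} > 0$. Every piece of the barrier must carry a quantitative margin: in the paper this is arranged by making \emph{all} connecting pieces circular arcs of curvature exactly $1/R$ joining nearest lattice points (so $-a\kappa_{\gamma} \leq -1/R$ away from the bumps), and then taking $F_{a}$ of order $1/(2R)$; straight connectors cannot be repaired by making $\grad a$ small, only by giving them uniformly positive curvature, at which point you are back to the paper's geometry.

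Second, the junctions. The viscosity supersolution property survives corners only when the patched set is locally an \emph{intersection} (minimum) of the smooth supersolution pieces; at a union-type corner it genuinely fails, and when you route a curve around an arbitrary $R$-regular $K$ (which has boundary portions turning in both senses relative to your outward-bulging arcs) you cannot arrange all junctions to be of the favorable sign. This is exactly why the paper does not rely on aligning $\grad a$ with the normal of the barrier: instead it builds \emph{node} supersolutions --- annular regions $\mathcal{O}$ around each lattice point where $a$ has a large radial gradient, crossed \emph{orthogonally} by the arcs so the gradient term does not spoil the edge pieces --- and uses the $\textup{node.join}$ operation of Lemma \ref{L: join a node and an edge} to replace each corner by a stretch of $\partial \mathcal{O}$, where the favorable gradient supplies the strict inequality. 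Combined with the approximation of $K$ by regular $\mathbb{Z}^{2*}$-measurable sets (Lemma \ref{L: annoying regularity argument}) and the patching formalism of Lemmas \ref{l.edgepatch} and \ref{L: supersolutions finally}, this is what makes the global assembly rigorous. Your proposal also leaves the routing itself unproved (that an aligned bump within $\delta$ of any needed normal is always available at the right location on a single periodic lattice), whereas the paper sidesteps direction-dependence entirely: the same four arcs per lattice edge and one node per vertex serve all directions. Without positive-curvature connectors and a mechanism for the unfavorable corners, the maximum-principle comparison at the end has nothing to compare against.
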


Once Lemma \ref{L: sub and supersolutions} is proved, we invoke a version of Perron's Method to establish the existence of solutions (Lemma \ref{l.stationary solutions}).  One could also construct a solution near $K$ by constrained energy minimization and, as a result, find a pinned ``local energy minimizer."  While that is not the approach taken here, it is worth pointing out for the purpose of exposition that these pinned solutions are not just unstable energy critical points.

As the reader familiar with homogenization will likely realize, Lemma \ref{L: sub and supersolutions} implies homogenization of the gradient flow (Theorem \ref{t.level set pinning}).  For completeness, the details are provided at the end of the section.  

The construction of stationary solutions relies on an explicit construction of super and subsolutions -- found below in Section \ref{S: edges and nodes}.  These two cases are symmetric so we will only need to handle constructing supersolutions.  At a technical level this involves repeated patching together of supersolution pieces, in the $d=2$ case we consider these are just concatenations of curves.  However in service of a possible future generalization to higher dimensions, and to separate the arguments involving patching from the actual explicit construction, we will start by setting up a framework which handles the topological issues of the patching procedure.  Additionally, since this patching will proceed using lattice cubes, we will use certain facts about sets consisting of unions of such cubes.

{The idea of the construction is we show that it is possible to find a coefficient $a$ together with certain curves that are stationary supersolutions of \eqref{e.mcfa} on their boundaries.  These curves are chosen so that they can be combined to bound any union of lattice cubes --- see Figure \ref{f.basicidea} for the basic picture to keep in mind.  The arguments showing that such an $a$ can be found are in Section \ref{S: edges and nodes} --- the reader may wish to start there --- while the remainder of the section formalizes the construction.}

\begin{figure}
\includegraphics[scale=0.4]{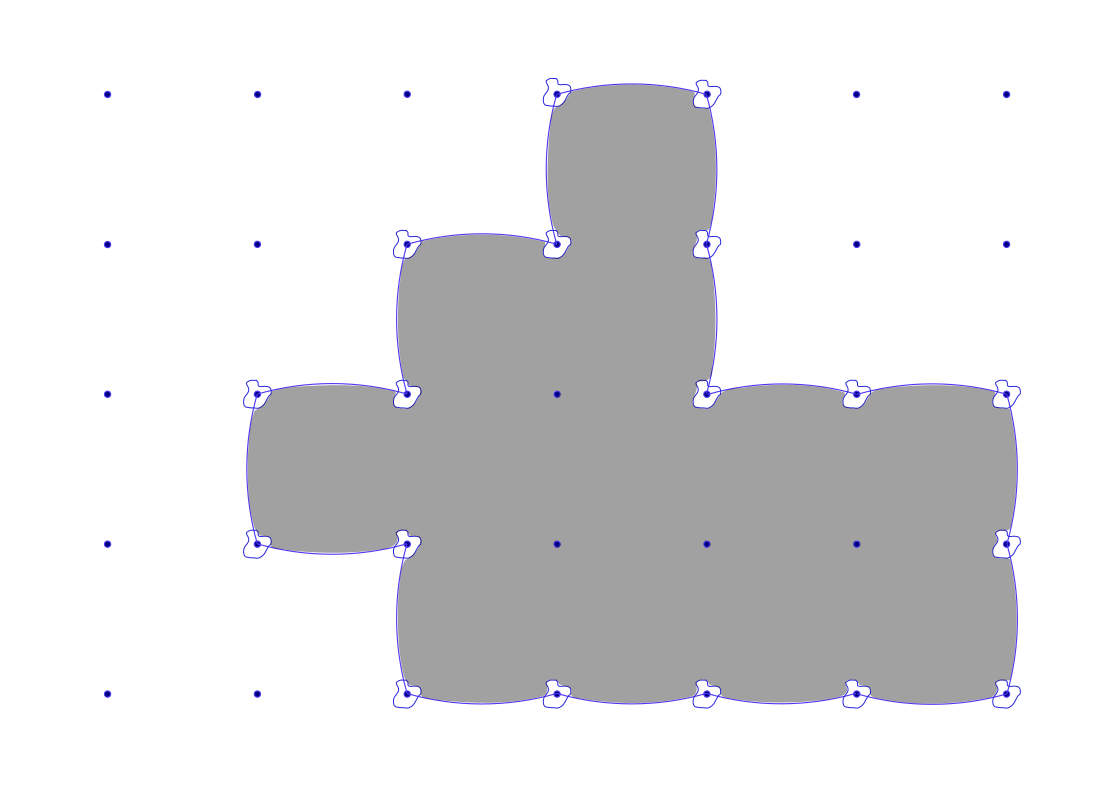}
\caption{A depiction of one of the supersolutions obtained through our construction.  The shaded area is the interior of the supersolution.  The dots indicate the points of the lattice $\mathbb{Z}^{2}$.  Notice that the boundary of the supersolution consists of translated copies of certain basic curves, some of which are circular arcs connecting lattice points (called ``edges" in what follows) and others that are simple closed curves surrounding one (called ``nodes").  }
\label{f.basicidea}
\end{figure}

\subsection{Regular $\mathbb{Z}^{2*}$-measurable sets}  The construction of sub- and supersolutions uses the fact that smooth subsets of $\mathbb{R}^{2}$ admit nice discrete approximations.  This leads us to define regular $\mathbb{Z}^{2*}$-measurable sets.  

In what follows, $\mathbb{Z}^{2*}$ is the dual lattice of $\mathbb{Z}^{2}$, that is, $\mathbb{Z}^{2*} = \mathbb{Z}^{2} + (1/2,1/2)$.  This is a convenient way of indexing the lattice cubes $\{z + [-1/2,1/2]^{2}\}_{z \in \mathbb{Z}^{2*}}$ that will be used in our approximations of smooth sets.  These approximations will consist of unions of such cubes, as in the next definition:

\begin{definition}  We say that $A \subset \mathbb{R}^{2}$ is \emph{$\mathbb{Z}^{2*}$-measurable} if there is a $Z_{A} \subset \mathbb{Z}^{2*}$ such that
	\begin{equation*}
		A = \bigcup_{z \in Z_{A}} (z + [-1/2,1/2]^{2}).
	\end{equation*}
\end{definition}  

The boundary of any $\mathbb{Z}^{2*}$-measurable set is a union of paths in a certain graph.  This will be convenient in the formalism that follows.  By the graph $(\mathbb{Z}^{2},\mathbb{E}^{2})$, we mean the set $\mathbb{Z}^{2} \subset \mathbb{R}^{2}$ together with \emph{directed} edges $[x,y]$ consisting of the line segment connecting two points $x,y \in \mathbb{Z}^{2}$ with $\|x - y\| = 1$.  We identify $[x,y]$ with the oriented line segment, oriented so that its tangent vector is parallel to $y - x$.  The normal vector $n_{[x,y]}$ to this line segment is defined by rotating the tangent vector counter-clockwise (hence, e.g., $n_{[(0,0),(1,0)]} = (0,1)$).  

In the discussion that follows, it will be useful to say that $z,z' \in \mathbb{Z}^{2*}$ are \emph{neighbors} if either $z = z'$, $[z,z'] \in \mathbb{E}^{2}$, or $[z',z] \in \mathbb{E}^{2}$.  We say that they are \emph{wired neighbors} if $\|z - z'\|_{\ell^{\infty}} \leq 1$.

Given a $\mathbb{Z}^{2*}$-measurable set $A$ corresponding to the points $Z_{A} \subset \mathbb{Z}^{2*}$, we define the associated boundary cubes $A^{b}$ and interior cubes $A^{\textup{int}}$ by 
	\begin{align*}
		A^{b} &= \bigcup \left\{z + [-1/2,1/2]^{2} \, \mid \, z \in Z_{A}, \, \, (z + [-1/2,1/2]^{2}) \cap \partial A \neq \emptyset \right\}, \\
		A^{\textup{int}} &= \bigcup \left\{z + [-1/2,1/2]^{2} \, \mid \, z \in Z_{A}, \, \, z + [-1/2,1/2]^{2} \subset \textup{Int}(A) \right\}.
	\end{align*}

We will restrict our attention to a particularly nice class of $\mathbb{Z}^{2*}$-measurable sets for which the boundary $\partial A$ equals the image of simple paths in $(\mathbb{Z}^{2},\mathbb{E}^{2})$.  Toward that end, the following definition will be convenient:

	\begin{definition} \label{D: regular cube sets}  A set $A \subset \mathbb{R}^{2}$ is said to be a \emph{regular $\mathbb{Z}^{2*}$-measurable set} if
		\begin{itemize}
			\item[(i)] For each $z + [-1/2,1/2]^{2} \subset A^{b}$ with $z \in \mathbb{Z}^{2*}$, there is a $z' + [-1/2,1/2]^{2} \subset A^{\textup{int}}$ such that $z' \in \mathbb{Z}^{2*}$ is a wired neighbor of $z$.
			\item[(ii)] For each $z + [-1/2,1/2]^{2} \subset A^{b}$ with $z \in \mathbb{Z}^{2*}$, if $z' + [-1/2,1/2]^{2} \subset A^{b}$ and $z'$ is a wired neighbor of $z$, then there is a $z'' \in \mathbb{Z}^{2*}$ such that $z'' + [-1/2,1/2]^{2} \subset A$ and $z''$ is a neighbor of both $z$ and $z'$.
		\end{itemize}  
	\end{definition}  
	
A topological argument proves that regular $\mathbb{Z}^{2*}$-measurable sets are determined by simple paths.  Precisely, given an interval $E \subset \mathbb{Z}$, we say that $\gamma : E \to \mathbb{Z}^{2}$ is a path in $(\mathbb{Z}^{2},\mathbb{E}^{2})$ if, for each $i, i + 1 \in E$, $[\gamma(i),\gamma(i+1)] \in \mathbb{E}^{2}$.  It is a simple path when $\gamma(i) = \gamma(j)$ only if $i = \min E$ and $j = \max E$.  When $\gamma(\min E) = \gamma(\max E)$, we say that $\gamma$ is closed.  A path that is both simple and closed is called a simple closed path.  

For convenience, we denote by $\{\gamma\} \subset \mathbb{R}^{2}$ the path traced out by $\gamma$, that is, $\{\gamma\} = \bigcup_{\{i,i+1\} \subset E} [\gamma(i),\gamma(i + 1)]$, where the edges $[\gamma(i),\gamma(i + 1)]$ are identified with the corresponding line segments in $\mathbb{R}^{2}$.

The next result shows that the boundary of a regular $\mathbb{Z}^{2*}$-measurable sets is the image of a disjoint union of simple paths in the graph $(\mathbb{Z}^{2},\mathbb{E}^{2})$.  Later, the orientation of the paths will be important.  Hence before stating the result, we define what it means for a path to traverse the boundary of a regular $\mathbb{Z}^{2*}$-measurable set in a clockwise fashion.

Given a $\mathbb{Z}^{2*}$-measurable set $A$ and an edge $[x,y] \in \mathbb{E}^{2}$ such that $[x,y] \subset \partial A$, we say that $[x,y]$ traverses $\partial A$ clockwise if $n_{[x,y]}$ is parallel to the outward normal vector to $\partial A$, or, more precisely,
	\begin{equation*}
		\left\{\frac{1}{2}(x + y) + t n_{[x,y]} \, \mid \, t \in (0,1)\right\} \subset \mathbb{R}^{2} \setminus A.
	\end{equation*}
Otherwise, we say that $[x,y]$ traverses $\partial A$ counter-clockwise.  Notice that $[x,y]$ traverses $\partial A$ clockwise if and only if $[y,x]$ traverses it counter-clockwise.

Given a $\mathbb{Z}^{2*}$-measurable set $A$ and an interval $E \subset \mathbb{Z}$, we say that a path $\gamma : E \to \mathbb{Z}^{2} \cap \partial A$ traverses $\partial A$ clockwise if, for each $\{i, i + 1\} \subset E$, the edge $[\gamma(i),\gamma(i + 1)]$ traverses $\partial A$ clockwise.

	\begin{theorem} \label{T: boundary curve} If $A$ is a regular $\mathbb{Z}^{2*}$-measurable set, then there is a pairwise disjoint collection $(\gamma^{(j)})_{j \in P}$ of simple paths in $\mathbb{Z}^{2}$ indexed by some $P \subset \mathbb{N}$ such that
		\begin{equation*}
			\partial A = \bigcup_{j \in P} \{\gamma^{(j)}\}
		\end{equation*}
	and, for each $j \in P$, the path $\gamma^{(j)}$ traverses $\partial A$ clockwise.  Furthermore, the finite length paths in $(\gamma^{(j)})_{j \in P}$ are all simple closed paths.\end{theorem}  
	
	A sketch of the proof is given next; the details can be found in Appendix \ref{A: boundary construction}.
	
		\begin{proof}[Sketch of proof]  Start at a boundary vertex $x_{0} \in \partial A \cap \mathbb{Z}^{2}$.  By (ii) in the definition of regularity, there is a unique neighbor $x_{1} \in \partial A \cap \mathbb{Z}^{2}$ of $x_{0}$ such that $[x_{0},x_{1}] \subset \partial A$ and the (oriented) edge $[x_{0},x_{1}]$ traverses $\partial A$ clockwise.  Apply this procedure again with $x_{1}$ replacing $x_{0}$, furnishing a neighbor $x_{2}$ of $x_{1}$ so that $[x_{1},x_{2}]$ traverses $\partial A$ clockwise.  Iterating this results in a path $\gamma : \mathbb{N} \cup \{0\} \to \mathbb{Z}^{2}$ given by $\gamma(j) = x_{j}$.  If the image $\{\gamma\}$ is finite, then it is necessarily a simple closed path; otherwise, in the infinite case, it is a simple path.  
		
		If $\partial A \setminus \{\gamma\}$ is nonempty, repeat the construction at a different boundary vertex.  Since $\mathbb{Z}^{2}$ is countable, this process eventually decomposes $\partial A$ as a countable union of pairwise disjoint simple paths in $\mathbb{Z}^{2}$, as claimed. \end{proof}

\subsection{Abstract framework for patching super/subsolutions} \label{S: patching} We set up an abstract framework which is useful to compartmentalize arguments relating to patching together local smooth super/subsolutions to form a global super/subsolution.  We consider only the two dimensional case, and we do not at all consider a fully general notion of admissible patching.  We expect that, with significantly more topological work, these ideas could be generalized and would be useful for constructing examples in higher dimensions.

Since we will be combining sets that only satisfy the supersolution property locally, it will be convenient to track the domain together with the set.  That is the purpose of the next definition.

\begin{definition} Given an open set $U \subset \mathbb{R}^{2}$ and a closed set $S \subset \overline{U}$, we say that the pair $(S,U)$ is a \emph{local supersolution} if $S$ is a supersolution of \eref{sieqn} in $U$.\end{definition}

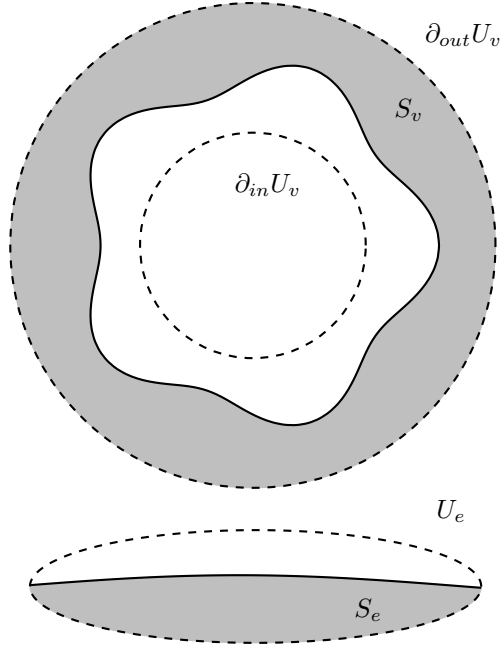
\begin{figure}[t]

\begin{tikzpicture}[scale = .75]

\draw[draw=gray!50!white,fill=gray!50!white]  
plot[smooth,samples=100,domain=0:360] ({4.3*cos(\x)},{4.3*sin(\x)}) --  
plot[smooth,samples=100,domain=360:0] ({(3+.3*cos(5*\x))*cos(\x)},{(3+.3*cos(5*\x))*sin(\x)}); 

\draw[thick] plot[smooth,samples=100,domain=360:0] ({(3+.3*cos(5*\x))*cos(\x)},{(3+.3*cos(5*\x))*sin(\x)}) node[xshift=-.4cm, yshift=1.8cm] {$S_v$}; 

\draw[thick, dashed] (0,0) circle (4.3cm) node[xshift=2.8cm, yshift=2.8cm] {$\partial_{out}U_v$};
\draw[thick, dashed] (0,0) circle (2cm) node[xshift=.2cm, yshift=.8cm] {$\partial_{in}U_v$}; 

\end{tikzpicture}

\begin{tikzpicture}[scale = .75]

\draw[draw=gray!50!white,fill=gray!50!white] plot[smooth,samples=100,domain=0:8] (\x+.3,{.2*sin(180*(\x+.3)/8)}) --
plot[smooth,samples=100,domain=0:-180] ({4.3+4*cos(\x)},{sin(\x)}); 

\draw[thick] plot[smooth,samples=100,domain=0:8] (\x+.3,{.2*sin(180*(\x+.3)/8)}) node[xshift=-1.5cm, yshift=-.3cm] {$S_e$}; 
\draw[thick, dashed] plot[smooth,samples=100,domain=0:360] ({4.3+4*cos(\x)},{sin(\x)}) node[xshift=-.4cm, yshift=1cm] {$U_e$}; 

\end{tikzpicture}

\caption{Topology of a node supersolution and an edge supersolution.}
\label{f.nodes and edges}
\end{figure}

It will be convenient for us to impose some (but not too much) boundary regularity on the local supersolutions we work with.  This is the purpose of the next definition of \emph{piecewise smooth local supersolution}.

\begin{definition} \label{d.nodesandedges} A bounded set $E \subset \mathbb{R}^{2}$ is called a \emph{piecewise smooth domain} if there is an $M \in \mathbb{N}$ and, for each $j \in \{1,\dots,M\}$, a compact interval $I_{j} \subset \mathbb{R}$ and a piecewise smooth simple closed curve $\gamma_{j} : I_{j} \to \mathbb{R}$ such that $\partial E = \bigcup_{j = 1}^{M} \gamma_{j}(I_{j})$ and $\{\gamma_{1}(I_{1}),\dots,\gamma_{M}(I_{M})\}$ is pairwise disjoint. 

A local supersolution $(S,U)$ is called a \emph{piecewise smooth local supersolution} if $S$ and $U$ are both bounded and piecewise smooth.
\begin{enumerate}[label = $\triangleright$]
\item Given a local supersolution we call $n_S$ to be the outward normal to $\partial S$ and $\tau_S$ to be the corresponding tangent vector which is the outward normal rotated by $90$ degrees clockwise. 
\item We say that a pair of local supersolutions $(S_1,U_1)$ and $(S_2,U_2)$ is disjoint if $\overline{U}_1 \cap \overline{U}_2 = \emptyset$.
\item If $U$ is doubly connected (i.e., $U$ is connected and $\mathbb{R}^{2} \setminus U$ consists of exactly two connected components), we call $\partial_{out}U$ to be the boundary of the unbounded component of the complement, $\partial_{in}U$ to be the boundary of the bounded component of the complement, and we call $\textup{fill}(U)$ to be the union of $U$ with the bounded component of the complement.
\item A piecewise smooth local supersolution $(S,U)$ is called a \emph{smooth patch} if $\partial S \cap U$ equals the image of a single smooth curve.
\end{enumerate}
\end{definition}

As a basic building block, we will use so-called \emph{supersolution edges} and \emph{supersolution nodes}, defined next.

\begin{definition} \label{D: edge} A piecewise smooth local supersolution $(S,U)$ is called an \emph{edge} if 
	\begin{itemize}
		\item[(a)] $U$ is simply connected, and
		\item[(b)] $S$ splits both $U$ and $\partial U$ into exactly two components or, more precisely, there is a piecewise smooth curve $\gamma : [0,1] \to \partial S$ such that $\gamma((0,1)) = \partial S \cap U$ and $\{\gamma(0),\gamma(1)\} \subset \partial U$. 
	\end{itemize}
	
A piecewise smooth local supersolution $(S,U)$ is called a \emph{node} if $S$ and $U$ are both doubly connected sets; $\partial_{out}U \subset \text{Int}(S)$; and $\partial_{in} U \subset (\R^2 \setminus S)$.
\end{definition}

See \fref{nodes and edges} for a graphic representation of a node and edge local supersolution.

Now our goal is to define an appropriate notion of combining supersolutions.  We begin with a patching operation that amounts to taking local intersections.

We use the following notation, 
\[ \bigcap_{\substack{\alpha \in I \\ x \in K_\alpha}} E_\alpha  := (\bigcap_{\alpha \in I} E_\alpha \cup (\R^2 \setminus K_\alpha)) \cap (\bigcup_{\alpha \in I} K_\alpha)\]
which is also
\[ \bigcap_{\substack{\alpha \in I \\ x \in K_\alpha}} E_\alpha = \{x \in \cup_{\alpha \in I} K_\alpha: \prod_{\substack{\alpha\in I\\ x \in K_\alpha}} {\bf 1}_{E_\alpha}(x) = 1\}\]
to avoid writing the more (respectively) unintuitive and lengthy formulae on the right.  

\begin{figure} \label{f.patch}
\begin{tikzpicture}
\node at (-3.25,0){
\begin{tikzpicture}[scale = .75]
\node[rotate=15] at (-1,0) {
\begin{tikzpicture}
\draw[thick,dashed] (0,0) circle (2);
\filldraw[gray,opacity = .4] (-2,0) -- (2,0) arc ( 0:-180:2);
\draw[thick] (-2,0)--(.8,0);
\end{tikzpicture}
};
\node[rotate=-15] at (1,0) {
\begin{tikzpicture}
\draw[thick,dashed] (0,0) circle (2);
\filldraw[gray,opacity = .4] (-2,0) -- (2,0) arc ( 0:-180:2);
\draw[thick] (-.8,0)--(2,0);
\end{tikzpicture}
};

\end{tikzpicture}
};
\node at (3.25,0){
\begin{tikzpicture}[scale = .75]
\node[rotate=15] at (-1,0) {
\begin{tikzpicture}
\draw[thick,dashed] (0,0) circle (2);
\filldraw[gray,opacity = .4] (-2,0) -- (2,0) arc ( 0:-180:2);
\end{tikzpicture}
};
\node[rotate=-15] at (1,0) {
\begin{tikzpicture}
\draw[thick,dashed] (0,0) circle (2);
\filldraw[gray,opacity = .4] (-2,0) -- (2,0) arc ( 0:-180:2);
\end{tikzpicture}
};
\node[rotate=0] at (0,-1) {
\begin{tikzpicture}
\draw[thick,dashed] (0,0) circle (2);
\filldraw[gray,opacity = .4] plot[smooth,samples=100,domain=-1.6304:1.6304] (\x,1.5-\x*\x) -- (1.6304,-1.1582) arc (-35.393:-180+35.393:2);
\draw[thick] plot[smooth,samples=50,domain=-1.6304:-.9] (\x,1.5-\x*\x) --(0,.92)--plot[smooth,samples=50,domain=.9:1.6304] (\x,1.5-\x*\x);
\end{tikzpicture}
};

\end{tikzpicture}
};
\end{tikzpicture}
\caption{Left: patching of two local supersolutions the outlined boundary is the boundary of the patch.  Right: patching of three local supersolutions.  Condition \eqref{E: property we care about} is satisfied in both cases.}
\end{figure}
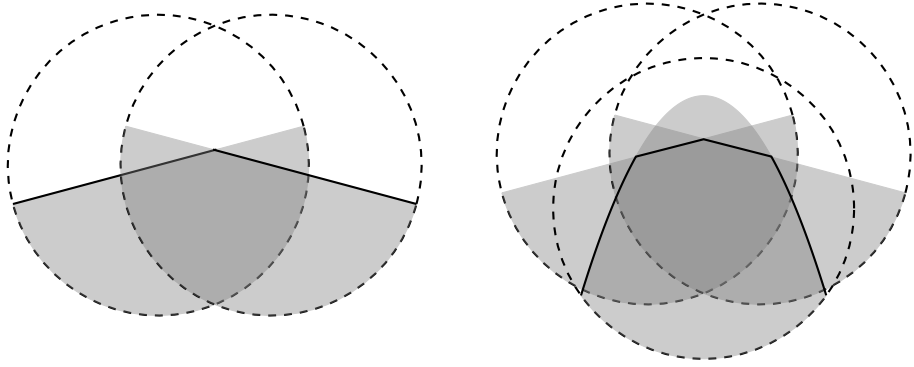
\begin{lemma}[Simple patching]\label{l.edgepatch}
Suppose that $\{(S_e,U_e)\}_{e \in I}$ is a finite collection of local supersolutions such that for each $e \in I$ and each $x_{0} \in \partial U_{e}$, there is a relatively open set $V_{e}$ in $\overline{U_{e}}$ such that $x_{0} \in V_{e}$ and
	\begin{equation} \label{E: property we care about}
		S_{e} \cap V_{e} \supseteq \bigcap_{\substack{e' \in I  \\  x_{0} \in U_{e'}}} S_{e'} \cap V_{e}.
	\end{equation}
Then
\[ \textup{patch}((S_e,U_e)_{e \in I}) := (\bigcap_{\substack{e \in I\\ x \in \overline{U}_e}} S_e, \bigcup_{e \in I} U_e)\]
is a local supersolution in $\cup_{e \in I} U_e$.
\end{lemma}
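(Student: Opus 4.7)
The plan is to verify the viscosity supersolution property of $S := \bigcap_{e \in I, x \in \overline{U}_e} S_e$ at an arbitrary $x_0 \in \bigcup_{e \in I} U_e$ by locally reducing the (varying) intersection defining $S$ to an intersection over a \emph{fixed} finite collection of supersolutions, and then invoking the standard fact that intersection preserves the supersolution property for the geometric equation \eref{sieqn}. The role of the compatibility hypothesis \eqref{E: property we care about} is precisely to enable this local reduction.

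First I would split $I$ according to $x_0$: set $J_0 := \{e \in I : x_0 \in U_e\}$ and $I_0 := \{e \in I : x_0 \in \overline{U}_e\}$, so that $\emptyset \neq J_0 \subseteq I_0$. Because $I$ is finite, I can choose an open Euclidean neighborhood $W$ of $x_0$ satisfying three properties: (a) $W \subseteq U_e$ for each $e \in J_0$, using openness of each $U_e$; (b) $W \cap \overline{U}_e = \emptyset$ for each $e \notin I_0$, using $x_0 \notin \overline{U}_e$; and (c) $W \cap \overline{U}_e \subseteq V_e$ for each $e \in I_0 \setminus J_0$, obtained by first writing $V_e = \widetilde{W}_e \cap \overline{U}_e$ for some open Euclidean $\widetilde{W}_e \ni x_0$ (using that $V_e$ is relatively open in $\overline{U}_e$) and then shrinking $W$ into each such $\widetilde{W}_e$.

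Next I would show that $S \cap W = \bigl( \bigcap_{e \in J_0} S_e \bigr) \cap W$. For $x \in W$, the indices contributing to $S$ at $x$ are $\{e \in I : x \in \overline{U}_e\}$, which by (a) and (b) equals $J_0$ together with the (possibly empty) subcollection of those $e \in I_0 \setminus J_0$ for which $x \in \overline{U}_e$. For each such extra index $e$, property (c) places $x$ in $V_e$, and the compatibility hypothesis then gives $S_e \cap V_e \supseteq \bigcap_{e' \in J_0} S_{e'} \cap V_e$. Thus membership in $S_e$ at $x$ is implied by membership in $\bigcap_{e' \in J_0} S_{e'}$, so these extra intersections impose no additional constraint, yielding the claimed local equality.

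Finally I would deduce that $\bigcap_{e \in J_0} S_e$ is a viscosity supersolution of \eref{sieqn} near $x_0$. Since $x_0 \in U_e$ for each $e \in J_0$ and each $S_e$ is a supersolution on $U_e$, this reduces to the general principle that the intersection of stationary supersolutions of a geometric equation is itself a supersolution: at a smooth boundary point of the intersection the boundary locally coincides with a single $\partial S_{e_*}$, and any smooth test set touching the intersection from outside also touches $S_{e_*}$ from outside, so the supersolution inequality for $S_{e_*}$ applies; at a concave corner where two or more $\partial S_e$ meet, no $C^2$ test surface can tangentially touch the complement of the intersection from the outside, so the viscosity test is vacuous. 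The main obstacle, and the only delicate step, is this corner case; it is handled by the classical observation that in the level-set formulation of \eref{sieqn}, intersection of sets corresponds to the pointwise maximum of level-set functions, an operation under which viscosity supersolutions of degenerate elliptic geometric equations are known to be closed.
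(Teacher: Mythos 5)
Your proposal is correct and takes essentially the same route as the paper: the paper's proof likewise fixes a ball $N$ around $x_0$ with $N \subseteq U_e$ when $x_0 \in U_e$, $N \subseteq \mathbb{R}^2 \setminus \overline{U}_e$ when $x_0 \notin \overline{U}_e$, and $N \cap \overline{U}_e \subseteq V_e$ when $x_0 \in \partial U_e$, then asserts $S_* \cap N = \bigcap_{e \,:\, N \subseteq U_e} S_e \cap N$ and concludes because a minimum (intersection) of supersolutions is a supersolution. Your verification of the local equality via \eqref{E: property we care about} is precisely the ``direct argument'' the paper leaves implicit, so nothing essential is missing.
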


See \fref{patch} for a visualization of the patch operation.

The idea of the condition in \lref{edgepatch} is simply that a collection of supersolutions on several overlapping domains can be patched together by locally taking the minimum as long as each supersolution is not the minimal supersolution at any point of the boundary of its own domain (which is in the closure of one of the other domains).
\begin{proof}
Call $(S_*,U_*) = \textup{patch}((S_e,U_e)_{e \in I})$.  We just need to check, for any interior point $x_0 \in U_*$ there is a neighborhood $N(x_0)$ in which $S_*$ is an intersection only of $S_e$ for which $N(x_{0}) \subset U_e$ . In that case, $-\chi_{S_{*}}$ is a minimum of supersolutions in $N(x_0)$ and so it is a supersolution in $N(x_0)$.  More precisely, for a sufficiently small neighborhood $N(x_{0})$ of $x_0$, we claim
\[ N(x_{0}) \cap S_* = \bigcap_{\substack{e \in I \, : \, N(x_{0}) \subset U_e}} S_e \cap N(x_{0}).\]
This is immediate unless $x_0 \in \partial U_{e}$ for some $e$.  

In that case, call $J = \{e \in I: x_0 \in \partial U_e\}$.  For each $e \in J$, let $V_{e}$ be the relatively open set in $\overline{U}_{e}$ such that $x_{0} \in V_{e}$ and \eqref{E: property we care about} holds.  Note that we can fix an open ball $N(x_{0})$ containing $x_{0}$ such that the following hold:
	\begin{gather*}
		N(x_{0}) \subset U_{e} \quad \textup{if} \, \, x_{0} \in U_{e}, \quad N(x_{0}) \subset \mathbb{R}^{2} \setminus \overline{U}_{e} \quad \textup{if} \, \, x_{0} \in \mathbb{R}^{2} \setminus \overline{U}_{e}, \\
		N(x_{0}) \cap \overline{U}_{e} \subset V_{e} \quad \textup{if} \, \, e \in J.
	\end{gather*}
A direct argument involving \eqref{E: property we care about} shows that
	\begin{equation*}
		S_{*} \cap N(x_{0}) = \bigcap_{e \in I \, : \, N(x_{0}) \subset U_{e}} S_{e} \cap N(x_{0}).
	\end{equation*}
\end{proof}

Of course this patching procedure, does not require the supersolutions involved to be edges, however the hypothesis will typically not hold when one of the supersolutions involved is a node, see \fref{nodejoin}. It is a bit more topologically delicate to explain how to join a pair of edges to a node.  Toward that end, we begin by defining a criterion for the node and two edges to be admissible.

\begin{definition} \label{d.incident} Given a node $(S,U)$ and an edge $(S',U')$, we say that $(S',U')$ is \emph{admissibly incident} on $(S,U)$ if
	\begin{itemize}
		\item[(i)] $\partial U' \cap \partial [\text{fill}(U)]$ consists of exactly two points, and the corresponding arc of $\partial_{out} U$ separates $U'$ into two connected components,
		\item[(ii)] $(\partial S' \cap U') \cap \partial U$ consists of exactly two points, one in $\partial_{out}U$ and the other, in $\partial_{in}U$.  (In particular, $\partial S'$ contains a piecewise smooth path $\gamma : [0,1] \to \partial S' \cap U'$ such that $\gamma(0) \in \partial_{out}U$, $\gamma(1) \in \partial_{in}U$, and $\gamma(t) \in U$ for each $t \in (0,1)$.)
		\item[(iii)] $(\partial S' \cap U') \cap (\partial S \cap U)$ consists of a single point.
	\end{itemize}
If the path $\gamma$ in (ii) is such that the velocity $\dot{\gamma}$ is parallel to the tangent vector $\tau_{S'}$ to $\partial S'$, then we say that $(S',U')$ is \emph{incoming} at $(S,U)$.  Otherwise, if $\dot{\gamma}$ is anti-parallel to $\tau_{S'}$, we say that $(S',U')$ is \emph{outcoming} at $(S,U)$.\end{definition}

Given a node $(S_v,U_v)$ and a pair of disjoint incident edges $(S_{e_1},U_{e_1})$ and $(S_{e_2},U_{e_2})$, respectively incoming and outgoing, we now explain how to patch and define an edge supersolution $(S_{join},U_{join})$, where $U_{join}$ is defined by 
\[ U_{join} := U_{e_1} \cup \textup{fill}(U_v) \cup U_{e_2}.\]  
See \fref{nodejoin} for a graphical representation of the patching procedure; the figure will be used to describe $S_{join}$.  Let $\gamma$ be the curve obtained by starting at point (I); traversing $\partial S_{e_{1}} \cap U_{e_{1}}$ until point (II); then traversing $\partial S_{v}$ counterclockwise until point (III); traversing $\partial S_{e_{2}} \cap U_{e_{2}}$ until point (IV); then following $\partial U_{e_{2}}$ clockwise to (V); following $\partial U_{v}$ to (VI); and, finally, proceeding to (I) along $\partial U_{e_{1}}$ clockwise.  In other words, $\gamma$ is the curve that bounds the dark gray shaded region in the figure.  We let $S_{join}$ be the closure of the domain bounded by $\gamma$, i.e. the shaded region, which is well-defined by the Jordan Curve Theorem.   

Note that although \fref{nodejoin} depicts a particular configuration of nodes and edges, the construction above makes sense whenever $(S_{e_{1}},U_{e_{1}})$ is incoming at $(S_{v},U_{v})$ and $(S_{e_{2}},U_{e_{2}})$ is outgoing at $(S_{v},U_{v})$.  The reason is the points (I), (II), (III), (IV), (V), and (VI) and the paths between them are well-defined due to the definitions.  For instance, (II) is the unique point in $(\partial S_{e_{1}} \cap U_{e_{1}}) \cap (\partial S_{v} \cap U_{v})$, whose existence and uniqueness is guaranteed by Definition \ref{d.incident}.

In conclusion, we define the node join operation by
	\begin{equation*}
		\textup{node.join}((S_{e_{1}},U_{e_{1}}),(S_{v},U_{v}),(S_{e_{2}},U_{e_{2}})) := (S_{join},U_{join}).
	\end{equation*}

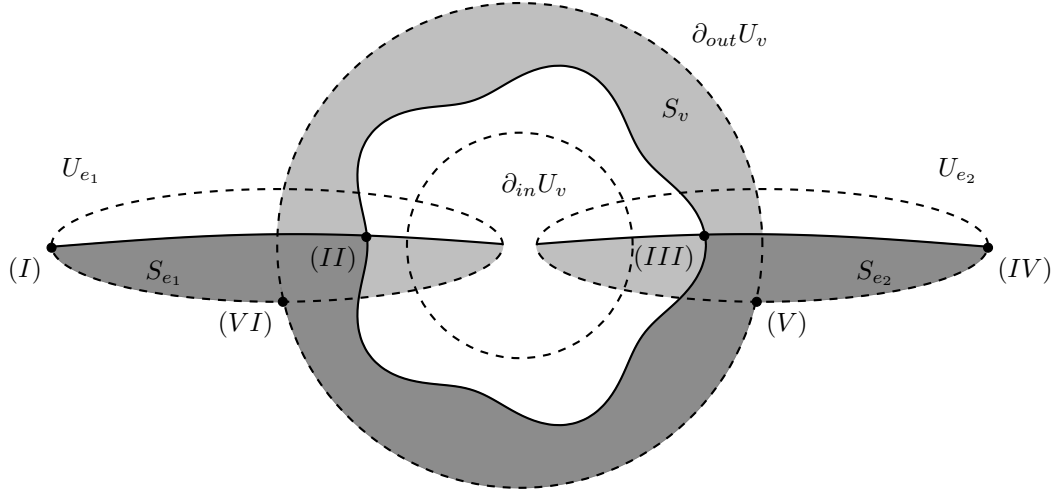
\begin{figure}[t]

\begin{tikzpicture}[scale = .75]


\begin{scope}

\draw[draw=gray!50!white,fill=gray!50!white] plot[smooth,samples=100,domain=0:8] (\x+.3,{.2*sin(180*(\x+.3)/8)}) --
plot[smooth,samples=100,domain=0:-180] ({4.3+4*cos(\x)},{sin(\x)}); 

\draw[draw=gray!50!white,fill=gray!50!white] plot[smooth,samples=100,domain=0:8] (-\x-.3,{.2*sin(180*(\x+.3)/8)}) --
plot[smooth,samples=100,domain=0:-180] ({-4.3-4*cos(\x)},{sin(\x)}); 


\draw[draw=gray!50!white,fill=gray!50!white]  
plot[smooth,samples=100,domain=0:360] ({4.3*cos(\x)},{4.3*sin(\x)}) --  
plot[smooth,samples=100,domain=360:0] ({(3+.3*cos(5*\x))*cos(\x)},{(3+.3*cos(5*\x))*sin(\x)}); 

\draw[draw=gray,fill=gray!90!white]  plot[smooth,samples=100,domain=2.4:8] (-\x-.3,{.2*sin(180*(\x+.3)/8)}) --
plot[smooth,samples=100,domain=0:-92] ({-4.3-4*cos(\x)},{sin(\x)})--
plot[smooth,samples=100,domain=190:347] ({4.3*cos(\x)},{4.3*sin(\x)})--
plot[smooth,samples=100,domain=-92:0] ({4.3+4*cos(\x)},{sin(\x)})--
plot[smooth,samples=100,domain=8:3] (\x+.3,{.2*sin(180*(\x+.3)/8)})--
plot[smooth,samples=100,domain=0:-180] ({(3+.3*cos(5*\x))*cos(\x)},{(3+.3*cos(5*\x))*sin(\x)});

\end{scope}


\draw[thick] plot[smooth,samples=100,domain=0:8] (\x+.3,{.2*sin(180*(\x+.3)/8)}) node[xshift=-1.5cm, yshift=-.35cm] {$S_{e_2}$}; 
\draw[thick, dashed] plot[smooth,samples=100,domain=0:360] ({4.3+4*cos(\x)},{sin(\x)}) node[xshift=-.4cm, yshift=1cm] {$U_{e_2}$}; 

\draw[thick] plot[smooth,samples=100,domain=0:8] (-\x-.3,{.2*sin(180*(\x+.3)/8)}) node[xshift=1.5cm, yshift=-.35cm] {$S_{e_1}$}; 
\draw[thick, dashed] plot[smooth,samples=100,domain=0:360] ({-4.3-4*cos(\x)},{sin(\x)}) node[xshift=.4cm, yshift=1cm] {$U_{e_1}$}; 

\draw[thick] plot[smooth,samples=100,domain=360:0] ({(3+.3*cos(5*\x))*cos(\x)},{(3+.3*cos(5*\x))*sin(\x)}) node[xshift=-.4cm, yshift=1.8cm] {$S_v$}; 
\draw[thick, dashed] (0,0) circle (4.3cm) node[xshift=2.8cm, yshift=2.8cm] {$\partial_{out}U_v$};
\draw[thick, dashed] (0,0) circle (2cm) node[xshift=.2cm, yshift=.8cm] {$\partial_{in}U_v$}; 

\filldraw (-8.3,-.04) circle (.08)  node[anchor = north east] {$(I)$};
\filldraw (-2.72,.15) circle (.08)  node[anchor = north east] {$(II)$};
\filldraw (3.27,.17) circle (.08)  node[anchor = north east] {$(III)$};
\filldraw (8.3,-.04) circle (.08) node[anchor = north west] {$(IV)$};
\filldraw (4.2,-1) circle (.08)  node[anchor = north west] {$(V)$};
\filldraw (-4.2,-1) circle (.08)  node[anchor = north east] {$(VI)$};
\end{tikzpicture}
\caption{Two edges intersect a node, how to patch to get a supersolution.  The darker shaded region is $S_{join}$.}
\label{f.nodejoin}
\end{figure}

\begin{lemma} \label{L: join a node and an edge}\label{l.join a node and an edge}
Let $(S_{v},U_{v})$, $(S_{e_{1}},U_{e_{1}})$, $(S_{e_{2}},U_{e_{2}})$ be piecewise smooth local supersolutions, and suppose that $(S_{e_{1}},U_{e_{1}})$ and $(S_{e_{2}},U_{e_{2}})$ are disjoint (i.e., $\overline{U}_{e_{1}} \cap \overline{U}_{e_{2}} = \emptyset$).  If $(S_v,U_v)$ is a node and $(S_{e_1},U_{e_1})$, and $(S_{e_2},U_{e_2})$ are, respectively,  incoming and outgoing edges admissibly incident on $(S_v,U_v)$, then the pair $(S_{join},U_{join})$ is a local supersolution edge. \end{lemma}

\begin{proof} First, to see that $U_{join}$ is simply connected, observe that it equals the bounded component of the simple closed curve obtained by starting at point (I) in \fref{nodejoin}; proceeding clockwise around $\partial U_{e_{1}}$ until it first intersects $\partial U_{v}$; continuing around $\partial U_{v}$ until it first intersects $\partial U_{e_{2}}$; proceeding to (IV); then continuing on to (V), (VI), and back to (I).  This follows from the definition of $U_{join}$, Definition \ref{d.incident}, and the Jordan Curve Theorem.

Further, by construction, $\partial S_{join} \cap U_{join}$ equals the part of $\partial S_{join}$ that starts at point (I), then proceeds to (II) and (III) before ending at (IV).  Parametrizing this path as $\gamma : [0,1] \to \partial S_{join}$, we note that $\gamma([0,1]) \cap \partial U = \{\gamma(0),\gamma(1)\}$.  Hence (b) in Definition \ref{D: edge} holds.

Finally, by Propositions \ref{P: boundary thing} and \ref{P: minimum}, to prove that $S_{join}$ is a supersolution in $U_{join}$, it suffices to verify that, for any $x_{0} \in \partial S_{join} \cap U_{join}$, there is an $r > 0$ such that $S_{join} \cap B(x_{0},r)$ equals a finite intersection of supersolutions inside $B(x_{0},r)$.  The only place this is delicate is where (a) $x_{0} \in \partial U_{v} \cap U_{e_{i}}$ for some $i \in \{1,2\}$ or (b) $x_{0} \in U_{v} \cap \partial U_{e_{i}}$ for some $i$.  However, in case (a), by the definition of admissibly incident, $S_{join} \cap B(x_{0},r) = S_{v} \cap B(x_{0},r)$ for $r$ small enough.  Similarly in case (b), $S_{join} \cap B(x_{0},r) = S_{e_{i}} \cap B(x_{0},r)$ for small $r$.  Therefore, $S_{join}$ is a supersolution in $U_{join}$, and  $(S_{join},U_{join})$ is a local supersolution.
\end{proof}

\subsection{$(\mathbb{Z}^{2},\mathbb{E}^{2})$-indexed local supersolution networks}  We now show how to use the supersolution patching procedure to produce supersolutions that approximate regular cube sets.  To abstract away some of the details, we start by defining a type of network that will allow us to associate a local supersolution to each edge and vertex of the graph $(\mathbb{Z}^{2},\mathbb{E}^{2})$.  

\begin{definition}  \label{D: supersolution network} We say that a family of pairs $(S_{e},U_{e})_{e \in \mathbb{E}^{2}}$ and $(S_{v},U_{v})_{v \in \mathbb{Z}^{2}}$ forms a \emph{$(\mathbb{Z}^{2},\mathbb{E}^{2})$-compatible local supersolution network} if:
	\begin{itemize}
		\item[(i)] There is an $F \in \mathbb{R}$ such that, for any $x \in \mathbb{Z}^{2}$ and any $e \in \mathbb{E}^{2}$, the pairs $(S_{x},U_{x})$ and $(S_{e},U_{e})$ are piecewise smooth local supersolutions of \eqref{e.sieqn}.
		\item[(ii)] For any $x \in \mathbb{Z}^{d}$ and $e \in \mathbb{E}^{2}$, $(S_{x},U_{x})$ and $(S_{e},U_{e})$ are smooth patches.
		\item[(iii)] For any $x \in \mathbb{Z}^{d}$, $\text{fill}(U_{x})$ is a neighborhood of $x$, and if $y \in \mathbb{Z}^{d} \setminus \{x\}$, then $\overline{\text{fill}(U_{x})} \cap \overline{\text{fill}(U_{y})} = \emptyset$.
		\item[(iv)] If $e_{1},e_{2} \in \mathbb{E}^{2}$ and the line segments $e_{1}$ and $e_{2}$ are disjoint, then $(S_{e_{1}},U_{e_{1}})$ and $(S_{e_{2}},U_{e_{2}})$ are disjoint.
		\item[(v)] For each $[x,y] \in \mathbb{E}^{2}$, the pairs $(S_{[x,y]},U_{[x,y]})$ and $(S_{x},U_{x})$ and $(S_{[x,y]},U_{[x,y]})$ and $(S_{y},U_{y})$ are admissibly incident, the former outgoing at $x$ and the latter, incoming at $y$.
		\item[(vi)] For each $[x,y] \in \mathbb{E}^{2}$, if $z + [-1/2,1/2]^{2}$ is the (unique) $\mathbb{Z}^{2*}$-measurable cube such that $[x,y]$ traverses $z + \partial [-1/2,1/2]^{2}$ clockwise, then $\overline{U}_{[x,y]} \setminus S_{[x,y]} \subset \mathbb{R}^{2} \setminus (z + [-1/2,1/2]^{2})$.
		\item[(vii)] For each $[x,y] \in \mathbb{E}^{2}$, $\textup{fill}(U_{x}) \cup U_{[x,y]} \cup \textup{fill}(U_{y})$ is a neighborhood of the line segment $[x,y]$.  
		\item[(viii)] For any $[x,y] \in \mathbb{E}^{2}$ and $v \in \mathbb{Z}^{2}$, $(S_{[x + v,y + v]},U_{[x+v,y+v]}) = (v + S_{[x,y]},v + U_{[x,y]})$ and $(S_{x +v}, U_{x + v}) = (v + S_{x},v + U_{x})$.
	\end{itemize}
\end{definition}

\begin{remark} \label{R: translation-invariance-useful} The translation invariance assumption, that is, condition (viii) in the above definition, is useful for two reasons.  First, it automatically implies that there is a constant $C > 0$ such that $\text{diam}(U_{\nu}) \leq C$ for each $\nu \in \mathbb{Z}^{2} \cup \mathbb{E}^{2}$.  Further, the functions parametrizing the curves  $\{\partial S_{\nu} \cap U_{\nu}\}_{\nu \in \mathbb{Z}^{2} \cup \mathbb{E}^{2}}$ satisfy uniform $C^{2}$ estimates.  \end{remark}

Combining our abstract supersolution patching procedure with the notion of a local supersolution network, we now describe how to approximate an arbitrary regular $\mathbb{Z}^{2*}$-measurable set by a supersolution.  We assume in the discussion that follows that we have fixed a $(\mathbb{Z}^{2},\mathbb{E}^{2})$-compatible local supersolution network consisting of edges $(S_{e},U_{e})_{e \in \mathbb{Z}^{2}}$ and nodes $(S_{v},U_{v})_{v \in \mathbb{Z}^{2}}$.  

By Theorem \ref{T: boundary curve} the boundary of every regular $\mathbb{Z}^{2*}$-measurable set is a disjoint union of simple paths.  Thus to create a supersolution approximating such a boundary, we start by describing the method to approximate a single simple path in $(\mathbb{Z}^{2},\mathbb{E}^{2})$.  To begin with, given $m, n \in \mathbb{Z}$, suppose that $\gamma: \{m,\dots,n\} \to \mathbb{Z}^{2}$ is a finite simple path in $\mathbb{E}^{2}$, simple paths with infinite length will be considered later.  We can create an edge supersolution along $\gamma$ by the following procedure:  call $\Sigma_{m} = (S_{[\gamma(m),\gamma(m + 1)]},U_{[\gamma(m),\gamma(m + 1)]})$ and then, inductively, define
\[\Sigma_{i} = \textup{node.join}(\Sigma_{i-1},(S_{\gamma(i)},U_{\gamma(i)}),(S_{[\gamma(i),\gamma(i+1)]},U_{[\gamma(i),\gamma(i+1)]}))\]
for $m + 1 \leq i \leq n - 1$.  Note that this results in a local supersolution edge
\[(S_\gamma,U_\gamma) := \Sigma_{n-1} \]
which is incident on $\gamma(m)$ and $\gamma(n)$, respectively outgoing and incoming.

When $\gamma$ is a simple closed path $(S_{\gamma},U_{\gamma})$, as defined above, joins all the nodes/edges along $\gamma$ except misses the node at $\gamma(m)$.  Of course we can simply patch this node in using basically the same procedure as before, although it is slightly awkward to phrase in our terminology.  Simply take $(S_*,U_*)$ to be the $\textup{node.join}$ of the ordered triple 
\[((S_{[\gamma(n-1),\gamma(n)]},U_{[\gamma(n-1),\gamma(n)]}),(S_{\gamma(m)},U_{\gamma(m)}),(S_{[\gamma(m),\gamma(m+1)]},U_{[\gamma(m),\gamma(m+1)]}))\]
and then this can be joined with $(S_{\gamma},U_{\gamma})$ by the patch operation
\[ (S_{\bar{\gamma}},U_{\bar{\gamma}})=\textup{patch}((S_{\gamma},U_{\gamma}),(S_{*},U_{*})).\]
In the future we will simply omit the bars and write $(S_{\gamma},U_{\gamma})$ abusing notation in the case when $\gamma$ is a simple closed path.

\begin{lemma} \label{L: boundary approximation}
If $(S_{e},U_{e})_{e \in E}$ and $(S_{v},U_{v})_{v \in V}$ define a $(\mathbb{Z}^{2},\mathbb{E}^{2})$-compatible local supersolution network, and if $\gamma : \{m,\dots,n\} \to \mathbb{Z}^{2}$ is any simple path in $(\mathbb{Z}^{2},\mathbb{E}^{2})$, possibly closed, then $(S_\gamma,U_\gamma)$ as defined in the paragraphs above is a local supersolution.  

Furthermore, if $\gamma : \mathbb{Z} \to \mathbb{Z}^{2}$ is a simple path in $(\mathbb{Z}^{2},\mathbb{E}^{2})$ and if $\gamma_{[m,n]}$ denotes the restriction of $\gamma$ to $\{m,\dots,n\}$, then the pair $(S_{\gamma},U_{\gamma})$ defined by 
	\begin{equation*}
		S_{\gamma} = \bigcup_{N = 1}^{\infty} S_{\gamma_{[-N,N]}}, \quad U_{\gamma} = \bigcup_{N = 1}^{\infty} U_{\gamma_{[-N,N]}}
	\end{equation*}
also defines a local supersolution.
\end{lemma}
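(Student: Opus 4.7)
The plan is to handle the finite-path case by induction on the length $L = n - m$ of $\gamma$, to treat the closed case by an additional edge-join step, and finally to pass to the doubly infinite case by locality of the supersolution property.

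For the base case $L = 1$, $\Sigma_{m} = (S_{[\gamma(m),\gamma(m+1)]}, U_{[\gamma(m),\gamma(m+1)]})$ is an edge supersolution by the defining hypothesis of the network. For the inductive step, assume $\Sigma_{i-1}$ is an edge; I want to invoke \lref{join a node and an edge} to conclude that $\Sigma_{i} = \textup{node.join}(\Sigma_{i-1},(S_{\gamma(i)},U_{\gamma(i)}),(S_{[\gamma(i),\gamma(i+1)]},U_{[\gamma(i),\gamma(i+1)]}))$ is again an edge supersolution. Two things must be checked: \emph{(a)} that $\Sigma_{i-1}$ and $(S_{[\gamma(i),\gamma(i+1)]},U_{[\gamma(i),\gamma(i+1)]})$ are admissibly incident on $(S_{\gamma(i)},U_{\gamma(i)})$, incoming and outgoing respectively; \emph{(b)} that $\Sigma_{i-1}$ and $(S_{[\gamma(i),\gamma(i+1)]},U_{[\gamma(i),\gamma(i+1)]})$ are disjoint. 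For (a), the orientation follows directly from property (iv) of \dref{supersolution network} applied to the last elementary edge $[\gamma(i-1),\gamma(i)]$ in $\Sigma_{i-1}$. For (b), the domain of $\Sigma_{i-1}$ unwinds to $\bigcup_{j=m}^{i-1} U_{[\gamma(j),\gamma(j+1)]} \cup \bigcup_{j=m+1}^{i-1} \textup{fill}(U_{\gamma(j)})$, and simplicity of $\gamma$ together with properties (i), (ii), (iii) ensures this is disjoint from $U_{[\gamma(i),\gamma(i+1)]}$.

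In the closed case ($\gamma(m) = \gamma(n)$), the inductive construction yields an edge $(S_\gamma, U_\gamma)$ that covers every node of $\gamma$ except the endpoint $\gamma(m)$. I then form
\[
(S_{*},U_{*}) = \textup{node.join}\bigl((S_{[\gamma(n-1),\gamma(n)]},U_{[\gamma(n-1),\gamma(n)]}),(S_{\gamma(m)},U_{\gamma(m)}),(S_{[\gamma(m),\gamma(m+1)]},U_{[\gamma(m),\gamma(m+1)]})\bigr)
\]
and combine it with $(S_\gamma, U_\gamma)$ via \lref{edgepatch}. The hypothesis \eqref{E: property we care about} is verified by localizing on a small neighborhood of each boundary point of $U_{*}$ inside $U_{\gamma}$ (and vice versa): by construction, near such a point one of the two edges restricts to a single elementary edge/node piece, so the minimality condition is inherited from the inductive step for $\Sigma_{i}$.

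For the doubly infinite case, by the finite case each $(S_{\gamma_{[-N,N]}}, U_{\gamma_{[-N,N]}})$ is an edge supersolution. Because \textup{node.join} only alters sets inside the node regions $\textup{fill}(U_v)$, the restriction of the level $(N+1)$-construction to $U_{\gamma_{[-N,N]}}$ agrees with the level-$N$ construction on the complement of the two terminal nodes; so the family is monotone and consistent, and the union $(S_\gamma, U_\gamma)$ is a local supersolution on $U_\gamma = \bigcup_N U_{\gamma_{[-N,N]}}$ since supersolution is a local property (each point in $U_\gamma$ lies in some $U_{\gamma_{[-N,N]}}$ interior). The edge property (simple connectedness of $U_\gamma$ and the two-component property of $\partial S_\gamma$ in $U_\gamma$) passes to the limit because the bounded diameters in property (vi) prevent $U_\gamma$ from accumulating at itself, while simplicity of $\gamma$ rules out self-crossings.

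The main obstacle I anticipate is the topological bookkeeping in step (b) of the inductive step: one must carefully verify that the newly enlarged domain $U_{\Sigma_i}$ remains simply connected and that $S_{\Sigma_i}$ continues to split $U_{\Sigma_i}$ and $\partial U_{\Sigma_i}$ into exactly two connected components. This relies essentially on the simplicity of $\gamma$ — were the path to revisit a vertex, $U_{\Sigma_i}$ could enclose a bounded region and fail the edge condition — and on property (v), which guarantees that $\textup{fill}(U_x) \cup U_{[x,y]} \cup \textup{fill}(U_y)$ is a neighborhood of the segment $[x,y]$ so that the joins produce a connected ``tube'' whose boundary has the expected two-component structure.
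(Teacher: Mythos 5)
Your proposal is correct and follows essentially the same route as the paper: the finite (and closed) case by iterating the node-join of Lemma \ref{L: join a node and an edge} (with the extra node.join/edge.join step for closed paths), and the doubly infinite case by the locality/stabilization argument — local finiteness of $\gamma$ together with the diameter bound (vi) — so that near any point $S_{\gamma}$ agrees with $S_{\gamma_{[-N,N]}}$ for large $N$. Your write-up is in fact somewhat more detailed than the paper's two-line proof in checking the incidence, disjointness, and edge-topology hypotheses at each join.
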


	\begin{proof}  The first statement is a direct consequence of Lemmas \ref{l.edgepatch} and \ref{L: join a node and an edge}.  For the second statement, first, observe that $\gamma$ is locally finite: that is, for each compact set $K \subset \mathbb{R}^{2}$, $\#\{i \in \mathbb{Z} \, \mid \, \gamma(i) \in K\} < \infty$.  Combining this with the diameter bound in Remark \ref{R: translation-invariance-useful}, we find that $(S_{\gamma} \cap K,U_{\gamma} \cap K) = (S_{\gamma_{[-N,N]}} \cap K, U_{\gamma_{[-N,N]}} \cap K)$ for all $N$ large enough.  Thus, as a local uniform limit of supersolutions, $S_{\gamma}$ is a supersolution in $U_{\gamma}$.   \end{proof}  
	
Now we describe how to approximate regular $\mathbb{Z}^{2*}$-measurable sets by supersolutions.  If $A$ is a regular $\mathbb{Z}^{2*}$-measurable set, it can be written as the sum of countably many connected components $A = \cup_{n \in J} A_{n}$ for some $J \subset \mathbb{N}$, where $(A_{n})_{n \in \mathbb{N}}$ are regular $\mathbb{Z}^{2*}$-measurable sets.  Let us thus start in the case that $A$ is simply connected. 

First, let $\gamma : E \to \mathbb{Z}^{2}$ be a simple path such that $\partial A = \{\gamma\}$ and $\gamma$ traverses $\partial A$ clockwise; such a path exists by Theorem \ref{T: boundary curve} and simple connectedness.  Let $(S_{\gamma},U_{\gamma})$ be the local supersolution constructed as in Lemma \ref{L: boundary approximation}.  

It will be useful to know, and follows essentially from condition (vi) in Definition \ref{D: supersolution network}, that $\partial S_{\gamma} \setminus A \subset U_{\gamma}$.  

	\begin{lemma} \label{L: supersolution thing} If $A$ is a simply connected, regular $\mathbb{Z}^{2}$-measurable set and the curve $\gamma$ and local supersolution $(S_{\gamma},U_{\gamma})$ are as constructed above, then $\partial S_{\gamma} \setminus A \subset U_{\gamma}$.  Furthermore, $\partial S_{\gamma} \cap U_{\gamma}$ is a piecewise smooth simple curve. \end{lemma}
	
The proof is deferred to Appendix \ref{A: boundary issue}.  

Next, for each cube $z + [-1/2,1/2]^{2} \subset A^{b}$, let $(S_{z},U_{z})$ denote the local supersolution
	\begin{equation*}
		S_{z} = z + [-1/2,1/2]^{2}, \quad U_{z} = z + (-1/2,1/2)^{2}.
	\end{equation*}
Letting $ \{z_{n}\}_{n \in \mathbb{N}} \subset \mathbb{Z}^{2*}$ be an enumeration of these cubes, define $(S_{A^{b}_{n}},U_{A^{b}_{n}})_{n \in \mathbb{N}}$ recursively by 
	\begin{align*}
		(S_{A^{b}_{1}},U_{A^{b}_{1}}) &= \textup{patch}((S_{\gamma},U_{\gamma}),(S_{z_{1}},U_{z_{1}})), \\
		(S_{A^{b}_{n+1}},U_{A^{b}_{n+1}}) &= \textup{patch}((S_{A^{b}_{n}},U_{A^{b}_{n}}), (S_{z_{n+1}},U_{z_{n+1}})).
	\end{align*}
Finally, let $(S_{A^{b}},U_{A^{b}})$ be the limiting supersolution with $S_{A^{b}} = \cup_{n = 1}^{\infty} S_{A^{b}_{n}}$ and $U_{A^{b}} = \cup_{n = 1}^{\infty} U_{A^{b}_{n}}$.  As in the proof of Lemma \ref{L: boundary approximation}, this defines a local supersolution.

It only remains to ``fill in" the rest of $A$.  Let $\{q_{n}\}_{n \in \mathbb{N}} \subset \mathbb{Z}^{2*}$ be an enumeration of the cubes $q + [-1/2,1/2]^{2}$ contained in $A^{\textup{int}}$.  Define $(S_{A},U_{A})$ by 
	\begin{equation*}
		S_{A} = S_{A^{b}} \cup \bigcup_{n = 1}^{\infty} (q_{n} + [-1/2,1/2]^{2}), \quad U_{A} = U_{A^{b}} \cup \bigcup_{n = 1}^{\infty} (q_{n} + [-1/2,1/2]^{2}),
	\end{equation*}
Note that, with this definition, $A^{\text{int}} \subset \text{Int}(S_{A})$.  Since each cube $q_{n} + [-1/2,1/2]^{2}$ is surrounded by cubes in $A$, one readily checks that $(S_{A},U_{A})$ is a local supersolution.  We claim that, in fact, $S_{A}$ is a supersolution in $\mathbb{R}^{2}$.

	\begin{lemma} \label{L: supersolution whole space} $S_{A}$ is a supersolution in $\mathbb{R}^{2}$ with piecewise smooth boundary. \end{lemma}
	
		\begin{proof} By Proposition \ref{P: boundary thing}, it suffices to check that, for every $x_{0} \in \partial S_{A}$, there is an open set $U$ containing $x_{0}$ such that $S_{A}$ is a supersolution in $U$.  If $x_{0} \in \partial S_{A} \cap A$, then this is immediate since $A \subset U_{A}$; $U_{A}$ is open; and $S_{A}$ is a supersolution in $U_{A}$.  Otherwise, if $x_{0} \in \partial S_{A} \setminus A$, then $x_{0} \in \partial S_{\gamma}$ by construction.  Yet Lemma \ref{L: supersolution thing} implies that $\partial S_{\gamma} \setminus A \subset U_{\gamma}$.  Since $U_{\gamma} \subset U_{A}$, we are done. \end{proof}
		
We showed how to construct a supersolution $S_{A}$ in case $A$ is a simply connected, regular $\mathbb{Z}^{2*}$-measurable set.  If, on the other hand, $A$ is only connected and not simply connected, we proceed by letting $S = S_{A_{j}}$, where $A_{j}$ is chosen so that $\mathbb{R}^{2} \setminus A_{j}$ is the $j$th connected component of $\mathbb{R}^{2} \setminus A$.  Since any compact set in $\mathbb{R}^{2}$ sees at most finitely many boundary paths of $A$, $\cap_{j} S_{A_{j}} \cap B(0,R)$ equals a finite intersection of supersolutions for any $R > 0$.  Hence $S_{A} = \cap_{j} S_{A_{j}}$ is a supersolution itself.

When $A$ is not even connected, we let $\{S_{A_{n}}\}$ be the supersolutions associated to its connected components.  By Definition \ref{D: supersolution network} and the regularity of $A$, these supersolutions are pairwise disjoint.  Hence the union $S_{A} := \cup_{n} S_{A_{n}}$ is also a supersolution.

Summing up, we have
	
	\begin{lemma} \label{L: supersolutions finally} If $(S_{e},U_{e})_{e \in \mathbb{E}^{2}}$ and $(S_{v},U_{v})_{v \in \mathbb{Z}^{2}}$ form a $(\mathbb{Z}^{2},\mathbb{E}^{2})$-compatible local supersolution network, then there is a constant $C > 0$ such that, for each regular $\mathbb{Z}^{2*}$-measurable set $A$, there is a closed set $S_{A}$, which is a supersolution in $\mathbb{R}^{2}$, such that $A^{int} \subset \textup{Int}(S_{A})$ and
		\begin{equation*}
			d_{H}(\partial S_{A},\partial A) \leq C, \quad d_{H}(S_{A},A) \leq C.
		\end{equation*}
	\end{lemma}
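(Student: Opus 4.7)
My plan is to essentially formalize the construction already sketched in the paragraphs immediately preceding the lemma, organizing it around the connectivity of $A$ and invoking the local supersolution network axioms and the previous lemma at each step.

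First, I would reduce to the case where $A$ is connected by observing that a regular $\mathbb{Z}^{2*}$-measurable set decomposes into at most countably many connected components $A = \cup_{n} A_{n}$, each itself regular. By axiom (iii) in Definition \ref{D: supersolution network} together with the fact that the boundary paths of distinct components are a positive (integer) distance apart in $\mathbb{Z}^{2}$, the supersolutions produced for distinct components will have disjoint domains, so their union will be a supersolution automatically with the desired Hausdorff bounds inherited componentwise. The same reduction handles the passage from ``connected'' to ``simply connected'' via $S_A = \cap_j S_{A_j}$ where $A_j$ is complementary to the $j$th hole, since each compact set meets only finitely many boundary loops of $A$ (by (vi)), so locally the intersection is finite.

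Next, for simply connected $A$, I parametrize $\partial A$ by a single simple (closed or doubly infinite) path $\gamma$ in $(\mathbb{Z}^{2},\mathbb{E}^{2})$, which exists by the lemma on regular $\mathbb{Z}^{2*}$-measurable sets. Applying \lref{boundary approximation}, I obtain an edge local supersolution $(S_\gamma, U_\gamma)$ tracing $\partial A$. I then enumerate the boundary cubes $\{z_n + [-\tfrac12,\tfrac12]^2\} \subseteq A^b$, define trivial local supersolutions $(S_{z_n}, U_{z_n}) = (z_n + [-\tfrac12,\tfrac12]^2, z_n + (-\tfrac12,\tfrac12)^2)$, and iteratively apply $\textup{edge.join}$ to glue them onto $(S_\gamma, U_\gamma)$. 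Finally, I adjoin all interior cubes $q_n + [-\tfrac12,\tfrac12]^2 \subseteq A^{\textup{int}}$, producing $(S_A, U_A)$ with $A \subseteq \textup{Int}(S_A)$.

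The main obstacle is verifying that the infinite edge-join operations actually produce a local supersolution, i.e.\ that the hypothesis \eqref{E: property we care about} of \lref{edgepatch} holds at every boundary point between $(S_\gamma, U_\gamma)$ and each $(S_{z_n}, U_{z_n})$ and between successive cube supersolutions. At any $x_0 \in \partial U_{z_n}$ that also lies in $\overline{U}_\gamma$ or $\overline{U}_{z_m}$, the cube supersolution equals the closed cube, so locally $S_{z_n}$ agrees with $A$ itself near $x_0$; by regularity of $A$ (Definition \ref{D: regular cube sets}(i),(ii)) the neighboring domain actually cuts off a smaller portion, giving the required inclusion. The only subtle piece is at corners where three or four cubes meet a boundary edge, and there I would use clause (ii) of regularity to locate a ``diagonal'' interior cube that witnesses the inclusion. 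Local finiteness from axiom (vi) guarantees the infinite union of joins is locally a finite join, so the supersolution property passes to the limit as in the proof of \lref{boundary approximation}.

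Finally, for the Hausdorff bounds, I would take $C = \sup_{\nu \in \mathbb{Z}^{2} \cup \mathbb{E}^{2}} \textup{diam}(U_\nu)$, which is finite by axiom (vi). By construction, every point of $\partial S_A$ lies in some $\overline{U}_\nu$ centered at a vertex or edge of $\partial A$, so $d_H(\partial S_A, \partial A) \leq C$; similarly $A \subseteq S_A \subseteq A + B_C(0)$ gives the second bound. This constant depends only on the network, not on $A$, as required.
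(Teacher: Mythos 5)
Your proposal is correct and follows essentially the same route as the paper: decompose $A$ into components and holes, trace $\partial A$ by a simple path and apply \lref{boundary approximation}, edge-join the boundary cubes, fill in the interior cubes, and read off the Hausdorff bounds from properties (v)--(vi) of the network with $C$ comparable to $\sup_{\nu}\textup{diam}(U_{\nu})$. The paper's displayed proof only records the distance bounds, leaving the patching verifications at the same level of detail you give, so there is nothing substantive to add.
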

		
		\begin{proof}  We only need to verify the distance bounds.  Since any point in $S_{A} \setminus A$ is contained in the set $S_{\gamma}$ as defined above, the diameter bound in Remark \ref{R: translation-invariance-useful} implies that $S_{A} \subset A + B_{C}$.  At the same time, $A^{int} \subset \text{Int}(S_{A})$ so, by condition (i) in the definition of regularity, we have $A \subset S_{A} + B_{\sqrt{2}}$.  This gives $d_{H}(S_{A},A) \leq C$.  Since the union of the images of the paths $\gamma$ defined above is precisely $\partial A$, the same reasoning shows $d_{H}(\partial S_{A}, \partial A) \leq C$.    \end{proof}

\subsection{Approximating smooth sets}  We still need to show that we can approximate sufficiently smooth sets by regular $\mathbb{Z}^{2*}$-measurable sets.  Toward that end, the main technical result we need follows:

	\begin{lemma} \label{L: annoying regularity argument} There is an $R > 0$ such that if $K \subset \mathbb{R}^{2}$ satisfies an interior and exterior ball condition of radius $R$, then the $\mathbb{Z}^{2*}$-measurable approximation $A_{K}$ of $K$ defined by
		\begin{align*}
			A_{K} = \bigcup \left\{ z + [-1/2,1/2]^{2} \, \mid \, z \in \mathbb{Z}^{2*}, \, \, (z + [-1/2,1/2]^{2}) \cap K \neq \emptyset \right\}
		\end{align*}
	is regular.  Furthermore, 
		\begin{equation*}
			K \subset A_{K}, \quad d_{H}(K,A_{K}) \leq R, \quad d_{H}(\partial K, \partial A_{K}) \leq R.
		\end{equation*}
	\end{lemma}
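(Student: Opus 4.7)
The plan is to choose $R$ sufficiently large (a numerical constant depending only on the geometry of the unit cube) and verify separately the three distance bounds and the two clauses of Definition \ref{D: regular cube sets}.  The key tool throughout is that the interior and exterior sphere conditions of radius $R$ force $\partial K \in C^{1,1}$ with curvature bounded by $1/R$, so on any scale $s \ll R$ the boundary is very close to a straight line and $K$ locally looks like a half-space; every geometric estimate below will come from this one fact.

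The distance bounds are essentially immediate from the construction.  $K \subseteq A_{K}$ is definitional, and every point of $A_{K}$ lies in a cube meeting $K$, giving $d_{H}(K, A_{K}) \leq \sqrt{2} \leq R$.  For $d_{H}(\partial K, \partial A_{K})$: any $x \in \partial A_{K}$ sits at the interface between a cube meeting $K$ and a cube in $A_{K}^{c}$, so $\textup{dist}(x, \partial K) \leq \sqrt{2}$, and conversely for $x \in \partial K$ the exterior ball of radius $R$ contains (for $R$ large) an entire cube in $A_{K}^{c}$, so a radial walk from the cube containing $x$ to this cube crosses $\partial A_{K}$ within distance $R$.

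For condition (i), fix $q \in A_{K}^{b}$.  Since $q$ meets $K$ but shares a face with a cube in $A_{K}^{c}$, there is a point $\bar{p} \in \partial K$ at distance $O(1)$ from $q$; let $n$ denote the inward unit normal at $\bar{p}$.  I would take $q^{*}$ to be the wired neighbor of $q$ obtained by translating by the lattice vector $v \in \{-1,0,1\}^{2} \setminus \{0\}$ maximizing $v \cdot n$, which satisfies $v \cdot n = |n_{1}| + |n_{2}| \geq 1$ for every $n \in S^{1}$.  Using the nearly-flat picture of $\partial K$ on the $5 \times 5$ block around $q$, I would then check that each of the nine cubes in the $3 \times 3$ block around $q^{*}$ either straddles $\partial K$ (hence contains a point of $K$) or lies strictly inside $K$; in either case each cube meets $K$, which is exactly the condition for $q^{*} \subseteq \textup{Int}(A_{K})$.

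Condition (ii) is trivial in the face-adjacent case (take $z'' = z$).  The only substantive case is when $z, z'$ are diagonal neighbors, say $z' = z + e_{1} + e_{2}$, where one must show that at least one of the intermediate cubes $q + e_{1}, q + e_{2}$ meets $K$.  If both missed $K$, then $K \cap (\text{$2 \times 2$ block})$ would have at least two connected components (one in each diagonal cube) with $K^{c}$ containing a corridor through the middle.  But the interior sphere condition of radius $R$ forbids $K$ from pinching across the block, and on this block (diameter $2\sqrt{2} \ll R$) the curvature bound places $\partial K$ within $O(1/R)$ of a single straight line; such a near-line can only separate the $2 \times 2$ block in a ``horizontal'' or ``vertical'' way, never diagonally.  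A short planar-geometry argument converts this into an explicit contradiction for $R$ large enough.

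I expect the main obstacle to lie in this last step, the quantitative impossibility of a diagonal separation in (ii): one must bound the tangent variation of $\partial K$ across the $2 \times 2$ block in terms of $1/R$ and enumerate the geometrically permissible separations by such nearly-straight $C^{1,1}$ curves, showing that none leaves the two diagonal cubes on one side and the two intermediate cubes on the other.  The remaining steps are routine, although some care is needed to handle degenerate configurations in which $\partial K$ is nearly tangent to a lattice cube face; these can be absorbed into the choice of $R$ using the curvature bound.
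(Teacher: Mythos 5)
Your route is genuinely different from the paper's: you argue directly and quantitatively from the curvature bound $1/R$, whereas the paper argues by contradiction, blowing up a sequence of $n$-regular counterexamples and reducing to the impossibility of the corresponding cube configuration for a half-plane. For the inclusion and the two Hausdorff bounds your argument is fine, and your treatment of condition (ii) of Definition \ref{D: regular cube sets} is sound: writing $K$ locally as $\{x \cdot \nu \geq c\}$ up to an $O(1/R)$ error, meeting the two diagonal cubes while missing the two off-diagonal ones forces $|\nu_{1}|, |\nu_{2}| = O(1/R)$, contradicting $|\nu| = 1$; this inequality has room of order $1$, so it survives the curvature error and is essentially the quantitative form of the paper's half-space contradiction.

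The gap is in your step for condition (i). The claim that every cube of the $3\times 3$ block around $q^{*} = q + v$, with $v$ maximizing $v \cdot n$, ``either straddles $\partial K$ or lies strictly inside $K$'' is false: for the cubes $q + w$ with $(v + w)\cdot n = 0$ (which occur exactly when $n$ is axis-aligned or nearly so) the relevant inequality is an equality for the half-plane, so it has only $O(1/R)$ room and is destroyed by the curvature, no matter how large $R$ is. Concretely, let $K$ be the closed disk of radius $R$ centered at $(R - \epsilon, 1/2)$ with $\epsilon \ll 1/R$. It satisfies both sphere conditions with radius $R$; the cube $q = [-1,0]\times[0,1]$ meets $K$ only in a shallow cap of height less than $1/2$, while the cubes $q + (0,\pm 1)$ miss $K$ entirely, and one of them lies in the $3\times 3$ block around $q + v$ for every admissible $v$. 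So the configurations you proposed to ``absorb into the choice of $R$'' are not eliminated by taking $R$ large; they exist at every scale. Worse, in this configuration no wired neighbor of $q$ is an interior cube at all (each candidate has a wired neighbor in column $-1$ or $-2$ missing $K$), so condition (i) genuinely fails for $A_{K}$ as literally defined. This means your (i) step cannot be repaired by sharper estimates alone: one has to modify the construction or the statement (for instance, fatten $K$ before discretizing, or discard/absorb cubes that $K$ enters only through such a shallow cap). For what it is worth, the paper's own sketch passes over exactly this near-tangential case, since ``cube meets $K_{n}$'' is not stable under the blow-up limit used there, so the compactness argument as sketched does not rule it out either.
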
  
	
		\begin{proof}  To show that (i) and (ii) in Definition \ref{D: regular cube sets} hold for $R > 0$ large enough, we argue by contradiction.  Where (ii) is concerned, if the lemma is not true, then, after translating and rotating, we can find sets $(K_{n})_{n \in \mathbb{N}}$ such that, for each $n \in \mathbb{N}$, $K_{n}$ satisfies an interior and exterior ball condition of radius $n$ and			\begin{gather} \label{E: contradiction inclusions}
				K_{n} \cap ((-1/2,1/2) + [-1/2,1/2]^{2}) \neq \emptyset, \quad K_{n} \cap ((1/2,-1/2) + [-1/2,1/2]^{2}) \neq \emptyset, \\
				K_{n} \cap ((1/2,1/2) + [-1/2,1/2]^{2}) = K_{n} \cap ((-1/2,-1/2) + [-1/2,1/2]^{2}) = \emptyset. \nonumber
			\end{gather}
		By compactness of $[-1/2,1/2]^{2}$, we conclude that there is a half-space $K_{\infty} \subset \mathbb{R}^{2}$ still satisfying \eqref{E: contradiction inclusions}.  This is readily shown to be impossible.  
		
		A similar approach establishes (i).
		
		The remaining claims follow directly from the definitions.  \end{proof}  
		
\subsection{Proofs of the Main Lemmas}  All that remains to prove Lemmas \ref{l.stationary solutions} and \ref{L: sub and supersolutions} is to show that the preceding discussion is not vacuous.  In other words, we need to show there is a medium $a \in C^{\infty}(\mathbb{T}^{2}; [1,\Lambda])$ for which a local supersolution network can be constructed.  This is true, and it will be proved in Section \ref{S: edges and nodes}.  Let us state it as its own proposition for now:

\begin{proposition} \label{P: existence of a network} There is an $a \in C^{\infty}(\mathbb{T}^{2}; [1,\Lambda])$ for which a $(\mathbb{Z}^{2},\mathbb{E}^{2})$-compatible local supersolution network (Definition \ref{D: supersolution network}) exists for some force $F > 0$.  In fact, given $\zeta > 0$, this can be done so that $\|a - 1\|_{L^{\infty}(\mathbb{T}^{d})} \leq \zeta$. \end{proposition}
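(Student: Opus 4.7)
The plan is to construct the medium explicitly and then build the edges and nodes by hand using the lattice structure. I would take
\[ a(x_1,x_2) = 1 + \alpha\bigl(\eta(x_1) + \eta(x_2) + \beta\,\eta(x_1)\eta(x_2)\bigr), \]
where $\alpha > 0$, $\beta \geq 0$ are parameters, and $\eta : \mathbb{R} \to [0,\infty)$ is a smooth, $1$-periodic, even function with a unique sharp peak at $0$. The parameters are tuned so that $1 \leq a \leq \Lambda$. The crucial feature is that at each lattice vertex $v \in \mathbb{Z}^2$, since $\eta'(0) = 0$ and $\eta''(0) < 0$, the Taylor expansion gives $\nabla a(v + r\omega) \approx \alpha\,\eta''(0)\, r\, (1 + \beta\eta(0))\,\omega$ for small $r > 0$, so $\nabla a$ points radially inward toward $v$ with magnitude proportional to $\alpha|\eta''(0)|\,r$.

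For each directed edge $e = [x,y] \in \mathbb{E}^2$ with $y - x = e_1$ (the other three directions being symmetric), I would take $U_e$ to be an open rectangular strip of half-width $w$ about the segment $[x,y]$, with its short sides trimmed to the circles $\partial B(x, r/2)$ and $\partial B(y, r/2)$, and set $S_e = \overline{U_e} \cap \{x_2 \geq y_2 + \delta\}$ for a small offset $\delta > 0$. The boundary $\partial S_e \cap U_e$ is a horizontal segment along which $\kappa = 0$, and the supersolution inequality \eqref{e.sieqn} reduces to $\partial_{x_2} a \leq -F_a$. Since $\eta'(\delta) < 0$ for small $\delta > 0$, this holds whenever $F_a \leq \alpha|\eta'(\delta)|(1 + \beta \eta(x_1 + t))$, uniformly along the chord.

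For each vertex $v \in \mathbb{Z}^2$, I would take $U_v = B(v, 2r) \setminus \overline{B(v, r/2)}$ (an open annulus) and $S_v = \overline{B(v, 2r)} \setminus B(v, r)$, so $\partial S_v \cap U_v = \partial B(v, r)$. On this inner circle, the outward normal from $S_v$ is $-\omega$ and the curvature is $-1/r$, so \eqref{e.sieqn} becomes $\omega \cdot \nabla a(v + r\omega) \leq -a/r - F_a$. From the Taylor expansion above, the left side is $\alpha\eta''(0)\,r\,(1 + \beta\eta(0)) + O(r^2)$, so the condition reduces to $\alpha|\eta''(0)|\,r^2\,(1 + \beta\eta(0)) \gtrsim \Lambda$ (uniformly in $\omega$). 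This is the main obstacle: the curvature contributes a term of order $\Lambda/r$ that must be beaten by a gradient of only order $\alpha|\eta''(0)|\,r$. The crucial observation is that the ratio $|\eta''(0)|/\eta(0)$ can be made arbitrarily large (by choosing $\eta$ to approximate a steep tent and then mollifying), while $\alpha\eta(0) \lesssim \Lambda$ is preserved; thus for $\eta$ sufficiently sharp and then $r$, $\alpha$, $\delta$, $F_a$ chosen accordingly, the node inequality holds.

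Finally, the compatibility conditions (i)--(vi) of Definition~\ref{D: supersolution network} are straightforward from the geometry. Condition (i) holds by taking $2r < 1/2$; (ii)--(iii) follow because the strips $U_e$ of distinct edges meeting at the same vertex are trimmed to disjoint angular arcs of $\partial B(v, r/2)$ and live in disjoint angular wedges of the annulus $U_v$ once $w < r/2$; (iv) holds because $\partial S_e \cap U_v$ is a straight chord from $\partial B(v, 2r)$ to $\partial B(v, r/2)$, whose complement in the annulus is simply connected; (v) is built into the construction since $\textup{fill}(U_v) = \overline{B(v,2r)}$ and the strips cover the gap between adjacent $B(\cdot,r/2)$'s; and (vi) is immediate from $\mathbb{Z}^2$-periodicity. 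The corresponding subsolutions required for Lemma~\ref{L: sub and supersolutions} are obtained by the symmetric construction (swapping $S$ with its complement and using $-F_a$).
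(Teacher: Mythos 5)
Your construction takes a genuinely different route from the paper's, and it contains a gap in the estimate that justifies the node supersolution.  Let me explain both points.

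\emph{Where the routes diverge.}  The paper's medium is $a = 1 + \varphi$ where $\varphi = \eta(d_{\mathcal{O}})$ is a function of the signed distance to a small perturbed circle $\partial\mathcal{O}$ near each vertex, so $\nabla a$ is exactly radial and of uniform magnitude $|\eta'(0)|$ on $\partial\mathcal{O}$, which is precisely the interior boundary of the node supersolution.  Its edges are circular arcs with small positive curvature, which are strict supersolutions even where $\nabla a \equiv 0$, and $\partial\mathcal{O}$ is perturbed so that the arcs meet it orthogonally---this is what lets the arcs survive the steep-gradient shell.  You use instead a tensor-product medium, straight-line ($\kappa = 0$) edges at offset $\delta$, and an unperturbed circle $\partial B(v,r)$ as node boundary.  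The straight edges can no longer rely on curvature and must instead extract $\nabla a \cdot n < 0$ along the whole lattice edge, which is why your $\eta(x_{1}) + \eta(x_{2})$ is supported along the entire lattice grid rather than only near the vertices.

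\emph{The gap.}  Your justification of the node inequality via Taylor expansion is internally inconsistent.  You want the inequality $\nabla a(v + r\omega) \cdot \omega \leq -a/r - F_{a}$ on $\partial B(v,r)$, i.e.\ at fixed radius $r$, and you reduce it by writing $\eta'(r) \approx \eta''(0)\,r$, arriving at $\alpha|\eta''(0)| r^{2}(1 + \beta\eta(0)) \gtrsim \Lambda$.  If $\eta$ is a bump of width $\rho$, then $|\eta''(0)|/\eta(0) \sim \rho^{-2}$, so this requires $r^{2} \gtrsim \rho^{2}$, i.e.\ $r \gtrsim \rho$.  But the Taylor approximation $\eta'(s) \approx \eta''(0)s$ is only valid for $s \ll \rho$; indeed, if you ``sharpen $\eta$'' by shrinking $\rho$ below $r$, you in fact get $\eta'(r) = 0$ and the inequality fails completely.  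More concretely, at $\omega = e_{1}$ the quantity $\nabla a \cdot \omega$ equals $\alpha\eta'(r)(1 + \beta\eta(0))$, and since $a(re_{1}) \geq 1 + \alpha\eta(0)$ one needs $r\alpha|\eta'(r)| \geq 1 + \alpha\eta(0) + \alpha\eta(r) + rF_a$; at the same time the uniform-in-$\theta$ version of the inequality forces $|\eta'(s)|$ to be of order $1/r$ on essentially all of $[0,r]$ (so that neither $\eta'(r\cos\theta)$ nor $\eta'(r\sin\theta)$ degenerates), which forces $\int_{0}^{r}|\eta'| = \eta(0) - \eta(r)$ to be of order $1$.  Coupling these, one finds $\alpha\eta(0) \gtrsim 1$ and hence $a(v) = 1 + \alpha\eta(0)(2 + \beta\eta(0)) \gtrsim 3$, so the construction cannot be carried out with $a$ valued in $[1,\Lambda]$ for $\Lambda$ close to $1$ --- whereas the paper explicitly performs the construction with $0 \leq \varphi \leq \zeta$ for any $\zeta > 0$.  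The mechanism of the paper's construction that avoids this trade-off is precisely that the gradient $\nabla a$ is concentrated in a thin shell about $\partial\mathcal{O}$ (so $|\nabla a|$ can be made arbitrarily large while $\|a - 1\|_{\infty}$ is arbitrarily small) and the node boundary is chosen to be that same level set $\partial\mathcal{O}$, so the gradient contributes at full magnitude uniformly around the node.  Your $\partial B(v,r)$ is not a level set of your $a$, so the inner product $\nabla a \cdot \omega$ decays at the angles where $r\cos\theta$ or $r\sin\theta$ leaves the region where $\eta'$ is large, and no amount of sharpening $\eta$ can fix this uniformly in $\theta$ without inflating $\eta(0)$.
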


Combining this with Lemmas \ref{L: supersolutions finally} and \ref{L: annoying regularity argument}, we obtain Lemmas \ref{l.stationary solutions} and \ref{L: sub and supersolutions}:

\begin{proof}[Proof of Lemma \ref{L: sub and supersolutions}]  Let $R$ be the constant of Lemma \ref{L: annoying regularity argument} and $a$ be a medium as in Proposition \ref{P: existence of a network}.  Let $F_{a} = F > 0$ be the associated force.   Given a set $K$ satisfying an interior and exterior ball condition of radius $R$, let $A_{K}$ be the approximation of Lemma \ref{L: annoying regularity argument}.  By applying Lemma \ref{L: supersolutions finally} to $a$ and $F_{a}$, we obtain a closed set $S_{K} = S_{A_{K}}$ which is a supersolution of \eqref{e.sieqn} with $F = F_{a}$.  We note that $S^{*}(K) = S_{K}$ has all the desired properties by concatenating the bounds and inclusions of the two lemmas.

To obtain a subsolution with the desired properties, we repeat the previous procedure with $K$ replaced by $\mathbb{R}^{2} \setminus K$, which still satisfies interior and exterior ball conditions of radius $R$ since $K$ does.  This leads to a closed set $S_{\mathbb{R}^{2} \setminus K}$ containing $\mathbb{R}^{2} \setminus K$ which is a supersolution of \eqref{e.sieqn} with $F = F_{a}$.  We conclude by defining the open set $S_{*}(K) = \mathbb{R}^{2} \setminus S_{\mathbb{R}^{2} \setminus K}$ and noting that $S_{*}(K)$ is a subsolution of \eqref{e.sieqn} with $F = -F_{a} < 0$.  \end{proof}  

\begin{proof}[Proof of Lemma \ref{l.stationary solutions}]  Let $a$ be a medium as in Lemma \ref{L: sub and supersolutions} and let $R$ and $F_{a}$ be the constants of that same lemma.  Given a set $K \subset \mathbb{R}^{2}$ satisfying exterior and interior ball conditions of radius $R$, and given any $F \in (-F_{a},F_{a})$, let $S^{*}(K)$ and $S_{*}(K)$ be the respective super- and subsolution guaranteed by Lemma \ref{L: sub and supersolutions}.  The conclusions of Lemma \ref{L: sub and supersolutions} readily imply that the hypotheses of Proposition \ref{p.perron method}, which is Perron's Method in this context, apply in this situation.  Thus, we obtain a solution $S \subset \mathbb{R}^{2}$ of \eqref{e.sieqn} such that $S_{*}(K) \subset S \subset S^{*}(K)$.  Further, a quick computation shows that the claimed inclusions and distance bound also hold.  \end{proof}

Finally, Corollary \ref{c.pinning corollary} follows by scaling:

\begin{proof}[Proof of Corollary \ref{c.pinning corollary}]  If $K$ is compact with $C^{2}$ boundary, then there is a $\delta > 0$ such that $K$ satisfies exterior and interior ball conditions of radius $\delta$.  Hence there is an $\ep_{0}(K) > 0$ such that $\ep^{-1} K$ satisfies exterior and interior ball conditions of radius $R$ for any $\ep \in (0, \ep_{0}(K))$.  Applying Lemma \ref{L: sub and supersolutions} and blowing down space by a factor $\ep$, we obtain, for each $\ep > 0$, a stationary subsolution $S_{*}^{\ep}$ and a stationary supersolution $S^{*,\ep}$ of \eqref{e.sieqn} such that
	\begin{equation*}
		S_{*}^{\ep} \subset K \subset S^{*,\ep}, \quad d_{H}(S_{*}^{\ep},S^{*,\ep}) \leq C\ep, \quad d_{H}(\partial S_{*}^{\ep},\partial S^{*,\ep}) \leq C\ep.
	\end{equation*}
Therefore, if $(S^{\ep}_{t})_{t \geq 0}$ is the solution flow of \eqref{e.sieqn} with initial datum $K$, then, for each $t > 0$, the comparison principle implies 
	\begin{equation*}
		S_{*}^{\ep}(K) \subset S^{\ep}_{t} \subset S^{*,\ep}.
	\end{equation*}
We then use the distance bounds on $S_{*}^{\ep}$ and $S^{*,\ep}$ to deduce those for $S^{\ep}_{t}$ and $K$.
\end{proof}

\subsection{Homogenization of the Level Set PDE}  In view of \cref{pinning corollary}, it is straightforward to conclude that solutions of the level set PDE are also pinned. 

\begin{proof}[Proof of \tref{level set pinning}]  As in Definition \ref{D: half-relaxed}, define half-relaxed limits $\bar{u}^{*} = \limsup^{*} u^{\ep}$ and $\bar{u}_{*} = \liminf_{*} u^{\ep}$.  We claim that $\bar{u}^{*} = \bar{u}_{*} = u_{0}$.  To avoid repetition, we will only prove that $\bar{u}_{*} \geq u_{0}$; similar arguments show that $u_{0} \geq \bar{u}_{*}$.  

Fix $x_0 \in \mathbb{R}^{d}$.  We will show that, for each $\delta > 0$ and $t \geq 0$, $\bar{u}_{*}(x_0,t) \geq u_{0}(x_0) - \delta$.  To see this, choose an $r > 0$ such that $B_{r}(x_0) \subset\subset \{u_{0} > u_{0}(x_0) - \delta\}$.  By comparison, for each $\ep > 0$, if $(S^{\ep}_{t})_{t \geq 0}$ are the solutions of \eref{siepeqn} with $S_{0} = B_{r}(x_0)$, then 
	\begin{equation*}
		S^{\ep}_{t} \subset \{u^{\ep}(\cdot,t) > u_{0}(x_0) - \delta\} \quad \textup{for each} \, \,  t > 0.
	\end{equation*}  
At the same time, \cref{pinning corollary} implies that, for all sufficiently small $\ep > 0$, we have
	\begin{equation*}
		B_{r/2}(x_0) \subset \{u^{\ep}(\cdot,t) > u_{0}(x_0) - \delta\}. 
	\end{equation*}
This implies $\bar{u}_{*}(x_{0},t) \geq u_{0}(x_{0}) - \delta$ for all $t \geq 0$.

 \end{proof}

\subsection{Supersolutions edge and node construction} \label{S: edges and nodes}  Below we construct a scalar field $a : \mathbb{T}^{2} \to [1,2]$ of the form $a(x) = 1 + \varphi(x)$ with the goal of constructing a $(\mathbb{Z}^{2},\mathbb{E}^{2})$-indexed local supersolution network.  In other words, we will prove Proposition \ref{P: existence of a network}.  

Let $\zeta > 0$.  In what follows, we will construct a $\varphi \in C^{\infty}(\mathbb{T}^{d})$ so that $a = 1 + \varphi$ admits a local supersolution network and 
	\begin{equation*}
		0 \leq \varphi(x) \leq \zeta \quad \textup{in} \, \, \mathbb{T}^{2}.
	\end{equation*}

We break the construction down into steps.  The idea is $\varphi$ will be a bump function (in $\mathbb{T}^{2})$ centered at zero.

\textbf{Step 1: Edge supersolutions.}   We begin with the edge supersolutions, corresponding to each directed edge of $\mathbb{Z}^{2}$ (near which the node supersolutions will be constructed) via certain (oriented) circular arcs.  The circular arcs will have small positive curvature which will create the positivity needed for the supersolution property.  We just need to ensure that the interiors of any two distinct edges are disjoint.

By translation invariance, we can fix our attention on $(0,0)$ and the $8$ directed edges incident on $(0,0)$ connecting to its $\Z^2$ neighbors $(\pm 1,0)$ and $(0,\pm 1)$.  It is convenient to start by defining the arcs connecting $(0,0)$ to $(1,0)$ and $(1,0)$ to $(0,0)$ since the other arcs are constructed the same way.  We will construct two arcs $\gamma^\pm$, $\gamma^+$ for the directed edge $[(0,0),(1,0)]$ and $\gamma^-$ for the directed edge $[(1,0),(0,0)]$, see Figures \ref{f.gamma_plus} and \ref{f.gamma_minus} for the role each plays.  

Notice that the point $(1/2,-t)$ is equidistant to $(0,0)$ and $(1,0)$, no matter the choice of $t$, the distance being $R(t) = \sqrt{t^{2} + 1/4}$.  For $t > 0$ sufficiently large (to be fixed later), let $\gamma^{+}$ be the circular arc connecting $(0,0)$ to $(1,0)$ with radius of curvature $R(t)$ and center $(1/2,-t)$ and $\gamma^{-}$ be the ``reflected" arc obtained by doing the same construction, but with center $(1/2,t)$.  Each arc $\gamma^\pm$ is oriented by the outward normal vector to the corresponding ball.  In particular each arc has small positive mean curvature
\[ \kappa_{\gamma^\pm} = \frac{1}{R(t)}.\]
Choosing $R$ large enough we can guarantee that the circular arcs are close to the line segment $[(0,0),(1,0)]$
\begin{equation}\label{e.closetoedge}
 d_H(\gamma^\pm,[(0,0),(1,0)]) \leq \frac{1}{4}.
 \end{equation}

The interiors of these arcs are clearly disjoint, being separated by the chord connecting $(0,0)$ and $(1,0)$.  Notice, further, that the angle formed between the tangent line to $\gamma^{+}$ (or $\gamma^{-}$) at $(0,0)$ and the aforementioned chord goes to zero as $t \to +\infty$.

Repeat the same construction between $(0,0)$ and each of its other $\Z^2$ nearest neighbors.  This results in $8$ distinct arcs.  By the previous observation on tangent  lines at $(0,0)$ all $8$ arcs are disjoint, except for possible intersections at their endpoints, provided the radius of curvature $R$ is chosen large enough.  This condition along with \eref{closetoedge} fixes our choice of $R$.

Analogously, for any directed lattice edge $e \in \E^2$, we define an associated arc $\gamma_{e}$ by translating the corresponding arc incident at the origin.  (Note that this hides the $\pm$ notation we have used here in Step 1 inside of the directed edge $e$.)

\begin{figure}
\includegraphics[scale=0.4]{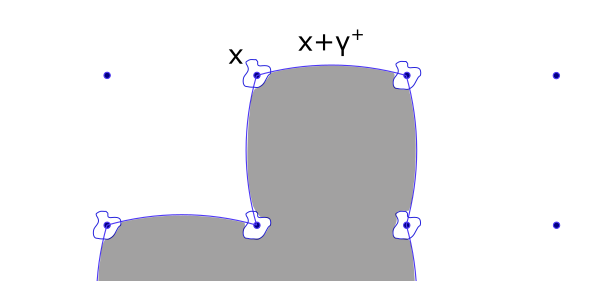}
\caption{A translate of $\gamma^{+}$ in the boundary of the supersolution of \fref{basicidea}.}
\label{f.gamma_plus}
\end{figure}

\begin{figure}
\includegraphics[scale=0.4]{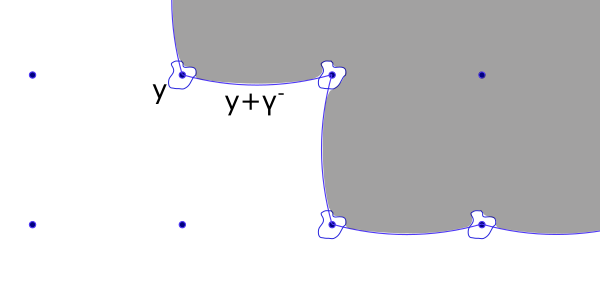} 
\caption{A translate of $\gamma^{-}$ in the boundary of the supersolution of \fref{basicidea}.}
\label{f.gamma_minus}
\end{figure}

\textbf{Step 2: Node shape.}  We add nodes to our network at each $\Z^2$ vertex.  The same construction will be repeated at each one so we restrict attention to $(0,0)$.  The key point is to create large radial gradients to allow for a node supersolution but also to enforce that the incoming/outgoing edge supersolutions are tangential to $Da$ so that the large gradients will not destroy their supersolution property.

To start with, let $B_{r}$ denote the disk centered at $(0,0)$ with radius $r \ll 1$.  Each of the eight arcs $\gamma$ incident on $(0,0)$ passes through $\partial B_{r}$ at some point.  As just discussed, the injectivity of the map sending arcs to tangent vectors shows that each arc is associated to a unique intersection point on $\partial B_r$ provided $r$ is small enough.  By making a small perturbation of $\R^2 \setminus B_{r}$, we can construct a smooth region $\mathcal{O}$ with the property that each arc emanating from $0$ intersects $\partial \mathcal{O}$ at a unique intersection point, and the normal vector of $\partial \mathcal{O}$ is parallel to the tangent line of the arc at the intersection point; see \fref{perturbation}.  We can also choose $\mathcal{O}$ to be symmetric with respect to $\pi/2$ rotations and $\R^2\setminus \mathcal{O} \subset B_{1/4}(0)$, so that $\partial \mathcal{O}$ only intersects the arcs incident on $(0,0)$ and
\[d_H(\R^2 \setminus \mathcal{O},\{0\}) \leq \frac{1}{4}. \]

\begin{figure}[t]
\begin{tikzpicture}[scale = .5]
\draw[ thick, dashed] (0,0) circle (4.3cm) node[xshift=1.8cm, yshift=1.8cm] {$\partial \mathcal{O}$}; 
\draw[ thick] (0,0) arc (109.47:95:30cm) node[anchor = north east] {$\gamma_{e_1}^+$};
\draw[ thick] (0,0) arc (289.47:275:30cm);
\draw[ thick] (0,0) arc (199.47:185:30cm);
\draw[ thick] (0,0) arc (19.47:5:30cm) ;
\draw[ thick] (0,0) arc (70.54:85:30cm);
\draw[ thick] (0,0) arc (160.54:175:30cm) ;
\draw[ thick] (0,0) arc (250.54:265:30cm) node[anchor = south east] {$\gamma_{e_1}^-$};
\draw[ thick] (0,0) arc (340.54:355:30cm);
\draw[ thick] (0,0) circle (2.5pt) node[xshift=1.5cm, yshift=0cm]{$(0,0)$}; 
\draw[ thick] (4.5,1.2) -- (4.4,1.55) -- (4.05,1.45);
\draw[ thick] (4.51,-1.19) -- (4.6,-.82) -- (4.24,-0.76);
\end{tikzpicture}
\caption{Perturbing circle so that it intersects arcs orthogonally.}
\label{f.perturbation}
\end{figure}
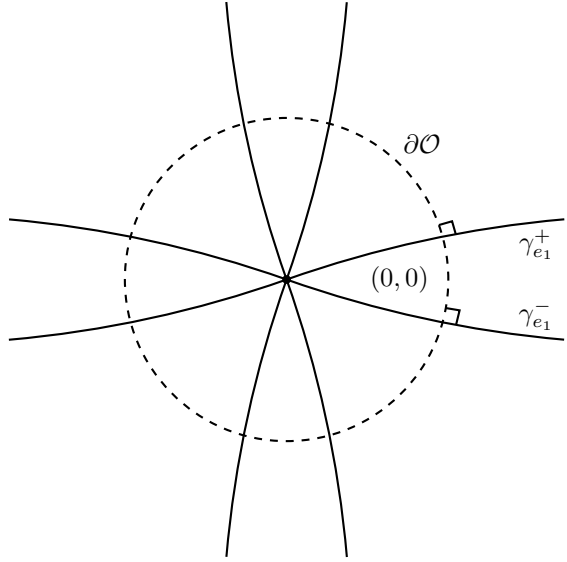

 \textbf{Step 3: Construction of $\varphi$, part 1.}  Since $\mathcal{O}$ is smooth, there is a $\nu > 0$ such that the signed distance function $d_{\mathcal{O}}$ (positive in $\mathcal{O}$ and negative outside) is smooth in $\{|d_{\mathcal{O}}| < \nu\}$.  Let $\eta : [-\nu,\nu] \to [0,1]$ be a smooth function such that 	
 	\begin{gather*}
		\eta(s) = \|\eta\|_{L^{\infty}([-\nu,\nu])} \quad \text{if} \quad s \leq -\nu/2, \quad \eta(s) = 0 \quad \text{if}  \quad s \geq \nu/2, \\
		\eta'(0) = \|\eta'\|_{L^{\infty}([-\nu,\nu])}.
	\end{gather*}  
Define $\varphi : (-1/2,1/2] \times (-1/2,1/2] \to [0,1]$ by     
	\begin{equation*}
		\varphi(x) = \left\{ \begin{array}{r l}
					\eta(d_{\mathcal{O}}(x)), & \textup{if} \, \, |d_{\mathcal{O}}(x)| \leq \nu \\
					0, & \textup{if} \, \, d_{\mathcal{O}}(x) \geq \nu \\
					\|\eta\|_{L^{\infty}([-\nu,\nu])}, & \textup{if} \, \, d_{\mathcal{O}}(x) \leq -\nu
				\end{array} \right.
	\end{equation*}
Extend $\varphi$ $\mathbb{Z}^{2}$-periodically to $\mathbb{R}^{2}$.  

Define the parameter 
\[A := \eta'(0) = \|\eta'\|_{L^{\infty}([-\nu,\nu])}, \]
which we will need to choose large below using our freedom to choose $\eta$.

Let $n_{\partial \mathcal{O}}$ be the outward pointing normal to $\mathcal{O}$ and $\kappa_{\partial \mathcal{O}}$, the mean curvature (following the sign convention $\kappa_{\partial \mathcal{O}} = - \textup{tr}(D^{2} d_{\mathcal{O}})$).  Modify $\eta$ if necessary so that $A$ satisfies
	\begin{equation} \label{E: A condition}
		A > 2 \|(\kappa_{\partial \mathcal{O}})_{-}\|_{L^\infty(\partial \mathcal{O})}.
	\end{equation}
We then find, for each $x\in \partial \mathcal{O}$,
	\begin{equation*}
		-(1 + \varphi(x)) \kappa_{\partial \mathcal{O}}(x) - D\varphi(x)\cdot n_{\partial \mathcal{O}}(x)  \leq 2 \|(\kappa_{\partial \mathcal{O}})_{-}\|_{L^\infty(\partial \mathcal{O})} - A   < 0.
	\end{equation*}
In other words $\overline{\mathcal{O}}$ is a supersolution of \eref{sieqn} for some $ F_1>0$ or, in level set form,  $u = \ind_{\mathcal{O}}$ is a supersolution of the equation
	\begin{equation*}
		-(1 + \varphi(x)) \textup{tr} \left( \left( \textup{Id} - \widehat{Du} \otimes \widehat{Du} \right) D^{2} u \right) - D\varphi(x)\cdot Du  - F_{1}|Du| \geq 0 \quad \textup{in} \, \, \mathbb{R}^{2}.
	\end{equation*}

\textbf{Step 4: Construction of $\varphi$, part 2.} We proceed to ensure that the edges of the network satisfy the necessary differential inequalities outside of $\bigcup_{k \in \mathbb{Z}^{d}} (k + \mathbb{R}^{2} \setminus \mathcal{O})$ (actually outside of a neighborhood of the closure).  Given an edge $\gamma$ in the network, orient it so that its normal vector points away from the center of the corresponding circle.  If $\mathcal{O}_{1}$ and $\mathcal{O}_{2}$ are the two regions intersecting $\gamma$ at either end, first, assume that $x \in \gamma \cap \{d_{\mathcal{O}_{1}} \geq \nu\} \cap \{d_{\mathcal{O}_{2}} \geq  \nu\}$.  It follows that $\varphi$ vanishes in a neighborhood of $x$ and, thus,
	\begin{equation*}
		- (1 + \varphi(x)) \kappa_{\gamma}(x) -  D\varphi(x)\cdot n_{\gamma}(x)  = - \kappa_{\gamma}(x) = -\frac{1}{R} < 0,
	\end{equation*}
where $R$ is the radius of curvature fixed earlier.  

It remains to check the requisite inequalities near a vertex, which we can take to be $(0,0)$ by symmetry.  Assume that $x \in \gamma \cap \{|d_{\mathcal{O}}| \leq \nu\}$.  We are only interested in the part of $\gamma$ in a small neighborhood of $\overline{\mathcal{O}}$, so as long as we can prove the requisite supersolution property for $d_{\mathcal{O}}(x)$ in a neighborhood of  $[0,\nu]$ we will be done (in particular for small negative values of $d_{\mathcal{O}}$ since values $\geq \nu$ have already been handled).  

We start at the intersection point $\bar{x} \in \gamma \cap \partial \mathcal{O}$ and work outwards.  By construction, 
\[  n_{\gamma}(\bar{x})\cdot n_{\mathcal{O}}(\bar{x})  = 0.\]
  Thus, by continuity, there is a $\nu' \in (0,\nu/2)$ such that $x \in \gamma \cap \{- \nu' \leq d_{\mathcal{O}} \leq \nu'\}$ implies 
	\begin{equation*}
		| n_{\gamma}(x) \cdot Dd_{\partial \mathcal{O}}(x)| \leq (2R A)^{-1}.
	\end{equation*}
 Hence, for such $x$, we find
	\begin{equation*}
		- (1 + \varphi(x)) \kappa_{\gamma}(x) -  D\varphi(x) \cdot n_{\gamma}(x) \leq -\frac{1}{R} + A |n_{\gamma}(x)\cdot Dd_{\partial \mathcal{O}}(x)| \leq -\frac{1}{2 R}.
	\end{equation*}

Next, we consider the case when $x \in \gamma \cap \{ \nu \geq d_{\mathcal{O}_{1}} \geq -\nu'\}$.  Recall that in the construction of $\varphi$ through $\eta$, so far we have only needed to know that $\eta'(0) = A = \|\eta'\|_{L^{\infty}([-\nu,\nu])}$ with $A$ a fixed constant satisfying \eqref{E: A condition}.  Hence, we are still free at this stage to require the following condition on $\eta$:
	\begin{equation*}
		|\eta'(s)| \leq (2R)^{-1} \quad \textup{if} \, \, s \in [-\nu,-\nu'].
	\end{equation*} 
 With this condition in hand, we find
	\begin{equation*}
		- (1 + \varphi(x)) \kappa_{\gamma}(x) -  D\varphi(x)\cdot n_{\gamma}(x)  \leq -\frac{1}{R} + |\eta'(d_{\mathcal{O}}(x))| \leq -\frac{1}{2 R}.
	\end{equation*}
	
		Also notice that the restrictions on $\eta'$ are loose enough that we can still require $\|\eta\|_{L^{\infty}([-\nu,\nu])} \leq \zeta$, where $\zeta$ was the small parameter fixed at the start of the proof.
		
To summarize, in this step of the proof, we have shown that there is a constant $F_{2} > 0$ such that, for any $x \in \gamma$ satisfying $d_{k + \mathcal{O}}(x) > -\nu'$ for all $k \in \mathbb{Z}^{2}$, we have
	\begin{equation*}
		- (1 + \varphi(x)) \kappa_{\gamma}(x) -  D\varphi(x)\cdot n_{\gamma}(x) \leq -F_{2}.
	\end{equation*}

\textbf{Defining $F$, $(S_{e},U_{e})_{e \in \mathbb{E}^{2}}$, and $(S_{v},U_{v})_{v \in \mathbb{Z}^{2}}$.}  Finally, we define the local supersolutions that comprise our network.  To begin with, let $F = \min\{F_{1},F_{2}\}$.  

Recall from condition (viii) in Definition \ref{D: supersolution network} that we require translation invariance, i.e., $(S_{\nu + x},U_{\nu + x}) = x + (S_{\nu},U_{\nu})$ for all $x \in \mathbb{Z}^{2}$ and $\nu \in \mathbb{Z}^{2} \cup \mathbb{E}^{2}$, hence we only need to construct the supersolution node $(S_{0},U_{0})$ associated with the origin and the supersolution edges $(S_{e},U_{e})$ for edges $e$ containing the origin.

Let us begin with the node $(S_{0},U_{0})$.  Recall the smooth open set $\mathcal{O}$ defined above.  Let $U_{v} = \{x \in B_{1/4}(0) \, \mid \, d_{\mathcal{O}}(x) > -c \}$ for some small $c \in (0,\nu'/2)$ to be determined and let $S_{0} = \overline{\mathcal{O}} \cap \{x \in B(0,C) \, \mid \, d_{\mathcal{O}}(x) \geq -c\}$.   

Note that $\partial S_{0} \cap U_{0} = \partial \mathcal{O}$ so $(S_{0},U_{0})$ is a local supersolution of \eqref{e.sieqn} by Step 3 of the proof and Proposition \ref{P: boundary thing}.

Next, we construct the edge supersolutions $(S_{e},U_{e})$ for edges $e$ containing the origin.  We begin with $e = [(0,0),(1,0)]$.  Let $\gamma^{+}$ be the circular arc constructed above connecting $(0,0)$ to $(1,0)$.  Let $\delta \ll 1$ and define $(S_{e},U_{e})$ as follows:
	\begin{align*}
		U_{e} &= B_{R(t) + \delta}((1/2,-t)) \cap \{x \in \mathbb{R}^{2} \, \mid \, x_{2} > -\delta, \, \, d_{\mathcal{O}}(x) > -2c, \, \, d_{(1,0) + \mathcal{O}}(x) > -2c\}, \\
		S_{e} &= \overline{B_{R(t)}((1/2,-t))} \cap \{x \in \mathbb{R}^{2} \, \mid \, x_{2} \geq -\delta, \, \, d_{\mathcal{O}}(x) \geq -2c, \, \, d_{(1,0) + \mathcal{O}}(x) \geq -2c\}.
	\end{align*}
Notice that $\partial S_{e} \cap U_{e} = \gamma \cap U_{e}$ by construction, and hence $(S_{e},U_{e})$ is a local supersolution of \eqref{e.sieqn} with $F = F_{a}$.  Furthermore, $\overline{U}_{e} \setminus S_{e}$ is disjoint from the cube $(1/2,-1/2) + [-1/2,1/2]^{2}$ (cf.\ condition (vi) in Definition \ref{D: supersolution network}).

For the remaining edges, we use symmetries of $\mathbb{Z}^{2}$.  Define the rotation map $\mathcal{I}$ and reflection map $\mathcal{R}$ by $\mathcal{I}(x_{1},x_{2}) = (-x_{2},x_{1})$ and $\mathcal{R}(x_{1},x_{2}) = (x_{1},-x_{2})$.  Let $S_{[(1,0),(0,0)]} = \mathcal{R}(S_{[(0,0),(1,0)]})$ and $U_{[(1,0)],(0,0)]} = \mathcal{R}(U_{[(0,0),(1,0)]})$.  Similarly, we define
	\begin{align*} 
		S_{[(0,0),(0,1)]} = \mathcal{I}(S_{[(0,0),(1,0)]}), \quad U_{[(0,0),(0,1)]} = \mathcal{I}(U_{[(0,0),(1,0)]}), \\
		S_{[(0,1),(0,0)]} = \mathcal{I}(S_{[(1,0),(0,0)]}), \quad U_{[(0,1),(0,0)]} = \mathcal{I}(U_{[(0,1),(0,0)]}).
	\end{align*}
The remaining edge supersolutions $(S_{e},U_{e})$ with $e \ni 0$ are prescribed by translation invariance:
	\begin{align*}
		(S_{[(-1,0),(0,0)]},U_{[(-1,0),(0,0)]}) = (-1,0) + (S_{[(0,0),(1,0)]},U_{[(0,0),(1,0)]}), \\
		(S_{[(0,0),(-1,0)]},U_{[(0,0),(-1,0)]}) = (-1,0) + (S_{[(1,0),(0,0)]},U_{[(1,0),(0,0)]}), \\
		(S_{[(0,0),(0,-1)]},U_{[(0,0),(0,-1)]}) = (0,-1) + (S_{[(0,1),(0,0)]},U_{[(0,1),(0,0)]}), \\
		(S_{[(0,-1),(0,0)]},U_{[(0,-1),(0,0)]}) = (0,-1) + (S_{[(0,0),(0,1)]},U_{[(0,0),(0,1)]}).
	\end{align*} 

It is not hard to verify that if $\delta$ and $c$ are small enough, then the constructed network satisfies the conditions in Definition \ref{D: supersolution network}.  The details are left to the reader.


\section{Pinning in a Diffuse Interface Model}\label{s.diffusepinning}

In this section, we treat the diffuse interface setting, completing a construction analogous to that in Section \ref{s.mobility}. The basic idea is straightforward, we have proven the existence of pinned super/subsolutions for a non-trivial interval of forcing parameters in the sharp interface model, this gives us the room to approximate these solutions by a diffuse interface in the natural way and maintain the strict sub/supersolution property. 

At a technical level there are two main issues that we need to address.  First, the super/subsolutions we constructed in the previous section are not smooth, they have corner-type gradient discontinuities at a discrete set of points.  

Further, as we will see, \eref{mcfa} differs from \eref{forced allen cahn} by a square root.  Hence, in what follows, we let $a$ be as in Section \ref{s.mobility} and define $\theta \in C^{\infty}(\mathbb{T}^{2})$ by
	\begin{equation} \label{E: square root}
		\theta(x) = a(x)^{2}.
	\end{equation}
	Before proceeding further, notice that the first equations in \eref{forced allen cahn} are related through the scaling $(x,t) \mapsto (\ep^{-1} x, \ep^{-2}t)$.  Accordingly, in what follows, we will be frequently interested in the unscaled equation:
	\begin{equation} \label{e.aca forced}
		\delta (u_{\delta,t} - \Delta u_{\delta}) + \theta(x) (\delta^{-1} W'(u_{\delta}) - F ) = 0 \quad \textup{in} \, \, \mathbb{R}^{2} \times (0,\infty).
	\end{equation}
	
Lastly, we need to make explicit our assumptions on $W$:
	\begin{gather}
		W \in C^{3}([-3,3];[0,\infty)), \quad \{W' = 0\} = \{-1,0,-1\}, \label{A: smooth and zeros}\\
		(-1,0) \subset \{W' > 0\}, \quad (0,1) \subset \{W' < 0\}, \label{A: shape of W}\\
		\min\{W''(-1), W''(1)\} > 0, \quad W''(0) < 0 \label{A: nondegenerate zeros}
	\end{gather}
	
	Here is the main technical result of this section, which will be the key component of the proof of Theorem \ref{t.diffuse interface pinning}:

	\begin{lemma} \label{l.stationary solutions diffuse interface}  If $a$ is as in \lref{stationary solutions}, then there are constants $\delta_{0}, \beta, \bar{F}, C > 0$ such that, for each $K \subset \mathbb{R}^{2}$ satisfying an interior and exterior ball condition with large enough radius and each $\delta \in (0,\delta_{0})$, there is a continuous, stationary supersolution $u^{+}$ of \eqref{e.aca forced} with $F = \bar{F}$ and a continuous, stationary subsolution $u^{-}$ of \eqref{e.aca forced} with $F = -\bar{F}$ such that 
		\begin{gather*}
			\{x \in \mathbb{R}^{d} \setminus K \, \mid \, \textup{dist}(x,\partial K) \geq C\} \subset \{u^{-} = -1 - \beta \delta, \, \, u^{+} = -1 + 2\beta \delta\}, \\
			 \{x \in K \, \mid \, \textup{dist}(x,\partial K) \geq C\} \subset \{u^{-} = 1 - 2\beta \delta, \, \, u^{+} = 1 + \beta \delta \}, \\
			-1 + 2 \beta \delta \leq u^{+} \leq 1 + \beta \delta, \quad -1 - \beta \delta \leq u^{-} \leq 1 - 2 \beta \delta.
		\end{gather*}
	In particular, for each $F \in [-\bar{F},\bar{F}]$, there is a stationary solution $u$ of \eqref{e.aca forced} taking values in $[-(1 + \beta \delta),1 + \beta \delta]$ such that
		\begin{gather*}
			\{x \in K \, \mid \, \textup{dist}(x,K) \geq C\} \subset \{1 - \beta \delta \leq u \leq 1 + \beta \delta\}, \\
			\{x \in \mathbb{R}^{2} \setminus K \, \mid \, \textup{dist}(x,K) \geq C\} \subset \{-(1 + \beta \delta) \leq u \leq -1 + \beta \delta\}.
		\end{gather*}
	  \end{lemma}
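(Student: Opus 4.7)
The plan is to bootstrap the strict sharp interface super/sub-solutions from \lref{stationary solutions} into diffuse interface super/sub-solutions by means of a standing-wave ansatz, retaining the strict inequality as slack that survives the diffuse limit. This slack is what produces the non-trivial forcing range $[-\bar F, \bar F]$ independent of $\delta$.

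\emph{Step 1 (base constructions).} I would invoke \lref{stationary solutions} to obtain the stationary super- and sub-solutions $S^{*}(K)$ and $S_{*}(K)$ of \eref{sieqn} with forcings $\pm F_{a}$. By inspecting the construction of Section~\ref{S: edges and nodes}, their boundaries consist of finitely many smooth pieces---edge arcs and node arcs---on each of which the strict inequality $-a\kappa - \grad a \cdot n - F_{a} \geq \eta_{0}$ holds uniformly for some $\eta_{0}>0$.

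\emph{Step 2 (standing-wave ansatz).} Near each smooth boundary piece $\gamma \subset \partial S^{*}(K)$, let $\tilde d$ denote the signed distance to $\gamma$ in the conformal metric $g_{ij} = \theta\,\delta_{ij}$, so that $|D\tilde d|^{2} = \theta$ identically, with $\tilde d > 0$ on the $S^{*}$-side. Let $q$ be the standing wave satisfying $q'' = W'(q)$, $q(\pm \infty) = \pm 1$, $q(0)=0$. In a tubular neighborhood $N_{\gamma}$, set
\[ v^{\gamma}(x) = q\bigl(\tilde d(x)/\delta\bigr) + \beta \delta, \]
and extend by the constant $1 + \beta\delta$ or $-1 + \beta \delta$ outside $N_{\gamma}$, with $\beta>0$ to be chosen.

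\emph{Step 3 (supersolution check).} A direct expansion using $|D\tilde d|^{2}=\theta$ (which exactly cancels the $\delta^{-1}$ term from $q''=W'(q)$) gives, in $N_{\gamma}$,
\[ -\delta \Delta v^{\gamma} + \theta\bigl(\delta^{-1} W'(v^{\gamma}) - \bar F\bigr) = -q'(\tilde d/\delta)\,\Delta \tilde d - \theta \bar F + O(\delta). \]
A short differential-geometric computation shows $\Delta \tilde d = -a\kappa - \grad a \cdot n$ on $\gamma$, so the leading term equals $q'(0)(a\kappa + \grad a \cdot n) - \theta \bar F$. Combining with the strict inequality of Step~1 yields a lower bound of $q'(0)(F_{a} + \eta_{0}) - \Lambda^{2} \bar F$, positive for $\bar F$ small and $\delta \in (0,\delta_{0})$. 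Far from $\gamma$, where $q(\tilde d/\delta)$ is exponentially close to $\pm 1$, the shift $\beta \delta$ combined with $W''(\pm 1)>0$ produces the required inequality, provided $\beta > \bar F/\min\{W''(-1), W''(1)\}$.

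\emph{Step 4 (patching and Perron).} I would assemble $u^{+}$ by taking the pointwise infimum of the $v^{\gamma}$ over all smooth pieces, mimicking the infimum patching of \sref{patching}; this preserves the viscosity supersolution property for the non-degenerate elliptic equation. A symmetric construction from $S_{*}(K)$ yields a subsolution $u^{-}$ for forcing $-\bar F$. Monotonicity in $F$ ensures $u^{\pm}$ remain super/sub-solutions for every $F \in [-\bar F, \bar F]$, so Perron's method (as in \pref{perron method}) produces a stationary solution of \eref{aca forced} trapped between $u^{-}$ and $u^{+}$ for each such $F$. The set-level inclusions transfer directly from the Hausdorff bounds of \lref{stationary solutions}.

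\emph{Main obstacle.} The principal technical difficulty occurs at the corner points where smooth pieces of $\partial S^{*}(K)$ meet: there $\tilde d$ fails to be $C^{2}$ and the local ansatz degenerates. This is handled just as in the sharp interface case by passing to viscosity super/sub-solutions through the infimum patching, but one must verify that the exponential decay $|q(s)\mp 1| \lesssim e^{-c|s|}$ ensures the local supersolutions $v^{\gamma}$ exceed the constant states $\pm 1 + \beta\delta$ on the boundary of their tubular neighborhoods---for which a natural choice of width is $O(\delta|\log\delta|)$. Keeping all the constants ($\eta_{0}$, the bound on $\theta$, the coercivity of $W$ at $\pm 1$) $\delta$-independent is exactly what yields a $\delta$-independent pinning interval $\bar F$.
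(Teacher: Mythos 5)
Your overall strategy (standing-wave ansatz $q(\cdot/\delta)$ built on the sharp-interface barrier of \lref{stationary solutions}, a vertical shift of order $\beta\delta$, patching, then Perron plus monotonicity in $F$) is the same as the paper's, but two steps that you treat as routine are exactly where the real work lies, and as written your construction fails there. First, the extension on the $-1$ side: you propose to define $v^{\gamma}=q(\tilde d/\delta)+\beta\delta$ in a tube and to ``extend by the constant $-1+\beta\delta$'' outside it. Since $q>-1$ everywhere, $v^{\gamma}$ never reaches that constant, so the glued function has a (small but positive) upward jump as one crosses into the tube. Such a function is not a viscosity supersolution along the gluing hypersurface: a test function touching from below at a gluing point is constrained only by the constant on the outside, so its second derivative in the normal direction is unbounded above, and $-\delta\Delta\varphi$ can be made arbitrarily negative while $\delta^{-1}\theta W'(-1+\beta\delta)-\theta F$ stays bounded. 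Taking infima does not cure this, because near the outer tube boundary the profile strictly exceeds the constant, so the inf is not locally an inf of supersolutions on a common neighborhood. This is precisely why the paper does not truncate on the $-1$ side but interpolates smoothly between $v^{\delta}$ and $-1+2\beta\delta$ with a cutoff $\lambda(\underline d)$ built from a mollified distance, and verifies the supersolution inequality in the interpolation zone \`a la Barles--Souganidis (\lref{first part}, including the $u^{\delta}_{n}$ regularization where $\lambda(\underline d)$ vanishes); the exponential decay of $q'$ and $q+1$ there is what absorbs the cutoff errors. On the $+1$ side a plain minimum with $1+\beta\delta$ works, since the profile rises strictly above that constant.

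Second, the corners. You assert the corner points are ``handled just as in the sharp interface case by infimum patching,'' but patching diffuse profiles attached to different smooth pieces requires a compatibility condition of the type \eqref{E: property we care about} at the lateral ends of the tubes, which you neither state nor verify, and the same jump problem reappears there. The paper avoids per-piece patching of diffuse profiles altogether: it works with the single signed $a$-distance $d_{E}$ to the whole sharp supersolution $E$, uses the local representation of $E$ as a finite intersection of smooth supersolutions (Property \ref{prop.locallyamin}) to show that $d_{E}$ satisfies the eikonal and curvature inequalities in the viscosity sense --- but only on one side, in $\{0<d_{E}<r\}$, because the min-of-distances formula holds only inside $E$ near a corner --- and then recenters, $d=d_{E}-r/2$, so the entire transition layer sits inside $E$ where those inequalities are valid (Proposition \ref{P: distance viscosity supersolution}). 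Your per-piece distance $\tilde d$ with a tube of width $O(\delta|\log\delta|)$ straddling $\partial S^{*}(K)$ ignores this one-sidedness and would require the supersolution inequality for the distance function on the bad side of the corners, where it can fail. Fixing your argument essentially forces you back to the paper's two devices: a one-sided, recentered global distance function, and the smooth interpolation (rather than truncation) on the $-1$ side.
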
  
	
The construction of the supersolution $u^{+}$ proceeds in three steps.  First, for a sharp interface supersolution $E$ as in \sref{mobility}, we construct a ``level set function" $d$ with the property that the interfaces $\{d = s\}$ are sharp interface supersolutions close to $\partial E$ for all $s$ close enough to zero.  The second and third steps follow \cite{barles souganidis}.  In the second step, we use $d$ and the standing wave solution of the homogeneous Allen-Cahn equation to build a diffuse interface supersolution in the domain $\{|d| < \gamma\}$ for a suitable $\gamma > 0$.  Lastly, we extend this diffuse interface supersolution to the entire space.  

The subsolution $u^{-}$ is built analogously.  These sub- and supersolutions will be used to prove that the scaled problem \eref{forced allen cahn} is pinned (\tref{diffuse interface pinning}).

By taking $K$ to be a half space, we establish the existence of plane-like stationary solutions:

\begin{remark} \label{R: plane like}  Given $e \in S^{1}$, let $K = \{x \in \mathbb{R}^{2} \, \mid \, x \cdot e \leq 0\}$ in \lref{stationary solutions diffuse interface} and let $u$ be the associated stationary solution.  If $\delta$ is small enough, then it is not hard to show that $u$ satisfies
	\begin{equation*}
		\lim_{s \to \pm \infty} \sup \left\{ |u(x) - u^{\pm}(\alpha \delta)| \, \mid \, \pm (x \cdot e) \geq s \right\} = 0,
	\end{equation*} 
where $u^{-}(\alpha \delta) < u^{+}(\alpha \delta)$ are the unique stable critical points of $W(u) + \alpha \delta u$.  Hence $u$ is a plane-like solution heteroclinic to the two spatially homogeneous stationary solutions.   \end{remark}  

\subsection{Preliminaries}  In what follows, we let $D^{a} : \mathbb{R}^{2} \times \mathbb{R}^{2} \to [0,\infty)$ be the metric induced by $a$.  Specifically, this is the function defined by 
	\begin{align*}
		D^{a}(x,y) &= \inf \left\{ \int_{0}^{T} a(\gamma(s)) \|\dot{\gamma}(s)\| \, ds \, \mid \, T > 0, \right.\\
				&\qquad \qquad \left. \gamma \in AC([0,T]; \mathbb{R}^{2}), \, \, \gamma(0) = x, \, \, \gamma(T) = y \right\}.
	\end{align*}
Recall that $D^{a}$ is a metric on $\mathbb{R}^{2}$ equivalent to the Euclidean metric.  Furthermore, $D^{a}$ is invariant under integer translations in the following sense:
	\begin{equation}
		D^{a}(x + k,y+k) = D^{a}(x,y) \quad \textup{if} \, \, x,y \in \mathbb{R}^{2}, \, \, k \in \mathbb{Z}^{d}.
	\end{equation}

Given a (non-empty) set $A \subset \mathbb{R}^{2}$, define the $a$-distance $\textup{dist}^{a}(\cdot,A) : \mathbb{R}^{2} \to [0,\infty)$ to $A$ as follows:
	\begin{equation*}
		\textup{dist}^{a}(x,A) = \inf \left\{ D^{a}(x,y) \, \mid \, y \in A \right\}.
	\end{equation*}
	
	We also introduce the Allen-Cahn one-dimensional transition front associated with the homogeneous energy function with $\theta \equiv 1$.  We call $q:\R \to \R$ to be the solution of the second order ODE
	\begin{equation} \label{E: standing wave}
		\ddot{q} = W'(q) \ \hbox{ with } \ \lim_{s \to -\infty} q(s) = -1 \ \hbox{ and } \ \lim_{s \to \infty} q(s) = 1.
	\end{equation}
	Standard computations find that
	\[ \dot{q} = \sqrt{2 W(q)} \]
	and from this first order ODE plus the previous boundary conditions at $\pm \infty$ it is easy to see
	\[ q \in (-1,1) \ \hbox{ and } \ \dot{q} > 0.\]

\subsection{Modifying the Interfaces}  Given a set $K$ satisfying exterior and interior ball conditions of radius $R$, let $E = E(K)$ be the supersolution of \eref{sieqn} constructed by the algorithm of Section \ref{s.mobility}.  Let $d_{E} : \mathbb{R}^{2} \to \mathbb{R}$ be the signed distance to $E$, that is, the function given by
	\begin{equation*}
		d_{E}(x) = \left\{ \begin{array}{r l}
							\textup{dist}^{a}(x,\mathbb{R}^{2} \setminus E), & \textup{if} \, \, x \in \overline{E}, \\
							-\textup{dist}^{a}(x,E), & \textup{if} \, \, x \in x \in \mathbb{R}^{2} \setminus E,
						\end{array} \right.
	\end{equation*}
If $E$ were smooth and compact, then it would be easy to see that, at least close to $E$, $d_{E}$ is a supersolution of a stationary level set PDE.  Our setting complicates things slightly, but not irredeemably.  

The following property about $E$ is sufficient for our immediate purposes:

\begin{property}\label{prop.locallyamin}
 There is a collection of local supersolutions $(S_i,U_i)|_{i \in I}$ of \eref{sieqn} with some positive forcing $F = F_0>0$ such that the sets $\partial S_i \cap U_{i}$ are smooth uniformly in $i$ and there is an $r>0$ so that, for all $x_0 \in \partial E$, there is a finite sub-collection $I'(x_{0}) \subset I$ such that
\[ E \cap B_r(x_0) = (\cap_{i \in I'(x_{0})} S_i ) \cap B_r(x_0).\]

\end{property}

In words, the supersolutions constructed in \sref{mobility} are, locally, an intersection of a finite number of local smooth supersolutions.  See the proofs of \lref{edgepatch} and \lref{join a node and an edge}, which show that the $\textup{patch}$ and $\textup{node.join}$ operations create a supersolution which is, locally, an intersection of the input supersolutions.  (Recall from Definition \ref{D: supersolution network} and Remark \ref{R: translation-invariance-useful} after that the ``basic building block" supersolution edges and nodes in the network are uniformly smooth.)

	\begin{proposition} \label{P: distance viscosity supersolution} There is an $r > 0$ depending on the network constructed in \sref{mobility}, but not the particular choice of $E$, such that $d_{E}$ satisfies the following viscosity inequalities:
		\begin{align}\label{e.distfuncequation}
			\|Dd_{E}\|^{2} &\geq a(x)^{2} \quad \textup{in} \, \, \{0 < d_{E} < r\}, \\
			 - a(x) \mathcal{MC}_{*}(Dd_{E}, D^{2}d_{E}) &- Da(x) \cdot Dd_{E} \geq \frac{F_0}{2}\|Dd_{E}\| \quad \textup{in} \, \, \{0 < d_{E} < r\}. \notag
		\end{align}
	\end{proposition}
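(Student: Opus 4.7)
The plan is to reduce to the smooth case via Property \ref{prop.locallyamin} and then invoke the closure principle that the minimum of viscosity supersolutions of a degenerate elliptic geometric PDE is itself a viscosity supersolution. First I would treat the eikonal inequality: since $d_{E}$ coincides in $\{0 < d_{E} < r\}$ with the $a$-distance to the closed set $\mathbb{R}^{2} \setminus E$, the standard Hamilton-Jacobi theory for intrinsic distance functions yields $\|Dd_{E}\|^{2} = a(x)^{2}$ in the viscosity sense, from which the required supersolution inequality follows immediately.

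For the curvature inequality I would localize. Fix $x_{0} \in \partial E$ and let $I'(x_{0}) \subset I$ be the finite sub-collection provided by Property \ref{prop.locallyamin}, so that $E \cap B_{r}(x_{0}) = \bigl( \bigcap_{i \in I'(x_{0})} S_{i} \bigr) \cap B_{r}(x_{0})$. Taking complements and using the definition of $\textup{dist}^{a}$, for $x$ close to $x_{0}$ inside $E$,
\begin{equation*}
d_{E}(x) \;=\; \textup{dist}^{a}(x,\mathbb{R}^{2} \setminus E) \;=\; \min_{i \in I'(x_{0})} \textup{dist}^{a}(x,\mathbb{R}^{2} \setminus S_{i}) \;=\; \min_{i \in I'(x_{0})} d_{S_{i}}(x).
\end{equation*}
So locally in $\{0 < d_{E} < r\}$, $d_{E}$ is the minimum of finitely many signed $a$-distance functions to uniformly smooth supersolution pieces $\partial S_{i}$.

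Next I would handle each smooth piece directly. Since $\partial S_{i}$ is a smooth supersolution of \eref{sieqn} with forcing $F_{0}$, a standard tubular neighborhood computation shows that the parallel surface $\{d_{S_{i}} = s\}$ at $a$-distance $s > 0$ has outward normal and curvature that depend continuously on $s$, with modulus controlled by the uniform smoothness of the pieces in the network. Consequently, for all $s$ smaller than some tubular radius $r_{0}$ depending only on the network, the parallel surface still satisfies the geometric supersolution inequality with forcing $F_{0}/2$ in place of $F_{0}$, which is precisely the second claim with $d_{E}$ replaced by $d_{S_{i}}$.

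Finally, the operator $-a \, \textup{tr}\bigl( (\textup{Id} - \widehat{Du} \otimes \widehat{Du}) D^{2} u \bigr) - Da \cdot Du - \tfrac{F_{0}}{2} \|Du\|$ is degenerate elliptic and depends on $u$ only through the geometry of its level sets, so the pointwise minimum of viscosity supersolutions is again a viscosity supersolution. Combined with the local identification $d_{E} = \min_{i} d_{S_{i}}$, this delivers the inequality at every point of $\{0 < d_{E} < r\}$, including at the singular ridge points of $d_{E}$ where several $d_{S_{i}}$ attain the minimum. The radius $r$ is then taken as the minimum of the Property \ref{prop.locallyamin} radius and the tubular radius $r_{0}$, both depending only on the network fixed in \sref{mobility} and not on $E$. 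The main obstacle I anticipate is establishing the tubular estimate uniformly in $i \in I$: this requires a uniform lower bound on the reach of $\partial S_{i}$ and a uniform modulus of continuity for its curvature, which in the end follow from the fact that the network pieces are all translates of finitely many explicit shapes (circular arcs of radius $R$ together with the perturbed disks $\partial \mathcal{O}$).
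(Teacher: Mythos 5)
Your overall strategy is the same as the paper's: localize via Property \ref{prop.locallyamin}, identify $d_{E}$ near a boundary point with the minimum of the signed $a$-distances $d_{S_{i}}$ to the uniformly smooth network pieces, check the classical inequalities (with forcing $F_{0}/2$) for each $d_{S_{i}}$ in a uniform tubular neighborhood, and conclude because a minimum of viscosity supersolutions is a supersolution. The weak point is the step you dispatch with ``taking complements and using the definition of $\textup{dist}^{a}$'': the identity $\textup{dist}^{a}(x,\mathbb{R}^{2}\setminus E)=\min_{i\in I'(x_{0})}\textup{dist}^{a}(x,\mathbb{R}^{2}\setminus S_{i})$ is \emph{not} a formal consequence of complementation, because $E\cap B_{r}(x_{0})=(\bigcap_{i}S_{i})\cap B_{r}(x_{0})$ holds only inside the ball, while both distances are global infima whose minimizing points could a priori lie outside $B_{r}(x_{0})$ (where $E$ and the $S_{i}$ bear no relation to each other, in either direction of the inequality). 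To make this rigorous you must force the minimizing points into the ball: for $x\in B_{\alpha r}(x_{0})$ one has $|d_{E}(x)|\le D^{a}(x,x_{0})\le \Lambda\alpha r$ while the $a$-distance from $x$ to $\partial B_{r}(x_{0})$ is at least $(1-\alpha)r$, so the identity holds only after shrinking to $B_{r/(1+\Lambda)}(x_{0})$; this quantitative comparison, using $1\le a\le\Lambda$, is precisely the content of the paper's argument and is also where the final radius (and its independence of $E$) comes from. As stated, your proposal asserts the conclusion of this step rather than proving it.

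A secondary point to patch: before invoking ``min of supersolutions is a supersolution'' you need every $d_{S_{i}}$ with $i\in I'(x_{0})$ to actually be a (smooth) supersolution on the ball in question, which requires the ball to sit inside the tubular neighborhood $\{|d_{S_{i}}|<r_{1}\}$ of $\partial S_{i}$. Pieces whose boundary stays far from $x_{0}$ do not satisfy this; the remedy (as in the paper) is to observe that such $i$ can be discarded from $I'(x_{0})$ without changing the local intersection, and then to take the working radius at most $r_{1}/(2\Lambda)$ so that $\partial S_{i}\cap B_{r_{2}}(x_{0})\neq\emptyset$ guarantees $B_{r_{2}}(x_{0})\subset\{|d_{S_{i}}|<r_{1}\}$. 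With these two repairs your argument coincides with the paper's proof; the eikonal inequality via the standard viscosity property of the intrinsic distance function is a fine (and slightly more direct) alternative to deriving it from the same minimum formula.
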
  
	
		\begin{proof}  
		Let $(S_i,U_i)$ be the collection of supersolutions from Property \ref{prop.locallyamin}. The $S_i$ have smooth boundary in $U_i$ uniformly in $i$ so there is $r_1>0$ such that the signed $a$-distance functions $d_{S_i}$ in the tubular neighborhoods $\{|d_{S_i}| < r_1\} \cap U_i$ are smooth and satisfy the following differential inequalities in the classical sense:
			\begin{align*}
				\|Dd_{S_i}\|^{2} &= a(x)^{2}, \\
				- a(x) \textup{tr} \left( \left(\textup{Id} - \widehat{Dd_{S_i}} \otimes \widehat{Dd_{S_i}} \right) D^{2} d_{S_i} \right) &- Da(x) \cdot Dd_{S_i} \geq \frac{F_0}{2} \|Dd_{S_i}\|.
			\end{align*}

		Let $x_0 \in \partial E$.  By Property~\ref{prop.locallyamin} there is $r_2>0$ (independent of $x_0$ and, without loss, smaller than $\frac{r_1}{2 \Lambda}$) so that
		
		\[ E \cap B_{r_2}(x_0) = ( \bigcap_{i \in I'} S_i ) \cap B_{r_2}(x_0)\] 
		for some subcollection $I' \subset I$.  We can also add the following requirement without loss: $\partial S_i \cap B_{r_2}(x_0) \neq \emptyset$ for all $I \in I'$.   With this additional property, and since $r_2 \leq \frac{r_1}{2\Lambda}$,
		\[ B_{r_2}(x_0) \subset \{|d_{S_i}| < r_1 \} \ \hbox{ for all } \ i \in I'\]
		and so $d_{S_i}$ are supersolutions of \eref{distfuncequation} in $B_{r_2}(x_0)$ for all $i \in I'$.
		
		Further note that for $x \in B_{\alpha r}(x_0)$ the unsigned distance satisfies 
		\[ |d_E(x)| \leq D^a(x,x_0) \leq \Lambda \alpha r \ \hbox{ and} \  |d_{ B_r(x_0)}(x)| \geq (1-\alpha)r\]
		and so
		\[ d_E(x) = d_{E \cap B_{r}(x_0)}(x) \ \hbox{ for } x \in B_{\frac{r}{1+\Lambda}}(x_0)\]
		Thus
		\begin{equation}\label{e.mindists}
		 d_{E}(x) =  \inf_{i \in I'} d_{S_i}(x) \ \hbox{ for } \ x \in B_{\frac{r_2}{1+\Lambda}}(x_0) \cap E.
		 \end{equation}
		 Let us call $r_3 = \frac{r_2}{1+\Lambda}$.

		  		 By the formula \eref{mindists}, and since the minimum of supersolutions is a supersolution, we find that $d_E$ is a supersolution of \eref{distfuncequation} in the region
			\[  B_{r_3}(x_0) \cap \textup{Int}(E) \] 
			 Since $x_0 \in \partial E$ was arbitrary and the radius $r_3$ did not depend on the particular $x_0$ we have that $d_E$ is a supersolution of \eref{distfuncequation} in the region
			 \[ (\partial E + B_{r_3}(0)) \cap E \supset \{ 0 < d_E(x) < r_3\}.\]
		
				\end{proof}

\subsection{Diffuse interface near $\partial E$}   

It will be convenient in what follows to recenter around $d_E = r/2$ by
	\begin{equation*}
		d(x) = d_{E}(x) -\frac{r}{2}.
	\end{equation*} 
Hence $\{-r/2 < d < r/2\} = \{0 < d_{E} < r\}$.  Note this changes none of the viscosity inequalities we have proven above since they are all invariant under addition of constants.

For the moment, let $\delta, \beta > 0$ be free variables.  Define $v_{\delta} : \{-r/2 < d < r/2\} \to [-2,2]$ by 
	\begin{equation*}
		v_{\delta}(x) = q\left(\frac{d(x)}{\delta} \right) + 2\beta \delta
	\end{equation*}
	where $q(s)$ is the solution of \eqref{E: standing wave}.
We claim that if $\delta$ and $\beta$ are chosen appropriately, then $v_{\delta}$ is a strict supersolution of \eqref{e.aca forced} in $\{-r/2 < d < r/2\}$ for sufficiently small $\alpha>0$ .  

Note that $q$ is increasing and smooth so if a smooth test function $\varphi$ touches $v_{\delta}$ from below at some point $x_0$, then $\delta q^{-1}(\varphi-2\beta \delta)$ touches $d$ from below at $x_0$.  We will compute as if $d$ is smooth, but technically one does the computations on a smooth touching test function as is standard in viscosity solution theory (also the specific $d$ under consideration is smooth at any point where it can be touched from below by a smooth test function in the neighborhood considered).  We compute
	\begin{align*}
		-\delta \Delta v_{\delta}(x) + \delta^{-1} \theta(x) W'(v_{\delta}(x)) &= - \delta^{-1} \ddot{q}\left(\frac{d(x)}{\delta}\right) \left( \|Dd(x)\|^{2} - a(x)^{2} \right) \\
		&\quad - \dot{q}\left(\frac{d(x)}{\delta}\right) \Delta d(x) + 2\beta \theta(x) W''\left(q\left(\frac{d(x)}{\delta}\right)\right)+O(\beta^2\delta)
	\end{align*}
	where the $O(\beta^2\delta)$ error term can be bounded, more precisely, by
	\[ 2\beta^2 \delta \theta(x)\sup_{[-1,1]} |W'''|.\]
Since $\|Dd\|^{2} = a^{2}$ in a neighborhood of $x$, it follows that $D^{2}d(x) Dd(x) = a(x) Da(x)$.  Thus,
	\begin{align*}
		- \Delta d(x) &= - \textup{tr} \left( \left(\textup{Id} - \widehat{Dd}(x) \otimes \widehat{Dd}(x) \right) D^{2} d(x) \right) - a(x)^{-2} D^{2} d(x) Dd(x) \cdot Dd(x) \\
		&= - \textup{tr} \left( \left( \textup{Id} - \widehat{Dd}(x) \otimes \widehat{Dd}(x) \right) D^{2} d(x) \right) - a(x)^{-1} Da(x) \cdot Dd(x) \geq \frac{F_0}{2}.
	\end{align*}  
Recalling that $W''$ is bounded from below away from $0$ in a neighborhood $N$ of $\{-1,1\}$, we can choose $\beta>0$ small so that
\[  \frac{F_0}{4}\inf_{[-1,1] \setminus N} \sqrt{2W(q)} - 2\beta \Lambda \sup_{[-1,1]} |(W'')_-| \geq 0\]
and
\[ \beta \sup_{[-1,1]} |W'''| \leq  \inf_{N} W''.\]
Then, as in \cite[Lemma 4.3]{barles souganidis}, we deduce that there is $F_1(F_0,W) > 0$ such that, for any $\delta<1$,
	\begin{align}  \notag
			- \delta \Delta v_{\delta}(x) + &\delta^{-1} \theta(x) W'(v_{\delta}(x)) \\
			\notag &\geq \frac{F_0}{2} \dot{q}\left(\frac{d(x)}{\delta}\right) + 2 \beta \theta(x) W''\left(q\left(\frac{d(x)}{\delta}\right)\right)  - 2\beta^2 \delta \theta(x)\sup_{[-1,1]} |W'''|\\
		&\geq F_1.  \label{e.choose beta}
	\end{align}
	
\begin{remark} \label{R: other energies construction}  This section is the only part of the argument where we use the specific form of \eqref{e.diffuseenergy}.  If instead we wanted to build sub- or supersolutions for the Euler-Lagrange equation associated with the energy model \eqref{E: gradient model}, the $L^2$ gradient flow is 
	\begin{equation*}
		\delta (u_{\delta,t} - \theta(x) \Delta u_{\delta} - D\theta(x) \cdot Du_{\delta}) + \delta^{-1} W'(u_{\delta}) = 0.
	\end{equation*}
Hence when we invoke an ansatz of the form $u_{\delta}(x,t) = q(\frac{d(x)}{\delta}) + \dots$, we find
	\begin{equation*}
		0 = \delta^{-1} (-\theta(x) \ddot{q}(\tfrac{d}{\delta})\|Dd\|^{2} + W'(q(\tfrac{d}{\delta}))) + \dot{q}(\tfrac{d}{\delta})(- \Delta d - D\theta(x) \cdot Dd) + \dots 
	\end{equation*}
Notice that, in this case, the highest order term suggests the identity $a(x)^{2} \|Dd(x)\|^{2} = \theta(x) \|Dd(x)\|^{2} = 1$.  Thus, the only change necessary is to replace the Riemannian metric $D^{a}$ above by $D^{a^{-1}}$ (i.e.\ interchange $a$ with $a^{-1}$).

Where \eqref{E: weight model} is concerned, since $a = \sqrt{\theta}$, the gradient flow is
	\begin{equation*}
		\delta (u_{\delta, t} - a(x) \Delta u_{\delta} - D a(x) \cdot Du_{\delta}) + \delta^{-1} a(x) W'(u_{\delta}) = 0.
	\end{equation*}
Employing the ansatz $u_{\delta}(x,t) = q(\delta^{-1}d(x)) + \dots$, we obtain
	\begin{equation*}
		0 = \delta^{-1} a(x) (-\ddot{q}(\tfrac{d}{\delta}) \|Dd\|^{2} + W'(q(\tfrac{d}{\delta}))) + \dot{q}(\tfrac{d}{\delta}) (- a(x) \Delta d - Da(x) \cdot Dd) + \dots
	\end{equation*}
Accordingly, in this case, the Euclidean distance should replace $D^{a}$ in the definition of $d$.
\end{remark}  
	
\subsection{Diffuse interface outside of $\{-r/2 < d < r/2\}$}  We proceed to extend $v_{\delta}$ to the whole space.  On the one hand, when $d \geq r/2$, the function $v_{\delta}$ as defined above is almost $1$ so we can simply take the minimum.  When $d \leq -r/2$, we interpolate between $v_{\delta}$ and $-1$ using a partition of unity. 

Most of the work is in the interpolation.  Let $\lambda : \mathbb{R} \to [0,1]$ be a smooth, increasing function such that $\lambda(u) = 0$ if $u \leq -\frac{3r}{8}$ and $\lambda(u) = 1$ if $u \geq -\frac{ r}{8}$.  We wish to define $u_{\delta} : \{-r/2 < d_{E} < r/4\} \to [-2,2]$ by 
	\begin{equation*}
		u_{\delta}(x) = \lambda(\underline{d}(x)) v_{\delta}(x) + (1 - \lambda(\underline{d}(x)))(-1 + 2\beta \delta)
	\end{equation*}
for some suitable smoothed function $\underline{d}$ approximating $d$.

	\begin{lemma}  There is a smooth function $\underline{d} : \{-r/2 < d < r/2\} \to \mathbb{R}$ with bounded first and second derivatives such that the following inclusions hold:
		\begin{equation*}
			\left\{d \leq -\frac{7r}{16}\right\} \subset \left\{\underline{d} \leq -\frac{3r}{8}\right\} \subset \left\{d \leq -\frac{3r}{8}\right\}, \\
			\left\{ d \geq -\frac{r}{8} \right\} \subset \left\{\underline{d} \geq -\frac{ r}{8}\right\} \subset \left\{d \geq -\frac{r}{4}\right\}.
		\end{equation*}
	\end{lemma}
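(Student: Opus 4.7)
The function $d = d_E - r/2$ extends naturally to all of $\R^2$ via the signed $a$-distance $d_E$. Because $1 \leq a \leq \Lambda$, the $a$-distance satisfies $D^a(x,y) \leq \Lambda \|x-y\|$, so $d$ is $\Lambda$-Lipschitz on $\R^2$. The idea is to build $\underline{d}$ by mollifying $d$ at a fixed small scale and shifting upward just enough to guarantee $\underline{d} \geq d$ pointwise.

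Fix a standard nonnegative mollifier $\eta \in C_c^\infty(B_1(0))$ with $\int \eta = 1$, let $\eta_\epsilon(y) = \epsilon^{-2}\eta(y/\epsilon)$, and define
\[ \underline{d}(x) := (d * \eta_\epsilon)(x) + \Lambda \epsilon, \qquad \epsilon > 0 \text{ to be chosen.} \]
The standard estimate $|d * \eta_\epsilon - d| \leq \Lambda \epsilon$ then yields the two-sided bound
\[ d \leq \underline{d} \leq d + 2\Lambda \epsilon \quad \text{on} \ \R^2. \]
For derivatives, $D\underline{d} = (Dd) * \eta_\epsilon$ has sup norm bounded by $\Lambda$, while $D^2\underline{d} = (Dd) * D\eta_\epsilon$ has sup norm bounded by $\Lambda \|D\eta_\epsilon\|_{L^1(\R^2)} = C\Lambda / \epsilon$. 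Since $\epsilon$ will be fixed, both derivatives are uniformly bounded.

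Next I would choose $\epsilon$ so that $2\Lambda \epsilon \leq r/16$ and check the four inclusions directly. The inclusions $\{\underline{d} \leq -3r/8\} \subset \{d \leq -3r/8\}$ and $\{d \geq -r/8\} \subset \{\underline{d} \geq -r/8\}$ follow immediately from $\underline{d} \geq d$. For the other two, $\underline{d} \leq d + r/16$ gives: if $d \leq -7r/16$ then $\underline{d} \leq -7r/16 + r/16 = -3r/8$; and if $\underline{d} \geq -r/8$ then $d \geq \underline{d} - r/16 \geq -r/8 - r/16 = -3r/16 > -r/4$.

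\textbf{Main difficulty.} There is essentially none, but the one subtlety worth flagging is that plain convolution does not by itself give the inclusions $\{\underline{d} \leq -3r/8\} \subset \{d \leq -3r/8\}$ and $\{d \geq -r/8\} \subset \{\underline{d} \geq -r/8\}$, since mollification can shift level sets in either direction by $O(\Lambda \epsilon)$. The additive shift $+\Lambda \epsilon$ is the key trick: it forces the one-sided bound $\underline{d} \geq d$, which makes those two inclusions automatic, and the remaining inclusions then absorb the full $2\Lambda \epsilon$ error into the slack between $-7r/16$ and $-3r/8$ and between $-r/8$ and $-r/4$.
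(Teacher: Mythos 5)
Your proposal is correct and is essentially the paper's own argument: the paper likewise defines $\underline{d}$ as a mollification of $d$ plus a small constant, using the Lipschitz bound $\|Dd_E\|\leq \Lambda$ to control the error and the derivatives, with the mollification scale chosen independently of $E$ and $K$. Your version simply makes the choice of constants ($c=\Lambda\epsilon$, $2\Lambda\epsilon\leq r/16$) and the verification of the four inclusions explicit.
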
    
	
		\begin{proof}  Given a mollifying family $(\rho_{\zeta})_{\zeta > 0}$, define $\underline{d}$ by 
			\begin{equation*}
				\underline{d}(x) = \int_{\mathbb{R}^{d}} d(y) \rho_{\zeta}(x - y) \, dy + c
			\end{equation*}
		for small constants $c,\zeta > 0$.  The boundedness of $\|Dd_{E}\|$ implies $\underline{d}$ has bounded first and second derivatives with bounds depending only on $\zeta$.  Further, for the same reason, $\zeta$ can be chosen independently of $E$ (or $K$).  \end{proof}  
		
It is now a more-or-less straightforward adaptation of \cite{barles souganidis} to show that $u_{\delta}$ is a supersolution in $\{-r/2 < d(x) < 0\}$.  

	\begin{lemma} \label{l.first part}  There is a $\delta_{1} > 0$ and $\bar{F}>0$ depending only on $\theta$, $W$, and the choice of the network in \sref{mobility} such that if $\delta \in (0,\delta_{1})$ is sufficiently small, then $u_{\delta}$ is a supersolution of 
		\begin{equation} \label{e.acsupersoln1}
		-\delta \Delta u_{\delta} + \delta^{-1} \theta(x) W'(u_{\delta}) \geq \bar{F} \theta(x) \quad \textup{in} \, \, \{ d(x) < r/2\}.
	\end{equation}  
  \end{lemma}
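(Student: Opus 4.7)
The plan is to verify the differential inequality for $u^{\delta}$ by splitting the domain $\{d < r/2\}$ into three subregions determined by $\lambda(\underline{d})$: the two ``good'' regions $G_{+} = \{\underline{d} \geq -r/8\}$ and $G_{-} = \{\underline{d} \leq -3r/8\}$, where $u^{\delta}$ coincides with one of its two building blocks, together with the transition region $T = \{-3r/8 < \underline{d} < -r/8\}$ where both contribute.

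On $G_{+}$ the cutoff equals $1$, so $u^{\delta} = v^{\delta}$ and the desired estimate is immediate from \eqref{e.choose beta} and $\theta \geq 1$, provided $\bar{F} \leq F_{1}$. On $G_{-}$ the cutoff equals $0$, so $u^{\delta} \equiv -1 + 2\beta\delta$ is constant, $-\delta \Delta u^{\delta} = 0$, and a Taylor expansion at $u = -1$ together with \eqref{A: nondegenerate zeros} yields
\[
    \delta^{-1} \theta(y) W'(-1 + 2\beta\delta) = \theta(y) \bigl( 2\beta W''(-1) + O(\delta) \bigr),
\]
which exceeds $\bar{F} \theta(y)$ as soon as $\bar{F} < 2\beta W''(-1)$ and $\delta$ is small.

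The main obstacle is the transition region $T$. The key observation is that by the inclusions proved for $\underline{d}$ together with $|\underline{d} - d| = O(\zeta)$, the signed distance $d$ satisfies $d \leq -\rho$ throughout $T$ for a universal $\rho > 0$ independent of $\delta$ and $K$. Since $q$ is the heteroclinic profile \eqref{E: standing wave} and $W$ is non-degenerate at $-1$, standard stable-manifold analysis for $\ddot q = W'(q)$ gives
\[
    |q(s) + 1| + |\dot q(s)| + |\ddot q(s)| \leq C e^{-c|s|} \quad \textup{as } s \to -\infty,
\]
so in $T$ we obtain the uniform bounds $|v^{\delta} + 1 - 2\beta\delta| + \delta |Dv^{\delta}| + \delta^{2} |D^{2} v^{\delta}| \leq C e^{-c\rho/\delta}$ (interpreted in the viscosity sense, using $\|Dd\|_{\infty} \leq \Lambda$ and the classical smoothness of $\underline{d}$).

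With these decay estimates, the remainder is bookkeeping. I would expand $-\delta \Delta u^{\delta}$ by the product rule as $\lambda(\underline{d})(-\delta \Delta v^{\delta}) + R_{1}$, where every term of $R_{1}$ carries at least one factor among $Dv^{\delta}$, $D^{2} v^{\delta}$, or $v^{\delta} + 1 - 2\beta\delta$ (the latter two entering through $\lambda'(\underline{d}) D\underline{d}$ and $\lambda''(\underline{d}) D\underline{d} \otimes D\underline{d} + \lambda'(\underline{d}) D^{2} \underline{d}$ acting on the difference of building blocks), and is hence $O(e^{-c\rho/\delta})$ by the previous step together with the boundedness of $D\underline{d}$ and $D^{2}\underline{d}$. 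A first-order Taylor expansion of $W'(u^{\delta})$ around the convex combination $\lambda(\underline{d}) W'(v^{\delta}) + (1 - \lambda(\underline{d})) W'(-1 + 2\beta\delta)$ produces an error controlled by $\delta^{-1} \sup|W''| \cdot |u^{\delta} - v^{\delta}| \cdot |u^{\delta} + 1 - 2\beta\delta|$, again exponentially small in $\delta$. Assembling everything yields, in $T$,
\[
    -\delta \Delta u^{\delta} + \delta^{-1} \theta W'(u^{\delta}) \geq \lambda(\underline{d}) F_{1} + (1 - \lambda(\underline{d})) \theta \cdot 2\beta W''(-1) - C e^{-c\rho/\delta},
\]
which dominates $\bar{F} \theta$ once $\bar{F}$ is fixed below $\min\{F_{1}/\|\theta\|_{\infty},\, 2\beta W''(-1)\}$ and $\delta_{1}$ is small enough to absorb the exponential error. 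The calculation is written for smooth $d$; at points where $d$ is only a viscosity supersolution (inherited from Proposition \ref{P: distance viscosity supersolution}), one tests against admissible smooth functions touching $u^{\delta}$ from below, and the monotonicity of $q$ transfers the relevant inequalities.
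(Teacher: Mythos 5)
Your decomposition and the treatment of the two outer regions coincide with the paper's proof, and your transition-region computation is essentially the Barles--Souganidis interpolation estimate that the paper invokes by citation, so the strategy is the intended one. There are, however, two technical points where the argument as written does not close, and one of them is precisely the issue the paper's proof is careful about. First, the pointwise bound $\delta^{2}|D^{2}v^{\delta}|\leq Ce^{-c\rho/\delta}$ is not available: $D^{2}v^{\delta}$ contains the term $\delta^{-1}\dot q(d/\delta)\,D^{2}d$, and $d$ is only Lipschitz (its second derivatives are not bounded near the corners of $\partial E$); Proposition \ref{P: distance viscosity supersolution} only provides viscosity inequalities. The remedy is never to differentiate $v^{\delta}$ (or $d$) directly but to run the product-rule computation on a smooth $\varphi$ touching $u^{\delta}$ from below: where $\lambda(\underline d)>0$, the function $\tilde\varphi=(\varphi-(1-\lambda(\underline d))(-1+2\beta\delta))/\lambda(\underline d)$ touches $v^{\delta}$ from below, one invokes the already-established viscosity inequality \eqref{e.choose beta}, and the cross terms involve only $D\tilde\varphi(x_{0})$ and $\tilde\varphi(x_{0})+1-2\beta\delta$, which are exponentially small because $v^{\delta}$ has exponentially small local Lipschitz constant in the transition region and $q(d/\delta)+1$ is exponentially small there.

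Second, and this is the gap the paper flags explicitly, that transfer of test functions fails at points where $\lambda(\underline d)=0$ but $u^{\delta}$ is not locally constant (the boundary of $\{\lambda(\underline d)>0\}$): you cannot divide by the cutoff there, your closing remark that ``the monotonicity of $q$ transfers the relevant inequalities'' does not cover such points, and the constant-function argument of your region $G_{-}$ only applies where $\lambda(\underline d)$ vanishes on a whole neighborhood. The paper resolves this by working with $u^{\delta}_{n}=(\lambda(\underline d)+n^{-1}\delta)v^{\delta}+(1-\lambda(\underline d))(-1-2\beta\delta)$, for which the coefficient of $v^{\delta}$ is strictly positive, proving the supersolution property for each large $n$, and passing to the limit $n\to\infty$ by stability. (Alternatively one can argue directly at such a point $x_{0}$: since $\lambda\circ\underline d\geq 0$ is smooth and vanishes at $x_{0}$, one has $u^{\delta}-u^{\delta}(x_{0})=\lambda(\underline d)\,(q(d/\delta)+1)\leq Ce^{-c\rho/\delta}|x-x_{0}|^{2}$ nearby, so any touching $\varphi$ satisfies $D\varphi(x_{0})=0$ and $D^{2}\varphi(x_{0})\leq Ce^{-c\rho/\delta}\,\textup{Id}$, and the positive term $\delta^{-1}\theta W'(-1+2\beta\delta)\approx 2\beta\theta W''(-1)$ absorbs the error.) A minor slip besides: on $G_{+}$ you need $\bar F\leq F_{1}/\|\theta\|_{L^{\infty}}$, not $\bar F\leq F_{1}$ together with $\theta\geq 1$; you do state the correct condition in your final choice of $\bar F$.
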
  
	
		\begin{proof}  
		First note that $u_{\delta} \equiv v_{\delta}$ in $\{ d(x) > -r/8\}$ so, since $v_{\delta}$ is supersolution of \eref{choose beta}, so is $u_{\delta}$ in $\{ -r/8 < d(x) < r/2\}$.  
		
		Meanwhile in $\{d(x) < -7r/16\}$ we have $u_{\delta} \equiv -1 + 2\beta \delta$ which is also supersolution of \eref{acsupersoln} for small enough $\delta$ since $W''(-1)>0$.  
		
		This leaves to check the region $\{ -7r/16 \leq d(x) \leq -r/8\}$.  Note that the supersolution property \eref{choose beta} for $v_{\delta}$ does hold in this region.

		If $d$ and, correspondingly, $v_{\delta}$ were smooth then the computation of \cite[Lemma 6]{allencahnmobility} (cf.\ \cite[Lemma 4.5]{barles souganidis}) would go through exactly to find \eref{acsupersoln1} for $\delta>0$ sufficiently small.  
		
		As is standard in viscosity solution theory we can carry over the computations which rely on differentiability to a touching test function.  The only issue is that $x \mapsto \lambda(\underline{d}(x))$ is not strictly positive so some care is required at points this function vanishes.  
		
		To address this, for $n \in \mathbb{N}$, define $u_{\delta}^{(n)}$ by 
			\begin{equation*}
				u_{\delta}^{(n)}(x) = (\lambda(\underline{d}(x)) + n^{-1} \delta) v_{\delta}(x) + (1 - \lambda(\underline{d}(x)))(-1 - 2 \beta \delta).
			\end{equation*}
			Now if $\varphi$ is a smooth test function touching $u_{\delta}^{(n)}$ from below at some point then
			\[ \tilde{\varphi}(x)=\frac{\varphi(x) - (1 - \lambda(\underline{d}(x)))(-1 - 2 \beta \delta)}{\lambda(\underline{d}(x)) + n^{-1} \delta}\]
			will touch $v_{\delta}$ from below at the same point.
	 
		Arguing as in \cite[Lemma 6]{allen cahn mobility}, we see that there is a $\delta_{1} > 0$ such that if $\delta \in (0,\delta_{1})$, then $u_{\delta}^{(n)}$ is a supersolution of \eref{acsupersoln1} as soon as $n$ is large enough.  Sending $n \to \infty$, we deduce that $u_{\delta}$ is a supersolution by stability.    
		
		As for the dependence of $\delta_{1}$, we only need to be able to eliminate the error terms in the construction above; and these depend only on $\Lambda$, $W$, and the size of the derivatives of $\underline{d}$, which are determined by $\zeta$.  \end{proof}  
		
Finally, we extend $u_{\delta}$ to a supersolution $\bar{u}_{\delta}$ as follows:
	\begin{equation*}
		\bar{u}_{\delta}(x) = \left\{ \begin{array}{r l}
								\min\left\{u_{\delta}(x), 1 + \beta \delta \right\}, & \textup{if} \, \,  d(x) <  r/2, \\
								1 + \beta \delta, & \textup{if} \, \, d(x) \geq r/2.
						\end{array} \right.
	\end{equation*}

\begin{proposition} \label{p.second part}  There are constants $\bar{\delta} > 0$ and $\bar{F} > 0$ depending on the network constructed in \sref{mobility} and on $W$, but not on $E$, such that if $\delta \in (0,\bar{\delta})$ is sufficiently small, then $\bar{u}_{\delta}$ is continuous in $\mathbb{R}^{2}$ and satisfies the following differential inequality in the viscosity sense:
	\begin{equation} \label{e.acsupersoln}
		-\delta \Delta \bar{u}_{\delta} + \delta^{-1} \theta(x) W'(\bar{u}_{\delta}) \geq \bar{F} \theta(x) \quad \textup{in} \, \, \mathbb{R}^{2}.
	\end{equation}  \end{proposition}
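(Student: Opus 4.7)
The plan is to combine three ingredients: (i) Lemma \ref{l.first part} provides that $u^{\delta}$ is a supersolution of \eqref{e.acsupersoln1} on $\{d < r/2\}$; (ii) the constant $1 + \beta \delta$ is, for $\delta$ small, a strict supersolution of the same equation on all of $\mathbb{R}^{2}$; and (iii) the minimum of two viscosity supersolutions is a viscosity supersolution. The extension rule in the definition of $\bar{u}^{\delta}$ is chosen precisely so that we can realize $\bar{u}^{\delta}$ as $\min\{u^{\delta}, 1 + \beta \delta\}$ wherever $u^{\delta}$ is defined, and as the constant $1 + \beta \delta$ elsewhere. The task is then reduced to (a) verifying the constant supersolution inequality, (b) checking that these two representations agree on an open neighborhood of $\{d = r/2\}$ so that $\bar{u}^{\delta}$ is well-defined and continuous, and (c) assembling the pieces through stability.

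First I would verify (a). Since the constant has zero Laplacian, the requirement is $\delta^{-1} \theta(y) W'(1 + \beta \delta) \geq \bar{F} \theta(y)$, i.e.\ $W'(1 + \beta \delta) \geq \bar{F} \delta$. By assumption \eqref{A: nondegenerate zeros}, $W'(1) = 0$ and $W''(1) > 0$, so Taylor expansion gives $W'(1 + \beta \delta) \geq \tfrac{1}{2} W''(1) \beta \delta$ for all $\delta \in (0, \delta_{2})$ with $\delta_{2}$ depending only on $W$ and $\beta$. Thus the constant $1 + \beta \delta$ is a supersolution of \eqref{e.acsupersoln} for any $\bar{F} \leq \tfrac{1}{2} W''(1) \beta$. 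Choosing $\bar{F}$ to be the minimum of this quantity and the $\bar{F}$ produced by Lemma \ref{l.first part}, both the constant and $u^{\delta}$ satisfy the same differential inequality with forcing $\bar{F} \theta(y)$.

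Next, for (b), observe that as $s \to +\infty$, $q(s) \to 1$, so on the region $\{d \geq r/4\}$ (where $u^{\delta}$ is simply $v^{\delta} = q(d/\delta) + 2 \beta \delta$ since the cutoff $\lambda(\underline{d})$ equals $1$ there), one has $v^{\delta}(x) \to 1 + 2\beta\delta$ uniformly as $\delta \to 0^{+}$, with exponential rate determined by the standing wave. In particular, there is $\delta_{3} > 0$ such that for $\delta \in (0, \delta_{3})$,
\begin{equation*}
    v^{\delta}(x) \geq 1 + \tfrac{3}{2} \beta \delta > 1 + \beta \delta \quad \text{whenever } d(x) \geq r/4.
\end{equation*}
Consequently, on $\{r/4 \leq d < r/2\}$ the minimum in the definition of $\bar{u}^{\delta}$ equals $1 + \beta \delta$, which matches the value prescribed on $\{d \geq r/2\}$. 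This shows that $\bar{u}^{\delta}$ is continuous across $\{d = r/2\}$, and in fact coincides with the constant $1 + \beta \delta$ on the open set $\{d > r/4\}$.

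Finally, for (c), the supersolution property is verified locally: on the open set $\{d < r/2\}$, both $u^{\delta}$ and the constant $1 + \beta \delta$ are viscosity supersolutions of \eqref{e.acsupersoln} with the same (common) $\bar{F}$, so their pointwise minimum $\bar{u}^{\delta}$ is a viscosity supersolution there by the standard stability principle. On the open set $\{d > r/4\}$, $\bar{u}^{\delta}$ is identically $1 + \beta \delta$, which is a supersolution by (a). These two open sets cover $\mathbb{R}^{2}$, and a viscosity supersolution property that holds locally at every point holds globally, so \eqref{e.acsupersoln} holds in $\mathbb{R}^{2}$ in the viscosity sense. The constants $\bar{\delta} := \min\{\delta_{1}, \delta_{2}, \delta_{3}\}$ and $\bar{F}$ depend only on the network of Section \ref{s.mobility}, $W$, $\theta$, and the constants $\beta, r$ from the construction, not on $E$. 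The only delicate point is ensuring that $\bar{F}$ from Lemma \ref{l.first part} and the one forced by the constant estimate in (a) are compatible, but both can be shrunk independently so this is a matter of taking the smaller, not a genuine obstruction.
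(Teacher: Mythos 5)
Your proposal is correct and follows essentially the same route as the paper: check that the constant $1+\beta\delta$ is a supersolution (via $W''(1)>0$), use the exponential convergence of $q$ to show $\bar{u}^{\delta}\equiv 1+\beta\delta$ on a neighborhood of $\{d= r/2\}$ (the paper uses $\{d>r/8\}$, you use $\{d>r/4\}$ — immaterial), and conclude by locality of the viscosity property together with the fact that the minimum of two supersolutions is a supersolution on $\{d<r/2\}$.
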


	\begin{proof}  
	
	Given \lref{first part} we just need to check that the constant function $1+\beta \delta$ is a supersolution of \eref{acsupersoln} and that $\bar{u}^\delta$ is identically equal to $1+\beta \delta$ in a neighborhood of $\{ d(x) = r/2\}$.

Since $W''(1) > 0$ and $\theta > 0$, the constant $1 + \beta \delta$ is a supersolution of \eref{acsupersoln} as soon as $\delta > 0$ is small enough.  
	
	If $x \in \{d(x) > \frac{r}{8}\}$ then, from the exponential convergence of $q$, 
		\begin{equation*}
			u_{\delta}(x) = v_{\delta}(x) \geq 1 - C \exp\left(-\frac{r}{8C \delta}\right) + 2\beta \delta > 1 + \beta \delta.
		\end{equation*}
	for $\delta > 0$ sufficiently small.  Therefore, $\bar{u}_{\delta} = 1 + \beta \delta$ in $\{d(x) > \frac{r}{8}\}$.

	\end{proof}    
	
The next remark puts the computations above into some context.
	
\begin{remark} \label{R: sharp interface limit} By reprising the arguments just presented, one can show that, as $\delta \to 0^{+}$, solutions of the Cauchy problem
	\begin{equation}
		\left\{ \begin{array}{r l}
				\delta \left(u_{\delta, t} - \Delta u_{\delta} \right) - \delta^{-1} \theta(x) W'(u_{\delta}) = 0 & \textup{in} \, \, \mathbb{R}^{2} \times (0,\infty), \\
				u_{\delta} = u_{0} & \textup{on} \, \, \mathbb{R}^{2} \times \{0\},
			\end{array} \right.
	\end{equation}
concentrate along interfaces whose motion is governed by the level set PDE
	\begin{equation*}
		\left\{ \begin{array}{r l}
				a(x) u_{*,t} - a(x) \textup{tr} \left( \left(\textup{Id} - \widehat{Du_{*}} \otimes \widehat{Du_{*}} \right) D^{2} u_{*} \right) - Da(x) \cdot Du_{*} = 0 & \textup{in} \, \, \mathbb{R}^{2} \times (0,\infty), \\
				u_{*} = u_{0} & \textup{on} \, \, \mathbb{R}^{2} \times \{0\}.
			\end{array} \right.
	\end{equation*}
This can be seen using the ansatz $u_{\delta}(x,t) = q(\delta^{-1} d(x,t))$, where $d$ is the signed distance to $\{u_{*}(\cdot,t) = 0\}$ with respect to the Riemannian metric $D^{a}$ above.  

  Similarly, arguing as in \cite{MR1969814}, one can show that the energy \eref{diffuseenergy} $\Gamma$-converges to \eref{surfaceenergy} as $\delta \to 0^{+}$. \end{remark}

\subsection{Proof of \lref{stationary solutions diffuse interface}}  The computations of the previous three sections readily lead to a proof of the main result on stationary solutions with non-zero forcing.

\begin{proof}[Proof of \lref{stationary solutions diffuse interface}]  Given $K$, \lref{stationary solutions} furnishes a stationary supersolution $E$ of \eref{sieqn} with positive forcing $F_0>0$ such that $K \subset E$ and $d_{H}(K,E) + d_{H}(\partial K, \partial E) \leq C$.  The arguments of the previous subsection show that, for $\delta > 0$ small enough depending only on the coefficient $a$, there is a stationary supersolution $u^{+}$ of \eqref{e.aca forced} with $\alpha = \bar{F}>0$ such that 
	\begin{equation*}
		\left\{ \begin{array}{r l}
			u^{+} = 1 + \beta \delta & \textup{in} \, \, \{x \in E \, \mid \, \textup{dist}(x,\partial E) > r\}, \\
		u^{+} \leq -1 + \beta \delta & \textup{in} \, \, \mathbb{R}^{2} \setminus E.
		\end{array} \right.
	\end{equation*}
We are taking $r < 1$ so we have
	\begin{gather*}
		\{x \in K \, \mid \, \textup{dist}(x,\partial K) \geq C + 1\} \subset \{u^{+} = 1 + \beta \delta\}, \\
		\{x \in \mathbb{R}^{2} \setminus K \, \mid \, \textup{dist}(x,\partial K) \geq C + 1\} \subset \{u^{+} \leq -1 + \beta \delta\}.
	\end{gather*}

As in the sharp interface setting, the existence of supersolutions implies that of subsolutions.  To see this, notice that $u$ is a subsolution of \eqref{e.aca forced} if and only if $-u$ is a supersolution of \eqref{e.aca forced} with $W'$ replaced by the function $u \mapsto -W'(-u)$ and $\alpha$ replaced by $-\alpha$.  This does not change $a$ (and the equation for $E$ is correspondingly changed) so the construction goes through.

Finally, given $\alpha \in [-\bar{F},\bar{F}]$, we construct a stationary solution $u$ with the desired properties employing Perron's Method with $u^{+}$ and $u^{-}$ serving as barriers.  \end{proof}

\subsection{Sharp Interface Limit}  Using the stationary solutions furnished by \lref{stationary solutions diffuse interface}, we now prove that, for any sufficiently small external force $\alpha \in [-\bar{F},\bar{F}]$, the macroscopic interfaces associated with \eqref{e.aca forced} are pinned, i.e. we prove \tref{diffuse interface pinning}.

  In view of the assumptions \eqref{A: smooth and zeros}, \eqref{A: shape of W}, \eqref{A: nondegenerate zeros} on $W$, there is $\alpha_0>0$ so that for any $\alpha \in [-\alpha_0,\alpha_0]$ the perturbed potential $W_{\alpha}$ given by $W_{\alpha}(u) = W(u) - \alpha u$ satisfies the same assumptions.  This follows directly from the implicit function theorem.  

In particular, for $\alpha \in [-\alpha_{0},\alpha_{0}]$, we can let $u^{-}(\alpha) < u^{0}(\alpha) < u^{+}(\alpha)$ denote the critical points of $W_{\alpha}$ in $[-3,3]$, and $W_{\alpha}$ satisfies
		\begin{gather*}
			\{W'_{\alpha} = 0\} = \{u^{-}(\alpha),u^{0}(\alpha),u^{+}(\alpha)\}, \quad W''_{\alpha}(u^{\pm}(\alpha)) > 0, \quad W''_{\alpha}(u^{0}(\alpha)) < 0, \\
			(u^{-}(\alpha),u^{0}(\alpha)) \subset \{W'_{\alpha} > 0\}, \quad (u^{0}(\alpha), u^{+}(\alpha)) \subset \{W'_{\alpha} < 0\},
		\end{gather*} 
		and, for each $j \in \{+,-,0\}$,
		\[ |u^j(\alpha) - u^j(0)| \leq C\alpha. \]
Note that this implies there is an $F_{1} \in [0,\alpha_{0})$ (depending on $\beta$ from Lemma \ref{l.stationary solutions diffuse interface} and $C$ from the previous line) such that, for $\delta<1$ and $F \in [-F_{1},F_{1}]$, we have
	\begin{equation*}
		-(1 + \beta \delta) < u^{-}(F\delta) < -1 + \beta \delta < u^{0}(F\delta) < 1 - \beta \delta < u^{+}(F\delta) < 1 + \beta \delta.
	\end{equation*}
	
In the proof that follows, we will invoke the next ``initialization" result, which ensures that solutions of \eqref{e.aca forced} concentrate at the minimizers $u^{+}(\alpha)$ and $u^{-}(\alpha)$:

	\begin{proposition} \label{p.initialization} Fix an $a \in C(\mathbb{T}^{2}; [1,\Lambda])$, $\delta \in (0,1)$, and  $F \in [-F_{1},F_{1}]$.  Suppose that $y_{0} \in \mathbb{R}^{2}$, $r_{0} > 0$, $u_{0} \in UC(\mathbb{R}^{2}; [-3,3])$, and $B_{r_{0}}(y_{0}) \subset \{u_{0} > u^{0}(F\delta)\}$.  For each $\nu \in (0,r_{0})$, there are constants $\tau_{\nu}, \ep_{\nu} > 0$ such that if $(u^{\ep})_{\ep > 0}$ are the solutions of \eref{forced allen cahn}, then, for each $\ep \in (0,\ep_{\nu})$,
		\begin{equation*}
			u^{\ep}(\cdot,\tau_{\nu} \ep^{2} |\log(\ep)|) \geq (u^{+}(F\delta) - \ep) \ind_{B_{\nu}(y_{0})} -(1 + \beta \delta) \ind_{\mathbb{R}^{2} \setminus B_{\nu}(y_{0})} \quad \textup{in} \, \, \mathbb{R}^{2}.
		\end{equation*}\end{proposition}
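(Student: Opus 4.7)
The plan is to construct a radial subsolution of \eqref{e.forced allen cahn} that tracks the leading-order reaction ODE, with a cushion absorbing the diffusive error on the $\ep^{2}|\log\ep|$ time scale. By uniform continuity of $u_{0}$ and compactness, pick $c>0$ and $r_{1}\in(\nu,r_{0})$ with $u_{0} \geq u^{0}(F\delta)+c$ on $\overline{B_{r_{1}}(y_{0})}$. Rescaling time by $\tau = \ep^{-2}t$, the PDE becomes
\begin{equation*}
\delta u^{\ep}_{\tau} - \ep^{2}\delta \Delta u^{\ep} + \theta(x/\ep)\bigl(\delta^{-1}W'(u^{\ep}) - F\bigr) = 0,
\end{equation*}
an $\ep^{2}$-perturbation of the spatially frozen reaction ODE. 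Writing $W_{F\delta}(u)=W(u)-F\delta u$, and because $\theta \geq 1$ and $W'_{F\delta}(V) < 0$ on $(u^{0}(F\delta), u^{+}(F\delta))$, the solution of $\delta^{2}\dot V = -W'_{F\delta}(V)$ with $V(0) = u^{0}(F\delta) + c/2$ serves as a spatially-homogeneous subsolution when the Laplacian is ignored, and by the quadratic behavior of $W'_{F\delta}$ at $u^{+}(F\delta)$ it reaches $u^{+}(F\delta) - \ep$ at a time $\tau_{*}(\ep) \leq C|\log \ep|$.

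To localize near $y_{0}$ and absorb the Laplacian, I would take
\begin{equation*}
\phi(x,\tau) = \max\bigl\{V(\tau) - \xi(|x-y_{0}|) - A\ep^{2}\tau,\, -(1+\beta\delta)\bigr\},
\end{equation*}
where $\xi:[0,r_{1})\to[0,\infty)$ is smooth and radial, vanishes on $[0,\nu]$, and grows sharply toward $r_{1}$; the constant $A>0$ absorbs $\|\xi''\|_{\infty}$. The constant $-(1+\beta\delta)$ is itself a subsolution for $F\in[-F_{1},F_{1}]$ since $W''(-1)>0$ and $F_{1}<\beta W''(-1)$, and the maximum of subsolutions is a subsolution. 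The subsolution inequality for the first branch reduces to requiring the reaction slack $\theta\cdot\delta^{-1}|W'_{F\delta}(V(\tau))|$ to dominate $\ep^{2}(\|\xi''\|_{\infty}+A)$, which is valid until $V$ is within $O(\ep)$ of $u^{+}(F\delta)$. The initial inequality $\phi(\cdot,0)\leq u_{0}$ holds by construction of $\xi$ and $V(0)$. Parabolic comparison yields $u^{\ep}\geq\phi$, and evaluating at $\tau_{\nu}|\log\ep|$ with $\tau_{\nu}$ tuned so that $V(\tau_{\nu}|\log\ep|) - A\ep^{2}\tau_{\nu}|\log\ep|\geq u^{+}(F\delta)-\ep$ yields the claim.

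The principal obstacle is matching the reaction and diffusion time scales: the reaction slack $|W'_{F\delta}(V(\tau))|$ decays exponentially as $V\to u^{+}(F\delta)$, while the diffusive error remains $O(\ep^{2}\|\xi''\|_{\infty}+A\ep^{2}\tau)$, so one must choose $\xi$ and the cushion parameters so that this ratio is controlled uniformly on $[0,C|\log\ep|]$, which is precisely the borderline time scale on which the two mechanisms are comparable. This is the standard ``generation of interface'' tradeoff (cf.\ the analogous arguments in \cite{barles souganidis}); the quantitative matching of $\tau_{\nu}$ against the linearized rate $W''_{F\delta}(u^{+}(F\delta))$ is the quantitative heart of the proof, while the localization via $\xi$ and the role of $-(1+\beta\delta)$ as a global floor subsolution are structurally routine.
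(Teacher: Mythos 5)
Your ansatz has a genuine gap in the transition annulus. Write the first branch as $\phi = V(\tau) - \xi(|x-y_{0}|) - A\ep^{2}\tau$ and plug it into the rescaled equation: after using $\delta\dot V = -\delta^{-1}W'_{F\delta}(V)$ and absorbing $\ep^{2}\delta\Delta\xi$ into $A$, the subsolution inequality you must verify is
\begin{equation*}
\theta(x/\ep)\,W'_{F\delta}\bigl(\phi(x,\tau)\bigr) \;\leq\; W'_{F\delta}\bigl(V(\tau)\bigr) + O(\ep^{2}),
\end{equation*}
i.e.\ the reaction must be evaluated at the \emph{local value} $\phi(x,\tau)=V(\tau)-\xi(|x-y_{0}|)-A\ep^{2}\tau$, not at $V(\tau)$. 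Your reduction (``the reaction slack $\theta\delta^{-1}|W'_{F\delta}(V(\tau))|$ must dominate $\ep^{2}(\|\xi''\|_{\infty}+A)$'') silently replaces $\phi$ by $V$. In the annulus where $\xi$ interpolates, $\phi$ sweeps through all values between $-(1+\beta\delta)$ and $V(\tau)$; at points where $\phi\in(u^{-}(F\delta),u^{0}(F\delta))$ one has $W'_{F\delta}(\phi)>0>W'_{F\delta}(V)$, and even for $\phi\in(u^{0}(F\delta),V)$ near $u^{0}$ one has $|W'_{F\delta}(\phi)|\ll|W'_{F\delta}(V)|$, so the inequality fails at order one while $\dot V$ is bounded away from zero. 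A rigid profile translated in the $u$-direction cannot be a subsolution here: points whose value sits at or below $u^{0}$ have no reaction pushing them up at the rate $\dot V$. This is precisely why the paper's proof does not build such a profile; it follows the generation-of-interface argument of \cite[Proposition 4.1]{barles souganidis} and \cite[Appendix A]{allen cahn mobility}, where the subsolution is the ODE flow map $\chi(\tau,s)$ (solving $\chi_{\tau}=\overline f(\chi)$, $\chi(0,s)=s$) composed with a function of $x$, so each point evolves according to the reaction at its own value, and the Laplacian errors enter only through $\chi_{s},\chi_{ss}\sim e^{C\tau}$, which are controlled by an $\ep^{2}$-sized correction over times $\tau\sim|\log\ep|$. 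The paper's only new ingredient is the spatially uniform lower envelope $\overline f$ of the reaction, which you also need and do not have: $\theta\geq 1$ only helps on the interval where $W'_{F\delta}<0$; on $(u^{-}(F\delta),u^{0}(F\delta))$, where $W'_{F\delta}>0$, the worst case is the upper bound $\theta\leq\Lambda^{2}$, and your argument never addresses it.

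A second, smaller gap: the initial comparison $\phi(\cdot,0)\leq u_{0}$ fails outside $B_{r_{1}}(y_{0})$, since $u_{0}$ is only assumed to take values in $[-3,3]$ and may lie below your static floor $-(1+\beta\delta)$. If you lower the floor to $-3$ to restore the ordering at $t=0$, you lose the claimed conclusion $u^{\ep}\geq-(1+\beta\delta)$ off $B_{\nu}(y_{0})$ at the final time; what is needed is a floor that rises in time from $-3$ to above $-(1+\beta\delta)$ (in fast time $O(1)$, since $W'_{F\delta}<0$ on $[-3,u^{-}(F\delta))$), which again is exactly what the ODE-flow construction provides automatically. Your identification of the correct time scale, the use of the nondegeneracy $W''_{F\delta}(u^{+}(F\delta))>0$ to get $\tau_{*}\leq C|\log\ep|$, and the observation that $\theta\geq1$ helps on $(u^{0},u^{+})$ are all sound, but the core subsolution construction must be replaced by the $\chi(\tau,\cdot)$-composition with the envelope $\overline f$.
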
 

The proof of \pref{initialization}, which follows as in \cite{barles souganidis} and \cite{allen cahn mobility}, is briefly reviewed at the end of this section.  Note that a symmetrical result applies if instead $B_{r_{0}}(y_{0}) \subset \{u_{0} < u^{0}(F\delta)\}$.

\begin{proof}[Proof of \tref{diffuse interface pinning}]   Let $a$ be the coefficient field constructed in \sref{mobility} and $\theta(x)= a(x)^2$. Also recall $\bar{\delta}, \bar{F} > 0$ from \pref{second part} and $F_{1}$ from the discussion preceding Proposition \ref{p.initialization}.  If necessary we can make $\bar{F}$ and $\bar{\delta}$ smaller so that $\bar{F} < F_{1}$ and $\bar{\delta} < 1$.

Given $F \in [-\bar{F},\bar{F}]$ and $\delta \in (0,\bar{\delta})$, we will show below that the solutions $(u^{\ep})_{\ep > 0}$ of \eref{forced allen cahn} converge to $u^{+}(F\delta)$ locally uniformly in $\{u_{0} > u^{0}(F\delta)\} \times (0,\infty)$.  The corresponding argument for $\{u_{0} < u^{0}(F\delta)\} \times (0,\infty)$ is left to the reader.

Fix $y_{0} \in \mathbb{R}^{2}$ and $r > 0$ such that $B := B_{r}(y_{0}) \Subset \{u_{0} > u^{0}(F\delta)\}$.  If $\ep > \ep_{0}(B)$, then the ball $\ep^{-1} B$ satisfies the hypotheses of \lref{stationary solutions diffuse interface}.  Thus, for such $\ep$, there is a stationary subsolution $\tilde{u}^{-,\ep}$ of \eqref{e.aca forced} such that 
	\begin{gather*}
		\{y \in \ep^{-1} B \, \mid \, \textup{dist}(y, \partial (\ep^{-1} B)) \geq C\} \subset \{\tilde{u}^{-,\ep} \geq 1 - \beta \delta \}, \\
		\{y \in \mathbb{R}^{2} \setminus \ep^{-1} B \, \mid \, \textup{dist}(y, \partial (\ep^{-1} B) \geq C\} \subset \{\tilde{u}^{-,\ep} = -1 - \beta \delta\}.
	\end{gather*}
Henceforth, define $u^{-,\ep}$ by $u^{-,\ep}(x) = \tilde{u}^{-,\ep}(\ep^{-1} x)$.  It is a subsolution of \eref{forced allen cahn}.

Fix a small $\nu > 0$ such that $B_{r + \nu}(y_{0}) \subset \{u_{0} > u^{+}(F\delta)\}$.  \pref{initialization} implies that there are constants $\tau_{*}, \ep_{*} > 0$ depending only on $\nu$ such that if $\ep < \ep_{*}$, then
	\begin{equation*}
		u^{\ep}(\cdot, \tau_{*} \ep^{2} |\log(\ep)|) \geq (u^{+}(F\delta) - \ep) \ind_{B_{r + \nu/2}(y_{0})} - (1 + \beta \delta) \ind_{\mathbb{R}^{2} \setminus B_{r + \nu/2}(y_{0})} \quad \textup{in} \, \, \mathbb{R}^{2}.
	\end{equation*}
Since $u^{+}(F\delta) > 1 - \beta \delta$, it follows that, for all $\ep > 0$ sufficiently small,
	\begin{equation*}
		(u^{+}(F\delta) - \ep) \ind_{B_{r + \nu/2}(y_{0})} - (1 + \beta \delta) \ind_{\mathbb{R}^{2} \setminus B_{r + \nu/2}(y_{0})} \geq u^{-,\ep} \quad \textup{in} \, \, \mathbb{R}^{2}.
	\end{equation*}
Therefore, by the comparison principle,
	\begin{equation*}
		u^{\ep}(\cdot + (0,\tau_{*} \ep^{2} |\log(\ep)|)) \geq u^{-,\ep} \quad \textup{in} \, \, \mathbb{R}^{2} \times (0,\infty).
	\end{equation*}

Since $u^{-,\ep} \geq 1 - \beta \delta \geq u^{0}(F\delta)$ in $B_{3r_{0}/4}(y_{0})$ for small enough $\ep$, we can apply \pref{initialization} again to find $\tau_{**}, \ep_{**} > 0$ such that
	\begin{equation*}
		u^{\ep}(\cdot + (0,(\tau_{*} + \tau_{**}) \ep^{2} |\log(\ep)|)) \geq (u^{+}(F\delta) - \ep) \ind_{B_{r_{0}/2}(y_{0})} - (1 + \beta \delta) \ind_{\mathbb{R}^{2} \setminus B_{r_{0}/2}(y_{0})} \quad \textup{in} \, \, \mathbb{R}^{2} \times (0,\infty).
	\end{equation*}
From this, we conclude
	\begin{equation*}
		\liminf \nolimits_{*} u^{\ep} \geq u^{+}(F\delta) \quad \textup{in} \, \, B_{r_{0}/2}(y_{0}) \times (0,\infty).
	\end{equation*}
In particular, since $B$ was an arbitrary ball, it follows that $\liminf_{*} u^{\ep} \geq u^{+}(F\delta)$ in $\{u_{0} > u^{0}(F\delta)\} \times (0,\infty)$. 

On the other hand, since $u_{0} \leq 3$ in $\mathbb{R}^{2}$, it follows that $u^{\ep} \leq v^{\ep}$ in $\mathbb{R}^{2} \times (0,\infty)$, where $v^{\ep}(x,t) = \tilde{v}(\ep^{-2} t)$ is determined by the solution of the ODE
	\begin{equation*}
		\dot{\tilde{v}} = - \delta^{-1} W'(\tilde{v}) + F \tilde{v} \quad \textup{in} \, \, \mathbb{R}, \quad \tilde{v}(0) = 3.
	\end{equation*}
It is easy to check from the phase line analysis that $\tilde{v}(T) \to u^{+}(F\delta)$ as $T \to \infty$.  Thus, 
	\begin{equation*}
		\limsup \nolimits^{*} u^{\ep} \leq \lim_{\ep \to 0^{+}} v^{\ep} = u^{+}(F\delta) \quad \textup{in} \, \, \mathbb{R}^{2} \times (0,\infty),
	\end{equation*}  
We conclude that $u^{\ep} \to u^{+}(F\delta)$ locally uniformly in $\{u_{0} > u^{0}(F\delta)\} \times (0,\infty)$.

A similar argument proves convergence to $u^{-}(F\delta)$ in $\{u_{0} < u^{0}(F\delta)\} \times (0,\infty)$. \end{proof}

Finally, here is a sketch of the proof of \pref{initialization}:

\begin{proof}[Sketch of Proof of \pref{initialization}]  We argue as in \cite[Proposition 4.1]{barles souganidis} constructing the desired subsolution as in \cite[Appendix A]{allen cahn mobility}.  In the notation of the latter reference, in the present setting, $\overline{f}$ is defined by 
	\begin{equation*}
		\overline{f}(u) = \left\{ \begin{array}{r l}
							-\delta^{-1} W'_{F \delta}(u), & \textup{if} \, \, u \in [-3,u^{-}(F \delta)]\cup [u^{0}(F \delta),u^{+}(F \delta)], \\
							- \Lambda^{-1} \delta^{-1} W'_{F \delta}(u), & \textup{if} \, \, u \in [u^{-}(F \delta),u^{0}(F \delta)] \cup [u^{+}(F \delta),3].
						\end{array} \right.
	\end{equation*}
\end{proof}  


\section{Surface tension with gradient discontinuities at all directions satisfying a rational relation}\label{s.stgaps}

In this section, we prove \tref{generic} concerning generic discontinuities of $D\overline{\sigma}$ and \tref{level set pinning}, which proves that ``bubbling" is a generic feature of the gradient flow.  The basic strategy involves building compact subsolution barriers and the results apply in all dimensions $d \geq 2$.

On the one hand, where the behavior of $D\overline{\sigma}$ is concerned, we avail ourselves of the work of Chambolle, Goldman, and Novaga \cite{GCN}.  They prove that the behavior of the subdifferential of the surface tension $\overline{\sigma}$ closely mirrors the structure of the plane-like minimizers of the energy.  In particular, the key question is whether or not the plane-like minimizers in a given direction foliate space or not.  At least philosophically, it is clear that sliding arguments can be used to show that the existence of barriers is an obstruction to the formation of foliations.  This is precisely the strategy taken in what follows.

At the level of the gradient flow, on the other hand, the maximum principle implies that if a smooth open subset is a strict subsolution of the flow, then any set that initially contains the subsolution continues to do so at later times.  Accordingly, such subsolutions are also relevant for the dynamics.

\subsection{Plane-like minimizers} \label{S: plane like} We need to consider plane-like minimizers which only have locally finite perimeter, it is natural to consider the class of sets which minimize $E_a$ under compact perturbations.  In the literature these are referred to as Class A minimizers and we repeat the definition here for clarity:
\begin{definition}\label{d.classA}
A set $S$ of locally finite perimeter is called a \emph{Class A minimizer} of $E_a$ if $S$ minimizes $E_a(\cdot \,;B_R)$ with respect to compact perturbations in $B_R$ for all $R>1$.  More precisely, given any $R > 0$, if $S' \subset \mathbb{R}^{d}$ is a set of locally finite perimeter and the symmetric difference $S' \Delta S$ satisfies $S' \Delta S \subset \subset B_{R}$, then
	\begin{equation*}
		E_{a}(S; B_{R}) \leq E_{a}(S';B_{R}).
	\end{equation*}
\end{definition}
Following \cite{GCN}, given $n \in S^{d-1}$, we say that an open set of locally finite perimeter $S \subset \mathbb{R}^{d}$ is a \emph{strongly Birkhoff plane-like minimizer in the} $n$ \emph{direction} if (i) $S$ is a Class A minimizer of \eref{surfaceenergy}, (ii) $S$ equals its set of Lebesgue density one points, (iii) there is a $c \in \mathbb{R}$ and an $M > 0$ such that 
	\begin{equation*}
		\{x \in \mathbb{R}^{d} \, \mid \, x \cdot n < c - M\} \subset S \subset \{x \in \mathbb{R}^{d} \, \mid \, x \cdot n < c + M \},
	\end{equation*}
and (iv) $S$ has the strong Birkhoff property, that is,
	\begin{equation*}
		S + k \subset S \quad \textup{if} \, \, k \in \mathbb{Z}^{d} \, \, \textup{and} \, \, k \cdot n \leq 0, \quad S \subset S + k \quad \textup{if} \, \, k \in \mathbb{Z}^{d} \, \, \textup{and} \, \, k \cdot n \geq 0.
	\end{equation*}
We denote the family of all such sets by $\mathcal{M}(n)$.
	
We will need the following properties of $\mathcal{M}(n)$:

	\begin{proposition} \label{P: properties of plane like minimizers}
	\begin{enumerate}[label=(\roman*)]
	\item (\cite[Corollary 4.20]{GCN}) $\mathcal{M}(n)$ is totally ordered, that is, for each $S,S' \in \mathcal{M}(n)$, either $S \subset S'$ or $S' \subset S$.
	\item (\cite[Proposition 3.1]{GCN}) For each $n \in S^{d-1}$ and $K \Subset \mathbb{R}^{d}$, the set of $S \subset \mathcal{M}(n)$ such that $\partial S \cap K \neq \emptyset$ is compact in $L^{1}_{\textup{loc}}(\mathbb{R}^{d})$.  
	\item (\cite[Corollary 1]{MR1190161}) If $a \in C^1(\T^d; [1,\Lambda])$ then any $S \in \mathcal{M}(n)$ is a stationary viscosity solution of the unforced equation
		\begin{equation} \label{e.unforced}
			-a(x) \kappa - D a(x) \cdot n = 0.
		\end{equation}
	\end{enumerate}
	\end{proposition}
	
	Concerning (iii), Caffarelli and Cordoba \cite{MR1190161} show the viscosity solution property just for the perimeter functional, but small modifications of their arguments work for our heterogeneous energy \eref{surfaceenergy} as well.
	
In \cite{GCN}, the authors observe that, for energies of the form \eref{surfaceenergy} (isotropic), a result of Simon \cite{simonresult} implies that the interfaces $\{\partial S \, \mid \, S \in \mathcal{M}(n)\}$ are disjoint so that $\mathcal{M}(n)$ is a lamination.  For more general types of surface energy (anisotropic) it is only known that no intersections can occur at regular points \cite[Proposition 3.4]{GCN}.  Although it is convenient for sliding-type arguments, we will avoid using this fact below so that our arguments apply to other forms of energy as well (cf.\ \rref{otherenergies} below).

\subsection{Gaps}  Before proceeding to the proof of \tref{generic}, we define the notion of a gap and recall the main result of \cite{GCN}.

\begin{definition} \label{D: gap}
We say that a compact set $K \subset \R^d$ with non-empty interior is a \emph{gap at direction $n$} for the medium $a$ if $\partial S \cap K = \emptyset$ for every $S \in \mathcal{M}(n)$.  
\end{definition}

In the next result, we show that the property of having a gap at a direction $n \in S^{d-1}$ is an open condition with respect to uniform norm perturbations of the medium.

\begin{lemma}\label{l.aopen}
If a compact set $K \subset \R^d$ with non-empty interior is a {gap} for the medium $a$ at direction $n$ then there exists $\delta>0$ so that if $b \in C(\T^d ;[1,\Lambda])$ with $\|b - a\|_{C(\mathbb{T}^{d})} < \delta$ then $K$ is a gap for $b$ at direction $n$.
\end{lemma}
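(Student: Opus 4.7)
The plan is to argue by contradiction. Suppose no such $\delta > 0$ exists; then there is a sequence $(b_k)_{k \in \mathbb{N}} \subset C(\mathbb{T}^d;[1,\Lambda])$ with $b_k \to a$ uniformly and sets $S_k \in \mathcal{M}(n; b_k)$ together with points $x_k \in \partial S_k \cap K$. By compactness of $K$, pass to a subsequence so that $x_k \to x_\infty \in K$. The goal is to extract an $L^1_{\textup{loc}}$ limit $S_\infty$ of the $S_k$, show that $S_\infty \in \mathcal{M}(n;a)$, and show that $x_\infty \in \partial S_\infty$, contradicting the hypothesis that $K$ is a gap at direction $n$ for $a$.

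The first preliminary step is to note that the width of the strip containing $\partial S_k$ is bounded uniformly in $k$: there exist $c_k \in \mathbb{R}$ and a constant $M = M(d,\Lambda) > 0$ independent of $k$ such that
\[
	\{x \cdot n < c_k - M\} \subset S_k \subset \{x \cdot n < c_k + M\}.
\]
This width bound is a standard consequence of the strong Birkhoff property, the uniform ellipticity $1 \le b_k \le \Lambda$, and the isoperimetric inequality (as in the Caffarelli--de la Llave construction invoked in \cite{GCN}). Since $x_k \in \partial S_k \cap K$ and $K$ is bounded, the constants $c_k$ remain bounded, so up to a further subsequence we may assume $c_k \to c_\infty$.

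Next I would invoke the $L^1_{\textup{loc}}$ compactness of Class A minimizers with uniformly elliptic coefficients (analogous to \cite[Proposition 3.1]{GCN}) to extract a subsequential limit $S_\infty$, taking its Lebesgue density-one representative. There are four points to verify. (a) $S_\infty$ is a Class A minimizer for the energy $E_a$: given any local competitor $T$ for $S_\infty$ agreeing with $S_\infty$ outside a bounded open set $U$, one builds competitors for $S_k$ by matching $S_k$ in a slight enlargement of $U$ and passes to the limit using $\|b_k - a\|_\infty \to 0$ together with $L^1_{\textup{loc}}$-lower-semicontinuity of the weighted perimeter. (b) The strong Birkhoff inclusions $S_k + z \subset S_k$ for $z \cdot n \leq 0$ (and the reverse for $z \cdot n \geq 0$) pass to the $L^1_{\textup{loc}}$ limit, so $S_\infty$ is strongly Birkhoff in direction $n$. (c) The uniform width bound with the limit constant $c_\infty$ is inherited.

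The main obstacle is (d): showing that $x_\infty \in \partial S_\infty$, i.e. that we can lose neither the positive nor the negative phase in the limit. For this I would appeal to two-sided density estimates for Class A minimizers of \eref{surfaceenergy} under uniformly elliptic coefficients: there exist constants $c_0, r_0 > 0$ depending only on $d$ and $\Lambda$ such that, for every $k$ and every $r \in (0,r_0)$,
\[
	|S_k \cap B_r(x_k)| \geq c_0 r^d, \qquad |B_r(x_k) \setminus S_k| \geq c_0 r^d.
\]
Because $x_k \to x_\infty$ and $S_k \to S_\infty$ in $L^1_{\textup{loc}}$, these density bounds pass to the limit at $x_\infty$, forcing $x_\infty \in \partial S_\infty$. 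Combined with (a)--(c) this yields $S_\infty \in \mathcal{M}(n;a)$ with $\partial S_\infty \cap K \neq \emptyset$, contradicting the gap hypothesis. The critical input is thus the uniformity of the width bound and the density estimates in $\Lambda$ only; once these are in place, the rest is stability of minimizers under uniform coefficient convergence.
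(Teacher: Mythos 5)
Your proposal is correct and follows essentially the same route as the paper: extract an $L^{1}_{\textup{loc}}$ limit by BV compactness, use the two-sided density estimates of \cite[Proposition 3.1]{GCN} at the points $x_k \in \partial S_k \cap K$ to keep the boundary inside $K$ in the limit, pass the strong Birkhoff and plane-like properties to the limit, and verify Class A minimality of the limit by a standard competitor-gluing argument under uniform convergence of the coefficients (the paper carries this last step out via weak-$*$ convergence of the weighted boundary measures and a coarea choice of gluing radius, which is the detailed version of your step (a)).
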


In the proof of the lemma, we will need to know that Class A minimizers are well-behaved under perturbation of the coefficient $a$.  More precisely, we will use the following fact, which is proved in \aref{appendixtechnical}.

\begin{lemma}\label{l.aperturbation}    Suppose that $(a_{k})_{k \in \mathbb{N}} \subset C(\mathbb{T}^{d})$ converges uniformly to some positive function $a \in C(\mathbb{T}^{d})$ and $S$ and $(S_{k})_{k \in \mathbb{N}}$ are sets of locally finite perimeter such that $S_{k} \to S$ in $L^{1}_{\text{loc}}(\mathbb{R}^{d})$ as $k \to \infty$.  If $S_{k}$ is a Class A minimizer of $E_{a_{k}}$ for each $k \in \mathbb{N}$,
then $S$ is a Class A minimizer of $E_{a}$.
\end{lemma}

\begin{proof}[Proof of \lref{aopen}]
We will prove the contrapositive, that is, that the set of coefficients for which $K$ is \emph{not} a gap is closed. Suppose that there is a sequence $a_k \to a$ uniformly on $\T^d$ and $S_k \in \mathcal{M}(n,a_k)$ with $\partial S_{k} \cap K \neq \emptyset$.  We claim that there is an $S \in \mathcal{M}(n,a)$ such that $S \cap K \neq \emptyset$.

$S$ can be obtained as a subsequential limit of $(S_{k})_{k \in \mathbb{N}}$.  To see this, recall that standard local perimeter bounds give that the $S_k$ have uniformly bounded perimeter on any compact region. Thus, by standard $BV$ compactness results, we can choose a subsequence, not relabeled, so that
\[ S_k \to_{L^1_{loc}} S \]
and, for any $R>0$,
\[ E_{a}(S; B_R) \leq \liminf_{k \to \infty} E_{a}(S_k; B_R).\] 
By \lref{aperturbation}, $S$ is a Class A minimizer of $E_{a}$.

As for the strong Birkhoff property, for each $k \in \mathbb{N}$,
\begin{equation*}
	S_k + y \supset S_k \quad \textup{if} \, \, \ y \in \mathbb{Z}^{d} \, \, \textup{and} \, \, y \cdot n \geq 0
\end{equation*}
and
\begin{equation*}
S_k + y \subset S_k \quad \textup{if} \, \, y \in \mathbb{Z}^{d} \, \, \textup{and} \, \, \ y \cdot n \leq 0
\end{equation*}
so the same holds for the limit $S$.

It remains to prove that $\partial S \cap K \neq \emptyset$.  By density estimates (e.g.\ \cite[Proposition 3.1]{GCN}), for any $x \in \partial S_k$ and any $r>0$,
\[ |B_r(x) \cap S| \wedge |B_r(x) \setminus S| \geq c r^d\]
for a positive constant $c$ depending only on $d,\Lambda$.  By assumption, we can fix $x_k \in \partial S_k \cap K$ for all $k \in \mathbb{N}$.  Let $x_* \in K $ be any limit point of the sequence $x_k$.  By the $L^1_{\textup{loc}}$ convergence, for any $r>0$,
\[ |S \cap B_r(x_*)| \wedge | B_r(x_*) \setminus S| = \lim_{k \to \infty} |S \cap B_r(x_k)| \wedge | B_r(x_k) \setminus S| \geq cr^d.  \]
Since $r>0$ was arbitrary, we conclude that $x_* \in \partial S$. \end{proof}

Next, we recall the main result of \cite{GCN}, which gives a direct relationship between regularity of the effective surface tension and the existence of gaps in $\mathcal{M}(n)$.

\begin{theorem}[Chambolle, Goldman and Novaga] \label{t.GCN}  Let $n \in S^{d-1}$ and let $V(n)$ be the subspace of $\mathbb{R}^{d}$ spanned by the rational relations satisfied by $n$.  If $\textup{dim}(V(n)) = 0$, we say $n$ is totally irrational.
\begin{itemize}
\item If $n$ is totally irrational, then $D \overline{\sigma}(n)$ exists.
\item The same holds if $\mathcal{M}(n)$ has no gaps.
\item If $n$ is not totally irrational and $\mathcal{M}(n)$ has a gap, then $\partial \overline{\sigma}(n)$ is a convex subset of $V(n)$ of full dimension.
\end{itemize}
\end{theorem}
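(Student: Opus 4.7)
The plan is to exploit the correspondence between the $1$-homogeneous extension of $\bar\sigma$ (which is convex on $\mathbb{R}^d$) and its associated Wulff shape, reducing subdifferential structure to the existence of calibrations derived from plane-like minimizers. I would extend $\bar\sigma$ to $\mathbb{R}^d \setminus \{0\}$ by $\bar\sigma(\lambda n) = \lambda \bar\sigma(n)$ for $\lambda > 0$, and verify (via subadditivity inherited from joining competitors) that this extension is convex. Then $v \in \partial \bar\sigma(n)$ iff for every half-space $H$ with outward normal $n$ and every set $S$ of locally finite perimeter agreeing with $H$ outside a compact region, one has $E_a(S;U) \geq E_a(H;U) + v\cdot (\textup{flux difference})$ for a suitable large ball $U$. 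Equivalently, $v$ corresponds to a \emph{calibration}: a divergence-free vector field $\xi$ with $|\xi|\leq a$ a.e.\ whose trace on $\partial S$ equals $a\cdot n_S$ for each $S \in \mathcal M(n)$ in a foliation.

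For the first two bullets, I would first prove the no-gap case and then deduce the totally irrational case from it. If $\mathcal M(n)$ has no gaps, then the interfaces $\{\partial S : S \in \mathcal M(n)\}$ foliate $\mathbb R^d$; using Proposition~\ref{P: properties of plane like minimizers}(iii), each leaf is a viscosity stationary solution of \eref{mcfa}, so the unit normal field $\nu$ of the foliation satisfies $\operatorname{div}(a\,\nu) = 0$ in the distributional sense. By compactness (Prop.~\ref{P: properties of plane like minimizers}(ii)) and the Birkhoff property, $\nu$ is translation-compatible and hence yields a periodic calibration whose average $\bar v$ produces the unique element of $\partial \bar\sigma(n)$; hence $\bar\sigma$ is differentiable at $n$. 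For the totally irrational case, the strong Birkhoff property combined with $\dim V(n)=0$ forces the $\mathbb Z^d$-orbit of any $S \in \mathcal M(n)$ to be dense in $\mathcal M(n)$ under the order, and density plus total order (Prop.~\ref{P: properties of plane like minimizers}(i)) rules out isolated leaves, so there are no gaps, reducing to the previous case.

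For the third bullet, assume $n$ admits nontrivial rational relations and $\mathcal M(n)$ has a gap $K$. I would use the gap to build, for each $k \in V(n) \cap \mathbb{Z}^d$, a distinct Class A minimizer with tilted average normal: since the leaves do not fill space, there is room to insert a non-graphical modification whose boundary matches the planar datum outside a region of width $O(1)$ but which, when viewed in the $k$-direction, realizes a strictly different flux. Averaging via the Birkhoff property and passing to the effective scale produces a family of one-sided derivatives at $n$ varying nontrivially along every direction in $V(n)$. This yields a convex set $\partial\bar\sigma(n) \subset V(n)$ (since tangential directions in $V(n)^\perp$ are uniquely pinned by the leaves that still exist) of full dimension in $V(n)$.

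The main obstacle will be the gap case: constructing the competitors that certify full-dimensionality of $\partial\bar\sigma(n)$ in $V(n)$ requires a careful sliding/joining argument that respects both the strong Birkhoff order and the minimality class. The subtlety is that without Simon-type disjointness of interfaces, one cannot freely cut-and-paste leaves, so the modification near $K$ must be produced via a constrained minimization over competitors with prescribed asymptotics in each rational direction $k \in V(n)$, and one must show the infimum is strictly less than the planar energy by an amount linear in the rational perturbation. Handling this quantitatively—and showing the resulting boundary is still a Class A minimizer—is the technical heart of the argument.
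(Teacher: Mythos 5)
You should note first that the paper does not prove this statement at all: Theorem \ref{t.GCN} is recalled verbatim from Chambolle--Goldman--Novaga \cite{GCN} and is used as a black box (the authors only prove the auxiliary facts in Proposition \ref{P: properties of plane like minimizers} and Lemmas \ref{l.aopen}--\ref{l.barriers} that let them \emph{apply} it). So there is no internal proof to compare against, and your proposal has to stand on its own as a reconstruction of the argument of \cite{GCN}.

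Judged on those terms, there is a genuine gap in your treatment of the first bullet. You deduce the totally irrational case from the no-gap case by claiming that $\dim V(n)=0$ forces the $\mathbb{Z}^d$-orbit of any $S\in\mathcal{M}(n)$ to be ``dense in the order'' and hence that $\mathcal{M}(n)$ has no gaps. This implication is false: order-density of the orbit closure does not mean the leaves $\{\partial S: S\in\mathcal{M}(n)\}$ fill space, and gaps do occur at totally irrational directions --- this is exactly the continuum analogue of Aubry--Mather sets being Cantor sets at irrational rotation numbers. Indeed, if total irrationality implied absence of gaps, the first bullet of the theorem would be a redundant special case of the second, which it is not. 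The actual mechanism in \cite{GCN} is different: one shows that the one-sided directional derivatives of $\overline{\sigma}$ at $n$ can disagree only along directions lying in $V(n)$ (equivalently, $\partial\overline{\sigma}(n)$ is contained in an affine subspace parallel to $V(n)$), so that for totally irrational $n$ the subdifferential is a singleton \emph{whether or not} the lamination has gaps. Your calibration construction for the genuine no-gap case (divergence-free field $a\,\nu$ built from the foliation) is the right idea, and for the third bullet you have correctly identified, but not supplied, the technical heart: producing, from a gap, competitors whose energies certify distinct one-sided derivatives along every rational relation $k\in V(n)\cap\mathbb{Z}^d$, which is where the quantitative work of \cite{GCN} lies.
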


In \cite[Section 6]{GCN}, the authors give some examples of media where $\overline{\sigma}$ is not differentiable at any direction satisfying a rational relation.  We will show that this phenomenon is generic in the topological sense.

The strategy of proof uses the Euler-Lagrange equation.  The key observation is that if the equation associated to $a$ admits a smooth, bounded open set as a strict subsolution, or the complement of a smooth, bounded open set as a strict subsolution, then these will act as a barrier to foliations.

\begin{lemma} \label{l.barriers}  Given a medium $a \in C(\mathbb{T}^{d}; [1,\Lambda])$, if there is a nonempty $C^{2}$ bounded open set $\Omega \subset \mathbb{R}^{d}$ such that the indicator function $\ind_{\overline{\Omega}}$ is a strict subsolution of \eref{unforced}, then, for each $n \in S^{d-1}$, the family of strongly Birkhoff plane-like minimizers of \eref{surfaceenergy} has a gap.
\end{lemma}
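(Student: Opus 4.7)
I would argue by contradiction: assuming $\mathcal{M}(n)$ has no gap, I will construct an $S^+\in\mathcal{M}(n)$ containing $\overline{\Omega}$ whose boundary is tangent to $\partial\Omega$, and then invoke the viscosity-solution property of $S^+$ from Proposition \ref{P: properties of plane like minimizers}(iii) together with the strict subsolution property of $\Omega$ to derive a contradiction.

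The first step is to show that $\mathcal{F}^+:=\{S\in\mathcal{M}(n):\overline{\Omega}\subset S\}$ is non-empty and admits a minimal element in $\mathcal{M}(n)$. For any $S_0\in\mathcal{M}(n)$, the strong Birkhoff property combined with the plane-like strip bound shows $\overline{\Omega}\subset S_0+k$ for all $k\in\mathbb{Z}^d$ with $k\cdot n$ sufficiently large, and each such translate lies in $\mathcal{M}(n)$; hence $\mathcal{F}^+\neq\emptyset$. By Proposition \ref{P: properties of plane like minimizers}(i), $\mathcal{F}^+$ inherits the total ordering of $\mathcal{M}(n)$, and the boundaries of its elements are confined to a bounded region between $\partial\Omega$ and some fixed $\partial(S_0+k)$. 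Hence the compactness in Proposition \ref{P: properties of plane like minimizers}(ii), combined with preservation of Class A minimality, density-one representation, and the strong Birkhoff property under $L^1_{\text{loc}}$-limits, produces $S^+:=\bigcap_{S\in\mathcal{F}^+}S\in\mathcal{M}(n)$ with $\overline{\Omega}\subset S^+$.

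The crux of the argument is showing $\partial S^+\cap\partial\Omega\neq\emptyset$. If $\eta:=\dist(\partial S^+,\overline{\Omega})>0$, applying the no-gap hypothesis to a compact shell $K\subset S^+$ of thickness $\eta/4$ just outside $\overline{\Omega}$ yields $S'\in\mathcal{M}(n)$ with $\partial S'\cap K\neq\emptyset$; the total order rules out $S'\supseteq S^+$ (else $\partial S'\cap S^+=\emptyset$ contradicts $K\subset S^+$), so $S'\subsetneq S^+$. If $\overline{\Omega}\subset S'$ then $S'\in\mathcal{F}^+$ is strictly smaller than $S^+$, contradicting minimality. Otherwise $\partial S'$ must cross $\overline{\Omega}$; then I would slide the family $\Omega_t:=\{d_\Omega<t\}$, which stays $C^2$ and a strict subsolution of \eqref{e.mcfa} for $|t|<t_0$ by $C^2$-stability of the strict inequality, to find the critical $t^*\in(-t_0,0)$ at which $\overline{\Omega_{t^*}}\subset\overline{S'}$ with a tangent touching point, reducing to a tangency configuration for $\Omega_{t^*}$ in place of $\Omega$. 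I expect this substep to be the main obstacle, since keeping $|t^*|<t_0$ requires controlling how deeply $\partial S'$ penetrates $\overline{\Omega}$, possibly via choosing $K$ narrower, exploiting the plane-like regularity of $S'$, or iterating the argument.

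At the tangent point $x_0$ with $\Omega$ touching the minimizer from inside and common outward normal $\nu$, representing both hypersurfaces locally as graphs over the tangent plane at $x_0$ gives the curvature comparison $\kappa_{S^+}(x_0)\leq\kappa_\Omega(x_0)$. Proposition \ref{P: properties of plane like minimizers}(iii) then yields $-a(x_0)\kappa_{S^+}-\nabla a(x_0)\cdot\nu=0$ in the viscosity sense (testing $S^+$ from inside by the smooth $\partial\Omega$), while the strict subsolution inequality for $\Omega$ gives $-a(x_0)\kappa_\Omega-\nabla a(x_0)\cdot\nu\geq\delta>0$ for some $\delta>0$; combining, $0\geq\delta>0$, the desired contradiction.
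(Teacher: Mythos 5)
Your overall skeleton is the same as the paper's: take the minimal strongly Birkhoff plane-like minimizer lying above $\Omega$, produce a touching point on $\partial\Omega$, and play the viscosity supersolution property of the minimizer (Proposition \ref{P: properties of plane like minimizers}(iii)) against the strict subsolution property of $\ind_{\overline{\Omega}}$; that first and last step are fine. The genuine gap is exactly where you flag it: the case in which the minimizer $S'$ produced by the no-gap hypothesis has $\partial S'\cap K\neq\emptyset$ but $\overline{\Omega}\not\subset S'$. Your proposed repair --- sliding the level sets $\Omega_t$ of the signed distance until tangency --- is not justified: $\Omega_t$ is a $C^{2}$ strict subsolution only for $|t|$ below a threshold $t_0$ determined by the reach of $\partial\Omega$ (beyond the cut locus the level sets need not even be $C^{2}$) and by the modulus of strictness, while nothing in the construction bounds how deeply $\partial S'$ penetrates $\overline{\Omega}$. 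Making the shell $K$ thinner only constrains where $\partial S'$ meets $K$, not its excursion inside $\Omega$, and the suggested iteration has no termination mechanism, so the crossing case is left open. (As a side remark, with the convention $d_\Omega>0$ in $\Omega$ the family $\{d_\Omega<t\}$, $t<0$, is not a shrunken copy of $\Omega$, but this is internal to the step that fails anyway.)

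The paper closes this case by a different use of the no-gap hypothesis, which you may want to adopt: it first proves (\lref{nogapconclusion}, via density estimates and the closure properties of $\mathcal{M}(n)$) that if $\mathcal{M}(n)$ has no gaps then every $S\in\mathcal{M}(n)$ is, up to a null set, the union of the elements of $\mathcal{M}(n)$ strictly contained in it (and the intersection of those strictly containing it). Applying this to the minimal element $S^{*}$ containing $\Omega$, either some $S\subsetneq S^{*}$ in $\mathcal{M}(n)$ already contains $\Omega$, contradicting minimality, or every such $S$ misses a point of $\Omega$, and then the compactness of the minimizers whose boundaries meet $\overline{\Omega}$ (Proposition \ref{P: properties of plane like minimizers}(ii), i.e.\ \cite[Proposition 3.2]{GCN}) yields a point $x_{0}\in\partial S^{*}\cap\partial\Omega$ directly --- no sliding, no control of penetration depth, and no ordering of an auxiliary $S'$ relative to $\Omega$ is needed. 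From that touching point the paper concludes as you do, testing the supersolution $\ind_{S^{*}}$ with the signed distance $d_\Omega$, which is smooth near $\partial\Omega$, and contradicting the strict inequality. So the fix is to replace your shell-and-slide step with a statement and proof of the union/intersection representation of \lref{nogapconclusion} and the compactness extraction of the touching point.
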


The main result of this section is compact barriers exist generically:

\begin{lemma}\label{l.adense}
For any medium $a \in C(\mathbb{T}^{d}; [1,\Lambda])$ and any $\delta > 0$, there exists a medium $a_\delta \in C^{1}(\mathbb{T}^{d}; [1,\Lambda])$ with $\|a - a_\delta\|_{C(\T^d)} \leq \delta$ and a $C^{2}$ open set $\Omega$, which is bounded and nonempty, such that $\ind_{\overline{\Omega}}$ is a strict subsolution of \eref{unforced}.  

If, in addition, $a \in C^{2}(\mathbb{T}^{d};[1,\Lambda])$ and $p \in [1,\infty)$, then this estimate can be improved to $\|a - a_{\delta}\|_{W^{1,p}(\mathbb{T}^{d})} \leq \delta$.
\end{lemma}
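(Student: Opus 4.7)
Plan: The proof is constructive. I would take the perturbation $\varphi := a_\delta - a$ to be a smooth radial bump centered at a single point $x_0 \in \mathbb{T}^d$ with a sharp transition of width $\tau$ at radius $r_0$, and take $\Omega = B_{r_0}(x_0)$, so that $\partial\Omega$ sits in the transition region where $|\nabla \varphi|$ is large. The parameters $(h, r_0, \tau)$ (bump amplitude, radius, and transition width) are then chosen to balance the strict subsolution condition on $\partial \Omega$ against the prescribed norm bound.

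First I would smooth $a$ by convolution if necessary so that $M := \|\nabla a\|_{L^\infty} < \infty$, and locate a point $x_0 \in \mathbb{T}^d$ with $a(x_0) < \Lambda$. (The degenerate case $a \equiv \Lambda$ is handled symmetrically by a downward bump supported outside $\Omega$.) I would fix $r_0 > 0$ small enough that $B_{r_0}(x_0)$ sits in a single period cell and $a < \Lambda - h$ on $B_{r_0}(x_0)$ for the amplitude $h$ to be chosen below. Define
\[
\varphi(x) = h\,\eta(|x-x_0|),
\]
with $\eta \in C^\infty([0,\infty); [0,1])$ decreasing, equal to $1$ on $[0, r_0 - \tau]$, vanishing on $[r_0 + \tau, \infty)$, and normalized so $\eta'(r_0) = -1/\tau$. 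Extending $\varphi$ periodically and setting $a_\delta := a + \varphi$, one checks $a_\delta \in [1,\Lambda]$ by construction.

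On $\partial \Omega = \partial B_{r_0}(x_0)$ the mean curvature is $\kappa = (d-1)/r_0$ (with $\kappa > 0$ for convex sets) and $\nabla \varphi \cdot n = h\,\eta'(r_0) = -h/\tau$. The strict subsolution condition $-a_\delta \kappa - \nabla a_\delta \cdot n > 0$ then reduces to
\[
\frac{h}{\tau} \,>\, \frac{(d-1)(a+h)}{r_0} + \nabla a \cdot n,
\]
which is implied by $h/\tau > C_0$ with $C_0 := 2(d-1)\Lambda/r_0 + M$. This is the single design constraint $\tau < h/C_0$.

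I would then fix the parameters to meet the prescribed norm bound. For the $C^0$ case, take $h = \delta/2$ and any $\tau < h/C_0$. For the $W^{1,p}$ case, a direct shell/coarea estimate yields
\[
\|\nabla \varphi\|_{L^p}^p \;\lesssim\; \left(\frac{h}{\tau}\right)^p r_0^{d-1} \tau \;=\; h^p r_0^{d-1} \tau^{1-p}.
\]
Fixing $\tau = h/(2C_0)$ converts the right-hand side to $(2C_0)^{p-1} r_0^{d-1} h$, so the choice $h \sim \delta^p/\bigl((2C_0)^{p-1} r_0^{d-1}\bigr)$ gives $\|\nabla \varphi\|_{L^p} \leq \delta$. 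The $L^p$ control of $\varphi$ itself is automatic since $\|\varphi\|_{L^p} \lesssim h\, r_0^{d/p}$, which is $\leq \delta$ for small $\delta$ (noting $h = O(\delta^p)$). This is where the assumption $a \in C^2$ enters, through the finite bound $M$.

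The main obstacle, and the reason the amplitude must scale like $\delta^p$ rather than $\delta$ when $p > 1$, is the tension between the strict subsolution condition (demanding large $h/\tau$) and the $L^p$ bound on the gradient (which penalizes sharp transitions). The resolution is to shrink $h$ to the scale $\delta^p$ while keeping the pointwise gradient of $\varphi$ at the fixed size $C_0$, so that the $L^p$-mass of $\nabla \varphi$ is confined to a shell of width $\tau = O(\delta^p)$ and is of size $\delta$ overall.
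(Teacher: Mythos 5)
Your proposal is correct, but it takes a genuinely different route from the paper's. The paper first perturbs $a$ so that it attains its maximum at a unique nondegenerate point, takes $\Omega = \{a > c\}$ to be a small superlevel set near that maximum, and perturbs multiplicatively, $a_\delta = (1+\varphi)a$ with $\varphi = \psi(d_{\partial \Omega})$ a function of the signed distance to $\partial \Omega$. The point of choosing a level set of $a$ is that $Da \cdot n_{\partial \Omega} = -|Da| \le 0$ on $\partial \Omega$, so the drift term in \eref{mcfa} is automatically favorable and the collar only needs slope slightly larger than $\|\kappa_{\partial \Omega}\|_{L^{\infty}(\partial \Omega)}$, independent of $\|Da\|_{L^{\infty}}$; the $W^{1,p}$ smallness then comes, exactly as in your argument, from confining a gradient of fixed size to a collar of width tending to zero, via the coarea formula. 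You instead take $\Omega$ to be an arbitrary small ball and beat the possibly adverse term $Da \cdot n$ by brute force, requiring $h/\tau > 2(d-1)\Lambda/r_0 + \|\nabla a\|_{L^{\infty}}$; this avoids the paper's preliminary reduction (unique nondegenerate maximum and $C^{2}$ level-set geometry) --- indeed your second part only uses $a \in C^{1}$ --- at the price of an amplitude $h \sim \delta^{p}$ rather than $\delta$ in the $W^{1,p}$ case, which is harmless for the statement. A few bookkeeping points to tighten: the headroom condition $a < \Lambda - h$ must hold on the full support $B_{r_0 + \tau}(x_0)$ of the bump, and in the $C^{0}$ case $h$ should be capped by this headroom as well as by $\delta/2$; also, in the degenerate case $a \equiv \Lambda$, a dip literally ``supported outside $\Omega$'' would have vanishing normal derivative on $\partial \Omega$ --- the dip must straddle $\partial \Omega$ (equivalently, place $\partial \Omega$ inside the dip's steep region), consistent with your own guiding principle that $\partial \Omega$ sit where $|\nabla \varphi|$ is large. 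These are trivial fixes, not gaps.
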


Once Lemmas \ref{l.barriers} and \ref{l.adense} are proved, \tref{generic} follows easily, as we now show.

\begin{proof}[Proof of \tref{generic}]  
Given $n \in S^{d-1}$, let $\mathcal{A}_{n}$ be the family of coefficients $a$ given by
\[ \mathcal{A}_{n} = \{ a \in C^{\infty}(\T^d ;[1,\Lambda]) : \hbox{ there is a gap at direction $n$ for $a$}\} \]
By Lemma \ref{l.aopen}, $\mathcal{A}_{n}$ is open in $C^{\infty}(\mathbb{T}^{d};[1,\Lambda])$ with the $C(\T^d)$ norm topology.  Since the inclusion $W^{1,p}(\mathbb{T}^{d}) \hookrightarrow C(\mathbb{T}^{d})$ is continuous for $p \in (d,\infty)$, $\mathcal{A}_{n}$ is also open in the $W^{1,p}(\mathbb{T}^{d})$ norm topology.  Combining \lref{barriers}, \lref{adense}, and \tref{GCN}, we see that $\cap_{n \in S^{d-1}} \mathcal{A}_{n}$ is dense in either topology. \end{proof}

The remainder of this section is devoted to the proofs of Lemmas \ref{l.barriers} and \ref{l.adense} and \tref{level set pinning}.

\subsection{Gap barriers}  We now show that compact subsolution barriers occur generically.  The proof proceeds by exploiting the structure of the level sets of a generic medium.  We start with a few preliminary reductions.

First of all, we make some room by observing that any function in $C(\mathbb{T}^{d}; [1,\Lambda])$ can be approximated by functions $(a_{n})_{n \in \mathbb{N}}$ in $C^\infty(\mathbb{T}^{d},[1,\Lambda))$ satisfying 
	\begin{equation} \label{E: strict inequality}
	\max_{\mathbb{T}^{d}} a_{n} < \Lambda \quad \textup{for each} \, \, n \in \mathbb{N}.
	\end{equation} 
Therefore, in what follows, we always assume \eqref{E: strict inequality} holds.  

The next lemma shows we can also assume that $a$ attains its maximum at unique, non-degenerate critical points:

	\begin{lemma}  If $a \in C^{2}(\mathbb{T}^{d})$ satisfies \eqref{E: strict inequality} and $\delta > 0$, then there is an $a_{\delta} \in C^{2}(\mathbb{T}^{d})$ satisfying \eqref{E: strict inequality} such that the following holds:
		\begin{itemize}
			\item[(i)] $\|a - a_{\delta}\|_{C^{2}(\mathbb{T}^{d})} \leq \delta$.
			\item[(ii)] There is an $x_{0} \in \mathbb{T}^{d}$ such that 
				\begin{equation*}
					\{x_{0}\} = \left\{x \in \mathbb{T}^{d} \, \mid \, a(x) = \sup_{\T^d} a\right\}, \quad D^{2}a_{\delta}(x_{0}) < 0.
				\end{equation*}
		\end{itemize} 
	\end{lemma}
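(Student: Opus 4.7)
The plan is to subtract a carefully chosen nonnegative perturbation that vanishes only at a single point $x_0$ where the maximum of $a$ is attained, so as to simultaneously collapse the arg-max to $\{x_0\}$ and strictly lower the Hessian there.

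First, I would pick any $x_0 \in \mathbb{T}^d$ with $a(x_0) = \max_{\mathbb{T}^d} a =: M$ (existence is immediate by compactness). I would then fix a smooth $\mathbb{Z}^d$-periodic function $\psi : \mathbb{T}^d \to [0, \infty)$ with the following properties:
\begin{enumerate}
\item[(a)] $\psi(x_0) = 0$ and $\psi(x) > 0$ for all $x \in \mathbb{T}^d \setminus \{x_0\}$;
\item[(b)] $x_0$ is a nondegenerate minimum of $\psi$, i.e., $D^2 \psi(x_0) > 0$.
\end{enumerate}
Such a $\psi$ is easy to write down explicitly in local coordinates around $x_0$: for example, take $\psi(x) = \sum_{i=1}^d \bigl(1 - \cos(2\pi(x_i - (x_0)_i))\bigr)$, which vanishes exactly on $x_0 + \mathbb{Z}^d$ and has $D^2 \psi(x_0) = 4\pi^2 \operatorname{Id}$.

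Next I would define the perturbation $a_\delta := a - \eta \psi$ for a sufficiently small parameter $\eta > 0$ to be chosen in terms of $\delta$. The verification of (i) and (ii) then proceeds as follows. For (ii), for any $x \neq x_0$ one has $a_\delta(x) = a(x) - \eta \psi(x) \leq M - \eta \psi(x) < M = a_\delta(x_0)$, so $x_0$ is the \emph{unique} maximum of $a_\delta$. Moreover, because $x_0$ is a maximum of $a$, necessarily $D^2 a(x_0) \leq 0$ as a symmetric matrix, and therefore
\[
D^2 a_\delta(x_0) = D^2 a(x_0) - \eta D^2 \psi(x_0) \leq -\eta D^2 \psi(x_0) < 0,
\]
so the Hessian at $x_0$ is strictly negative definite. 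For (i), the estimate $\|a - a_\delta\|_{C^2(\mathbb{T}^d)} = \eta \|\psi\|_{C^2(\mathbb{T}^d)}$ is bounded by $\delta$ provided $\eta \leq \delta / \|\psi\|_{C^2(\mathbb{T}^d)}$. Finally, shrinking $\eta$ further if necessary, we retain $1 \leq a_\delta$ (since $a \geq 1$ and $\eta \psi$ is arbitrarily small in $C^0$) and $\max a_\delta \leq \max a < \Lambda$, so \eqref{E: strict inequality} is preserved.

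There is essentially no obstacle here: the perturbation $-\eta \psi$ is specifically designed to break the degeneracy of the arg-max set while simultaneously giving a strictly negative contribution to the Hessian at the chosen point. The only minor subtlety is ensuring that the perturbation respects the periodicity and the bounds $1 \leq a_\delta < \Lambda$, both of which are automatic for small $\eta$. I expect the proof to occupy only a few lines once the ``Morse bump'' $\psi$ has been fixed.
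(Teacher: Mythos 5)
Your construction is essentially the mirror image of the paper's: the paper adds a small nonnegative bump $\delta \tilde f$ centered at a chosen maximizer $x_{0}$ (pushing the value at $x_{0}$ strictly above all other values and contributing the strictly negative term $\delta D^{2}f(0)$ to the Hessian), whereas you subtract a nonnegative ``well'' $\eta\psi$ vanishing only at $x_{0}$. Your verifications of (i), of the collapse of the arg-max to $\{x_{0}\}$, and of $D^{2}a_{\delta}(x_{0}) \leq -\eta D^{2}\psi(x_{0}) < 0$ (using $D^{2}a(x_{0}) \leq 0$ at an interior maximum) are all correct, and since $a_{\delta} \leq a$, the condition \eqref{E: strict inequality} is preserved trivially.

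The one step that does not hold up is the parenthetical claim that $1 \leq a_{\delta}$ is retained ``since $a \geq 1$ and $\eta\psi$ is arbitrarily small in $C^{0}$.'' For the arg-max to collapse your $\psi$ must be strictly positive away from $x_{0}$, so if $a$ attains the value $1$ at some point $y \neq x_{0}$ (which the standing assumption $a \in C^{2}(\mathbb{T}^{d};[1,\Lambda])$ allows), then $a_{\delta}(y) = 1 - \eta\psi(y) < 1$ for \emph{every} $\eta > 0$; no smallness of $\eta$ repairs this. Strictly speaking the lemma only demands \eqref{E: strict inequality}, which you do preserve, but the lemma is applied to media valued in $[1,\Lambda]$, and this is exactly why the hypothesis is a strict \emph{upper} bound: it leaves room to perturb upward, while there is no room to perturb downward. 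The fix is immediate and lands you on the paper's construction: either set $a_{\delta} = a + \eta\left(\max_{\mathbb{T}^{d}}\psi - \psi\right)$, or, as in the paper, add a small nonnegative bump supported near $x_{0}$ with a strict maximum and negative definite Hessian at $x_{0}$. In either case $a_{\delta} \geq a \geq 1$ automatically, the arg-max and Hessian arguments go through verbatim, and \eqref{E: strict inequality} persists for $\eta$ small because $\max_{\mathbb{T}^{d}} a < \Lambda$.
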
  
	
		\begin{proof}  Let $x_{0} \in \mathbb{T}^{d}$ be a point where $a$ achieves its maximum.  Let $f \in C^{\infty}_{c}(\mathbb{R}^{d})$ be a radially decreasing bump function satisfying
			\begin{equation*}
				f(0) = \max_{\mathbb{R}^{d}} f, \quad D^{2} f(0) < 0, \quad f = 0 \quad \textup{in} \, \, \mathbb{R}^{d} \setminus B_{1/4}(0).
			\end{equation*}
		Then let $\tilde{f} = \sum_{k \in \mathbb{Z}^{d}} f(\cdot -x_0+ k)$ which is periodic on $\R^d$. 
		It is easy to see that $a_{\delta} = a + \delta \tilde{f}$ has the desired properties provided $\delta$ is small enough.  \end{proof}  
		
With these preliminaries out of the way, we are prepared for the proof.  The strategy is as follows: replacing $a$ by $a_{\delta}$ if necessary, we assume that $a$ attains its maximum at a unique, non-degenerate critical point.  This implies that there is $c > 0$ close to $\max a$ such that $\{a = c\}$ is a topologically trivial hypersurface in $\mathbb{T}^{d}$.  

Using a tubular neighborhood of $\partial \Omega = \partial \{a > c\}$, we define a function $\varphi$ such that
	\begin{equation*}
		- a(x)(1+\varphi(x)) \kappa_{\partial \Omega}(x) -  (1+\varphi(x)) Da(x) \cdot n_{\partial \Omega}(x) - c D\varphi(x) \cdot n_{\partial \Omega}(x) > 0 \quad \textup{if} \, \, x \in \partial \Omega.
	\end{equation*}
It follows that the set $\Omega = \{a > c\}$ is a strict subsolution associated to the coefficient $a_{\varphi} = (1 + \varphi) \cdot a$.   The complement of $\Omega$ will, correspondingly, be a strict supersolution.

	\begin{proof}[Proof of Lemma \ref{l.adense}]  By the previous considerations, we can assume that $a \in C^{2}(\mathbb{T}^{d})$ satisfies \eqref{E: strict inequality} and attains its maximum at a unique, non-degenerate critical point $x_{0}$.  Fix $\ep > 0$ and $p \in (d,\infty)$.  We will find a function $a_{\ep} \in C^{2}(\mathbb{T}^{d})$ satisfying \eqref{E: strict inequality} such that $a_{\ep}$ satisfies the conclusions of the theorem and $\|a_{\ep} - a\|_{W^{1,p}(\mathbb{T}^{d})} < (2 + \|Da\|_{L^{\infty}(\mathbb{T}^{d})}) \ep$.  Notice that this is enough to obtain an estimate in $C(\mathbb{T}^{d})$ by Morrey's inequality.
	
	To start with, notice that if $c$ is close enough to $a(x_0)$, then $\{a > c\}$ is an open, simply connected subset of $\mathbb{T}^{d}$ with $C^{2}$ boundary. Let $\Omega= \{a > c\}$. 
	
	Fix $r > 0$ such that the signed distance $d$ to $\partial \Omega = \{a = c\}$, positive in $\Omega$ and negative outside, is smooth in an $r$-neighborhood of the surface.  Letting $\nu \in (0,r)$ be a small constant to be determined, choose a smooth function $\eta : (-\nu,\nu) \to [-\ep/2\Lambda, \ep/2\Lambda]$ such that 
		\begin{gather*}
			2 \|\kappa_{\partial \Omega}\|_{L^{\infty}(\partial \Omega)} \geq \|\eta'\|_{L^{\infty}([-\nu,\nu])} \geq \eta'(0) > \|\kappa_{\partial \Omega}\|_{L^{\infty}(\partial \Omega)} , \quad \eta(0) = 0, \\
			\eta = 0 \quad \text{in a neighborhood of} \quad \{-\nu,\nu\}.
		\end{gather*}
Define $\varphi : \mathbb{T}^{d} \to [-\ep/2\Lambda,\ep/2\Lambda]$ by 
		\begin{equation*}
			\varphi(x) = \left\{ \begin{array}{r l}
							\eta(d(x)), & \textup{if} \, \, |d(x)| < \nu \\
								0, & \textup{otherwise}
						\end{array} \right.
		\end{equation*}
	This is a $C^{2}$ function by the choice of $\eta$.  		
	
	Let $a_{\varphi} = (1+ \varphi) \cdot a$.  Notice that $\|a_{\varphi} - a\|_{L^{\infty}(\mathbb{T}^{d})} \leq \ep$ and, by the coarea formula,
		\begin{equation*}
			\int_{\mathbb{T}^{d}} \|D\varphi(x)\|^{p} \, dx = \int_{\{|d| < \nu\}} |\eta'(d(x))|^{p} \, dx = \int_{-\nu}^{\nu} |\eta'(s)|^{p} \, ds \leq 2^{p + 1} \|\kappa_{\partial \Omega}\|_{L^{\infty}(\partial \Omega)}^{p} \nu.
		\end{equation*}  
	Thus, if $\nu$ is sufficiently small, we obtain
		\begin{equation*}
			\|a_{\varphi} - a\|_{W^{1,p}(\mathbb{T}^{d})} \leq (1 + \|Da\|_{L^{\infty}(\mathbb{T}^{d})})\ep + \Lambda 2^{1 + p^{-1}} \|\kappa_{\partial \Omega}\|_{L^{\infty}(\partial \Omega)} \nu^{\frac{1}{p}} < (2 + \|Da\|_{L^{\infty}(\mathbb{T}^{d})}) \ep.
		\end{equation*}
	
	Finally, we claim that $\Omega$ has the desired properties for the medium $a_\varphi$.  To see this, start by noting that $Da$ and $D\varphi$ are aligned with the outward normals to $\Omega$ along $\partial \Omega$, i.e., for $x \in \partial \Omega$,
	\[ Da(x) \cdot n_{\partial \Omega}(x) = -|Da(x)| \ \hbox{ and } \ D\varphi(x) \cdot  n_{\partial \Omega}(x) = -\eta'(0).\]
	Accordingly, for each $x \in \partial \Omega$, we have
		\begin{align*}
			- a_{\varphi}(x) \kappa_{\partial \Omega} (x) - Da_{\varphi}(x) \cdot n_{\partial \Omega}(x) &= -c \kappa_{\partial \Omega}(x)  +   |Da(x)|+ c \eta'(0) \\
			&\geq c(\eta'(0)-\|\kappa_{\partial \Omega}\|_{L^{\infty}(\partial \Omega)} )\\
			&> 0.
		\end{align*}
	Thus, the indicator function $\ind_{\overline{\Omega}}$ is a strict subsolution of \eref{unforced}.    \end{proof}
	
\begin{remark}\label{r.otherenergies}  The approach above provides a general strategy for showing that the plane-like minimizers of a given surface energy has gaps, even when the energy does not have the form \eref{surfaceenergy}.  For example, given a $\psi \in C^{\infty}(\mathbb{T}^{d}; \mathbb{R}^{d})$ such that $\|\psi\|_{L^{\infty}(\mathbb{T}^{d})} < 1$, consider the energy given by 
	\begin{equation} \label{E: finsler energy}
		\int_{\partial E} \left(1 + \psi(x) \cdot n(x) \right) \, \mathcal{H}^{d-1}(d x).
	\end{equation}
By the divergence theorem, the Euler-Lagrange equation associated with this energy is
	\begin{equation*}
		\kappa + \textup{div} \, \psi = 0
	\end{equation*}
Up to making a small perturbation, we can assume that $\textup{div} \, \psi \not \equiv 0$.  Hence there is a ball $B$ such that $\textup{div} \, \psi < 0$ in $B$, and then we can find a smooth perturbation $\tilde{\psi}$, which is arbitrarily close to $\psi$ in $C(\mathbb{T}^{d})$, such that 
	\begin{equation*}
		\kappa_{\partial B} + \textup{div} \, \tilde{\psi} < 0 \quad \textup{in} \, \, \partial B.
	\end{equation*}
Thus, $B$ is a smooth compact subsolution and we deduce that a small perturbation of \eqref{E: finsler energy} has gaps in every direction.  In particular, by \cite{GCN}, typically, the associated surface tension is non-differentiable at every lattice direction.
\end{remark}  

\subsection{Existence of Gaps}  Once a smooth compact barrier is known to exist, no plane-like minimizer can touch it if the subsolution property is strict.  By the monotonicity of the family of strongly Birkhoff plane-like minimizers, this means the barrier has to be contained in a gap.  As we will see below, proving this is somewhat technical compared to the diffuse interface case --- the basic issue being that, for the sake of generality, we will not use the fact that $\{\partial E \, \mid \, E \in \mathcal{M}(n)\}$ is pairwise disjoint.

We will need the following lemma:

\begin{lemma}\label{l.nogapconclusion}
If $\mathcal{M}(n)$ does not have a gap then, for every $S \in \mathcal{M}(n)$,
\[ S = \bigcup \{S' \in \mathcal{M}(n): S' \subsetneq S \} = \bigcap \{S' \in \mathcal{M}(n): S' \supsetneq S \} \quad \textup{Lebesgue a.e.}\]
\end{lemma}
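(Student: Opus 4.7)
The plan is to prove the contrapositive via a density-point argument. Since $\mathcal{M}(n)$ is totally ordered by inclusion (Proposition~\ref{P: properties of plane like minimizers}(i)), the sets $S_{-} = \bigcup\{S' \in \mathcal{M}(n) : S' \subsetneq S\}$ and $S_{+} = \bigcap\{S' \in \mathcal{M}(n) : S' \supsetneq S\}$ automatically satisfy $S_{-} \subseteq S \subseteq S_{+}$, so the claim reduces to $|S \setminus S_{-}| = |S_{+} \setminus S| = 0$. Suppose for contradiction that $|S \setminus S_{-}| > 0$ (the case $|S_{+} \setminus S| > 0$ is symmetric), and pick a Lebesgue density point $x$ of $S \setminus S_{-}$. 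Then $x$ is simultaneously a density point of $S$, and $|S_{-} \cap B(x, \rho)|/|B(x,\rho)| \to 0$ as $\rho \to 0^{+}$.

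The key ingredient is a uniform density estimate for Class A minimizers of $E_{a}$: because $1 \leq a \leq \Lambda$, a comparison argument with the unweighted perimeter produces constants $c_{0}, r_{0} > 0$, depending only on $d$ and $\Lambda$, such that for every $S' \in \mathcal{M}(n)$, every $y \in \partial S'$, and every $r \in (0, r_{0})$,
\[ |S' \cap B(y,r)| \geq c_{0} r^{d} \qquad \textup{and} \qquad |(S')^{c} \cap B(y,r)| \geq c_{0} r^{d}. \]
This is standard for quasi-minimizers of perimeter, and the requirement in the definition of $\mathcal{M}(n)$ that $S'$ equal its set of Lebesgue density one points ensures that $\partial S'$ coincides with the essential boundary, where these estimates apply.

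I now claim that, for all sufficiently small $r > 0$, the closed ball $\overline{B(x, r/3)}$ is a gap, which contradicts the no-gap hypothesis. Indeed, if some $y \in \partial S' \cap \overline{B(x, r/3)}$ for $S' \in \mathcal{M}(n)$, then $B(y, r/3) \subseteq B(x, 2r)$, and the two inequalities above give $|S' \cap B(x,2r)| \geq c_{0}(r/3)^{d}$ and $|(S')^{c} \cap B(x,2r)| \geq c_{0}(r/3)^{d}$. The total ordering of $\mathcal{M}(n)$ leaves three subcases: if $S' \subsetneq S$, then $S' \subseteq S_{-}$ and the first inequality contradicts $|S_{-} \cap B(x,2r)|/|B(x,2r)| \to 0$; if $S' = S$ or $S' \supsetneq S$, then $(S')^{c} \subseteq S^{c}$ and the second inequality contradicts that $x$ is a density point of $S$. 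Choosing $r$ small enough eliminates every case, so $\overline{B(x,r/3)}$ is a gap. The only real technical point is invoking the uniform density estimate uniformly over $\mathcal{M}(n)$; apart from that, the argument is self-contained.
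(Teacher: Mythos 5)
Your argument is correct, but it reaches the conclusion by a different mechanism than the paper. The paper's proof first forms $S^{**}=\bigcap\{S'\in\mathcal{M}(n)\,:\,S'\supsetneq S\}$ and invokes \cite[Lemma 6.3]{CdlL2001} together with \cite[Proposition 3.1]{GCN} to conclude that the set $S^{*}$ of Lebesgue density-one points of $S^{**}$ is again an element of $\mathcal{M}(n)$; if some $x_{0}\in S^{*}\setminus S$ existed, density estimates would produce a ball $B\subset S^{*}\setminus S$, and the contradiction with the no-gap hypothesis is then purely order-theoretic: any $\tilde S\in\mathcal{M}(n)$ whose boundary meets $B$ would satisfy $S\subsetneq \tilde S\subsetneq S^{*}\subset S^{**}$, violating the definition of $S^{**}$ as the intersection. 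You avoid this lattice-closure input entirely: you use only the total ordering from Proposition \ref{P: properties of plane like minimizers}(i) and the uniform density estimates of \cite[Proposition 3.1]{GCN} (with constants depending only on $d$ and $\Lambda$, exactly as they are used in the proof of Lemma \ref{l.aopen}), applied at a boundary point of an arbitrary crossing minimizer near a Lebesgue density point of $S\setminus S_{-}$ (resp. $S_{+}\setminus S$), so that the exclusion of crossing boundaries is a quantitative measure estimate rather than an extremality argument. What your route buys is economy — you never need to know that normalized intersections or unions of elements of $\mathcal{M}(n)$ remain in $\mathcal{M}(n)$ — and it proves exactly the stated a.e. identity; what the paper's route buys is the slightly stronger pointwise conclusion that the normalized intersection coincides with $S$, with no need to track uniform constants. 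The two points you must (and do) flag are genuine: the density estimates apply at topological boundary points only because elements of $\mathcal{M}(n)$ are normalized to equal their density-one points, and the smallness threshold for the radius can be chosen independently of the particular $S'\in\mathcal{M}(n)$ precisely because the constants $c_{0},r_{0}$ depend only on $d$ and $\Lambda$; with those in place the proof is complete.
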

\begin{proof}
The arguments are symmetric so we just do the intersection case.  By \cite[Lemma 6.3]{CdlL2001} and \cite[Proposition 3.1]{GCN}, if we define $S^{**} \subset \mathbb{R}^{d}$ by
\[ S^{**} = \bigcap \{S' \in \mathcal{M}(n): S' \supsetneq S \}, \]
then the set $S^{*}$ of Lebesgue density one points of $S^{**}$ is in $\mathcal{M}(n)$.  Note that, by the ordering property of $\mathcal{M}(n)$ and density estimates (i.e.\ \cite[Proposition 3.1]{GCN}), the inclusion $S \subset S^{*} \subset S^{**}$ holds.  

Suppose that $x_0 \in S^* \setminus S $.  Applying density estimates again, we can find a ball $B \subset S^{*} \setminus S$ close to $x_{0}$.  If $\tilde{S} \in \mathcal{M}(n)$ and $\partial \tilde{S} \cap B \neq \emptyset$, then $\tilde{S}$ must be a strict subset of $S^{*}$ and a strict superset of $S$.  In particular, $S \subsetneq \tilde{S} \subsetneq S^{*} \subset S^{**}$, in violation of the definition of $S^{**}$.  Hence $B$ is a gap according to Definition \ref{D: gap}, contradicting the hypothesis.
\end{proof}

Now we show how to use the lemma in a sliding argument.

\begin{proof}[Proof of \lref{barriers}]  We argue by contradiction.  Fix $n \in S^{d-1}$ and let $\mathcal{M}(n)$ denote the family of strongly Birkhoff plane-like minimizers in the $n$ direction.  Assume $\mathcal{M}(n)$ has no gaps. 

Let $\Omega$ be the bounded strict subsolution which was assumed to exist in the statement.  Define
\[ S^{**} =  \bigcap \left\{ S \in \mathcal{M}(n) \, \mid \,\Omega \subset S \right\}\]
and let $S^{*}$ be the set of Lebesgue density one points of $S^{**}$.  By \cite[Lemma 6.3]{CdlL2001} and \cite[Proposition 3.1]{GCN}, $S^{*} \in \mathcal{M}(n)$.  Furthermore, since $\Omega$ is open, $\Omega \subset S^{*}$ necessarily holds.  

We claim that, due to the no gap assumption, we must have $\partial S^* \cap \partial \Omega \neq \emptyset$.  Since $\mathcal{M}(n)$ has no gaps, by \lref{nogapconclusion},
\[S^* = \bigcup \{S \in \mathcal{M}(n): S \subset S^*, \ S \neq S^* \}. \]
If $\Omega \setminus S \neq \emptyset$ for all $S \subsetneq S^*$ then, by compactness (cf.\ \cite[Proposition 3.2]{GCN}), there is an $x \in \overline{\Omega} \cap \partial S^* = \partial S^{*} \cap \partial \Omega$.  Otherwise $\Omega \subset S$ for some $S \subsetneq S^*$, which contradicts the definition of $S^*$.  Thus, henceforth we can fix $x_{0} \in \partial S^{*} \cap \partial \Omega$.

	     Let $d_{\Omega}$ be the signed distance function to $\Omega$ with $\{d_{\Omega} > 0\} = \Omega$.  Since $\ind_{\overline{\Omega}}$ is a strict subsolution of \eref{unforced}, it follows that there is an $r' > 0$ such that $d_{\Omega}$ is smooth in $\{|d_{\Omega}| < r'\}$ and 
	     	\begin{equation} \label{E: strict subsolution}
			- a(x) \Delta d_{\Omega} - Da(x) \cdot Dd_{\Omega} < 0 \quad \textup{in} \, \, B_{r'}(x_{0}).
		\end{equation}
		
	It is straightforward to check that there is an $r > 0$ such that $\ind_{S^{*}} - d_{\Omega}$ achieves its minimum in $B_{r}(x_{0})$ at $x_{0}$.  Since $S^{*}$ is a plane-like minimizer, $\ind_{S^{*}}$ is a viscosity supersolution of \eref{unforced} by Proposition \ref{P: properties of plane like minimizers}.  Thus, the following inequality holds: 
	     	\begin{equation*}
			0 \leq - a(x_{0}) \Delta d_{\Omega}(x_{0}) - Da(x_{0}) \cdot Dd_{\Omega}(x_{0}).
		\end{equation*}
	However, this directly contradicts \eqref{E: strict subsolution}.  
	
	We conclude that $\mathcal{M}(n)$ has gaps, no matter the choice of $n \in S^{d-1}$. \end{proof}

\subsection{Proof of \cref{level set bubbling}}  The previous arguments show that the existence of a smooth, compact strict subsolution of \eref{sieqn} forces the surface tension $\bar{\sigma}$ to have corners.  It also has consequences for the gradient flow, as we now show.  While we do not know if it implies pinning in the strongest sense (i.e. pinning of the entire interface as considered in the example of \sref{mobility}) it does seem to rule out the possibility of homogenization in the usual way by pinning some compact connected components of the negative-phase.  This phenomenon has been observed many times in the study of interface homogenization, see, for example, Cardaliaguet, Lions and Souganidis \cite{CLS2009}. 

\begin{proof}[Proof of \cref{level set bubbling}]  Suppose that $x_{0} \in \mathbb{R}^{d}$, $c \in \mathbb{R}$, and $u_{0}(x_{0}) > c$.  We will show that $\bar{u}^{*}(x_{0},t) \geq c$ for all $t > 0$.  
	
	Let $\Omega \subset \mathbb{R}^{d}$ be a $C^{2}$ bounded open set such that $\ind_{\overline{\Omega}}$ is a strict stationary subsolution of \eref{unforced}.  Given $\ep > 0$, choose a $k_{\ep} \in \mathbb{Z}^{d}$ such that $\|\ep^{-1} x_{0} - (x + k_{\ep}) \| \leq 1$.  Define $\ind^{\ep}$ by 
		\begin{equation*}
			\ind^{\ep}(y) = \left\{ \begin{array}{r l}
									c, & \textup{if} \, \, \ep^{-1} y \in k_{\ep} + \overline{\Omega} \\
									-\infty, & \textup{otherwise} 
								\end{array} \right.
		\end{equation*}
	This is now a time-stationary subsolution of the $\ep$ scaled mean curvature flow \eref{levelsetepeqn}.  Since $u_{0}$ is continuous, we know that $\ep(k_{\ep} + \overline{\Omega}) \Subset \{u_{0} > c\}$ if $\ep > 0$ is small enough.  Thus, since $\ind^{\ep}$ is a stationary subsolution, it follows that $u^{\ep}(\cdot,t) \geq \ind^{\ep}$ for each $t \geq 0$.  From this, we find that $\bar{u}^{*}(x_{0},t) \geq c$.  
	
	The statement for $\bar{u}_{*}$ follows the same way using the complement compact supersolution $\R^d \setminus \Omega$.
	  \end{proof}

	  \section{Gaps in the plane-like minimizer lamination are generic: diffuse interface case}\label{s.stgaps-diffuse}

In this section, we prove results on the existence of gaps and weak pinning analogous to those of the previous one.  Once again, we proceed by perturbing around the sharp interface $\delta = 0$ setting.  The existence of a compact strict subsolution for the sharp interface model will imply the same for the diffuse interface model when $\delta>0$ is small.  By a sliding argument the existence of such barriers causes a gap in the family of strong Birkhoff plane-like minimizers just as in the sharp interface case.

We stop short of proving any results concerning the gradient of the diffuse surface tension $\bar{\sigma}_{AC}$.  The reason is there is currently no proven analogue of the result of \cite{GCN} in the diffuse interface case.  We believe that such an analogue does hold and leave its proof to future work.

As in the previous section, the results presented here apply in all dimensions $d \geq 2$.

\subsection{Plane-like minimizers and gaps}  Let us introduce some notation and terminology to be used in what follows.  To begin with, as in the sharp interface case, we recall the definition of a Class A minimizer of the diffuse interface energy $\mathcal{AC}^{\delta}_{\theta}$ (see \eref{diffuseenergy}).

\begin{definition} \label{D: diffuse interface class a} A function $u : \mathbb{R}^{d} \to [-1,1]$ is said to be a \emph{Class A minimizer} of the energy functional $\mathcal{AC}^{\delta}_{\theta}$ if, for any $R > 0$ and any $v \in u + H_{0}^{1}(B_{R})$,
	\begin{equation*}
		\mathcal{AC}^{\delta}_{\theta}(u; B_{R}) \leq \mathcal{AC}^{\delta}_{\theta}(v; B_{R}).
	\end{equation*}
\end{definition}

Given $\theta \in C(\mathbb{T}^{d}; [1,\Lambda^{2}])$, $\delta > 0$, and $n \in S^{d-1}$, we say that a Class A minimizer $u : \mathbb{R}^{d} \to (-1,1)$ of $\mathcal{AC}^{\delta}_{\theta}$ is a \emph{strongly Birkhoff plane-like minimizer in the direction $n$} if, for each $k \in \mathbb{Z}^{d}$,
	\begin{equation*}
		u(x - k) \geq u(x) \quad \textup{if} \, \, k \cdot n \geq 0, \quad u(x - k) \leq u(x) \quad \textup{if} \, \, k \cdot n \leq 0.
	\end{equation*}
Notice that $\lim_{x \cdot n \to \pm \infty} u(x) = \mp 1$ automatically holds since the only periodic Class A minimizers are the constants $1$ and $-1$.  

We let $\mathcal{M}^{\delta}_{\theta}(n)$ denote the family of all strongly Birkhoff plane-like minimizers.

Arguing as in \cite{Bangert2} (cf.\ \cite{morsetypeuniqueness}), one can prove that $\mathcal{M}^{\delta}_{\theta}(n)$ forms a lamination.  That is, for each $u_{1},u_{2} \in \mathcal{M}^{\delta}_{\theta}(n)$, 
	\begin{equation*}
		\textup{either} \quad u_{1} < u_{2} \quad \textup{in} \, \, \mathbb{R}^{d}, \quad u_{1} > u_{2} \quad \textup{in} \, \, \mathbb{R}^{d}, \quad \textup{or} \quad u_{1} = u_{2} \quad \textup{in} \, \, \mathbb{R}^{d}.
	\end{equation*}
(For the connection between Moser-Bangert theory and \eqref{e.diffuseenergy}, which allows us to invoke results from \cite{Bangert2}, see \cite{JGV09} and the introduction of \cite{RSbook}.)

We will say that $\mathcal{M}^{\delta}_{\theta}(n)$ \emph{has a gap} if the graphs of its elements fail to foliate $\mathbb{R}^{d} \times (-1,1)$.  That is, $\mathcal{M}^{\delta}_{\theta}(n)$ has a gap if 
	\begin{equation*}
		\{(x,u(x)) \, \mid \, x \in \mathbb{R}^{d}, \, \, u \in \mathcal{M}^{\delta}_{\theta}(n) \} \neq \mathbb{R}^{d} \times (-1,1).
	\end{equation*}

\subsection{Parametrizations of $\mathcal{M}^{\delta}_{\theta}(n)$}  As in the sharp interface case, it will be convenient to know that $\mathcal{M}^{\delta}_{\theta}(n)$ has no gaps if and only if it admits a continuous parametrization.

\begin{proposition} \label{P: continuous parametrization}  If $\mathcal{M}^{\delta}_{\theta}(n)$ does not have a gap, then there is a bijection $\gamma \mapsto u(\cdot \,; \gamma)$ from $\mathbb{R}$ onto $\mathcal{M}^{\delta}_{\theta}(n)$ such that:
	\begin{itemize}
		\item[(i)] $\gamma \mapsto u(\cdot \,; \gamma)$ is continuous with respect to the topology of local uniform convergence,
		\item[(ii)] If $\gamma_{1} < \gamma_{2}$, then $u(x; \gamma_{1}) < u(x; \gamma_{2})$ for each $x \in \mathbb{R}^{d}$.
	\end{itemize}  \end{proposition}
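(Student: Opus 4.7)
The plan is to parametrize $\mathcal{M}^{\delta}_{\theta}(n)$ by the value at a single base point. Fix $x_{0} \in \mathbb{R}^{d}$ and define $\Phi: \mathcal{M}^{\delta}_{\theta}(n) \to (-1,1)$ by $\Phi(u) = u(x_{0})$. The lamination property recalled just before the proposition forces any two distinct elements of $\mathcal{M}^{\delta}_{\theta}(n)$ to be strictly ordered on all of $\mathbb{R}^{d}$, so $\Phi$ is injective. The no-gap hypothesis is exactly the statement $\{(x,u(x)) : x \in \mathbb{R}^{d},\ u \in \mathcal{M}^{\delta}_{\theta}(n)\} = \mathbb{R}^{d} \times (-1,1)$, which gives surjectivity of $\Phi$. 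Composing $\Phi^{-1}$ with any increasing homeomorphism $h: \mathbb{R} \to (-1,1)$ (say $h = \tanh$) yields the desired bijection $\gamma \mapsto u_{\gamma} := \Phi^{-1}(h(\gamma))$ onto $\mathcal{M}^{\delta}_{\theta}(n)$. Property (ii) is then immediate: if $\gamma_{1} < \gamma_{2}$ then $u_{\gamma_{1}}(x_{0}) < u_{\gamma_{2}}(x_{0})$ by construction, and the lamination property upgrades this to strict inequality on all of $\mathbb{R}^{d}$.

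The real content is property (i), continuity of $\gamma \mapsto u_{\gamma}$, which reduces to continuity of $\Phi^{-1}$ with respect to local uniform convergence. Given $y_{k} \to y$ in $(-1,1)$ and $u_{k} = \Phi^{-1}(y_{k})$, each $u_{k}$ is uniformly bounded in $[-1,1]$ and solves the Euler-Lagrange equation $-\delta \Delta u_{k} + \delta^{-1}\theta(x) W'(u_{k}) = 0$. Standard interior Schauder estimates deliver uniform $C^{2,\alpha}_{\textup{loc}}$ bounds, so Arzelà-Ascoli extracts a subsequence $u_{k_{j}} \to u^{*}$ locally uniformly. The strong Birkhoff inequalities pass to the limit and $u^{*}(x_{0}) = y$ by construction. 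Injectivity of $\Phi$ then forces $u^{*} = \Phi^{-1}(y)$, and subsequence-of-every-subsequence reasoning upgrades this to convergence of the full sequence.

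The main obstacle is verifying that the locally uniform limit $u^{*}$ is again a Class A minimizer. The plan is a localization-plus-cutoff argument: for any test ball $B_{R}$ and any $v$ agreeing with $u^{*}$ outside a compact $K \Subset B_{R}$, construct competitors $v_{k}$ that agree with $u_{k}$ outside $B_{R}$ and with $v$ on $K$ by interpolating via a smooth cutoff in the annulus $B_{R} \setminus K$; the local uniform convergence $u_{k} \to u^{*}$ together with the $C^{2,\alpha}_{\textup{loc}}$ bounds ensures the discrepancy introduced in the annulus is $o(1)$, so the minimality inequality $\mathcal{AC}^{\delta}_{\theta}(u_{k};B_{R}) \leq \mathcal{AC}^{\delta}_{\theta}(v_{k};B_{R})$ passes to $u^{*}$ in the limit. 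This is the standard closure argument for Class A minimizers in Moser-Bangert theory (cf.\ \cite{Bangert2,JGV09,RSbook}); in the present setting the uniform $L^{\infty}$ and $C^{2,\alpha}_{\textup{loc}}$ bounds make the cutoff error particularly easy to control.
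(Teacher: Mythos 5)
Your proposal is correct and follows essentially the same route as the paper: parametrize by the value at a fixed base point composed with $\tanh$, using the no-gap hypothesis for surjectivity and the lamination property for injectivity and monotonicity, then obtain continuity from elliptic estimates, compactness, closedness of $\mathcal{M}^{\delta}_{\theta}(n)$ under local uniform limits, and a subsequence-of-subsequences argument. The only difference is that you spell out the Class A closure step, which the paper leaves as ``it is not hard to show.''
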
    
	
In fact, $\mathcal{M}^{\delta}_{\theta}(n)$ has no gaps if and only if, in the terminology of \cite{variationaleinstein}, there is a pulsating standing wave $U_{n} \in UC(\mathbb{R} \times \mathbb{T}^{d})$ in the direction $n$ (cf.\ Remark \ref{R: pulsating standing waves} below).  To keep things short, we will not prove this stronger statement here.

\begin{proof}  Given $\gamma > 0$, let $u(\cdot; \gamma)$ be the unique element of $\mathcal{M}^{\delta}_{\theta}(n)$ such that 
	\begin{equation*}
		u(0; \gamma) = -\tanh(\gamma).
	\end{equation*}
The existence of such an element follows from the assumption that $\mathcal{M}^{\delta}_{\theta}(n)$ has no gaps; uniqueness follows from the fact that it forms a lamination.

The monotonicity of $\gamma \mapsto u(\cdot \, ; \gamma)$ also follows from the lamination property.  It remains to check the continuity.  

Suppose that $\bar{\gamma} \in \mathbb{R}$, $(\gamma_{k})_{k \in \mathbb{N}} \subset \mathbb{R}$, and $\lim_{k \to \infty} \gamma_{k} = \bar{\gamma}$.  Elliptic estimates imply that $(u(\cdot \, ; \gamma_{k}))_{k \in \mathbb{N}}$ is compact in the topology of local uniform convergence.  Further, it is not hard to show that any subsequential limit is itself in $\mathcal{M}^{\delta}_{\theta}(n)$.  Thus, given a subsequence $(k_{j})_{j \in \mathbb{N}} \subset \mathbb{N}$, there is a further subsequence $(k_{j_{\ell}})_{\ell \in \mathbb{N}}$ and a $\tilde{u} \in \mathcal{M}^{\delta}_{\theta}(n)$ such that $u(\cdot; \gamma_{k_{j_{\ell}}}) \to \tilde{u}$ locally uniformly.  In particular, $\tilde{u}(0) = -\tanh(\bar{\gamma})$ so $\tilde{u} = u(\cdot \, ; \bar{\gamma})$.  Since $(k_{j})_{j \in \mathbb{N}}$ was arbitrary, we are left to conclude that $u(\cdot \, ; \bar{\gamma}) = \lim_{k \to \infty} u(\cdot \,; \gamma_{k})$ as desired.  \end{proof}

\subsection{Obstruction}  We saw above that if there are no gaps, we can continuously parametrize $\mathcal{M}^{\delta}_{\theta}(n)$.  Hence, in that case, classical sliding techniques can be used to rule out the existence of certain (sub- or super-) solutions of \eref{aceqn1}.  In particular, bump (strict) subsolutions cannot occur:

\begin{proposition} \label{p.diffuse gaps}  If there is an upper semi-continuous function $u_{\delta} : \mathbb{R}^{d} \to [-2,1]$ and an $F > 0$ such that 
	\begin{equation*}
		- \delta \Delta u_{\delta} + \delta^{-1} \theta(x) W'(u_{\delta}) \leq - F \quad \textup{in} \, \, \mathbb{R}^{d},
	\end{equation*}
$\{u_{\delta} \geq -1\}$ is compact, $\{u_{\delta} > -1\}$ is non-empty, and $u_{\delta}$ is smooth in a neighborhood of $\{u_{\delta} \geq -1\}$, then, for each $n \in S^{d-1}$, $\mathcal{M}^{\delta}_{\theta}(n)$ has gaps.  \end{proposition}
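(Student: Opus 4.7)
The plan is to argue by contradiction using a sliding argument, analogous to that employed for the sharp interface case in \lref{barriers}. Suppose $\mathcal{M}^{\delta}_{\theta}(n)$ has no gaps. Then Proposition \ref{P: continuous parametrization} furnishes a continuous, strictly monotone parametrization $\mathbb{R} \ni \gamma \mapsto u_{\gamma} \in \mathcal{M}^{\delta}_{\theta}(n)$; after reparametrizing we may assume $\gamma \mapsto u_{\gamma}(x)$ is increasing for every $x$, and that $u_{\gamma}(0)$ exhausts $(-1,1)$ with $u_{\gamma}(0) \to \pm 1$ as $\gamma \to \pm \infty$. By standard elliptic compactness, the monotone pointwise limits $u_{\pm \infty}(x) = \lim_{\gamma \to \pm \infty} u_{\gamma}(x)$ are classical solutions of the Euler-Lagrange equation $-\delta \Delta u + \delta^{-1} \theta(x) W'(u) = 0$; since $u_{+\infty}(0) = 1$ and $u_{-\infty}(0) = -1$, the strong maximum principle forces $u_{+\infty} \equiv 1$ and $u_{-\infty} \equiv -1$, and Dini's theorem upgrades this to local uniform convergence.

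Next I set up the comparison. Let $K = \{u^{\delta} \geq -1\}$, a non-empty compact set on which $u^{\delta}$ is smooth by hypothesis. Evaluating the strict subsolution inequality at an interior maximum of $u^{\delta}$ (using $W'(1) = 0$) shows $\max u^{\delta} < 1$. Off $K$ we have $u^{\delta} < -1 < u_{\gamma}$ pointwise for every $\gamma$, so the inequality $u_{\gamma} \geq u^{\delta}$ is automatic off $K$ and only needs to be checked on $K$. The uniform convergence established above then gives $u_{\gamma} > u^{\delta}$ on $\mathbb{R}^{d}$ for $\gamma$ large, while $u_{\gamma}(x_{*}) < u^{\delta}(x_{*})$ for $\gamma$ very negative at any chosen point $x_{*} \in \{u^{\delta} > -1\}$. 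Therefore
\[ \bar{\gamma} = \inf \left\{ \gamma \in \mathbb{R} \, : \, u_{\gamma} \geq u^{\delta} \text{ on } \mathbb{R}^{d} \right\} \in \mathbb{R}, \]
and continuity of the parametrization yields $u_{\bar{\gamma}} \geq u^{\delta}$ on $\mathbb{R}^{d}$.

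I claim $u_{\bar{\gamma}}$ must touch $u^{\delta}$ at some $x_{0} \in K$. Otherwise, compactness of $K$ combined with smoothness of both functions on a neighborhood of $K$ would give $u_{\bar{\gamma}} - u^{\delta} \geq \ep > 0$ on $K$; local uniform convergence $u_{\gamma'} \to u_{\bar{\gamma}}$ as $\gamma' \nearrow \bar{\gamma}$ would then produce $\gamma' < \bar{\gamma}$ with $u_{\gamma'} > u^{\delta}$ on $K$, and hence on all of $\mathbb{R}^{d}$ (using the automatic inequality off $K$), contradicting the definition of $\bar{\gamma}$. At the touching point $x_{0}$, both functions are smooth, $u_{\bar{\gamma}}$ satisfies the Euler-Lagrange equation with equality, and $u^{\delta}$ satisfies the strict subsolution inequality with margin $\leq -F$; the standard touching comparison, using $u_{\bar{\gamma}}(x_{0}) = u^{\delta}(x_{0})$ and $D^{2}(u_{\bar{\gamma}} - u^{\delta})(x_{0}) \geq 0$, then yields $0 \leq -F$, the desired contradiction. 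The main point requiring care is ensuring the extremal parameter $\bar{\gamma}$ really produces a touching on the compact set $K$, rather than an equality only approached in the limit at infinity; this is precisely the role played by the strict inequalities $u_{\gamma} > -1 > u^{\delta}$ off $K$.
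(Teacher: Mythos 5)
Your proof is correct and follows essentially the same route as the paper's: rule out $\max u^{\delta} = 1$ via the strict subsolution inequality, invoke Proposition \ref{P: continuous parametrization} to get a continuous increasing parametrization, slide to the extremal parameter $\bar{\gamma} = \inf\{\gamma \, : \, u_{\gamma} \geq u^{\delta}\}$, produce a touching point in $\{u^{\delta} > -1\}$, and contradict the strict inequality there. The only difference is that you spell out details the paper leaves implicit (the limits $u_{\pm\infty} \equiv \pm 1$, finiteness of the extremal parameter, and the compactness argument for the touching point), which is fine.
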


\begin{proof}  To start with, observe that there is a constant $c \in (0,1)$ such that $u_{\delta} \leq 1 - c$ in $\mathbb{R}^{d}$.  Indeed, were this not the case, then, by the compactness of $\{u_{\delta} \geq - 1\}$, we could find an $x_{0} \in \mathbb{R}^{d}$ such that $u_{\delta}(x_{0}) = 1 = \max_{\mathbb{R}^{d}} u_{\delta}$, but this would contradict the strict subsolution property.

We argue by contradiction.  If $\mathcal{M}^{\delta}_{\theta}(n)$ has no gaps, then Proposition \ref{P: continuous parametrization} implies that there is a continuous, increasing parametrization $\gamma \mapsto u(\cdot \,;\gamma)$ of $\mathcal{M}^{\delta}_{\theta}(n)$.  Define $T \in \mathbb{R}$ by 
	\begin{equation*}
		T = \inf \left\{ \gamma \in \mathbb{R} \, \mid \, u(\cdot \,;\gamma) \geq u_{\delta} \, \, \textup{in} \, \, \mathbb{R}^{d} \right\}.
	\end{equation*}
Since $u_{\delta} \leq 1 - c$, $\{u^{\delta} \geq -1\}$ is compact, and $\{u_{\delta} > -1\}$ is non-empty, it follows that $T < \infty$.  

We claim that $u(\cdot \,; T)$ touches $u_{\delta}$ above at some point $\bar{x} \in \mathbb{R}^{d}$.  Indeed, this follows from the fact that the parametrization is continuous and increasing.  Note that $u_{\delta}(\bar{x}) = u(\bar{x};T) > - 1$.  Since $u_{\delta}$ is smooth in a neighborhood of $\{u_{\delta} \geq - 1\}$, the viscosity solution property of $u(\cdot; T)$ yields
	\begin{equation*}
		0 \leq - \delta \Delta u_{\delta}(\bar{x}) + \delta^{-1}\theta(\bar{x}) W'(u_{\delta}(\bar{x}))
	\end{equation*}
This contradicts the strict subsolution property of $u_{\delta}$.  \end{proof}  

\subsection{Dynamics}  As in the sharp interface case, the previous construction also has a dynamical interpretation.  (Below we once again use the half-relaxed limit notation from Definition \ref{D: half-relaxed}.)

\begin{proposition} \label{p.diffuse bubbles}  If for some $\delta > 0$ there is a smooth function $u_{\delta} : \mathbb{R}^{d} \to [-2,1]$ and an $F > 0$ satisfying the hypotheses of \pref{diffuse gaps} and such that $\{u_{\delta} > 0\}$ is non-empty, then, for each $u_{0} \in UC(\mathbb{R}^{d}; [-3,3])$, if $(u^{\ep})_{\ep > 0}$ are the solutions of the Cauchy problem
	\begin{equation*}
		\left\{ \begin{array}{r l}
				\delta(u^{\ep}_{t} - \Delta u^{\ep}) + \ep^{-2} \delta^{-1} \theta(\tfrac{x}{\ep})W'(u^{\ep}) = 0 & \textup{in} \, \, \mathbb{R}^{d} \times (0,\infty), \\
				u^{\ep} = u_{0} & \textup{on} \, \, \mathbb{R}^{d} \times \{0\},
			\end{array} \right.
	\end{equation*}
then 
	\begin{equation*}
		\limsup \nolimits^{*} u^{\ep} = 1 \quad \textup{in} \, \, \{u_{0} > 0\}.
	\end{equation*}
\end{proposition}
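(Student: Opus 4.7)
The plan is to adapt the bubbling argument of \cref{level set bubbling} to the diffuse interface setting, using $u^\delta$ in place of the sharp-interface barrier $\ind_{\overline{\Omega}}$.  Fix $(x_0, t_0) \in \{u_0 > 0\} \times [0, \infty)$; the bound $\limsup^{*} u^\ep \leq 1$ is routine via ODE comparison with the spatially homogeneous solution starting from $3$, so I focus on the reverse inequality.  Pick $y^{*} \in \R^d$ with $u^\delta(y^*) > 0$ (guaranteed by hypothesis), and for each $\ep > 0$ choose $k_\ep \in \Z^d$ with $x_\ep := \ep(k_\ep + y^*) \to x_0$.  Define
    \[ v^\ep(x) = u^\delta(\ep^{-1} x - k_\ep). \]
By $\Z^d$-periodicity of $\theta$, $v^\ep$ is a stationary strict subsolution of the $\ep$-scaled equation with subsolution constant $\ep^{-2} F$.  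To convert it into a useable lower barrier, I would replace it by $\hat{v}^\ep = \max(v^\ep, -1 - \nu)$ for a small $\nu > 0$.  Since $W'(-1-\nu) = -\nu W''(-1) + O(\nu^2) < 0$, the constant $-1-\nu$ is itself a strict stationary subsolution, and as a maximum of subsolutions $\hat{v}^\ep$ remains one.  Up to a preliminary smooth truncation of $u^\delta$ (replacing it by $-2$ on a portion of $\{u^\delta < -1\}$, legitimate because $-2$ is also a strict subsolution), we may assume $\{u^\delta > -1-\nu\}$ is compact, so that $\hat{v}^\ep = -1-\nu$ outside a ball of physical diameter $O(\ep)$ about $x_\ep$.

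Next, since $u_0(x_0) > 0$, fix $r > 0$ and $c > 0$ with $u_0 > c$ on $B_r(x_0)$.  An initialization argument in the spirit of \pref{initialization}, applied at $F = 0$ (so $u^\pm(0) = \pm 1$), produces $\tau, \ep_0 > 0$ such that for $\ep \in (0, \ep_0)$,
    \[ u^\ep(\cdot, \tau \ep^2 |\log \ep|) \geq (1-\ep) \ind_{B_{r/2}(x_0)} - (1+\nu) \ind_{\R^d \setminus B_{r/2}(x_0)}, \]
where $\nu$ may be made arbitrarily small by taking $\tau$ large.  For $\ep$ sufficiently small the bump of $\hat{v}^\ep$ lies in $B_{r/2}(x_0)$ and $\hat{v}^\ep \leq 1 - c < 1 - \ep$ there, so comparing these pointwise bounds yields $\hat{v}^\ep \leq u^\ep(\cdot, \tau \ep^2 |\log \ep|)$.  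The parabolic comparison principle then gives $u^\ep(\cdot, t) \geq \hat{v}^\ep$ for all $t \geq \tau \ep^2 |\log \ep|$, and evaluation at $x = x_\ep$, $t = t_0$ yields
    \[ u^\ep(x_\ep, t_0) \geq u^\delta(y^*) > 0 \]
for all $\ep$ small enough.

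To bootstrap from this positive value to $1$, I would zoom in at scale $\ep$.  The function $\tilde{u}^\ep(y,s) = u^\ep(x_\ep + \ep y, t_0 + \ep^2 s)$ satisfies the \emph{unscaled} Allen-Cahn equation with coefficient $\theta(y + y^*)$, once again by $\Z^d$-periodicity, and the comparison above gives $\tilde{u}^\ep(y, 0) \geq u^\delta(y + y^*)$, which by smoothness of $u^\delta$ is strictly positive on a Euclidean neighborhood of $0$ whose radius is independent of $\ep$.  Applying the standard unscaled analogue of \pref{initialization} to $\tilde{u}^\ep$, for any $\eta > 0$ there is an $\ep$-independent $T(\eta) > 0$ with $\tilde{u}^\ep(0, T(\eta)) \geq 1 - \eta$, that is, $u^\ep(x_\ep, t_0 + \ep^2 T(\eta)) \geq 1 - \eta$.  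Since $(x_\ep, t_0 + \ep^2 T(\eta)) \to (x_0, t_0)$, we obtain $\limsup^{*} u^\ep(x_0, t_0) \geq 1 - \eta$; letting $\eta \to 0^{+}$ finishes the proof.

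The main obstacles I expect are (i) coordinating the modification threshold $\nu$ with the outer defect produced by initialization, so that the barrier inequality actually holds everywhere---this is where the preliminary truncation of $u^\delta$ plays a role, ensuring that $\{\hat{v}^\ep > -1-\nu\}$ sits in $B_{r/2}(x_0)$ for small $\ep$; and (ii) the bootstrap step, since a naive reapplication of \pref{initialization} fails because the region on which $u^\ep > 0$ from the stationary subsolution alone shrinks with $\ep$.  The $\ep$-scale zoom is precisely the device that sidesteps (ii) by restoring the unscaled equation, where initialization applies on an $\ep$-independent ball.
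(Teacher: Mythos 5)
Your first half follows the same route the paper intends: the ODE upper bound, the use of \pref{initialization} (with $F=0$) at time $\tau\ep^{2}|\log\ep|$, and the comparison with a lattice-translated, $\ep$-scaled copy of $u^{\delta}$ cut off near $-1$ is precisely the combination of \cref{level set bubbling} and \pref{initialization} that the paper's (very terse) proof refers to. Two small repairs are needed there: you must record, as the paper does, that $\sup_{\mathbb{R}^{d}}u^{\delta}\leq 1-c'$ for some $c'>0$ (this follows as in \pref{diffuse gaps}; your letter $c$ is already in use as the lower bound for $u_{0}$), and the ``preliminary smooth truncation of $u^{\delta}$ to $-2$'' is not automatic, since a smooth interpolation between a subsolution and the constant $-2$ need not remain a subsolution. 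A cleaner fix: pick a ball $B_{\rho}\supset\{u^{\delta}\geq-1\}$, choose $\nu$ with $-1-\nu>\max_{\partial B_{\rho}}u^{\delta}$, and take the barrier equal to $\max(u^{\delta},-1-\nu)$ in $B_{\rho}$ and to $-1-\nu$ outside; the two pieces agree near $\partial B_{\rho}$, so no gluing issue arises.

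The genuine gap is the bootstrap. The ``standard unscaled analogue of \pref{initialization}'' you invoke --- that $\tilde{u}^{\ep}(\cdot,0)>0$ on a Euclidean ball of \emph{fixed} radius forces $\tilde{u}^{\ep}(0,T(\eta))\geq 1-\eta$ in finite time --- is not a standard fact and is false in general for bistable reaction--diffusion: data above the unstable zero on a small ball is subject to the classical threshold (extinction) phenomenon. The mechanism behind \pref{initialization} uses crucially that, in unscaled variables, the region of positivity has radius of order $\ep^{-1}$, so the reaction ODE drives the solution to within $\ep$ of $u^{+}$ before diffusion can dilute the bump; after your zoom that region has unit size and this mechanism is unavailable. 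What the zoom legitimately gives, by comparing with the solution $w$ of the unscaled equation started from the stationary strict subsolution $\hat{V}$ (which is nondecreasing in time and bounded by $1$), is $\limsup^{*}u^{\ep}(x_{0},t_{0})\geq\sup_{\mathbb{R}^{d}}w_{\infty}$, where $w_{\infty}$ is the minimal stationary solution above $\hat{V}$. Nothing in the hypotheses you use --- only $\{u^{\delta}>0\}\neq\emptyset$ and $\sup u^{\delta}\leq 1-c'$ --- forces $\sup w_{\infty}=1$: bounded stationary solutions with supremum strictly below $1$ exist for such equations, and your argument does not exclude that $w_{\infty}$ is trapped beneath one of them. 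So the passage from ``$\limsup^{*}u^{\ep}\geq u^{\delta}(y^{*})>0$'' to ``$\limsup^{*}u^{\ep}=1$'' at times $t_{0}>0$ is unsupported as written; closing it requires extra input (for instance, quantitative closeness of $u^{\delta}$ to $1$ on a macroscopic region, as for the barriers actually built in \sref{diffusepinning} and exploited in the proof of \tref{diffuse interface pinning}, where the second application of \pref{initialization} is legitimate because the stationary subsolution exceeds $u^{0}$ on a ball of size independent of $\ep$), not merely the positivity of $u^{\delta}$ somewhere.
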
  

Similarly if there is a smooth $v_{\delta} :  \mathbb{R}^{d} \to [-1,2]$ and an $F > 0$ such that 
	\begin{equation*}
		- \delta \Delta v_{\delta} + \delta^{-1} \theta(x) W'(v_{\delta}) \geq F \quad \textup{in} \, \, \mathbb{R}^{d}
	\end{equation*}
and $\{v_{\delta} \leq 1\}$ is compact and $\{v_{\delta} <0\}$ is non-empty, then there is a symmetrical conclusion for the $\ep$ scaled problem above:
\begin{equation*}
		\liminf \nolimits_{*} u^{\ep} = -1 \quad \textup{in} \, \, \{u_{0} < 0\}.
	\end{equation*}

	\begin{proof}  Note, as in the proof of \pref{diffuse gaps}, that $\max_{\mathbb{R}^{d}} u_{\delta} \leq 1 - c$ for some $c \in (0,1)$.

	Since $u_{\delta} \leq 1 - c$ in $\mathbb{R}^{d}$, $\{u_{\delta} > -1\}$ is compact, and $\{u_{\delta} > 0\}$ is non-empty, we conclude the proof by combining ideas from the proofs of \cref{level set bubbling} and \tref{diffuse interface pinning} (especially \pref{initialization}).  \end{proof}  

\subsection{Proof of Theorem \ref{t.genericdiffuse}}

 In what follows, we let $\mathcal{A}_{\Lambda}$ denote the family of all $a \in C^{1}(\mathbb{T}^{d}; [1,\Lambda])$ such that there is a smooth, bounded open set $\Omega \subset \mathbb{R}^{d}$ such that $\ind_{\overline{\Omega}}$ is a strict subsolution of \eref{sieqn}.  By \lref{adense}, $\mathcal{A}_{\Lambda}$ is a dense subset of $C(\mathbb{T}^{d}; [1,\Lambda])$ and $W^{1,p}(\mathbb{T}^{d};[1,\Lambda])$ for each $p \in (d,\infty)$.

	\begin{proof}[Proof of Theorem \ref{t.genericdiffuse}]  Define $\Theta_{\Lambda} \subset C^{1}(\mathbb{T}^{d}; [1,\Lambda^{2}])$ by 
		\begin{equation*}
			\Theta_{\Lambda} = \left\{\theta \in C^{1}(\mathbb{T}^{d}; [1,\Lambda^{2}]) \, \mid \, \sqrt{\theta} \in \mathcal{A}_{\Lambda} \right\}.
		\end{equation*}
	Notice that the map $\theta \mapsto \sqrt{\theta}$ is a homeomorphism sending $C(\mathbb{T}^{d}; [1,\Lambda^{2}])$ onto $C(\mathbb{T}^{d}; [1,\Lambda])$ and $W^{1,p}(\mathbb{T}^{d};[1,\Lambda])$ onto $W^{1,p}(\mathbb{T}^{d};[1,\Lambda])$.  Thus, by the density of $\mathcal{A}_{\Lambda}$, we know that $\Theta_{\Lambda}$ is dense in both spaces.
	
	If $\theta \in \Theta_{\Lambda}$, then there is a smooth, bounded open set $\Omega$ such that $\ind_{\overline{\Omega}}$ is a strict subsolution and $\ind_{\R^d \setminus \Omega}$ is a strict supersolution of \eref{mcfa}.  Arguing exactly as in \sref{diffusepinning}, this implies we can find an $F_{\theta} > 0$, a $\delta_{\theta} \in (0,1)$, and continuous functions $(u_{\delta}^{(\theta)})_{\delta \in (0,\delta_{\theta})}, (v_{\delta}^{(\theta)})_{\delta \in (0,\delta_{\theta})} \subset UC(\mathbb{R}^{d}; [-2,1])$ satisfying
		\begin{equation*}
			- \delta \Delta u_{\delta}^{(\theta)} + \delta^{-1} \theta(x) W'(u_{\delta}^{(\theta)}) \leq -F_{\theta} \quad \textup{in} \, \, \mathbb{R}^{d}
		\end{equation*} 
		and
		\[ - \delta \Delta v_{\delta}^{(\theta)} + \delta^{-1} \theta(x) W'(v_{\delta}^{(\theta)}) \geq F_{\theta} \quad \textup{in} \, \, \mathbb{R}^{d} \]
	for which the sets $\{\{u_{\delta}^{(\theta)} \geq -1\},\{v_{\delta}^{(\theta)} \leq 1\}\}_{\delta \in (0,\delta_{\theta})}$ are all compact, the sets $\{\{u_{\delta}^{(\theta)} > 0\},\{v_{\delta}^{(\theta)} < 0\}\}_{\delta \in (0,\delta_{\theta})}$ are all non-empty, and $u_{\delta}^{(\theta)}$ and $v_{\delta}^{(\theta)}$ are smooth in $\{u_{\delta}^{(\theta)} > -(1 + \beta \delta)\}$ and $\{v_{\delta}^{(\theta)} < 1 + \beta \delta\}$, respectively.  (The construction shows that it is possible to make $u_{\delta}^{(\theta)}$ and $v_{\delta}^{(\theta)}$ smooth away from these extreme values since $\Omega$ is smooth in this setting.)
	
	Now define open sets $\{\mathcal{G}_{n}\}_{n \in \mathbb{N}} \subset C(\mathbb{T}^{d}; [1,\Lambda^{2}])$ by
		\begin{align*}
			\mathcal{G}_{n} &= \bigcup_{\theta \in \Theta_{\Lambda}} \mathcal{G}_{n}(\theta), \\
			\mathcal{G}_{n}(\theta) &:= \left\{ \tilde{\theta} \in C(\mathbb{T}^{d}; [1,\Lambda^{2}]) \, \mid \, 2 n \|W'\|_{L^{\infty}([-3,3])} \|\tilde{\theta} - \theta\|_{L^{\infty}(\mathbb{T}^{d})} < F_{\theta} \delta_{\theta} \right\}.
		\end{align*}
	Observe that $\mathcal{G}_{n}$ is dense in $C(\mathbb{T}^{d}; [1,\Lambda^{2}])$ since $\Theta_{\Lambda}$ is.  
	
	Next, notice that if $\tilde{\theta} \in \mathcal{G}_{n}$ for some $n \in \mathbb{N}$, then there is a $\theta \in \Theta_{\Lambda}$ such that $\tilde{\theta} \in \mathcal{G}_{n}(\theta)$.  In particular, for each $\delta \in [\frac{\delta_{\theta}}{n},\delta_{\theta})$,
		\begin{equation*}
			- \delta \Delta u_{\delta}^{(\theta)} + \delta^{-1} \tilde{\theta}(x) W'(u_{\delta}^{(\theta)}) \leq -\frac{F_{\theta}}{2} \quad \textup{in} \, \, \mathbb{R}^{d}.
		\end{equation*}
		and
		\[ - \delta \Delta v_{\delta}^{(\theta)} + \delta^{-1} \tilde{\theta}(x) W'(v_{\delta}^{(\theta)}) \geq \frac{F_{\theta}}{2} \quad \textup{in} \, \, \mathbb{R}^{d}. \]
	Accordingly, for such choices of $\delta$, \pref{diffuse gaps} and \pref{diffuse bubbles} (in both subsolution and supersolution form) apply to $\tilde{\theta}$.
	
	Let $\mathcal{G} = \cap_{n \in \mathbb{N}} \mathcal{G}_{n}$.  This is dense in $C(\mathbb{T}^{d}; [1,\Lambda^{2}])$ since $\mathcal{G} \supset \Theta_{\Lambda}$.  If $\theta \in \mathcal{G}$, then there is a sequence $(\theta^{(n)})_{n \in \mathbb{N}}$ such that $\theta \in \mathcal{G}_{n}(\theta^{(n)})$ for each $n$.   Hence ${\theta}$ satisfies the conclusions of the theorem with $I(\theta)$ given by
		\begin{equation*}
			I(\theta) = \bigcup_{n = 1}^{\infty} \left(\frac{\delta_{\theta^{(n)}}}{n}, \delta_{\theta^{(n)}} \right).
		\end{equation*}  
	Since $\sup_{n} \delta_{\theta^{(n)}} < 1$ by construction, we know that $0 \in \overline{I(\theta)}$.  
	
	Notice that if $\theta \in \Theta_{\Lambda}$, then we can take $\theta^{(n)} = \theta$ for all $n$ above.  Thus, $I(\theta) = (0,\delta_{\theta})$ in this case.  Since $\Theta_{\Lambda}$ is dense, this proves the penultimate assertion of the theorem.
	
	Finally, we observe that the same considerations apply to $W^{1,p}(\mathbb{T}^{d};[1,\Lambda])$ since, for each $n$, the set $\mathcal{G}_{n}(\theta) \cap W^{1,p}(\mathbb{T}^{d};[1,\Lambda])$ is open and $\Theta_{\Lambda}$ remains dense in this topology.

	\end{proof}  

\begin{remark} \label{R: other energies generic}  Theorem \ref{t.genericdiffuse} remains true if \eqref{e.diffuseenergy} is replaced by the variants \eqref{E: gradient model} or \eqref{E: weight model}.  As in Remark \ref{R: other energies construction}, smooth diffuse interface subsolutions can be constructed from the sharp interface subsolutions of Lemma \ref{l.adense}.  The only difference in the proof is that since $\theta$ appears multiplied by derivatives of $u_{\delta}$ in the PDE, we need to change the definition of $\mathcal{G}_{n}(\theta)$ accordingly.  This is not a problem since the construction of Section \ref{s.diffusepinning} implies that $u_{\delta}$ has bounded second order derivatives in the set $\{u_{\delta} > -(1 + \beta \delta)\}$. \end{remark}  

\begin{remark} \label{R: pulsating standing waves} Theorem \ref{t.genericdiffuse} provides examples of diffuse interface models in periodic media in which, in every direction $n \in S^{d-1}$, there is no continuous pulsating standing wave.  See \cite{variationaleinstein} for a discussion of the relevance of pulsating standing waves to the analysis of the energy \eqref{e.diffuseenergy} and the homogenization of its gradient flow.  

Given an $n \in S^{d-1}$, a pulsating standing wave of \eqref{e.diffuseenergy} is a function $U_{n} \in L^{\infty}(\mathbb{R} \times \mathbb{T}^{d})$ that is a distributional solution of the PDE
	\begin{equation*}
		\left\{ \begin{array}{c}
			(n \partial_{s} + D_{y})^{*} (n \partial_{s} + D_{y}) U_{n} + a(x) W'(U_{n}) = 0 \quad \text{in} \, \, \mathbb{R} \times \mathbb{T}^{d}, \\
			\lim_{s \to \pm \infty} U_{n}(s,y) = \mp 1, \, \, \|U_{n}\|_{L^{\infty}(\mathbb{R} \times \mathbb{T}^{d})} \leq 1, \, \, \partial_{s} U_{n} \leq 0.
		\end{array} \right.
	\end{equation*}
A pulsating standing wave can be interpreted as a generating function (or hull function) for the plane-like minimizers in the $n$ direction (see \cite[Section 6]{variationaleinstein}).  Such functions always exist, but they can be discontinuous.

Indeed, if $U_{n}$ is a pulsating standing wave and it is a continuous function in $\mathbb{R} \times \mathbb{T}^{d}$, then the plane-like minimizers of \eqref{e.diffuseenergy} in the $n$ direction form a foliation by \cite[Proposition 1]{variationaleinstein}.  Therefore, Theorem \ref{t.genericdiffuse} shows that it is possible that there are no continuous pulsating standing waves in any direction. \end{remark}

\appendix

\section{Perron's Method}

In this appendix, for the sake of completeness, we prove a version of Perron's Method for sharp interfaces:
\begin{equation}\label{e.levelseteqn}
		-a(x) \textup{tr} \left( \left(\textup{Id} - \widehat{Du} \otimes \widehat{Du} \right) D^{2} u \right) - Da(x) \cdot Du - F\|Du\|   = 0.
	\end{equation}
	  It shows that provided there are sufficiently regular (but not necessarily smooth) sets $E_{*} \subset E^{*}$ defining stationary sub- and supersolutions, it is possible to find a stationary solution $E$ between them.

	\begin{proposition} \label{p.perron method}  Let $E^{*}, E_{*} \subset \mathbb{R}^{d}$ be open sets such that $\overline{E_{*}} \subset E^{*}$ and  $\overline{\mathbb{R}^{d} \setminus \overline{E^{*}}} = \mathbb{R}^{d} \setminus E^{*}$.  Define $\overline{v} \in LSC(\mathbb{R}^{d}; \{0,1\})$ and $\underline{v} \in USC(\mathbb{R}^{d}; \{0,1\})$ by
		\begin{equation*}
			\overline{v}(x) = \left\{ \begin{array}{r l}
									1, & \textup{if} \, \, x \in E^{*}, \\
									0, & \textup{otherwise},
							\end{array} \right. \quad 
			\underline{v}(x) = \left\{ \begin{array}{r l}
									1, & \textup{if} \, \, x \in \overline{E_{*}}, \\
									0, & \textup{otherwise}.
							\end{array} \right.
		\end{equation*}
	If $\overline{v}$ is a supersolution of \eref{levelseteqn} and $\underline{v}$, a subsolution, then there is an open set $E \subset \mathbb{R}^{d}$ satisfying $E_{*} \subset E \subset E^{*}$ such that $\ind_{E}$ is a discontinuous viscosity solution of \eref{levelseteqn} (i.e., $\ind_{E}$ is a viscosity supersolution and $\ind_{\overline{E}}$ is a viscosity subsolution).
	   \end{proposition}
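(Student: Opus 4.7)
The plan is to adapt Ishii's classical Perron construction to the geometric level set PDE \eqref{e.levelseteqn} in its sharp interface guise. Define the family
\[ \mathcal{E} = \{A \subset \mathbb{R}^{d} \, \mid \, A \text{ open}, \, E_* \subset A \subset E^*, \, \mathbf{1}_{\overline{A}} \text{ is a subsolution of } \eqref{e.levelseteqn}\}. \]
The hypothesis $\overline{E_*} \subset E^*$ together with $\mathbf{1}_{\overline{E_*}} = \underline{v}$ being a subsolution shows $E_* \in \mathcal{E}$, so $\mathcal{E}$ is nonempty. Set
\[ E = \bigcup_{A \in \mathcal{E}} A. \]
Then $E$ is open, $E_* \subset E \subset E^*$, and I claim $\mathbf{1}_E$ is the sought-after discontinuous solution. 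Since $E$ is open, the USC envelope of $\mathbf{1}_E$ is $\mathbf{1}_{\overline{E}}$ and the LSC envelope is $\mathbf{1}_E$ itself, so we must check that $\mathbf{1}_{\overline{E}}$ is a subsolution and $\mathbf{1}_E$ is a supersolution.

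For subsolutions, I would use the standard stability argument. For any finite $A_1,\ldots,A_n \in \mathcal{E}$, one has $\mathbf{1}_{\overline{A_1 \cup \cdots \cup A_n}} = \max_i \mathbf{1}_{\overline{A_i}}$ because closure distributes over finite union, and the pointwise maximum of USC subsolutions of the geometric PDE is again a subsolution. Passing to the supremum over all directed finite subunions and taking the USC envelope yields $\mathbf{1}_{\overline{E}}$, which is therefore a subsolution by the classical stability of viscosity subsolutions under increasing supremum.

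For the supersolution property of $\mathbf{1}_E$, I would run the usual Perron bump argument by contradiction. If $\mathbf{1}_E$ fails to be a supersolution at some $x_0$, then since $\mathbf{1}_E \in \{0,1\}$ and $E$ is open, necessarily $x_0 \in \partial E$, and there is a smooth $\varphi \leq \mathbf{1}_E$ with $\varphi(x_0) = 0$ and a strict violation of the supersolution inequality. Using geometric invariance (and replacing $\varphi$ by $\varphi + \eta|x-x_0|^2$ for small $\eta > 0$), one may arrange that $D\varphi(x_0) \neq 0$ and that $\varphi$ is a strict classical subsolution in a neighborhood $U$ of $x_0$. Before patching, I would use the fact that $\mathbf{1}_{E^*}$ is itself a supersolution to rule out $x_0 \in \partial E^*$: indeed, $E \subset E^*$ gives $\mathbf{1}_E \leq \mathbf{1}_{E^*}$, so if $x_0 \in \partial E^*$ the same $\varphi$ would touch $\mathbf{1}_{E^*}$ from below and produce the same strict violation, contradicting the supersolution property of $\overline{v}$. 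Hence $x_0$ is interior to $E^*$, and one can shrink $U$ so that $U \subset E^*$. Then, for a sufficiently small $\delta > 0$, the set $A_0 = E \cup \{x \in U \, \mid \, \varphi(x) > -\delta\}$ is open, satisfies $E_* \subset A_0 \subset E^*$, and $\mathbf{1}_{\overline{A_0}}$ is a subsolution (as the max of the pre-existing subsolution $\mathbf{1}_{\overline{E}}$ and the strict classical subsolution coming from the bump). Thus $A_0 \in \mathcal{E}$ but $A_0 \supsetneq E$, contradicting the definition of $E$.

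The main obstacle is Step 3, the bump construction. The geometric PDE is singular at $Du = 0$, so one has to invoke the Giga--Goto-style definition of viscosity solutions (testing only against $\varphi$ with $D\varphi(x_0) \neq 0$ up to a semicontinuous envelope condition at degenerate critical points); the regularity assumption $\overline{\mathbb{R}^{d} \setminus \overline{E^*}} = \mathbb{R}^{d} \setminus E^*$ is exactly what is needed so that the supersolution property of $\mathbf{1}_{E^*}$ can be tested from below at every boundary point of $E^*$, ensuring the dichotomy $x_0 \in E^*$ holds. Patching $\mathbf{1}_{\overline{E}}$ and $\mathbf{1}_{\overline{\{\varphi > -\delta\}}}$ into a global subsolution is then a matter of verifying that the maximum operation preserves the sub\-solution property across the common interface, which follows from the standard lemma that a pointwise maximum of USC subsolutions is a subsolution.
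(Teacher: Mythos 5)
Your proposal is correct and takes essentially the same route as the paper's proof: Perron's method over the family of subsolution sets sandwiched between the two barriers, with sup-stability giving the subsolution property of $\ind_{\overline{E}}$ and a bump construction (the paper's Proposition \ref{p.perron construction}) giving the supersolution property, your dichotomy $x_{0} \in \partial E^{*}$ versus $x_{0} \in E^{*}$ matching the paper's cases (i) and (ii). The only cosmetic difference is that you phrase the argument with open sets and unions, while the paper works with the upper and lower envelopes of the supremum of the corresponding family of $\{0,1\}$-valued subsolutions.
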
  
	   
In the proof, we will use semi-continuous envelopes.  In particular, given a locally bounded function $w : \mathbb{R}^{d} \to \mathbb{R}$, we denote by $w^{*}, w_{*} : \mathbb{R}^{d} \to \mathbb{R}$ the upper and lower semi-continuous envelopes defined by 
	\begin{align*}
		w^{*}(x) &= \lim_{\delta \to 0^{+}} \sup \left\{ w(y) \, \mid \, \|x - y\| < \delta \right\}, \\
		w_{*}(x) &= \lim_{\delta \to 0^{+}} \inf \left\{ w(y) \, \mid \, \|x - y\| < \delta \right\}.
	\end{align*}
	   
The proof of this level-set version of Perron's Method rests on the fact that if an open set defines a subsolution but fails to be a supersolution, then it is possible to find a larger subsolution containing it.  More precisely, we have

	\begin{proposition} \label{p.perron construction}  Suppose that $w \in USC(\mathbb{R}^{d}; \{0,1\})$ is a subsolution of \eref{levelseteqn}, $x_{0} \in \mathbb{R}^{d}$, $r > 0$, and there is a smooth function $\psi$ such that $w_{*} - \psi$ has a strict local maximum at $x_{0}$ in $B_{r}(x_{0})$ and $\|D\psi(x_{0})\| > 0$.  If $\psi$ satisfies the following differential inequality at $x_{0}$
	\begin{equation*}
		-a(x_{0}) \textup{tr} \left( \left(\textup{Id} - \widehat{D\psi}(x_{0}) \otimes \widehat{D\psi}(x_{0}) \right) D^{2} \psi(x_{0}) \right) - Da(x_{0}) \cdot D\psi(x_{0}) < F \|D\psi(x_{0})\|,
	\end{equation*}
then there is a $\tilde{w} \in USC(\mathbb{R}^{d}; \{0,1\})$, which is a subsolution of \eref{levelseteqn}, such that $\tilde{w} \geq w$ in $\mathbb{R}^{d}$, $\tilde{w} = w$ in $\mathbb{R}^{d} \setminus B_{r}(x_{0})$, and $\tilde{w} \not \equiv w$.  \end{proposition}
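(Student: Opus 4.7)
The plan is a classical Perron-type bump construction: exploit the strict subsolution inequality satisfied by $\psi$ at $x_{0}$ to enlarge the set $\{w = 1\}$ slightly inside $B_{r}(x_{0})$, while the strict local maximum property of $w_{*} - \psi$ guarantees that the enlargement sits well inside $B_{r}(x_{0})$, so that it patches cleanly with $w$ outside.

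First, by smoothness of $\psi$ and continuity of $a$ and $Da$, the strict inequality at $x_{0}$ combined with $\|D\psi(x_{0})\| > 0$ persists on a smaller ball $B_{\rho}(x_{0}) \Subset B_{r}(x_{0})$: there $\psi$ is a classical strict subsolution of \eref{levelseteqn} with a uniform margin $\eta > 0$, and $\|D\psi\|$ is bounded below by a positive constant. Second, compactness combined with the strict local maximum hypothesis yields $\mu > 0$ such that $w_{*}(x) - \psi(x) \leq w_{*}(x_{0}) - \psi(x_{0}) - \mu$ on the closed annulus $\overline{B_{\rho}(x_{0})} \setminus B_{\rho/2}(x_{0})$. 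Fix $\delta \in (0,\mu)$, let $E_{\delta} = \{\psi \geq \psi(x_{0}) - \delta\}$, and define $v_{\delta}$ to equal $1$ on $E_{\delta}$ and $-1$ elsewhere, so $v_{\delta}$ is USC with values in $\{-1,1\}$. Define $\tilde{w}$ to equal $w$ outside $B_{\rho}(x_{0})$ and $w \vee v_{\delta}$ inside. The annular gap forces $v_{\delta} \leq w_{*}$ on the annulus, so the two pieces match near $\partial B_{\rho}(x_{0})$ and $\tilde{w}$ is globally USC; since $\|D\psi(x_{0})\| > 0$, the interior of $E_{\delta}$ meets $\{w = -1\}$ in a nonempty open set, yielding $\tilde{w} \not\equiv w$.

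The main remaining step is to verify that $\tilde{w}$ is a viscosity subsolution of \eref{levelseteqn}. The standard stability fact --- the maximum of two USC subsolutions is a USC subsolution --- reduces this to checking that $v_{\delta}$ is a subsolution on $B_{\rho}(x_{0})$. But $\partial E_{\delta} = \{\psi = \psi(x_{0}) - \delta\}$ is a smooth hypersurface (by $\|D\psi\| > 0$ on $B_{\rho}$), and $\psi$ satisfies the classical strict subsolution inequality on all of $B_{\rho}(x_{0})$; any viscosity test from above at a point of $\partial E_{\delta}$ reduces, via the implicit function theorem, to testing $\psi$ at that point, where the classical inequality applies. The main technical obstacle is this patching stability at transition points where the boundaries of $\{w = 1\}$ and $E_{\delta}$ meet: one checks by the standard dichotomy that a test function touching $\tilde{w}$ from above at such a point either locally coincides with a test of $w$ from above --- so the subsolution property of $w$ suffices --- or with a test of $v_{\delta}$ from above, in which case the strict $\psi$-inequality suffices.
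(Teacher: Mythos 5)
Your construction --- replacing the paper's real-valued bump by the indicator of the super-level set $E_{\delta}=\{\psi\ge\psi(x_{0})-\delta\}$ and taking a maximum with $w$ --- is in spirit the same Perron bump as the paper's (sketched) proof: the paper keeps a real-valued subsolution $\hat{w}$ with nonvanishing gradient and thresholds to $\{-1,1\}$ only at the very end, while you threshold $\psi$ first and patch at the level of sets. That route can be made to work, but the annulus/matching step, as you wrote it, is where the argument breaks down.

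Concretely, with the ``strict local maximum'' of $w_{*}-\psi$ that you quote, the reasoning does not cohere. First, $w_{*}-\psi$ is lower semicontinuous, so on the compact annulus it attains its infimum, not its supremum; a pointwise strict maximum therefore does not yield your uniform margin $\mu$. In fact the maximum hypothesis is incompatible with $\|D\psi(x_{0})\|>0$: since $w_{*}$ only takes the values $\pm1$, in either case $w_{*}(x_{0})=1$ or $w_{*}(x_{0})=-1$ the inequality $w_{*}(x)-\psi(x)<w_{*}(x_{0})-\psi(x_{0})$ forces $\psi>\psi(x_{0})$ on a punctured neighborhood of $x_{0}$, i.e.\ $\psi$ has a strict local minimum there and $D\psi(x_{0})=0$. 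This is a misprint in the statement: when \pref{perron construction} is invoked in the proof of \pref{perron method}, the test function touches from below, i.e.\ $w_{*}-\psi$ has a strict local \emph{minimum} at $x_{0}$, and that is the hypothesis your construction needs. With it, your scheme closes: $w_{*}-\psi$ attains its infimum on the annulus, giving $\mu>0$ with $w_{*}\ge\psi+(w_{*}(x_{0})-\psi(x_{0}))+\mu$ there, and for $\delta<\mu$ every annulus point of $E_{\delta}$ satisfies $w_{*}>w_{*}(x_{0})\ge-1$, hence $w_{*}=1\ge v_{\delta}$, which is exactly the matching $v_{\delta}\le w_{*}\le w$ you assert; note that the inequality you actually wrote is an upper bound on $w_{*}$ and could never force $w_{*}=1$ on $E_{\delta}\cap(\overline{B_{\rho}(x_{0})}\setminus B_{\rho/2}(x_{0}))$ --- since $\|D\psi(x_{0})\|>0$, $E_{\delta}$ does reach the annulus in general, and without the matching the patched function fails to be USC and acquires a spurious, non-subsolution interface along $\partial B_{\rho}(x_{0})$. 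Two smaller points to tighten: when verifying that $v_{\delta}$ is a viscosity subsolution you must also treat test functions whose gradient vanishes at points of $\partial E_{\delta}$ (this is exactly the ``care about vanishing gradients'' the paper flags; alternatively invoke the invariance of geometric level-set equations under nondecreasing relabelings of a classical subsolution with nonvanishing gradient), and the nontriviality $\tilde{w}\not\equiv w$ should be argued by noting that if $w\equiv1$ near $x_{0}$ the touching hypothesis would force $D\psi(x_{0})=0$, so points with $w=-1$ accumulate at $x_{0}$ and lie in $\textup{Int}(E_{\delta})\cap B_{\rho/2}(x_{0})$.
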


	\begin{proof}  The construction follows along the lines of the usual proof, see, e.g., \cite[Section 4]{user}.  A little care is needed to ensure that the gradient of the smooth subsolution built in the argument never vanishes.  At the end of the argument, we will have a subsolution $\hat{w}$ taking values in $\mathbb{R}$.  The proof is completed upon defining $\tilde{w}$ by 
		\begin{equation*}
			\tilde{w}(x) = \left\{ \begin{array}{r l}
							1, & \textup{if} \, \, \hat{w}(x) \geq \delta, \\
							0, & \textup{otherwise},
						\end{array} \right.
		\end{equation*}
	for some suitable small $\delta > 0$.  \end{proof}

	   	\begin{proof}[Proof of \pref{perron method}]  To start with, observe that the identity $\overline{\mathbb{R}^{d} \setminus \overline{E^{*}}} = \mathbb{R}^{d} \setminus E^{*}$ implies that, at the level of semi-continuous envelopes, we have $(\overline{v}^{*})_{*} = \overline{v}$.  This will be needed later in the argument.
		
		Let $\mathcal{S}$ denote the family of subsolutions $w \in USC(\mathbb{R}^{d}; \{0,1\})$ of \eref{levelseteqn} satisfying $\underline{v} \leq w \leq \overline{v}$ in $\mathbb{R}^{d}$.  Note that $\mathcal{S}$ is nonempty precisely because $\overline{E_{*}} \subset E^{*}$.  Let $v : \mathbb{R}^{d} \to \{0,1\}$ be the pointwise maximum of this family:
			\begin{equation*}
				v(x) = \sup \left\{ w(x) \, \mid \, w \in \mathcal{S} \right\}.
			\end{equation*}
		As the supremum of a family of subsolutions, $v^{*}$ is also a subsolution.
		
		We claim that $v_{*}$ is a supersolution.  To see this, assume that $x_{0} \in \mathbb{R}^{d}$ and $\psi$ is a smooth function such that $v_{*} - \psi$ has a strict local minimum at $x_{0}$ and $\|D \psi(x_{0})\| > 0$.  There are two cases to consider: (i) $v_{*}(x_{0}) = \overline{v}(x_{0})$ and (ii) $v_{*}(x_{0}) < \overline{v}(x_{0})$.
		
		In case (i), observe that $v_{*} \leq (\overline{v}^{*})_{*} = \overline{v}$.  Thus, $\overline{v} - \psi$ has a strict local minimum at $x_{0}$.  This implies that
			\begin{equation*}
				-a(x_{0}) \textup{tr} \left( \left( \textup{Id} - \widehat{D\psi}(x_{0}) \otimes \widehat{D\psi}(x_{0}) \right) D^{2} \psi(x_{0}) \right) - Da(x_{0}) \cdot D\psi(x_{0}) \geq F \|D\psi(x_{0})\|.
			\end{equation*}
		
		In case (ii), it necessarily follows that $\overline{v}(x_{0}) = 1$.  Since $E^{*}$ is open, there is an $r > 0$ such that $\{\overline{v} = 1\} = E^{*} \supset B_{r}(x_{0})$.  With this wiggle room, we can argue by employing a geometric version of the standard Perron argument: if $\psi$ does not satisfy the desired differential inequality at $x_{0}$, then \pref{perron construction} implies that there is a $w \in \mathcal{S}$ such that $w \geq v^{*}$ and $w \not \equiv v^{*}$.  However, this would contradict the definition of $v$.
		
		We proved that $v$ is a $\{0,1\}$-valued (discontinuous) solution of \eref{levelseteqn} with $\underline{v} \leq v \leq \overline{v}$.  Therefore, to conclude, we can set $E = \{v_{*} = 1\}$. \end{proof} 
		
		\section{Proof of \lref{aperturbation}}\label{s.appendixtechnical}

\begin{proof}[Proof of \lref{aperturbation}]  Recall that we are assuming as given a sequence $a_{k} \to a$ uniformly in $\mathbb{T}^{d}$ and sets $S$ and $(S_{k})_{k \in \mathbb{N}}$ of locally finite perimeter such that $S_{k} \to S$ in $L^{1}_{\text{loc}}(\mathbb{R}^{d})$ and $S_{k}$ is a Class A minimizer of $E_{a_{k}}$ for each $k$.  We wish to prove that $S$ is a Class A minimizer of the limiting energy $E_a$.  This is a standard argument: we follow \cite[Chapter 21]{maggibook}.  

As a preliminary step, we will argue that $a_{k} \, \mathcal{H}^{d-1} \restriction_{\partial S_{k}} \overset{*}{\rightharpoonup} a \, \mathcal{H}^{d-1} \restriction_{\partial S}$.  On the one hand, from $S_{k} \to_{L^{1}_{\textup{loc}}} S$, we know that if $\nu$ is any weak-$*$ limit point of $a_{k} \, \mathcal{H}^{d-1} \restriction_{\partial S_{k}}$, then 
 	\begin{equation*}
		a \, \mathcal{H}^{d-1} \restriction_{\partial S} \leq \nu.
	\end{equation*}  
At the same time, if $\nu = \lim_{j \to \infty} a_{k_{j}} \, \mathcal{H}^{d-1} \restriction_{\partial S_{k_{j}}}$ for some subsequence $(k_{j})_{j \in \mathbb{N}} \subset \mathbb{N}$, and if $x \in \mathbb{R}^{d}$, then, by testing the minimality property of $S_{k}$ with the set $E_{k} = (S \cap B(x,s)) \cup (S_{k} \setminus B(x,s))$, we find, for a.e. $s > 0$,
	\begin{equation*}
		\nu(B(x,s)) = \lim_{k \to \infty} E_{a}(S_{k}; B(x,s)) \leq E_{a}(S; B(x,s)).
	\end{equation*}
Hence $\nu \leq a \, \mathcal{H}^{d-1} \restriction_{\partial S}$ also holds.  In particular, $\nu = a \, \mathcal{H}^{d-1} \restriction_{\partial S}$ so this proves $a_{k} \, \mathcal{H}^{d-1} \restriction_{\partial S_{k}} \overset{*}{\rightharpoonup} a \, \mathcal{H}^{d-1} \restriction_{\partial S}$ as claimed.

Now we proceed with the heart of the argument.  Suppose $S'$ is a perturbation of $S$ so that $S \Delta S'$ is compactly contained in some ball $B_R(0)$.  Let us fix $R'<R$ so that $S' \Delta S \subset B_{R'}(0)$.  Making $R > 0$ larger if necessary, we can assume that $\mathcal{H}^{d-1}(\partial S \cap \partial R) = 0$.  By the co-area formula, we can choose $R'' \in (R',R)$ so that
	\begin{equation*}
		\lim_{k \to \infty} \mathcal{H}^{d-1}(\partial B_{R''} \cap (S_{k} \Delta S)) = 0, \quad \mathcal{H}^{d-1}(\partial S \cap \partial B_{R''}) = 0.
	\end{equation*}
Hence testing the minimality property of $S_{k}$ against the set $F_{k} = (S' \cap B_{R''}) \cup (S_{k} \setminus B_{R''})$, we find
	\begin{equation*}
		\lim_{k \to \infty} E_{a_{k}}(S_{k}; B_{R}) \leq E_{a}(S'; B_{R''}) + \lim_{k \to \infty} E_{a_{k}}(S_{k} ; B_{R} \setminus B_{R''}).
	\end{equation*}
Since $a_{k} \, \mathcal{H}^{d-1} \restriction_{\partial S_{k}} \overset{*}{\rightharpoonup} a \, \mathcal{H}^{d-1} \restriction_{\partial S}$ and $\mathcal{H}^{d-1}(\partial S \cap \partial B_{R}) = 0$, the left-most term is $E_{a}(S; B_{R})$.  Similarly, given that $a_{k} \to a$ uniformly and $\mathcal{H}^{d-1}(\partial S_{k} \cap B_{R})$ is uniformly bounded, the right-most term is $E_{a}(S; B_{R} \setminus B_{R''})$.  In particular,
	\begin{equation*}
		E_{a}(S; B_{R}) \leq E_{a}(S';B_{R''}) + E_{a}(S; B_{R} \setminus B_{R''}) = E_{a}(S';B_{R}).
	\end{equation*}
This proves that $S$ is a Class A minimizer of $E_{a}$. \end{proof}	

\section{Proof of Theorem \ref{T: boundary curve}} \label{A: boundary construction}

The proof of Theorem \ref{T: boundary curve} will rely on the following lemma, which demonstrates the utility of the second condition in the definition of regular $\mathbb{Z}^{2*}$-measurable set.
	
	\begin{lemma} \label{L: clockwise} If $A$ is a $\mathbb{Z}^{2*}$-measurable set satisfying condition (ii) in Definition \ref{D: regular cube sets} and $x_{0} \in \partial A \cap \mathbb{Z}^{2}$, then $x_{0}$ has a unique pair of neighbors $x_{1}, x_{-1} \in \mathbb{Z}^{2}$ such that:
		\begin{itemize}
			\item[(a)] $x_{1},x_{-1} \in \partial A$.
			\item[(b)] The edge $[x_{0}, x_{1}]$ is contained in $\partial A$ and it traverses $\partial A$ clockwise.
			\item[(c)] The edge $[x_{0}, x_{-1}]$ is contained in $\partial A$ and it traverses $\partial A$ counterclockwise.  (In particular, $[x_{-1},x_{0}]$ is contained in $\partial A$ and it traverses $\partial A$ clockwise.)
		\end{itemize}
	\end{lemma}
	
		\begin{proof}  The main idea of the proof is (ii) is really a local property.  Hence it is only necessary to consider the geometry of $A$ in the vicinity of $x_{0}$.  
		
		To simplify the notation, observe that, up to replacing $A$ by $A - x_{0}$, we can assume that $x_{0} = 0$.  This is no loss of generality since the translate of a regular $\mathbb{Z}^{2*}$-measurable set by an integer vector remains a regular $\mathbb{Z}^{2*}$-measurable set.
		
		     Since $0 \in \partial A \cap \mathbb{Z}^{2}$, we can fix a $z \in \mathbb{Z}^{2*}$ such that $0 \in z + [-1/2,1/2]^{2} \subset A$.  There is no loss of generality in assuming that $z = (1/2,-1/2)$.  (Indeed, we can reduce to this case by replacing $A$ by $\tau(A)$, where $\tau$ is one of the transformations $\tau(x,y) = (-x,y)$, $\tau(x,y) = (x,-y)$, or $\tau(x,y) = (-x,-y)$.  As in the case of integer translations, the class of regular $\mathbb{Z}^{2*}$-measurable sets is invariant under these transformations.)
		     
		     We complete the proof arguing by cases.  In particular, from the fact that $0 \in \partial A$, we can choose a $z' \in \mathbb{Z}^{2*}$ such that $0 \in z' + (-1/2,1/2)^{2} \subset \mathbb{R}^{2} \setminus A$.  There are three possible cases: (1) $z' = (1/2,1/2)$, (2) $z' = (-1/2,-1/2)$, and (3) $z' = (-1/2,1/2)$.  Since the other cases follow similarly, we will only treat case (1).
		     
We split further into three sub-cases: 
		\begin{itemize}
			\item[(1a)] $A \cap [-1,1]^{2} = (1/2,-1/2) + [-1/2,1/2]^{2}$, 
			\item[(1b)] $A \cap [-1,1]^{2} = [(1/2,-1/2) + [-1/2,1/2]^{2}] \cup [(-1/2,-1/2) + [-1/2,1/2]^{2}]$,
			\item[(1c)] $[-1,1]^{2} \setminus A = (1/2,1/2) + (-1/2,1/2)^{2}$.
		\end{itemize}
	Note that the only possibility that is missing is when $A \cap [-1,1]^{2} = [(1/2,-1/2) + [-1/2,1/2]^{2}] \cup [(-1/2,1/2) + [-1/2,1/2]^{2}]$.  However, that configuration is impossible due to condition (ii) in the definition of regular $\mathbb{Z}^{2}$-measurable set.  Hence the three cases above are exhaustive.
	
	In Case (1A), we take $x_{1} = (1,0)$ and $x_{-1} = (0,-1)$.  Uniqueness is immediate since $[0,x_{1}]$ and $[0,x_{-1}]$ are the only edges emanating from $0$ that are entirely contained in $\partial A$.
	
	In Case (1B), we take $x_{1} = (1,0)$ and $x_{-1} = (-1,0)$.  In Case (1C), we take $x_{1} = (1,0)$ and $x_{-1} = (0,1)$.  In either case, uniqueness follows as in Case (1A).\end{proof}
		
\begin{proof}[Proof of Theorem \ref{T: boundary curve}]  We define $P$ and $\{\gamma^{(j)}\}_{j \in P}$ recursively.  To begin, let $P = \emptyset$.  Suppose, for some $k \in \mathbb{N} \cup \{0\}$, that we are at stage $k$ of the construction.  At this stage, the set $P$ is simply $P = \{1,2,\dots,k\}$, and we have already constructed pairwise disjoint paths $\{\gamma^{(j)}\}_{j \in \{1,2,\dots,k\}}$, each of which traverse $\partial A$ clockwise.  If $\partial A = \bigcup_{j = 1}^{k} \{\gamma^{(j)}\}$, we terminate the construction.  Otherwise, we can choose an edge $e \subset \partial A \setminus \bigcup_{j \in \{1,2,\dots,k\}} \{\gamma^{(j)}\}$.  (To ensure that the algorithm eventually hits all of $\partial A$, we choose $e$ to be as close as possible to the origin, that is, choose $e = [x,x']$ such that $\min\{\|x\|,\|x'\|\}$ is as small as possible.)

Since $A$ is $\mathbb{Z}^{2*}$-measurable and $e \subset \partial A$, there is a $z \in \mathbb{Z}^{2*}$ such that $e \subset z + \partial [-1/2,1/2]^{2}$ and $z + [-1/2,1/2]^{2} \subset A$.  In particular, we can define $x_{0}, x_{1} \in \partial A \cap \mathbb{Z}^{2}$ so that $e = [x_{0}, x_{1}] \subset z + \partial [-1/2,1/2]^{2}$ and the edge $[x_{0}, x_{1}]$ traverses $\partial A$ clockwise.  

By Lemma \ref{L: clockwise}, there is a unique neighbor $x_{2} \in \partial A \cap \mathbb{Z}^{2}$ of $x_{1}$ such that $[x_{1}, x_{2}] \subset \partial A \cap \mathbb{Z}^{2}$ and $[x_{1}, x_{2}]$ traverses $\partial A$ clockwise.  We iterate this process, obtaining a path $\{x_{0},x_{1},x_{2},\dots\} \subset \partial A \cap \mathbb{Z}^{2}$ such that, for any $i \geq 0$, the pair $\{x_{i},x_{i+1}\}$ has the following property: 
	\begin{equation} \label{E: key part of construction 1}
		[x_{i}, x_{i + 1}] \subset \partial A \quad \text{and} \quad [x_{i}, x_{i + 1}] \, \, \text{traverses} \, \,  \partial A \, \, \text{clockwise.}
	\end{equation}

We claim that either $\{x_{0}, x_{1}, x_{2},\dots\}$ is a simple closed curve (in case it is bounded) or else it is an infinite simple path for which $\|x_{i}\| \to \infty$ as $i \to \infty$.  

\textit{Case 1: $\{x_{0},x_{1},\dots\}$ is bounded.}

Suppose that $\{x_{0},x_{1},x_{2},\dots\}$ is a bounded subset of $\mathbb{Z}^{2}$.  Since $\mathbb{Z}^{2}$ is discrete, we deduce that $\{x_{0},x_{1},x_{2},\dots\}$ must be a finite subset.  In particular, we can define $j \in \mathbb{N}$ by 
	\begin{equation*}
		j = \inf \left\{ m \in \mathbb{N} \, \mid \, x_{m} = x_{k} \, \, \text{for some} \, \, k > m \right\}.
	\end{equation*}
In particular, $x_{j} = x_{j + M}$ for some $M \in \mathbb{N}$.  Observe that $M \geq 2$ necessarily holds as $x_{j} \neq x_{j + 1}$ by construction.  

We claim that $j = 0$.  To see this, we assume that $j \geq 1$ and argue by contradiction.  Toward that end, note that $j + M - 1 \geq 1$ since $M \geq 2$.  Thus, $x_{j + M - 1}$ is well-defined.  

The construction implies that the edge $[x_{j + M - 1},x_{j}] = [x_{j + M - 1},x_{j + M}]$ is contained in $\partial A$ and it traverses $\partial A$ clockwise.  In particular, $[x_{j}, x_{j + M - 1}] \subset \partial A$ and it traverses $\partial A$ counterclockwise.  Thus, by Lemma \ref{L: clockwise} and the property \eqref{E: key part of construction 1} with $i = j - 1$, we must have $x_{j - 1} = x_{j + M - 1}$.  Yet $j - 1 < j$ so this contradicts the definition of $j$.

We conclude that $j = 0$ as claimed.  From the identity $x_{0} = x_{M}$, we deduce by construction that $x_{1} = x_{M + 1}$, and then this implies that $x_{k} = x_{M + k}$ for all $k \in \mathbb{N}$ by induction.  In particular, $\{x_{0},x_{1},\dots,x_{M}\} = \{x_{0},x_{1},\dots\}$.  Furthermore, by taking $M$ to be as small as possible, we find that the path $\gamma^{(k + 1)} : \{0,1,\dots,M\} \to \mathbb{Z}^{2}$ given by $\gamma^{(k + 1)}(i) = x_{i}$ is a simple closed path in $\mathbb{Z}^{2}$, as claimed.

\textit{Case: $\{x_{0},x_{1},\dots\}$ is unbounded.}

In this case, we set aside the infinite path $\{x_{0},x_{1},\dots\}$ for the moment.  Let us return to $x_{0}$, but this time we proceed in reverse.  By Lemma \ref{L: clockwise}, we can let $x_{-1} \in \partial A \cap \mathbb{Z}^{2}$ denote the unique neighbor of $x_{0}$ for which $[x_{0},x_{-1}] \subset \partial A$ and such that $[x_{0},x_{-1}]$ traverses $\partial A$ counterclockwise.  We then turn our attention to $x_{-1}$, letting $x_{-2}$ be the unique element of $\partial A \cap \mathbb{Z}^{2}$ with $[x_{-1},x_{-2}] \subset \partial A$ and such that $[x_{-1},x_{-2}]$ traverses the boundary counterclockwise.  Continuing in this way, we obtain a path $\{x_{0},x_{-1},x_{-2},\dots\}$ such that, for any $i \geq 0$, the following property holds:
	\begin{equation} \label{E: key part of construction 2}
		[x_{-i},x_{-(i+1)}] \subset \partial A \quad \text{and} \quad [x_{-i},x_{-(i+1)}] \, \, \text{traverses} \, \, \partial A \, \, \text{counterclockwise.}
	\end{equation}

We claim that $\{x_{0},x_{-1},x_{-2},\dots\}$ must also be unbounded.  Indeed, were this not the case, then we could argue as above to deduce that $\{x_{0},x_{-1},x_{-2},\dots\}$ is a simple closed curve $\{x_{0},x_{-1},x_{-2},\dots\} = \{x_{0},x_{-1},\dots,x_{-L}\}$ for some $L \in \mathbb{N}$, with $x_{-(i + L)} = x_{-i}$ for every $i \geq 0$.  Yet, by \eqref{E: key part of construction 1} and \eqref{E: key part of construction 2}, this would yield the following implications:
	\begin{itemize}
		\item[(A)] $[x_{0},x_{-(L - 1)}] = [x_{-L},x_{-(L-1)}] \subset \partial A$ traverses $\partial A$ clockwise.
		\item[(B)] $[x_{0},x_{1}] \subset \partial A$ traverses $\partial A$ clockwise.
	\end{itemize} 
Therefore, by Lemma \ref{L: clockwise}, $x_{1} = x_{-(L - 1)}$.  Applying the lemma again with $x_{1}$ in place of $x_{0}$, we see that $x_{2} = x_{-(L-2)}$.  By recursion, we conclude that $x_{k} = x_{-(L - k)}$ for every $k \leq L$.  This implies that $x_{0} = x_{L}$, hence $x_{k} = x_{L + k}$ for every $k \in \mathbb{N}$ by construction, but then this would contradict the assumption that $\{x_{0},x_{1},\dots\}$ is infinite.  We conclude that $\{x_{0},x_{-1},x_{-2},\dots\}$ is unbounded.

In sum, we showed that $\{x_{0},x_{1},\dots\}$ and $\{x_{0},x_{-1},\dots\}$ define two infinite paths emanating from $x_{0}$.  By arguments similar to those in the previous paragraph, $\{x_{0},x_{1},\dots\} \cap \{x_{0},x_{-1},\dots\} = \{x_{0}\}$.  Thus, if we define $\gamma^{(k + 1)} : \mathbb{Z} \to \mathbb{Z}^{2}$ by $\gamma^{(k + 1)}(i) = x_{i}$, then $\gamma^{(k + 1)}$ is a simple path in $\partial A \cap \mathbb{Z}^{2}$.

\textit{Conclusion}. 

It only remains to show that the updated collection of curves $\{\gamma^{(j)}\}_{j \in \{1,2,\dots,k+1\}} = \{\gamma^{(j)}\}_{j \in \{1,2,\dots,k\}} \cup \{\gamma^{(k+1)}\}$ is pairwise disjoint.  To see this, let us argue by contradiction: assume it is not pairwise disjoint.  In particular, there is a $j \in \{1,2,\dots,k\}$ such that $\{\gamma^{(j)}\} \cap \{\gamma^{(k+1)}\} \neq \emptyset$.

Since $\{\gamma^{(j)}\} \cap \{\gamma^{(k + 1)}\} \neq \emptyset$, it follows that we can choose $i, \ell \in \mathbb{Z}$ such that $\gamma^{(k + 1)}(i) = \gamma^{(j)}(\ell)$.  By Lemma \ref{L: clockwise} and the construction, it follows that $\gamma^{(k + 1)}(i + m) = \gamma^{(j)}(\ell + m)$ as long as $i + m$ is in the domain of $\gamma^{(k + 1)}$.  In particular, $\gamma^{(j)}$ is a finite, simple closed curve if and only if $\gamma^{(k + 1)}$ is a finite, simple closed curve.  Hence, in the finite case, we readily conclude that $\{\gamma^{(k + 1)}\} = \{\gamma^{(j)}\}$.  Yet we originally chose $e$ in such a way that $e \not\subset \{\gamma^{(j)}\}$, and yet the construction implies that $e = [x_{0},x_{1}] \subset \{\gamma^{(k + 1)}\}$, a contradiction.

If instead $\gamma^{(k + 1)}$ is infinite, then it is not hard to argue that actually $\gamma^{(k + 1)}(i + m) = \gamma^{(j)}(\ell + m)$ also holds for $m \in \mathbb{Z}$; this is a direct consequence of the construction and Lemma \ref{L: clockwise}.  Hence $\{\gamma^{(k + 1)}\} = \{\gamma^{(j)}\}$ follows, leading to a contradiction as in the finite case.

We conclude that our algorithm outputs a pairwise-disjoint collection of paths $\{\gamma^{(j)}\}_{j \in P \cup \{k+1\}}$ contained in $\partial A$.  We update $P$ by setting $P = \{1,2,\dots,k+1\}$, and repeat the process.

Finally, if the algorithm never terminates (i.e., if we can always find an edge $e$ as above), then we conclude with $P = \mathbb{N}$.  To see that $\partial A = \bigcup_{j \in \mathbb{N}} \{\gamma^{(j)}\}$ in this case, recall that the edge $e$ at each stage was chosen to be as close as possible to the origin.  Thus, since $\mathbb{Z}^{2}$ is discrete, the algorithm must eventually hit every boundary curve of $\partial A$. \end{proof}

\section{Proof of Lemma \ref{L: supersolution thing}} \label{A: boundary issue}

\begin{proof}[Proof of Lemma \ref{L: supersolution thing}]  Recall that $S_{\gamma} = \bigcup_{N \in \mathbb{N}} S_{\gamma_{[-N,N]}}$, $U_{\gamma} = \bigcup_{N \in \mathbb{N}} U_{\gamma_{[-N,N]}}$, and, for any compact set $K \subset \mathbb{R}^{2}$, there is an $N$ such that $S_{\gamma_{[-M,M]}} \cap K = S_{\gamma_{[-N,N]}} \cap K$ and $U_{\gamma_{[-M,M]}} \cap K = U_{\gamma_{[-N,N]}}$ for all $M \geq N$.  At the same time, for any $N \in \mathbb{N}$, the local supersolution $(S_{\gamma_{[-N,N]}},U_{\gamma_{[-N,N]}})$ is an edge.  Thus, there are three piecewise smooth curves $\eta^{(N)}_{\text{top}} : [-N,N] \to \mathbb{R}^{2}$, $\eta^{(N)}_{\text{bottom}} : [-N,N] \to \mathbb{R}^{2}$, and $\eta^{(N)}_{S} : [-N,N] \to \mathbb{R}^{2}$ such that
			\begin{align*}
				\partial U_{\gamma_{[-N,N]}} &= \eta^{(N)}_{\text{top}}([-N,N]) \cup \eta^{(N)}_{\text{bottom}}([-N,N]), \\ 
				\partial S_{\gamma_{[-N,N]}} &= \eta^{(N)}_{S}([-N,N]) \cup \eta^{(N)}_{\text{bottom}}([-N,N]), \\
				\eta^{(N)}_{S}((-N,N)) &\subset U_{\gamma_{[-N,N]}}.
			\end{align*} 
		In view of the facts mentioned above, it is possible to parametrize these curves so that, for any $M \geq N$ and $* \in \{\text{top},\text{bottom},S\}$, we have $\eta^{(M)}_{*} = \eta^{(N)}_{*}$ in the interval $[-N,N]$.  In particular, the limit $\eta_{*} = \lim_{N \to \infty} \eta^{(N)}_{*}$ exists for any choice of $*$.  Furthermore, 
			\begin{align*}
				\partial U_{\gamma} = \eta_{\text{top}}(\mathbb{R}) \cup \eta_{\text{bottom}}(\mathbb{R}), \quad \partial S_{\gamma} = \eta_{S}(\mathbb{R}) \cup \eta_{\text{bottom}}(\mathbb{R}), \quad \eta_{S}(\mathbb{R}) \subset U_{\gamma}.
			\end{align*}
		Since $\eta_{S}(\mathbb{R}) \subset U_{\gamma}$, to complete the proof that $\partial S_{\gamma} \setminus A \subset U_{\gamma}$, it only remains to show that $\eta_{\text{bottom}}(\mathbb{R}) \subset A$. 
		
		To begin the proof that $\eta_{\text{bottom}}(\mathbb{R}) \subset A$, we start by observing that either (a) $\eta_{\text{top}}(\mathbb{R}) \subset \mathbb{R}^{2} \setminus A$ and $\eta_{\text{bottom}}(\mathbb{R}) \subset \text{Int}(A)$ or (b) $\eta_{\text{top}}(\mathbb{R}) \subset \text{Int}(A)$ and $\eta_{\text{bottom}}(\mathbb{R}) \subset \mathbb{R}^{2} \setminus A$.  To understand why, first, note that the line segment $[\gamma(i),\gamma(i + 1)] \subset \text{fill}(U_{\gamma(i)}) \cup U_{[\gamma(i),\gamma(i + 1)]} \cup  \text{fill}(U_{\gamma(i + 1)}) \subset U_{\gamma}$ for any $i \in \mathbb{Z}$ by condition (vi) in Definition \ref{D: supersolution network}.  Thus, $\partial A \subset U_{\gamma}$ and hence the boundary curves of $U_{\gamma}$ do not intersect $\partial A$, that is, $\gamma_{*}(\mathbb{R}) \cap \partial A = \emptyset$ for $* \in \{\text{top},\text{bottom}\}$.  From this and the connectedness of $A$, we conclude that one boundary curve must be in $A$ and the other must be in $\mathbb{R}^{2} \setminus A$.  To conclude the proof, we establish that (a) holds.

		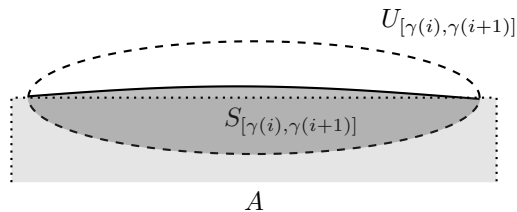
\begin{figure}[t]\label{f.condition5}
\begin{tikzpicture}[scale = .75]

\draw[draw=gray!50!white,fill=gray!50!white] plot[smooth,samples=100,domain=0:8] (\x+.3,{.2*sin(180*(\x+.3)/8)}) --
plot[smooth,samples=100,domain=0:-180] ({4.3+4*cos(\x)},{sin(\x)}); 

\draw[thick] plot[smooth,samples=100,domain=0:8] (\x+.3,{.2*sin(180*(\x+.3)/8)}) node[xshift=-2.5cm, yshift=-.3cm] {$S_{[\gamma(i),\gamma(i+1)]}$}; 
\draw[thick, dashed] plot[smooth,samples=100,domain=0:360] ({4.3+4*cos(\x)},{sin(\x)}) node[xshift=-.4cm, yshift=1cm] {$U_{[\gamma(i),\gamma(i+1)]}$}; 
\draw[thick,dotted,fill=gray, fill opacity = .2] (0,-1.5) -- (0,0) -- (8.6,0) -- (8.6,-1.5);

\draw (4.3, -1.5) node[anchor = north] {$A$};

\end{tikzpicture}
\caption{Condition (vi) of Definition \ref{D: supersolution network}.}
\end{figure}
		
	Finally, to see that (a) holds, pick any $i \in \mathbb{Z}$ and consider the edge (line segment) $[\gamma(i),\gamma(i + 1)] \subset \text{fill}(U_{\gamma(i)}) \cup U_{[\gamma(i), \gamma(i + 1)]} \cup \text{fill}(U_{\gamma(i + 1)}) \subset U_{\gamma}$.  By construction, $S_{[\gamma(i), \gamma(i + 1)]} \setminus [\text{fill}(U_{\gamma(i)}) \cup \text{fill}(U_{\gamma(i + 1)})] \subset S_{\gamma}$.  Further, by condition (vi) in Definition \ref{D: supersolution network}, see \fref{condition5}, we have $U_{[\gamma(i),\gamma(i + 1)]} \setminus S_{[\gamma(i),\gamma(i + 1)]} \subset \mathbb{R}^{2} \setminus A$.  Thus, that part of $\partial U_{\gamma}$ is outside of $A$, and the only remaining possibility is the part of $\partial U_{\gamma}$ contained in $S_{\gamma}$, namely, $\eta_{\text{bottom}}(\mathbb{R})$ is inside $A$.       \end{proof}

  \bibliographystyle{plain}
\bibliography{mobility-articles}
\end{document}